\title{\vspace{-0.7cm}Combinatorial theorems in sparse random sets}
\author{D. Conlon\thanks{Mathematical Institute, Oxford OX2 6GG, UK.
E-mail: {\tt david.conlon@maths.ox.ac.uk}. Research supported by a
Royal Society University Research Fellowship.} \and W. T. Gowers
\thanks{Department of Pure Mathematics and Mathematical Statistics,
Wilberforce Road, Cambridge CB3 0WB, UK. Email: {\tt
W.T.Gowers@dpmms.cam.ac.uk}. Research supported by a Royal Society 2010 Anniversary Research Professorship.}}
\def\qed{\ifvmode\mbox{ }\else\unskip\fi\hskip 1em plus 10fill$\Box$}
\def\E{\mathbb{E}}
\def\Z{\mathbb{Z}}
\def\R{\mathbb{R}}
\def\N{\mathbb{N}}
\def\P{\mathbb{P}}
\def\V{\mathbb{V}}
\def\r{\mathbf{r}}
\def\a{\alpha}
\def\b{\beta}
\def\g{\gamma}
\def\d{\delta}
\def\D{\Delta}
\def\e{\epsilon}
\def\l{\lambda}
\def\r{\rho}
\def\sp#1{\langle #1\rangle}
\newtheorem{theorem}{Theorem}[section]
\newtheorem{lemma}[theorem]{Lemma}
\newtheorem{corollary}[theorem]{Corollary}
\newtheorem{conjecture}[theorem]{Conjecture}
\newtheorem*{assumption*}{Main assumption}
\newtheorem*{properties*}{Four key properties}
\newtheorem{definition}[theorem]{Definition}
\newtheorem*{notation*}{Notation}
\newtheorem*{condition1*}{Condition 1}
\newtheorem*{condition2*}{Condition 2}
\date{}
\begin{document}
\maketitle

\begin{abstract}
We develop a new technique that allows us to show in a unified
way that many well-known combinatorial theorems, including
Tur\'an's theorem, Szemer\'edi's theorem and Ramsey's theorem,
hold almost surely inside sparse random sets. For instance,
we extend Tur\'an's theorem to the random setting by 
showing that for every $\e > 0$ and every positive integer $t \geq 3$ there 
exists a constant $C$ such that, if $G$ is a random graph 
on $n$ vertices where each edge is chosen independently 
with probability at least $C n^{-2/(t+1)}$, then, with probability 
tending to 1 as $n$ tends to infinity, every subgraph of $G$ with at least 
$\left(1 - \frac{1}{t-1} + \e\right) e(G)$ edges
contains a copy of $K_t$. This is sharp up to the constant $C$. We also show 
how to prove sparse analogues of structural results, giving two main applications,
a stability version of the random Tur\'an theorem stated above and a sparse hypergraph 
removal lemma. Many similar results have recently been obtained independently
in a different way by Schacht and by Friedgut, R\"{o}dl and Schacht.
\end{abstract}

\section{Introduction}

In recent years there has been a trend in combinatorics towards
proving that certain well-known theorems, such as Ramsey's
theorem, Tur\'an's theorem and Szemer\'edi's theorem, have 
``sparse random'' analogues. For instance, the first non-trivial
case of Tur\'an's theorem asserts that a subgraph of $K_n$ with
more than $\lfloor n/2\rfloor \lceil n/2\rceil$ edges must contain
a triangle. A sparse random analogue of this theorem is the assertion
that if one defines a random subgraph $G$ of $K_n$ by choosing each
edge independently at random with some very small probability $p$,  
then with high probability every subgraph $H$ of $G$ 
such that $|E(H)|\geq \left(\frac{1}{2}+\e\right)|E(G)|$ will contain a triangle.
Several results of this kind have been proved, and in some 
cases, including this one, the exact bounds on what $p$ one
can take are known up to a constant factor.

The greatest success in this line of research has been with
analogues of Ramsey's theorem \cite{R30}. Recall that Ramsey's
theorem (in one of its many forms) states that, for every graph $H$ 
and every natural number $r$, there exists $n$ such that if the
edges of the complete graph $K_n$ are coloured with $r$ colours,
then there must be a copy of $H$ with all its edges of the same
colour. Such a copy of $H$ is called \textit{monochromatic}. 

Let us say that a graph $G$ is $(H,r)$-\textit{Ramsey} if,
however the edges of $G$ are coloured with $r$ colours, there
must be a monochromatic copy of $H$. After
efforts by several researchers \cite{FR86, LRV92, RR93, RR94,
RR95}, most notably R\"{o}dl and Ruci\'nski, the following
impressive theorem, a ``sparse random version'' of Ramsey's 
theorem, is now known. We write $G_{n,p}$ for the standard 
binomial model of random graphs, where each edge in an $n$-vertex graph is chosen
independently with probability $p$. We also write $v_H$ and
$e_H$ for the number of vertices and edges, respectively, in
a graph $H$. 

\begin{theorem} \label{RelativeRamsey}
Let $r \geq 2$ be a natural number and let $H$ be a graph that
is not a forest consisting of stars and paths of length $3$. Then there exist positive constants $c$ and $C$ such that
\[\lim_{n \rightarrow \infty} \mathbb{P} (G_{n,p} \mbox{ is $(H,r)$-Ramsey}) =
\left\{ \begin{array}{ll} 0, \mbox{ if } p < c n^{-1/m_2(H)},\\ 1,
\mbox{ if } p > C n^{-1/m_2(H)},
\end{array} \right.\] where
\[m_2(H) = \max_{K \subset H, v_K \geq 3} \frac{e_K - 1}{v_K - 2}.\]
\end{theorem}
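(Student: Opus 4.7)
The plan is to prove the two halves of the theorem separately. For the 0-statement (that when $p < cn^{-1/m_2(H)}$ the graph $G_{n,p}$ is with high probability not $(H,r)$-Ramsey), I would use a deletion-and-colouring construction. The definition of $m_2(H)$ ensures that for every subgraph $K \subseteq H$ with $v_K \geq 3$ the expected number of copies of $K$ through a fixed edge is $O(n^{v_K - 2} p^{e_K - 1})$, which is $o(1)$ at this value of $p$; a first-moment argument, done with enough care to handle all dense subgraphs of $H$ simultaneously, lets one delete a small proportion of the edges of $G_{n,p}$ so as to destroy every copy of $H$. The surviving $H$-free subgraph receives one colour while the deleted edges, which are themselves sparse, are partitioned among the remaining $r-1$ colours in a way that produces no monochromatic $H$; the hypothesis that $H$ is not a star forest is what makes this distribution step feasible.

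The 1-statement is the harder direction and is where the transference framework of this paper is used. The dense input is Ramsey's theorem together with a standard supersaturation argument: there exist $N$ and $\d = \d(H,r) > 0$ such that every $r$-colouring of $E(K_N)$ contains at least $\d N^{v_H}$ monochromatic copies of $H$. To promote this conclusion to the sparse setting, I would first show that $G_{n,p}$ with $p > Cn^{-1/m_2(H)}$ satisfies, with high probability, certain pseudorandomness conditions; these conditions should support a sparse regularity lemma and an associated sparse counting lemma for copies of $H$. Given any $r$-colouring of $E(G_{n,p})$, sparse regularity partitions the vertex set into pieces with most pairs regular relative to the random host; colouring each regular pair by its majority colour produces a ``reduced'' dense graph on $N$ clusters to which the supersaturated dense Ramsey theorem applies, giving many monochromatic candidate copies of $H$ built from regular pairs; the sparse counting lemma finally converts one such candidate into an actual monochromatic copy of $H$ inside $G_{n,p}$.

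The principal obstacle, as is typical in this line of work, is the sparse counting lemma itself. At densities near the threshold $p = n^{-1/m_2(H)}$, each edge of a copy of $H$ contributes a factor of order $1/p$ relative to the corresponding dense count, so that small fluctuations in the number of edges within any single regular pair are enormously amplified. The dense counting proof therefore cannot be mimicked in a black-box fashion. To get around this I would proceed by induction on subgraphs of $H$, using the extremal balance built into the definition of $m_2(H)$ to control the number of partial embeddings after each edge is added, or else build a dense model for the relevant dense subgraph of $G_{n,p}$ via a transference principle and apply the dense counting lemma to the model. Verifying that $G_{n,p}$ itself satisfies the pseudorandomness conditions with sufficient precision, and uniformly over all subgraphs of $H$ that appear during the induction, is the technical heart of the argument.
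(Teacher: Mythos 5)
First, a point of orientation: the paper does not prove Theorem \ref{RelativeRamsey} at all; it is quoted as a known result of R\"odl and Ruci\'nski (and others), and the paper's own contribution in this direction is only a new proof of the 1-statement, and only for strictly $2$-balanced $H$ (Theorem \ref{ApproxRamsey}, via the transference machinery). Measured against what is actually known to be hard here, your proposal has a genuine gap in both halves. For the 0-statement, the deletion-and-recolouring sketch is exactly the argument the paper itself explains does not work for Ramsey (as opposed to density) properties: after you delete edges to destroy all copies of $H$ in one colour, those deleted edges must themselves be coloured, they form a random-looking sparse graph, and there is no easy way to distribute them among the remaining $r-1$ colours without creating monochromatic copies of $H$. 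Saying that ``$H$ is not a star forest makes this distribution step feasible'' is not a proof step; it is precisely the content of the R\"odl--Ruci\'nski 0-statement, which requires a delicate structural analysis of how copies of $H$ overlap in $G_{n,p}$ at this density. Note also that at $p=cn^{-1/m_2(H)}$ the expected number of copies of the densest subgraph through a fixed edge is $\Theta(c^{e_K-1})$, a constant, not $o(1)$, so even the first-moment bookkeeping has to be done more carefully than you indicate.

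For the 1-statement, the route you describe (sparse regularity, majority colour on the reduced graph, then a sparse counting lemma) runs straight into the K\L R-type embedding/counting problem at the threshold density, which you correctly identify as the obstacle but do not resolve; at the time of this paper that counting lemma was open in general, so ``induction on subgraphs of $H$'' cannot be taken for granted. Your fallback, building a dense model by transference and counting there, is indeed the spirit of the paper's method, but the paper's actual argument dispenses with sparse regularity entirely and, crucially, transfers the whole colouring at once: an $r$-colouring of the random set is encoded as functions $f_1,\dots,f_r$ with $\sum_i f_i=\mu$, the Hahn--Banach transference for colourings (Lemma \ref{transfer:colouring}, with the hypothesis on $(\max_i\phi_i)_+$) produces bounded $g_1,\dots,g_r$ forming an approximate fractional colouring of the dense structure, the supersaturated Ramsey theorem in functional form (Lemmas \ref{RamseyFtoS} and \ref{RamseyFtoS2}) supplies many monochromatic copies for the $g_i$, and a bespoke norm built from capped convolutions (Corollary \ref{counting}), verified probabilistically via Janson--Ruci\'nski upper-tail estimates, carries the count back to the $f_i$. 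Transferring one colour class at a time, or one subgraph of a regularity partition at a time, does not obviously give a consistent colouring of the dense model, and this simultaneous transference is one of the paper's key ideas that is missing from your outline. Finally, even the paper's method only yields the 1-statement for strictly $2$-balanced $H$; the full statement for every $H$ that is not a star forest, and the entire 0-statement, are taken from the literature rather than proved.
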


That is, given a graph $G$ that is not a disjoint union of stars and paths of length $3$,
there is a threshold at approximately $p =n^{-1/m_2(H)}$ where the
probability that the random graph $G_{n,p}$ is $(H,r)$-Ramsey
changes from 0 to~1. 

This theorem comes in two parts: the statement that above the
threshold the graph is almost certainly $(H,r)$-Ramsey and the
statement that below the threshold it almost certainly is not.
We shall follow standard practice and call these the 
1-\textit{statement} and the 0-\textit{statement}, respectively.

There have also been some efforts towards proving sparse random
versions of Tur\'an's theorem, but these have up to now been less
successful. Tur\'an's theorem \cite{T41}, or rather its
generalization, the Erd\H{o}s-Stone-Simonovits theorem (see for
example \cite{B78}), states that if $H$ is some fixed graph, then any
graph with $n$ vertices that contains more than
\[\left(1 - \frac{1}{\chi(H) - 1} + o(1)\right) \binom{n}{2}\]
edges must contain a copy of $H$. Here, $\chi(H)$ is the
chromatic number of $H$. 

Let us say that a graph $G$ is $(H,\e)$-\textit{Tur\'an} if 
every subgraph of $G$ with at least 
\[\left(1 - \frac{1}{\chi(H) - 1} + \epsilon\right) e(G)\]
edges contains a copy of $H$. One may then ask for
the threshold at which a random graph becomes
$(H,\e)$-Tur\'an. The conjectured answer \cite{HKL95, HKL96, KLR97}
is that the threshold is the same as it is for the corresponding
Ramsey property.

\begin{conjecture} \label{RelativeTuran}
For every $\epsilon > 0$ and every graph $H$ there exist positive
constants $c$ and $C$ such that
\[\lim_{n \rightarrow \infty} \mathbb{P} (G_{n,p} \mbox{ is
$(H,\epsilon)$-Tur\'an}) = 
\left\{ \begin{array}{ll} 0, \mbox{ if } p < c n^{-1/m_2(H)},\\ 1,
\mbox{ if } p > C n^{-1/m_2(H)},
\end{array} \right.\] where
\[m_2(H) = \max_{K \subset H, v_K \geq 3} \frac{e_K - 1}{v_K - 2}.\]
\end{conjecture}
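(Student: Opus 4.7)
My plan separates the two directions of the conjecture. The 0-statement, that $G_{n,p}$ is almost surely not $(H,\epsilon)$-Tur\'an below threshold, should follow from a first-moment deletion argument, while the 1-statement, that $G_{n,p}$ is almost surely $(H,\epsilon)$-Tur\'an above threshold, is the substantive direction and calls for a transference-style reduction to the classical Erd\H{o}s--Stone--Simonovits theorem.

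For the 0-statement, let $K \subseteq H$ attain the maximum in the definition of $m_2(H)$, so that $(e_K - 1)/(v_K - 2) = m_2(H)$. For $p < c n^{-1/m_2(H)}$ with $c$ small, the expected number of copies of $K$ in $G_{n,p}$ is of order $n^{v_K} p^{e_K} = p \cdot n^{v_K - 2} p^{e_K - 1}$, which by choice of $p$ is $o(pn^2) = o(e(G_{n,p}))$. By Markov's inequality, with probability $1-o(1)$ the actual count is also $o(e(G_{n,p}))$, so we may delete one edge per copy, destroying every copy of $K$ and hence every copy of $H$, while losing only an $o(1)$-fraction of the edges. The remaining $H$-free subgraph clears the required $(1 - \tfrac{1}{\chi(H)-1} + \epsilon)$-fraction (as long as $\epsilon < 1/(\chi(H)-1)$, the only interesting case), so $G_{n,p}$ is not $(H,\epsilon)$-Tur\'an with probability $1-o(1)$.

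For the 1-statement, encode a candidate subgraph $F \subseteq G_{n,p}$ as a function $f : \binom{[n]}{2} \to \{0, p^{-1}\}$ supported on $E(G_{n,p})$ and normalised so that $\E f$ equals the density of $F$ inside $G_{n,p}$. The plan is to build a ``dense model'' $g : \binom{[n]}{2} \to [0,1]$ with $\E g$ close to $\E f$, such that $f - g$ is small in a norm strong enough to control copy-counts of $H$. The weighted Erd\H{o}s--Stone--Simonovits theorem will then deliver many copies of $H$ in $g$ whenever $\E g \geq 1 - \tfrac{1}{\chi(H)-1} + \epsilon/2$, and the closeness of $f$ and $g$ in the chosen norm will force many copies of $H$ to survive in $f$, giving the desired conclusion.

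The main obstacle, and the place where the threshold $p = n^{-1/m_2(H)}$ enters essentially, is the construction of the dense model together with the counting transfer. One needs $G_{n,p}$, almost surely, to satisfy a strong pseudorandomness property: for every subgraph $J \subseteq H$ and every partial embedding of $J$ into $[n]$, the number of extensions inside $G_{n,p}$ should be tightly concentrated around its expectation. The exponent $1/m_2(H)$ is exactly calibrated so that this concentration holds for the densest subgraph of $H$, which is the bottleneck; from there one should be able to bootstrap to all subgraphs of $H$. The delicate point is that no black-box sparse counting lemma for the Kohayakawa--R\"odl sparse regularity lemma is available off-the-shelf, so the transference has to be carried out either from scratch or, more plausibly, via an iterative/energy-increment argument in the spirit of the Green--Tao dense model theorem, adapted to handle all subgraphs $J \subseteq H$ simultaneously and with weights compatible with $H$-counting rather than merely the cut norm.
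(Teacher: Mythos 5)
The statement you are addressing is stated in the paper as a conjecture, and the paper itself does not prove it in this generality: it establishes only the 1-statement, and only for strictly $2$-balanced $H$ (Theorem \ref{ApproxTuran}), via the transference machinery that occupies most of the paper. Your 0-statement sketch is essentially the paper's own informal argument and is fine up to small repairs (with $p$ comparable to $cn^{-1/m_2(H)}$ the expected number of copies of the densest subgraph $K$ is a small constant multiple of $e(G_{n,p})$, not $o(e(G_{n,p}))$, and Markov alone does not give probability $1-o(1)$; one needs $c^{e_K-1}$ small and a second-moment or concentration step). The real problem is the 1-statement, where your proposal is a plan rather than a proof, and the plan hinges on a property that is false at the critical density. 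You ask that for every $J\subseteq H$ and every partial embedding, the number of extensions in $G_{n,p}$ be tightly concentrated around its mean, with the densest subgraph as the bottleneck. At $p=Cn^{-1/m_2(H)}$ the rooted count of $H$ itself at a fixed edge has expectation of constant order and is \emph{not} uniformly concentrated: some edges lie in far more copies than the mean, which is exactly the paper's observation that the convolution $*_1(f,\dots,f)$ is almost always unbounded. Rooted counts of \emph{proper} subgraphs are well behaved (and only when $H$ is strictly balanced, via Janson--Ruci\'nski upper-tail estimates), but the full-$H$ extension count is the obstruction, not the guarantee.

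Overcoming this failure is the substance of the paper, and none of it appears in your outline: the bespoke norm built from basic anti-uniform functions, the \emph{capping} of the problematic convolution, the decomposition of the random set into $m$ independent pieces so that the capping can be done term by term, and the argument that products of capped anti-uniform functions can be approximated by convex combinations of functions built from normalized restrictions to much smaller random sets, so that a union bound over a small net becomes feasible. Saying that the transference ``has to be carried out from scratch or via an energy-increment argument'' names the gap without closing it; moreover, even with the paper's full machinery the conclusion is obtained only for strictly $2$-balanced $H$, so the route you describe would not yield the conjecture for arbitrary $H$ in any case. As written, the proposal establishes the easy direction (modulo the concentration fix) and leaves the hard direction unproved.
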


A difference between this conjecture and Theorem \ref{RelativeRamsey}
is that the 0-statement in this conjecture is very simple to prove. To see
this, suppose that $p$ is such that the expected number of copies of
$H$ in $G_{n,p}$  is significantly less than the expected number 
of edges in $G_{n,p}$. Then, since the number of copies of $H$ and the 
number of edges are both concentrated around their expectations, 
we can almost always remove a small number of edges
from $G_{n,p}$ and get rid of all copies of $H$, which proves that $G_{n,p}$
is not $(H,\e)$-Tur\'an. The expected number of copies of $H$
(if we label the vertices of $H$) is approximately $n^{v_H}p^{e_H}$,
while the expected number of edges in $G_{n,p}$ is approximately
$pn^2$. The former becomes less than the latter when $p=n^{-(v_H-2)/(e_H-1)}$.

A further observation raises this bound. Suppose, for example, that $H$ is 
a triangle with an extra edge attached to one of its vertices. It is clear that the
real obstacle to finding copies of $H$ is finding triangles: it is not
hard to add edges to them. More generally, if $H$ has a subgraph $K$
with $\frac{e_K-1}{v_K-2}>\frac{e_H-1}{v_H-2}$, then we can increase
our estimate of $p$ to $n^{-(v_K-2)/(e_K-1)}$, since if we can get rid
of copies of $K$ then we have got rid of copies of $H$. Beyond this
extra observation, there is no obvious way of improving the bound for
the 0-statement, which is why it is the conjectured upper bound as well.

An argument along these lines does not work at all for the Ramsey property,
since if one removes a few edges in order to eliminate all copies of $H$
in one colour, then one has to give them another colour. Since the set of
removed edges is likely to look fairly random, it is not at all clear 
that this can be done in such a way as to eliminate all monochromatic
copies of $H$.

Conjecture \ref{RelativeTuran} is known to be true for some graphs, for example
$K_3$, $K_4$, $K_5$ (see \cite{FR86, KLR97, GSS04},
respectively) and all cycles (see \cite{F94, HKL95, HKL96}), but it is open in general. Some partial results towards the general conjecture, where the $1$-statement is proved
with a weaker exponent, have been given by Kohayakawa, R\"{o}dl
and Schacht \cite{KRS04} and Szab\'o and Vu \cite{SV03}. The paper of
Szab\'o and Vu contains the best known upper bound in the case where 
$H$ is the complete graph $K_t$ for some $t\geq 6$; the bound they 
obtain is $p=n^{-1/(t - 1.5)}$, whereas the conjectured best possible
bound is $p=n^{-2/(t+1)}$ (since $m_2(K_t)$ works out to be $(t+1)/2$).
Thus, there is quite a significant gap. The full conjecture has also been proved to be a consequence of the 
so-called K\L R conjecture \cite{KLR97} of Kohayakawa, \L uczak and 
R\"{o}dl. This conjecture, regarding the number of $H$-free graphs of a
certain type, remains open, except in a few special cases \cite{GKRS07, GPSST07, GSS04, KLR96}.\footnote{The full K\L R conjecture was subsequently established by Balogh, Morris and Samotij \cite{BMS14} and by Saxton and Thomason \cite{ST14} (see also \cite{CGSS14}). Their methods also allow one to give alternative proofs for many of the results in this paper. We refer the reader to \cite{C14} for a more complete overview.}

As noted in \cite{K97, KLR97}, the K\L R conjecture would also imply the following structural
result about $H$-free graphs which contain nearly the extremal number of edges. The analogous
result in the dense case, due to Simonovits \cite{S68}, is known as the stability theorem. Roughly speaking, it says that if an $H$-free graph contains almost $\left(1 - \frac{1}{\chi(H) - 1}\right) \binom{n}{2}$ edges, then it must be very close to being $(\chi(H) - 1)$-partite.

\begin{conjecture} \label{RelativeStab}
Let $H$ be a graph with $\chi(H) \geq 3$ and let 
\[m_2(H) = \max_{K \subset H, v_K \geq 3} \frac{e_K - 1}{v_K - 2}.\]
Then, for every $\d > 0$, there exist positive constants $\e$ and $C$ such that 
if $G$ is a random graph on $n$ vertices, where each edge is chosen independently 
with probability $p$ at least $C n^{-1/m_2(H)}$, then, with probability 
tending to 1 as $n$ tends to infinity, every $H$-free subgraph of $G$ with at least $\left(1 - \frac{1}{\chi(H)-1} - \e\right) e(G)$ edges may be made $(\chi(H)-1)$-partite by removing at most $\d p n^2$ edges.
\end{conjecture}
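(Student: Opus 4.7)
The plan is to deduce Conjecture~\ref{RelativeStab} from the classical Simonovits stability theorem by combining it with the transference framework developed earlier in the paper. Given $G = G_{n,p}$ with $p \geq C n^{-1/m_2(H)}$ and an $H$-free subgraph $G' \subseteq G$ with $e(G') \geq (1 - 1/(\chi(H)-1) - \e) e(G)$, the aim is to construct a dense auxiliary graph $\widetilde G$ on $V(G)$ that faithfully models $G'$ on all large vertex sets, apply classical stability to $\widetilde G$, and then pull the resulting partition back to $G'$.

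First, apply the sparse regularity lemma to $G'$ regarded as a subgraph of $G_{n,p}$, producing a partition $V(G) = V_1 \cup \dots \cup V_k$ in which almost every pair $(V_i, V_j)$ is $\e$-regular at density $\a_{ij} p$ for some $\a_{ij} \in [0,1]$. Define $\widetilde G$ to be the (possibly weighted) dense graph on the same vertex set whose density between $V_i$ and $V_j$ is $\a_{ij}$; its total edge density is at least $1 - 1/(\chi(H)-1) - \e - o(1)$. The next step is to show that $\widetilde G$ is almost $H$-free: if $\widetilde G$ contained $\Omega(n^{v_H})$ copies of $H$, then the sparse embedding/counting lemma supplied by the transference framework of this paper (standing in for the K\L R conjecture) would produce a copy of $H$ in $G'$, contradicting $H$-freeness. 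By the classical graph removal lemma, we can delete $o(n^2)$ edges from $\widetilde G$ to make it genuinely $H$-free.

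Simonovits's stability theorem applied to this modified $\widetilde G$ yields a partition $V(G) = U_1 \cup \dots \cup U_{\chi(H)-1}$ such that at most $(\d/2) n^2$ edges of $\widetilde G$ lie within a single part. To transfer the partition back to $G'$, exploit the regularity of $G'$ with respect to $V_1, \dots, V_k$: for each pair $(V_i, V_j)$, the number of edges of $G'$ between $U_l \cap V_i$ and $U_l \cap V_j$ differs from $p \a_{ij} |U_l \cap V_i| |U_l \cap V_j|$ by an error that sums to $o(p n^2)$. Consequently, the number of edges of $G'$ contained within the parts $U_1, \dots, U_{\chi(H)-1}$ is at most $p (\d/2) n^2 + o(p n^2) \leq \d p n^2$, and hence $G'$ can be made $(\chi(H)-1)$-partite by removing at most $\d p n^2$ edges.

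The main obstacle is the counting step: controlling the $H$-count of the dense model $\widetilde G$ in the range $p \geq C n^{-1/m_2(H)}$ demands exactly a sparse counting lemma for regular pairs, which in full generality is the (still open) K\L R conjecture. The key innovation we rely on is the transference technique of the paper, which provides a weaker but still sufficient embedding statement allowing us to conclude that $\widetilde G$ must be almost $H$-free whenever $G'$ is. A secondary technical point, handled by standard large-deviation arguments, is that $G_{n,p}$ itself is quasirandom enough on all large vertex sets to justify the cut-density transfer used in the previous paragraph with high probability.
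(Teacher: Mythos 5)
There is a genuine gap at the heart of your argument: the step where you conclude that the dense model $\widetilde G$, defined from the sparse regularity densities $\a_{ij}$ of $G'$ inside $G_{n,p}$, must be almost $H$-free. That deduction requires a counting/embedding lemma for sparse $\e$-regular pairs at density $p \geq C n^{-1/m_2(H)}$ — i.e.\ that if the reduced densities would force $\Omega(n^{v_H})$ copies of $H$ in the dense model, then $G'$ itself contains a copy of $H$. This is precisely the K\L R conjecture \cite{KLR97}, which the paper does not prove and explicitly treats as open except in special cases. Your appeal to ``the transference technique of the paper, which provides a weaker but still sufficient embedding statement'' is not backed by anything in the paper: the transference theorems (Theorem \ref{main:structure}, Corollary \ref{unconditionalsetstructural}) compare a sparse function $f \leq \mu$ with a dense model $g$ that the Hahn--Banach argument itself constructs, and they say nothing about a dense graph assembled from a Kohayakawa-type sparse regularity partition of $G'$; in particular they give no lower bound on the $H$-count of $G'$ in terms of the $\a_{ij}$. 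So the central counting step of your proof is exactly the open statement you were trying to avoid.

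The paper's proof (Theorem \ref{RelativeStabAgain}) bypasses sparse regularity altogether. One applies Corollary \ref{unconditionalsetstructural} with $S$ the labelled copies of $H$ and with $\mathcal{V}$ the family of edge sets of complete graphs on vertex subsets of $[n]$ — only $2^n = 2^{o(pn^2)}$ sets, which is why property $3'$ can absorb them. This directly produces a dense graph $J$ such that (i) the normalized $H$-count of the $H$-free subgraph $A$ is at least that of $J$ minus $\e$, so $J$ has at most $\e n^{v_H}$ copies of $H$, (ii) $J$ has at least $\bigl(1 - \frac{1}{\chi(H)-1} - 2\e\bigr)\binom{n}{2}$ edges, and (iii) the density of $A$ inside every vertex subset tracks that of $J$. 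Then the robust stability statement (Theorem \ref{RobustStab}, i.e.\ Simonovits stability combined with the dense removal lemma, since $J$ is only almost $H$-free rather than $H$-free) gives a $(\chi(H)-1)$-partition in which $J$ has few internal edges, and (iii) transfers this bound to $A$, completing the proof. If you want to salvage your outline, you would need to replace the regularity-partition model $\widetilde G$ by the transference-produced model $J$ and carry the local-density information through $\mathcal{V}$ as above; as written, the regularity-based route stands or falls with K\L R.
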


Another example where some success has been achieved is Szemer\'edi's 
theorem \cite{Sz75}. This celebrated theorem states
that, for every positive real number $\delta$ and every natural
number $k$, there exists a positive integer $n$ such that every 
subset of the set $[n]= \{1, 2, \cdots, n\}$ of size at least $\delta n$
contains a $k$-term arithmetic progression. The particular
case where $k = 3$ had been proved much earlier by Roth
\cite{R53}, and is accordingly known as Roth's theorem. A sparse
random version of Roth's theorem was proved by Kohayakawa, 
\L uczak and R\"{o}dl \cite{KLR96}. To state the theorem, let 
us say that a subset $I$ of the integers is $\delta$-\textit{Roth} 
if every subset of $I$ of size $\delta |I|$ contains a $3$-term arithmetic 
progression. We shall also write $[n]_p$ for a random
set in which each element of $[n]$ is chosen independently with
probability $p$.

\begin{theorem} \label{RelativeRoth}
For every $\delta > 0$ there exist positive constants $c$ and $C$ such that
\[\lim_{n \rightarrow \infty} \mathbb{P} ([n]_p \mbox{ is $\delta$-Roth}) =
\left\{ \begin{array}{ll} 0, \mbox{ if } p < c n^{-1/2},\\ 1,
\mbox{ if } p > C n^{-1/2}.
\end{array} \right.\]
\end{theorem}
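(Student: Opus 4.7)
The plan is to treat the two statements separately. The $0$-statement is straightforward by a first-moment/deletion argument: when $p = c n^{-1/2}$ with $c$ sufficiently small in terms of $\delta$, one computes $\mathbb{E}|[n]_p| \sim pn$ and $\mathbb{E}(\#\text{non-trivial 3-APs in }[n]_p) \sim n^2 p^3/2 = (np^2/2)\,\mathbb{E}|[n]_p|$. Choosing $c^2 < 2(1-\delta)$ and invoking standard concentration (Chebyshev or Janson), both quantities are close to their means with probability $1-o(1)$, so the number of 3-APs is less than $(1-\delta)|[n]_p|$. Deleting one point from each 3-AP then leaves a 3-AP-free subset of $[n]_p$ of size at least $\delta|[n]_p|$, witnessing that $[n]_p$ is not $\delta$-Roth.

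The $1$-statement is the heart of the matter, and I would proceed by a \emph{transference principle}. The first ingredient is to verify that when $p \gg n^{-1/2}$ the $p$-normalised indicator $\nu = p^{-1}\mathbf{1}_{[n]_p}$ almost surely satisfies a ``linear forms condition'' adequate for counting $3$-APs and for controlling a sparse analogue of the Gowers $U^2$-norm. The relevant moments are of the form $\mathbb{E}_{a,d}\,\nu(a)\nu(a+d)\nu(a+2d) = 1 + o(1)$ together with the corresponding cube-type expectation that governs $U^2$, and each is amenable to a second-moment concentration argument; the threshold $n^{-1/2}$ is exactly where these expectations are dominant, which is why it is critical.

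Given this pseudorandomness, I would establish the transference step. If $I \subseteq [n]_p$ has $|I| \geq \delta|[n]_p|$, set $f = p^{-1}\mathbf{1}_I$, so $0 \leq f \leq \nu$ and $\mathbb{E} f \geq \delta - o(1)$. By a Hahn--Banach / dense-model argument one decomposes $f = g + h$, where $0 \leq g \leq 1$ has mean close to $\mathbb{E} f$ and where $h$ is small in the $\nu$-weighted $U^2$-norm. Applying Roth's theorem in the dense setting to $g$ furnishes $\Omega(n^2)$ arithmetic progressions weighted by $g$, and the pseudorandomness of $\nu$ combined with $U^2$-smallness of $h$ shows via a generalised von Neumann inequality that substituting $h$ for any factor of $g$ in the $3$-AP counting expression contributes only $o(1)$. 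Hence the count weighted by $f$ itself is positive, so $I$ contains a $3$-AP.

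The main obstacle will be the construction of the bounded model $g$ and the verification of the generalised von Neumann inequality using only the linear forms condition on $\nu$, rather than any uniform bound on $\nu$ itself. This sparse analogue of the Green--Tao dense model theorem, specialised to the $U^2$-setting, is the technical heart of the transference argument and is where the approach differs most substantially from the dense Fourier-analytic proof of Roth's theorem.
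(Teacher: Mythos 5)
Your $0$-statement is fine: it is the same first-moment/deletion argument sketched in the paper immediately after the statement (the theorem itself is due to Kohayakawa, \L uczak and R\"odl, and the paper only reproves the $1$-statement, as the $k=3$ case of its sparse Szemer\'edi theorem).

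The $1$-statement, however, has a genuine gap, and it is precisely the gap this paper was written to close. You propose a Green--Tao transference based on the $U^2$ norm, with pseudorandomness meaning a linear forms condition verified by second-moment calculations. Two problems arise at the critical density $p=Cn^{-1/2}$. First, the $U^2$-type linear forms condition does not hold with $1+o(1)$ error there: degenerate configurations (e.g.\ the quadruples with $a=0$, or the doubled systems arising in the generalised von Neumann argument with $x=x'$) contribute terms of order $1/(np^2)=1/C^2$, a constant rather than $o(1)$; this is not fatal if one tracks constants, but it already shows the second-moment verification you invoke is not routine at the sharp threshold. Second, and more seriously, the dense model (Hahn--Banach) step is not a consequence of the linear forms condition at all: it requires showing that $\langle \nu-1,\xi\rangle$ is small for $\xi$ ranging over \emph{products of dual functions} $\Delta(f_1,f_2,f_3)$ with arbitrary $0\le f_i\le\nu$, uniformly in the $f_i$. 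Since these dual functions depend on the random set itself, one is forced into a union bound over a class of functions of entropy $\exp(\Theta(pn))$, while the concentration available for a single fixed bounded test function is only $\exp(-cpn)$ with a small constant $c$; the naive argument therefore collapses, which is exactly why Green and Tao needed an additional correlation condition and only reached densities $n^{-o(1)}$. The paper circumvents this by abandoning the off-the-shelf $U^2$ norm in favour of a bespoke norm built from \emph{capped} convolutions of the $3$-AP system itself, by writing $f$ as an average over $m$ independent random sets (so that the measure being tested is independent of the duals it is tested against), and by approximating every dual by a convex combination of duals built from normalized restrictions to much sparser random subsets, which shrinks the net enough for the union bound to work (Sections 5--8), together with Bernstein/Janson-type tail estimates. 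Your proposal correctly identifies the dense-model step as ``the technical heart'', but offers no mechanism for it at $p=Cn^{-1/2}$; asserting that it is ``the sparse analogue of the Green--Tao dense model theorem, specialised to the $U^2$-setting'' assumes exactly what has to be proved, and the paper's explicit position is that the Green--Tao-style hypotheses cannot be verified at this density without new ideas of the above kind.
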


Once again the 0-statement is trivial (as it tends to be for density theorems): 
if $p=n^{-1/2}/2$, then the expected number of $3$-term progressions in $[n]_p$ is less than $n^{1/2}/8$, while the expected number 
of elements of $[n]_p$ is $n^{1/2}/2$. Therefore, one can almost
always remove an element from each progression and still be left
with at least half the elements of $[n]_p$. 

For longer progressions, the situation has been much less satisfactory.
Let us define
a set $I$ of integers to be $(\delta,k)$-\textit{Szemer\'edi} if every
subset of $I$ of cardinality at least $\delta|I|$ contains a $k$-term arithmetic
progression.
Until recently, hardly anything was known at all about which
random sets were $(\d,k)$-Szemer\'edi. However, that changed
with the seminal paper of Green and Tao \cite{GT08}, who, on the way
to proving that the
primes contain arbitrarily long arithmetic progressions, showed
that every \textit{pseudo}random set is $(\d,k)$-Szemer\'edi, if 
``pseudorandom" is defined in an appropriate way. Their definition
of pseudorandomness is somewhat complicated, but it is straightforward
to show that quite sparse random sets are pseudorandom in their
sense. From this the following result follows, though we are not
sure whether it has appeared explicitly in print.

\begin{theorem}
For every $\d>0$ and every $k \in \N$ there exists a function $p=p(n)$
tending to zero with $n$ such that 
\[\lim_{n \rightarrow \infty} \mathbb{P} ([n]_p \mbox{ is $(\d,k)$-Szemer\'edi}) = 1.\]
\end{theorem}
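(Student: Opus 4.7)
Plan: My approach is to derive the theorem from the relative Szemer\'edi theorem of Green and Tao \cite{GT08}, exactly as the excerpt suggests. That theorem asserts that any subset of positive relative density of a set whose associated measure satisfies their linear forms condition and correlation condition must contain a $k$-term arithmetic progression. So the task reduces to constructing a pseudorandom measure out of $[n]_p$ and verifying, with probability $1-o(1)$ over the choice of $[n]_p$, that both of Green and Tao's conditions hold. Once this is done, applying the relative Szemer\'edi theorem to any subset of $[n]_p$ of relative density at least $\d$ immediately yields the $(\d,k)$-Szemer\'edi conclusion.

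Concretely, I would pick a prime $N$ slightly larger than $kn$, embed $[n]$ in $\Z_N$, and define $\nu:\Z_N\to\R_{\geq 0}$ by $\nu(x)=p^{-1}\mathbf{1}_{x\in[n]_p}$ on the image of $[n]$, extended by $1$ elsewhere so that $\E_{x\in\Z_N}\nu(x)=1+o(1)$. For the linear forms condition, given any collection $\psi_1,\dots,\psi_m$ of pairwise non-parallel affine forms in $t$ variables with $m\leq m_0(k)$ and $t\leq t_0(k)$, the quantity $\E_{\x\in\Z_N^t}\prod_i\nu(\psi_i(\x))=p^{-m}\,\P(\psi_i(\x)\in[n]_p\text{ for all }i)$ has expectation $1+o(1)$ because independence of the indicators, combined with the non-parallel hypothesis ruling out high-dimensional coincidences of the $\psi_i(\x)$, makes the leading contribution exactly $p^m$ times the number of non-degenerate $\x$; a Chernoff or second-moment bound gives concentration, and a union bound over the finitely many test systems finishes the verification. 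For the correlation condition, the expectation $\E_x\nu(x+h_1)\cdots\nu(x+h_m)$ is close to $1$ whenever the shifts are distinct, and blows up like a power of $p^{-1}$ when several $h_i$ coincide; one therefore defines a weight $\tau$ that is large at $0$ and small elsewhere, so that the diagonal contributions are absorbed by the sum $\sum_{i<j}\tau(h_i-h_j)$, and checks that each moment $\E\tau^q$ stays bounded.

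The principal obstacle is simply calibrating the rate at which $p(n)\to 0$ so that both conditions hold simultaneously with probability $1-o(1)$ and so that the error terms surviving the union bound remain negligible. One cannot push $p$ down to the natural counting threshold $n^{-1/(k-1)}$ by this route: controlling $\E\tau^q$ forces $p$ to remain at least a fixed polynomial in $1/n$ whose exponent depends on $m_0(k)$, $t_0(k)$ and the maximal moment required. This mismatch is precisely why the theorem only asserts the existence of \emph{some} $p(n)\to 0$ rather than a sharp threshold. Any choice of the form $p=n^{-\eta_k}$ for a sufficiently small $\eta_k>0$ will do; the remaining work is a routine combination of the Green--Tao deterministic theorem with the probabilistic verification sketched above.
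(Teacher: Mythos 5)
Your proposal follows essentially the same route as the paper, which simply observes that sparse random sets satisfy Green and Tao's pseudorandomness hypotheses and then invokes their relative Szemer\'edi theorem; your verification sketch of the linear forms and correlation conditions (independence plus concentration and a union bound, with a weight $\tau$ absorbing the diagonal terms) is exactly the intended argument. One caveat: Green and Tao's correlation condition as stated requires bounded moments of \emph{every} order, so a black-box application only permits $p$ of the form $n^{-o(1)}$ rather than a fixed power $n^{-\eta_k}$ (to get a fixed power one must track how many moments their proof actually uses); since the theorem only asks for some $p(n)\to 0$, this does not affect the validity of the proof of the stated result.
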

 
The approach of Green and Tao depends heavily on the use of
a set of norms known as \textit{uniformity norms}, introduced in \cite{G01}. In order
to deal with $k$-term arithmetic progressions, one must use a uniformity
norm that is based on a count of certain configurations that can be thought
of as $(k-1)$-dimensional parallelepipeds. These configurations have $k$ 
degrees of freedom (one for each dimension and one because the
parallelepipeds can be translated) and size $2^{k-1}$. A simple argument 
(similar to the arguments for the 0-statements in the density theorems above)
shows that the best bound
that one can hope to obtain by their methods is therefore at most
$p=n^{-k/2^{k-1}}$. This is far larger than the bound that arises in the
obvious 0-statement for Szemer\'edi's theorem: the same argument that 
gives a bound of $cn^{-1/2}$ for the Roth property gives a bound of 
$cn^{-1/(k-1)}$ for the Szemer\'edi property. However, even $p=n^{-k/2^{k-1}}$ is not
the bound that they actually obtain, because they need in addition a 
``correlation condition" that is not guaranteed by the smallness of the 
uniformity norm. This means that the bound they obtain is of the form 
$n^{-o(1)}$.

The natural conjecture is that the obvious bound for the 0-statement is 
in fact correct, so it is far stronger than the bound of Green and Tao.

\begin{conjecture} \label{RelativeSzem}
For every $\delta > 0$ and every positive integer $k \geq 3$, there exist positive constants $c$ and $C$ such that
\[\lim_{n \rightarrow \infty} \mathbb{P} ([n]_p \mbox{ is $(\delta,k)$-Szemer\'edi}) =
\left\{ \begin{array}{ll} 0, \mbox{ if } p < c n^{-1/(k-1)},\\ 1,
\mbox{ if } p > C n^{-1/(k-1)}.
\end{array} \right.\]
\end{conjecture}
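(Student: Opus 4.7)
The 0-statement follows from essentially the same deletion argument sketched after Theorem~\ref{RelativeRoth} for $k=3$. When $p = cn^{-1/(k-1)}$ with $c$ small, the expected number of $k$-term APs in $[n]_p$ is on the order of $p^k n^2$, which is much less than $pn$, so one can almost surely delete one element from each $k$-AP and still retain more than a $(1-\delta)$ fraction of $[n]_p$. The substance of the conjecture is therefore the 1-statement.

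My plan for the 1-statement is to prove a transference theorem: show that any subset $A$ of a sufficiently ``pseudorandom'' host set $S \subseteq [n]$ with $|A| \geq \delta |S|$ must contain a $k$-AP, and then verify that $[n]_p$ is pseudorandom in the required sense whenever $p \geq C n^{-1/(k-1)}$. The novelty has to lie in using the \emph{weakest possible} pseudorandomness hypothesis. Anything stronger than a condition on the very linear forms that count $k$-APs will inevitably force $p$ past the conjectured $n^{-1/(k-1)}$; indeed, the 0-statement shows that below this threshold $[n]_p$ does not even contain the expected number of $k$-APs on its dense subsets, so we have essentially no room to spare.

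The steps I would carry out, in order, are as follows. First, formulate the pseudorandomness of $[n]_p$ as a weak count condition: roughly, that the number of $k$-APs supported in any reasonably large subset of $[n]_p$ agrees up to a constant with $p^k$ times the corresponding count in $[n]$, together with analogous bounds for relevant subsystems of linear forms. Verify that this holds with probability $1-o(1)$ as soon as $p \geq C n^{-1/(k-1)}$, via a union bound over bad configurations, possibly preceded by a deletion step to remove atypical vertices. Next, prove a dense-model / decomposition theorem stating that any non-negative function $f$ supported on $[n]_p$ with average $\delta p$ can be written as $f = g + h$, where $0 \leq g \leq 1$ on $[n]$ has the same total mass after rescaling by $p$, and $h$ is small in a norm adapted to counting $k$-APs against the random host. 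Finally, bound the contribution of $h$ to the $k$-AP count using the pseudorandomness, and apply Szemer\'edi's theorem to the dense function $g$ on $[n]$ to produce many $k$-APs, which transfer back to $A$.

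The main obstacle will be the middle step. One needs a decomposition with respect to a norm weak enough that its boundedness follows from the one-shot pseudorandomness of $[n]_p$ at the exact threshold, yet strong enough that its smallness translates into a negligible error term in the sparse $k$-AP count. Green and Tao control this by forcing strong uniformity $U^{k-1}$ estimates plus a correlation condition on the majorising measure, which is precisely what pushes their bound up to $p = n^{-o(1)}$. Circumventing this will almost certainly require a Hahn--Banach or weighted dual-norm argument in which the relevant norm is tailored to the $k$-AP counting operator restricted to the random host, rather than to the ambient dense set $[n]$. Making that duality argument work with only the limited pseudorandomness available at $p = Cn^{-1/(k-1)}$ is, to my mind, the crux of the matter.
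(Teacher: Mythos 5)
Your treatment of the 0-statement is fine (it is the standard deletion argument, as in the paper), and your overall strategy for the 1-statement --- a Hahn--Banach/dense-model transference against a norm tailored to the $k$-AP counting operator on the random host, rather than an off-the-shelf uniformity norm --- is exactly the route the paper takes (Lemma \ref{transfer:density} together with the bespoke norm of Section \ref{clemma}). But the proposal stops where the real work begins, and the two steps you leave open are genuine gaps, not routine verifications. The first is the unboundedness problem: the dual (anti-uniform) functions against which $\mu-1$ must be tested are convolutions of functions dominated by $\mu$, and at the critical probability $p=Cn^{-1/(k-1)}$ the convolution $*_1(f,\dots,f)$ is typically unbounded. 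This breaks the positive-part step of the transference argument, which needs to approximate $\phi_+$ by polynomials of functions taking values in a fixed bounded interval. The paper's fix is structural rather than a stronger hypothesis: convolutions are capped at $2$ (the capped convolutions $\circ_j$), the random set is split into $m$ independent copies $U_1,\dots,U_m$ with $f$ written as an average of $f_i\leq\mu_i$ and the capping applied term by term, and Lemma \ref{precounting} and Corollary \ref{counting} show the capped count still tracks the dense count. Nothing in your outline addresses this.

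The second gap is your plan to verify the pseudorandomness ``via a union bound over bad configurations.'' The functions you must decorrelate from $\mu-1$ (products of basic anti-uniform functions) depend on the random set itself, and any net of this family has size exponential in $p|X|$ with a large constant, which overwhelms the concentration available from Bernstein-type bounds at the critical density; this is precisely the obstruction discussed at the start of Section \ref{BAUrestriction}. The paper's way around it --- its third main idea --- is to show that every basic anti-uniform function can be approximated, up to a fixed error function of small $L_1$-norm, by a convex combination of convolutions of normalized restrictions to much sparser random sets $V=U_{q/p}$ with $p=Lq$ (Lemmas \ref{averagerestrict}, \ref{restrictions}, \ref{presmallnet}), which shrinks the family enough for a union bound, after which the remaining estimates (assumptions 1 and 2) are verified for $k$-APs in $\Z_n$ using the two-degrees-of-freedom structure. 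Without a substitute for these two devices, the middle step you yourself identify as the crux remains unproved, so the proposal is a correct identification of the strategy but not a proof.
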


One approach to proving Szemer\'edi's theorem is known as the hypergraph removal lemma. Proved independently by Nagle, R\"{o}dl, Schacht and Skokan \cite{NRS06, RS04} and by the second author \cite{G07} (see also~\cite{T06}), this theorem states that for every $\d > 0$ and every positive integer $k \geq 2$ there exists a constant $\e > 0$ such that if $G$ is a $k$-uniform hypergraph containing at most $\e n^{k+1}$ copies of the complete $k$-uniform hypergraph $K_{k+1}^{(k)}$ on $k+1$ vertices, then it may be made $K_{k+1}^{(k)}$-free by removing at most $\d n^k$ edges. Once this theorem is known, Szemer\'edi's theorem follows as an easy consequence. The question of whether an analogous result holds within random hypergraphs was posed by \L uczak \cite{L06}. For $k = 2$, this follows from the work of Kohayakawa, \L uczak and R\"odl \cite{KLR96}.

\begin{conjecture} \label{RelativeRemoval}
For every $\d > 0$ and every integer $k \geq 2$ there exist constants $\e > 0$ and $C$ such that, if $H$ is a random $k$-uniform hypergraph on $n$ vertices where each edge is chosen independently with probability $p$ at least $C n^{-1/k}$, then, with probability 
tending to 1 as $n$ tends to infinity, every subgraph of $H$ containing at most $\e p^{k+1} n^{k+1}$ copies of the complete $k$-uniform hypergraph $K_{k+1}^{(k)}$ on $k+1$ vertices may be made $K_{k+1}^{(k)}$-free by removing at most $\d p n^k$ edges.
\end{conjecture}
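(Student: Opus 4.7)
The plan is to follow a transference strategy, reducing the sparse statement to the ordinary dense hypergraph removal lemma of \cite{NRS06, RS04, G07} via a dense model theorem for sub-hypergraphs of the random host $G = G^{(k)}_{n,p}$. Given a candidate ``bad'' subgraph $H' \subseteq G$ with at most $\e p^{k+1} n^{k+1}$ copies of $K^{(k)}_{k+1}$, first normalise by setting $f = p^{-1} \mathbf{1}_{E(H')}\colon \binom{[n]}{k}\to[0,p^{-1}]$, a function of mean at most $1+o(1)$. The aim is to produce a dense model $g\colon \binom{[n]}{k} \to [0,1]$ with $g \approx f$ in a norm fine enough to control the count of $K^{(k)}_{k+1}$ copies, then apply the dense removal lemma to $g$ and pull the conclusion back to $H'$.

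The first technical step would be to establish the appropriate pseudorandomness of $G^{(k)}_{n,p}$ at the threshold $p \geq C n^{-1/k}$. Concretely, one needs every linked system of labelled copies of $K^{(k)}_{k+1}$ (and of its sub\-hypergraphs) to be realised inside $G$ close to the expected number of times, with probability tending to $1$. The exponent $1/k$ is the reciprocal of the $k$-density analogue $\max_{K} (e_K-1)/(v_K-k) = k$ attained at $K = K^{(k)}_{k+1}$: this is precisely the density at which Janson- or Kim--Vu-type concentration, combined with a union bound over the relevant sub\-configurations, forces every count we care about to lie near its mean. This pseudorandomness would then drive a Koopman--von Neumann style dense-model theorem, in the spirit of the transference principle of \cite{GT08}, yielding a $g$ whose weighted $K^{(k)}_{k+1}$-count agrees with that of $f$ up to an additive error of the form $\eta\, n^{k+1}$ for any prescribed $\eta > 0$.

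With the model in hand, apply a weighted version of the dense removal lemma to $g$: since the $K^{(k)}_{k+1}$-count of $g$ is at most $2\e\, n^{k+1}$, provided $\e$ is small in terms of $\d$ and $k$ there exists $g' \leq g$ with $\|g - g'\|_1 \leq (\d/2)\, n^k$ whose support contains no copy of $K^{(k)}_{k+1}$. The transference then has to be run in reverse, producing a subgraph $H'' \subseteq H'$ with $|E(H') \setminus E(H'')| \leq \d p n^k$ whose normalised indicator is itself a model for $g'$; any copy of $K^{(k)}_{k+1}$ in $H''$ would correspond, via the dense-model approximation, to a positive-weight copy in $g'$, which is ruled out. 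Hence $H''$ is $K^{(k)}_{k+1}$-free, yielding the desired removal.

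The main obstacle is the counting/pseudorandomness step at the sharp threshold $p = C n^{-1/k}$. At this density the expected number of copies of $K^{(k)}_{k+1}$ through a typical edge is only of constant order, so the concentration estimates for overlapping sub\-configurations are delicate, and the transference machinery must be arranged so that only counts of $K^{(k)}_{k+1}$ and its genuine sub\-hypergraphs are ever invoked---not arbitrary higher-order uniformity norms, which would force $p$ much larger than $n^{-1/k}$, in the manner of the Green--Tao-style approach discussed above. Getting the dense model theorem to run using only this economical family of test configurations, so that the required pseudorandomness actually holds above $n^{-1/k}$, is the heart of the matter.
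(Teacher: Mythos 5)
Your outline follows the right general philosophy (transference to a dense model plus the dense removal lemma), but the step where you ``run the transference in reverse'' is not a real step, and it is exactly the difficulty that the actual proof has to overcome. Two things go wrong. First, there is no construction of $H''$: the Hahn--Banach/dense-model machinery only produces a bounded function $g$ approximating the sparse $f$; the weight you delete from $g$ to obtain the simplex-free $g'$ lives on the dense model and has no canonical preimage among the edges of $H'$, so ``a subgraph of $H'$ whose normalised indicator is a model for $g'$'' is not something the method supplies. Second, even granting such an $H''$, the approximation cannot rule out individual simplices: the transference controls the $K_{k+1}^{(k)}$-count only up to an additive error of order $\eta n^{k+1}$ (unnormalised), whereas a single simplex of $H''$ contributes only about $p^{-(k+1)}\approx n^{(k+1)/k}$ to the weighted count, far below that error. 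So ``$g'$ has no positive-weight copy'' tells you nothing stronger than ``$H''$ has at most $\eta p^{k+1}n^{k+1}$ copies''; you cannot pass from an approximate statement to the exact conclusion that $H''$ is $K_{k+1}^{(k)}$-free this way.

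The paper's proof fixes this with two ingredients you are missing. It does not use the dense removal lemma as a black box but a robust form extracted from the regularity proof (Theorem \ref{simplexremoval}): edges are removed from the dense model in ``clumps'' $H(J_{i_1},\dots,J_{i_k})$ determined by a partition of $\binom{[n]}{k-1}$, and after this removal any surviving pattern of nonempty clumps forces \emph{many} ($\Omega(n^{k+1})$) simplices, not just one. Correspondingly, the transference is enriched so that, in addition to the simplex count, the relative densities of $f$ and $g$ agree on every set of the form $K_n^{(k)}(J_1,\dots,J_k)$ (this is the set system $\mathcal{V}$ in Theorem \ref{unconditionalstructural} and assumption $3'$; it is affordable because $|\mathcal{V}|\le 2^{kn^{k-1}}\ll 2^{cpn^k}$). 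One then removes from the sparse graph the clumps corresponding to those removed from the model: the local-density transfer guarantees this deletes at most $\delta pn^k$ edges, and if a simplex survived, the clump structure plus the ``many simplices'' conclusion in the model, transferred back through the counting estimate, would force $F$ to contain far more than $\e p^{k+1}n^{k+1}$ simplices, a contradiction. This clump mechanism, together with the local-density component of the transference, is the missing idea; your concern about keeping the test configurations down to $K_{k+1}^{(k)}$ and its subhypergraphs at $p\sim n^{-1/k}$ is legitimate (the paper handles it via capped convolutions and the Janson--Ruci\'nski upper-tail bounds), but it is secondary to this gap.
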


\subsection{The main results of this paper}

In the next few sections we shall give a very general method for proving sparse
random versions of combinatorial theorems. This method allows one 
to obtain sharp bounds for several theorems, of
which the principal (but by no means only) examples are positive answers to the conjectures we 
have just mentioned. This statement comes with one caveat. When dealing with graphs and 
hypergraphs, we shall restrict our attention to those which are well-balanced in the following 
sense. Note that most graphs of interest, including complete graphs and cycles, satisfy this condition.

\begin{definition} 
A $k$-uniform hypergraph $K$ is said to be strictly $k$-balanced if, for every subgraph $L$ of $K$,
\[\frac{e_K - 1}{v_K - k} > \frac{e_L - 1}{v_L - k}.\]
\end{definition}

The main results we shall prove in this paper (in the order in which we 
discussed them above, but not the order in which we shall prove them) are 
as follows. The first is a sparse random version of Ramsey's theorem.
Of course, as we have already mentioned, this is known: however, our
theorem applies not just to graphs but to hypergraphs, where the
problem was wide open apart from a few special cases \cite{RR98, RRS07}.
As we shall see, our methods apply just
as easily to hypergraphs as they do to graphs. We write $G_{n,p}^{(k)}$
for a random $k$-uniform hypergraph on $n$ vertices, where each
hyperedge is chosen independently with probability $p$. If $K$ is some
fixed $k$-uniform hypergraph, we say that a hypergraph is $(K,r)$-Ramsey
if every $r$-colouring of its edges contains a monochromatic
copy of $K$.

\begin{theorem} \label{ApproxRamsey}
Given a natural number $r$ and a strictly $k$-balanced $k$-uniform hypergraph $K$, there
exists a positive constant $C$ such that
\[\lim_{n \rightarrow \infty} \mathbb{P} (G_{n,p}^{(k)} \mbox{ is
$(K,r)$-Ramsey}) = 1, \mbox{ if } p > C n^{-1/m_k(K)},\] 
where $m_k(K) = (e_K - 1)/(v_K - k)$.
\end{theorem}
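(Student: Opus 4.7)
The plan is to deduce this sparse Ramsey theorem from the classical dense Ramsey theorem via a transference principle, reducing everything to the proof of a sparse counting lemma for monochromatic copies of $K$ inside sufficiently pseudorandom hypergraphs. The starting point is the supersaturation form of Ramsey's theorem for hypergraphs: for every $r$ and every $K$ there exists $\eta > 0$ such that any $r$-colouring of the edges of $K_n^{(k)}$ contains at least $\eta n^{v_K}$ monochromatic copies of $K$. The goal is to replicate this in the sparse world, showing that any $r$-colouring of the edges of $G_{n,p}^{(k)}$ contains at least $\eta' p^{e_K} n^{v_K}$ monochromatic copies of $K$, which in particular produces at least one.

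First I would show that, with probability tending to $1$, the random hypergraph $G_{n,p}^{(k)}$ satisfies a list of pseudorandomness conditions. These should guarantee that for every subhypergraph $L$ of $K$ the number of labelled copies of $L$ is close to its expectation $n^{v_L} p^{e_L}$, and moreover that the same holds for copies whose vertex set is constrained to lie in a specified subset of $[n]$. The strictly $k$-balanced hypothesis on $K$, combined with $p \geq C n^{-1/m_k(K)}$, keeps all the relevant expectations large enough for concentration inequalities of Janson or FKG type to deliver these bounds with exponentially small failure probability.

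Next comes the heart of the argument: the transference step. I would prove that whenever $\Gamma$ is a $k$-uniform hypergraph on $n$ vertices satisfying the pseudorandomness conditions above, every $r$-colouring of its edges contains at least $c(r,K)\, p^{e_K} n^{v_K}$ monochromatic copies of $K$, where $p = e(\Gamma)/\binom{n}{k}$. The strategy is to pass to a dense model for each colour class, producing an $r$-colouring of $K_n^{(k)}$ of total density close to $1$ to which the dense supersaturation can be applied, and then to use the pseudorandomness of $\Gamma$ to pull the resulting monochromatic copies of $K$ back into $\Gamma$. This is where I expect the main difficulty to lie: the pull-back requires controlling, for each intermediate subhypergraph $L$ of $K$, how copies of $L$ in the sparse and dense worlds compare, and it is precisely here that the strictly $k$-balanced condition is indispensable, since it prevents any proper subhypergraph of $K$ from dominating the count and destabilising the comparison. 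Once this transference lemma is established, choosing $C$ large enough to make the pseudorandomness conditions hold almost surely closes the argument.
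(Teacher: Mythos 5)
Your high-level plan (supersaturated Ramsey theorem, plus a dense model for each colour class, plus a sparse-to-dense comparison of monochromatic counts) is the same skeleton as the paper's, but the step you describe as "use the pseudorandomness of $\Gamma$ to pull the resulting monochromatic copies of $K$ back into $\Gamma$" is precisely where the real content lies, and as stated it fails at the critical density $p = Cn^{-1/m_k(K)}$. Two concrete problems. First, the comparison between the sparse count $\E_{s\in S}f_i(s_1)\cdots f_i(s_{e_K})$ and the dense count for its model $g_i$ requires control of the inner products of $f_i-g_i$ with convolutions of the form $*_1(f,\dots,f)$, i.e.\ with the normalized count of copies of $K$ minus one edge rooted at a given edge. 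At the critical probability this function is \emph{not} bounded with high probability (only the rooted counts of proper subhypergraphs are, which is where strict balancedness enters), so a counting lemma does not follow from "counts of every $L\subseteq K$, also vertex-restricted, are close to expectation". The paper has to cap these convolutions and split the random hypergraph into $m$ independent pieces (Lemma \ref{precounting}, Corollary \ref{counting}) to make the comparison one-sided but usable. Second, your appeal to "concentration inequalities of Janson or FKG type with exponentially small failure probability" is not available: the upper tail for subgraph counts is the "infamous" one, and the best bounds (Janson--Ruci\'nski, Lemma \ref{JRHyper}) decay only like $\exp(-c\,(\E Y)^{1/v_L})$. This matters because constructing the dense models $g_i$ (via Hahn--Banach, Lemma \ref{transfer:colouring}) forces one to verify $|\sp{\mu-1,\xi}|$ small for a family of test functions $\xi$ that depends on the random measure $\mu$ itself and is far too large for a naive union bound; the paper needs the whole restriction-and-net argument of Section \ref{BAUrestriction} (approximating basic anti-uniform functions by convolutions of normalized restrictions to much sparser random sets) to reduce the number of events to something the weak tail bounds can absorb.

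In short, a fixed deterministic pseudorandomness hypothesis on $\Gamma$ of the kind you propose cannot, by itself, yield the sparse supersaturation statement for \emph{every} sub-colouring at the optimal exponent -- this is essentially the obstruction behind the K\L R conjecture -- and your proposal does not indicate how the dense models are built, why their sum has density close to $1$ (which needs the colouring version of the transference with $(\max_i\phi_i)_+$, Lemma \ref{transfer:colouring} and Lemma \ref{colourproducts}), or how to handle the unbounded rooted-$K$ counts and the union bound over $\mu$-dependent test functions. These are the missing ideas; everything else in your outline (supersaturation via averaging, verifying subgraph-count regularity for proper subhypergraphs via Janson--Ruci\'nski, monotonicity in $p$ at the end) does match the paper.
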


One problem that the results of this paper leave open is to establish
a corresponding 0-statement for Theorem \ref{ApproxRamsey}. The above
bound is the threshold below which the number of copies of $K$ 
becomes less than the number of hyperedges, so the results for
graphs make it highly plausible that the 0-statement holds when
$p<cn^{-1/m_k(K)}$ for small enough $c$. However, the example of stars, for which 
the threshold is lower than expected, shows that we cannot take
this result for granted.

We shall also prove Conjecture \ref{RelativeTuran} for strictly 2-balanced graphs. In particular, it holds for complete graphs.

\begin{theorem} \label{ApproxTuran}
Given $\epsilon > 0$ and a strictly 2-balanced graph $H$, there exists a positive constant $C$ 
such that
\[\lim_{n \rightarrow \infty} \mathbb{P} (G_{n,p} \mbox{ is
$(H,\epsilon)$-Tur\'an}) = 1, \mbox{ if } p > C n^{-1/m_2(H)},\] 
where $m_2(H) = (e_H - 1)/(v_H - 2)$.  
\end{theorem}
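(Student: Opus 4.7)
The plan is to prove this via a transference principle that reduces the sparse random Tur\'an problem to the classical (dense) Erd\H{o}s--Stone--Simonovits theorem. Given any subgraph $G'\subseteq G_{n,p}$ with $e(G')\geq(1-\tfrac{1}{\chi(H)-1}+\epsilon)e(G)$, I would rescale to work with the normalised indicator $f=\mathbf{1}_{G'}/p$, which is dominated by the majorant $\nu=\mathbf{1}_{G_{n,p}}/p$ and which has average density $\mathbb{E}[f]\geq(1-\tfrac{1}{\chi(H)-1}+\tfrac{\epsilon}{2})\mathbb{E}[\nu]$ with high probability over $G_{n,p}$.

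The first step is to verify that, for $p\geq Cn^{-1/m_2(H)}$ and $C$ sufficiently large, the random majorant $\nu$ satisfies a pseudorandomness condition with high probability. Roughly, one needs that for every subgraph $K\subseteq H$, as well as for various ``partially labelled'' versions of $K$, the weighted count $\sum_{K'\cong K} p^{-e_K}\mathbf{1}_{K'\subseteq G_{n,p}}$ is $(1+o(1))n^{v_K}$. The hypothesis that $H$ is strictly $2$-balanced is exactly what makes this work: at the threshold $p\sim n^{-1/m_2(H)}$, every proper subgraph $K\subsetneq H$ has $(e_K-1)/(v_K-2)<m_2(H)$, so the expected count of copies of $K$ is polynomially larger than the variance contributions coming from overlapping pairs, and Janson- or martingale-type concentration inequalities give the required estimates.

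The second step, which is the heart of the method, is a dense model/transference theorem asserting that, because $f\leq\nu$ and $\nu$ is pseudorandom, there exists a function $g:E(K_n)\to[0,1]$ with $\mathbb{E}[g]\approx\mathbb{E}[f]$ such that $f-g$ is small in an ``$H$-norm'' --- a norm designed so that smallness controls the weighted count of copies of $H$. One then applies a weighted version of Erd\H{o}s--Stone--Simonovits to $g$: since $g$ is bounded by $1$ and has average at least $1-\tfrac{1}{\chi(H)-1}+\tfrac{\epsilon}{3}$, it must encode $\Omega(n^{v_H})$ weighted copies of $H$. The counting lemma for the $H$-norm then transfers this back to $f$, showing that $G'$ contains $\Omega(p^{e_H}n^{v_H})$ copies of $H$, which in particular is positive.

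The main obstacle is setting up and proving the dense model theorem together with its accompanying counting lemma in this graph-theoretic context. The $H$-norm must be weak enough that a bounded model $g$ can be extracted by a Hahn--Banach / weak approximation argument exploiting the pseudorandomness of $\nu$, yet strong enough that closeness in the norm forces closeness of $H$-counts; balancing these two requirements, and doing so uniformly over all subgraphs $K\subseteq H$ via the strict $2$-balancedness, is the delicate technical step that drives the whole argument.
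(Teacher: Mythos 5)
Your overall architecture---majorise by $\nu=p^{-1}\mathbf{1}_{G_{n,p}}$, extract a bounded dense model $g$ by a Hahn--Banach argument with a norm tailored to $H$-counts, apply a robust (supersaturated, functional) Erd\H{o}s--Stone--Simonovits theorem to $g$, and transfer the count back to $f$---is exactly the route the paper takes. The gap is in your first step: the pseudorandomness condition you propose to verify is false at $p=Cn^{-1/m_2(H)}$ with $C$ constant. For the counting lemma you need control not only of unrooted subgraph counts but of the dual (convolution) functions, in particular the function assigning to each edge the number of copies of $H$ through that edge, normalised by $p^{e_H-1}$. At the critical density this quantity has constant mean, behaves roughly like a Poisson variable, and is therefore neither $(1+o(1))$-concentrated nor bounded: a positive proportion of edges lie in no copy of $H$ while others lie in many. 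Janson- or martingale-type concentration, and strict $2$-balancedness, only rescue the rooted counts of \emph{proper} subgraphs $K\subsetneq H$ (this is precisely Lemma \ref{LInfHyper}); no such estimate is available for $H$ itself, and demanding one forces $p$ up by at least a polylogarithmic factor, which loses the optimal threshold that is the content of the theorem.

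What is missing is how to live with this failure, and this is where the paper's main new ideas lie: (i) the offending convolution is replaced by a \emph{capped} convolution, and the transference norm is built from these capped basic anti-uniform functions; (ii) $f$ is written as an average of $m$ functions dominated by the associated measures of $m$ independent random sets, so that the capping error can be controlled in $L_1$ rather than $L_\infty$ (Lemma \ref{precounting} and Corollary \ref{counting}); and (iii) to verify that $\mu-1$ correlates little with \emph{all} products of basic anti-uniform functions---a class that depends on the random set itself---one cannot take a union bound over a naive $\varepsilon$-net, which is exponentially too large in $p\binom n2$; instead each such function is approximated, beneath a fixed small-$L_1$ error function, by convex combinations of functions built from random restrictions to much sparser sets (Section \ref{BAUrestriction}), after which Bernstein-type bounds and the Janson--Ruci\'nski upper-tail estimates suffice. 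Your closing step (functional supersaturation for $g$, then transfer) matches the paper's deduction of Theorem \ref{RelativeHyperTuran} from Theorems \ref{unconditionaldensity} and \ref{ErdosSim}, but without ingredients (i)--(iii), or substitutes for them, the sketch only yields the result for $p\geq n^{-1/m_2(H)+o(1)}$, not $p\geq Cn^{-1/m_2(H)}$.
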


A slightly more careful application of our methods also allows us to prove its structural counterpart, Conjecture \ref{RelativeStab}, for strictly $2$-balanced graphs.

\begin{theorem} \label{ApproxStab}
Given a strictly $2$-balanced graph $H$ with $\chi(H) \geq 3$ and a constant $\d > 0$, there exist positive constants $C$ and $\e$ such that in the random graph $G_{n,p}$ chosen with probability $p \geq C n^{-1/m_2(H)}$, where $m_2(H) = (e_H - 1)/(v_H - 2)$, the following holds with probability tending to 1 as $n$ tends to infinity. Every $H$-free subgraph of $G_{n,p}$ with at least $\left(1 - \frac{1}{\chi(H) - 1} - \e\right) e(G)$ edges may be made $(\chi(H)-1)$-partite by removing at most $\d p n^2$ edges. 
\end{theorem}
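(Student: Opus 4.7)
The plan is to combine the transference machinery behind Theorem~\ref{ApproxTuran} with the classical stability theorem of Simonovits, applied to a suitable dense model of the $H$-free subgraph. The proof of Theorem~\ref{ApproxTuran} already supplies, for $p \geq Cn^{-1/m_2(H)}$ and with probability tending to $1$, a ``relative'' counting lemma: every subgraph of $G_{n,p}$ that is regular at the scale $p$ in the sense of Kohayakawa--R\"odl contains roughly the expected number of copies of $H$. Let $G' \subseteq G_{n,p}$ be $H$-free with $e(G') \geq (1 - 1/(\chi(H)-1) - \e)e(G_{n,p})$; the goal is to transfer a dense stability statement through this counting lemma.

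Given $G'$, I would apply a sparse regularity decomposition to $G_{n,p}$, discard the few edges of $G'$ lying in irregular or atypically sparse pairs, and define a dense graph $W$ on $[n]$ whose density across each remaining pair $(X,Y)$ is $d_{G'}(X,Y)/p$. Then $W$ has edge density at least $1 - 1/(\chi(H)-1) - \e - o(1)$, and the relative counting lemma, combined with the $H$-freeness of $G'$, forces $W$ to contain at most $\eta n^{v_H}$ copies of $H$, with $\eta \to 0$ as $\e \to 0$ (and as the regularity parameters are refined). A robust form of Simonovits' stability theorem---one that tolerates a small number of copies of $H$ rather than requiring strict $H$-freeness---then yields a partition of $[n]$ into $\chi(H)-1$ classes that makes $W$ monochromatic on at most $\d' n^2$ edges.

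Finally, I would use the same partition of $[n]$ as a partition of the vertex set of $G_{n,p}$. Every monochromatic edge of $G'$ either sits in a regular pair corresponding to a monochromatic edge of $W$---and the counting lemma ensures such pairs contribute at most $\d' p n^2$ edges of $G'$---or in an irregular/atypically sparse pair, whose total edge count is negligible by standard properties of the sparse regularity lemma. Choosing $\e$ and the regularity parameters small enough in terms of $\d$ then delivers the bound $\d p n^2$. The main obstacle will be establishing the \emph{quantitative} robust stability statement for $W$ with the correct dependence between $\e$, $\eta$ and $\d'$, and simultaneously verifying that the heredity in the counting lemma is strong enough to pass an approximate partition from $W$ back to $G'$ without losing control on edges in low-density or borderline pairs: it is in this step that the full strength of the machinery developed for Theorem~\ref{ApproxTuran} must be deployed, as we need both the upper-bound and the genuinely two-sided form of the transferred counting lemma.
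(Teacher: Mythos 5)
There is a genuine gap at the heart of your proposal: the ``relative counting lemma'' you invoke --- that every subgraph of $G_{n,p}$ which is regular at scale $p$ in the Kohayakawa--R\"odl sense contains roughly the expected number of copies of $H$ --- is \emph{not} supplied by the proof of Theorem \ref{ApproxTuran}. The paper's proof of the Tur\'an result does not pass through a sparse regularity decomposition at all; it transfers the function supported on the sparse random graph directly to a dense model via the Hahn--Banach/basic anti-uniform function machinery. The counting lemma you need (even its one-sided embedding form, let alone the ``genuinely two-sided'' form you ask for at the end) is essentially the K\L R conjecture, which the introduction explicitly notes was open at the time except for a few special graphs ($K_3$, $K_4$, $K_5$, cycles). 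So the step ``the relative counting lemma, combined with the $H$-freeness of $G'$, forces $W$ to contain at most $\eta n^{v_H}$ copies of $H$'', and likewise the step passing the partition of $W$ back to $G'$, rest on an unproved (and at the time unavailable) ingredient, and nothing in the machinery developed for Theorem \ref{ApproxTuran} can be ``deployed'' to produce it, because that machinery never produces regularity-type heredity statements for individual pairs.

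The paper's actual route avoids sparse regularity entirely (except in the trivial range where $p$ exceeds a fixed constant, where the dense regularity lemma applies). One enriches the family of basic anti-uniform functions by the characteristic functions of the edge sets induced by vertex subsets of $[n]$ (this is assumption $3'$ and Theorem \ref{main:structure}), which is permissible because there are only $2^n \ll 2^{cp\binom n2}$ such sets. Corollary \ref{unconditionalsetstructural} then produces, for the scaled characteristic function $f$ of the $H$-free subgraph $A$, a genuine dense graph $J$ whose $H$-count is controlled by that of $A$ \emph{and} whose edge count inside every vertex subset matches that of $A$ up to $\eta n^2$ after rescaling by $p$. Since $A$ is $H$-free, $J$ has at most $\e n^{v_H}$ copies of $H$ and almost extremal density, so the robust stability theorem (Theorem \ref{RobustStab}, i.e.\ the graph removal lemma combined with Simonovits stability --- this part of your plan is the same as the paper's) makes $J$ $(\chi(H)-1)$-partite after deleting $\d n^2/2$ edges; the local-density conclusion for the partite classes $V_1,\dots,V_t$ then immediately bounds the number of edges of $A$ inside the classes by $\d p n^2$, with no need to track regular, irregular or sparse pairs. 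To repair your argument you would either have to adopt this local-density transference route, or supply a proof of the sparse counting lemma for regular pairs, which is a substantially harder (and then unresolved) problem.
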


We also prove Conjecture \ref{RelativeSzem}, obtaining bounds for the Szemer\'edi property that are essentially best possible.

\begin{theorem} \label{ApproxSzem}
Given $\delta > 0$ and a natural number $k \geq 3$, there exists
a constant $C$ such that
\[\lim_{n \rightarrow \infty} \mathbb{P} ([n]_p \mbox{ is $(
\delta, k)$-Szemer\'edi}) = 1, \mbox{ if } p > C n^{-1/(k-1)}.\]
\end{theorem}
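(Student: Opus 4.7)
The plan is to specialize the transference framework developed earlier in the paper to the linear system $\Psi$ of $k$-term arithmetic progressions in $[n]$. A progression $a, a+d, \ldots, a+(k-1)d$ has only two degrees of freedom but $k$ elements, so the expected number of $k$-APs in $[n]_p$ is $\Theta(p^k n^2)$ while $\mathbb{E}|[n]_p| = pn$; these become comparable precisely at $p = n^{-1/(k-1)}$, which is the claimed threshold. The first task is to verify that for $p \geq C n^{-1/(k-1)}$ the random set $[n]_p$ almost surely satisfies the four key pseudorandomness properties of the framework. This is a matter of routine second-moment and Janson-type concentration estimates for the counts of $k$-APs and their various sub-configurations inside $[n]_p$, and it is here that the exponent $1/(k-1)$ enters naturally: it is the smallest $p$ for which the AP-counting functional on $[n]_p$ concentrates around its mean.

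With the pseudorandomness in hand, the second step is to apply the transference theorem. Given a subset $A \subseteq [n]_p$ with $|A| \geq \delta |[n]_p|$, one produces a bounded ``dense model'' $g : [n] \to [0,1]$ with $\mathbb{E} g \geq \delta/2$ such that
\[
\sum_{a,d} \prod_{i=0}^{k-1} \mathbf{1}_A(a + i d) \;\geq\; \tfrac{1}{2}\, p^k \sum_{a,d} \prod_{i=0}^{k-1} g(a + i d) \;-\; o(p^k n^2).
\]
The model $g$ arises from a Hahn--Banach / dense-model decomposition $p^{-1} \mathbf{1}_A = g + h$ in which $0 \leq g \leq 1$ and $h$ has small norm dual to the AP-counting operator, so its contribution to the left-hand count is negligible. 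Applying Szemer\'edi's theorem in its functional form to $g$ then forces the right-hand side to be at least $c(\delta, k)\, p^k n^2$, which produces many genuine $k$-APs in $A$ and completes the proof.

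The main obstacle is the sparse counting lemma that makes the transference work: one must control the contribution of the unstructured piece $h$ to the AP-counting operator in the sparse regime, with only the above pseudorandomness on $[n]_p$ available. Green and Tao handled the analogous step via $U^{k-1}$-norms together with a ``correlation condition'' whose cost forces $p$ to be as large as $n^{-o(1)}$; the framework developed earlier in this paper replaces the correlation condition by a cleaner inductive densification argument that iteratively passes to finer subsystems, and this is exactly what permits the sharp exponent $p \geq C n^{-1/(k-1)}$ rather than $n^{-o(1)}$.
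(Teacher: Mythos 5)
Your outline follows the same route as the paper's own proof: verify the probabilistic hypotheses of the transference framework for the system of $k$-term progressions, use the Hahn--Banach transference and counting machinery to produce a dense model $g$, and finish with the functional (Varnavides-type) form of Szemer\'edi's theorem. Two concrete points need repair, however. First, the framework is stated for \emph{homogeneous} systems, and the system of $k$-APs inside $[n]$ is not homogeneous (the number of progressions through $x$ in a given position depends on $x$); the correct move is to work with mod-$n$ progressions in $\Z_n$ for $n$ a large prime -- there the system is homogeneous, has two degrees of freedom, and $S_1(x)\cap S_k(y)$ is always a single progression -- and then to deduce the $[n]$ statement by choosing a prime between $2n$ and $4n$ so that progressions do not wrap around. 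Relatedly, the general sparse density theorem only applies for $p$ below a small constant $\d$, so the regime of larger $p$ needs a separate (easy) argument via partitioning or Szemer\'edi's theorem itself; your sketch omits both steps, and neither is optional if you invoke the framework as stated.

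Second, your explanation of why the sharp exponent is attainable misdescribes the mechanism, and the heuristic you offer in its place is exactly what fails at the threshold. At $p = Cn^{-1/(k-1)}$ the convolution counting progressions through a fixed point has expectation $\Theta(1)$ and does \emph{not} concentrate -- it is occasionally large -- and this unboundedness is the central obstruction, not something resolved by an ``inductive densification argument that iteratively passes to finer subsystems'' (no such iteration occurs). What actually makes transference work at this density is: the bespoke norm built from \emph{capped} convolutions, so the anti-uniform functions being tested are bounded; writing $f$ as an average of functions supported on $m$ independent random sets, so each $\mu_i$ is only tested against anti-uniform functions not involving $U_i$; and approximating each product of basic anti-uniform functions by convex combinations built from normalized restrictions to sparser random subsets, which shrinks the family enough for a union bound. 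Moreover, for the AP system the verification of the two conditions is Chernoff plus the trivial pointwise bound $W(x,y)\le p^{-(k-2)}\le\a p n$; Janson-type estimates enter only in the graph and hypergraph applications, not here. If you cite the framework as a black box these internal details do not invalidate the application, but the substitute heuristics (concentration of the AP-counting functional at the threshold, iterative densification) should be removed, since they are not true and do not support the claimed bound.
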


Our final main result is a proof of Conjecture \ref{RelativeRemoval}, the sparse hypergraph removal lemma. As we have mentioned, the dense hypergraph removal lemma implies Szemer\'edi's theorem, but it turns out that the sparse hypergraph removal lemma does not imply Theorem \ref{ApproxSzem}. The difficulty is this. When we prove Szemer\'edi's theorem using the removal lemma, we first pass to a hypergraph to which the removal lemma can be applied. Unfortunately, in the sparse case, passing from the sparse random set to the corresponding hypergraph gives us a sparse hypergraph with dependencies between its edges, whereas in the sparse hypergraph removal lemma we assume that the edges of the sparse random hypergraph are independent. While it is likely that this problem can be overcome, we did not, in the light of Theorem \ref{ApproxSzem}, see a strong reason for doing so.

In addition to these main results, we shall discuss other density
theorems, such as Tur\'an's theorem for hypergraphs (where,
even though the correct bounds are not known in the dense
case, we can obtain the threshold at which the bounds in the
sparse random case will be the same), the multidimensional Szemer\'edi theorem of Furstenberg and Katznelson \cite{FK78} and the Bergelson-Leibman theorem \cite{BL96} concerning polynomial configurations in dense sets. In the colouring case, we shall discuss Schur's theorem \cite{S16} as a further example. Note that many similar results have also been obtained by a different method by Schacht \cite{S09} and by Friedgut, R\"{o}dl and Schacht \cite{FRS09}.

\subsection {A preliminary description of the argument}

The basic idea behind our proof is to use a \textit{transference 
principle} to deduce sparse random versions of density and
colouring results from their dense counterparts. To oversimplify
slightly, a transference principle in this context is a statement 
along the following lines. Let $X$ be a structure such as the 
complete graph $K_n$ or the set $\{1,2,\dots,n\}$, and let $U$
be a sparse random subset of $X$. Then, for every subset
$A\subset U$, there is a subset $B\subset X$ that has similar
properties to $A$. In particular, the density of $B$ is approximately
the same as the relative density of $A$ in $U$, and the number
of substructures of a given kind in $A$ is an appropriate multiple
of the number of substructures of the same kind in $B$.

Given a strong enough principle of this kind, one can prove a sparse 
random version of Szemer\'edi's theorem, say, as follows. Let $A$
be a subset of $[n]_p$ of relative density $\d$. Then there exists a 
subset $B$ of $[n]$ of size approximately $\d n$ such that the number of
$k$-term progressions in $B$ is approximately $p^{-k}$ times
the number of $k$-term progressions in $A$. From Szemer\'edi's
theorem it can be deduced that the number of $k$-term progressions 
in $B$ is at least $c(\d)n^2$, so the number of $k$-term progressions
in $A$ is at least $c(\d)p^kn^2/2$. Since the size of $A$
is about $pn$, we have roughly $pn$ degenerate progressions. Hence, 
there are non-degenerate progressions within $A$ as long as
$p^k n^2$ is significantly larger than $pn$, that is, as long as $p$ is 
at least $Cn^{-1/(k-1)}$ for some large $C$.

It is very important to the success of the above argument that
a dense subset of $[n]$ should contain not just one progression
but several, where ``several" means a number that is within a 
constant of the trivial upper bound of $n^2$. The other combinatorial
theorems discussed above have similarly ``robust" versions and again 
these are essential to us. Very roughly,
our general theorems say that a typical combinatorial theorem that
is robust in this sense will have a sparse random version with an upper
bound for the probability threshold that is very close to a natural lower bound that is trivial for density
theorems and often true, even if no longer trivial, for Ramsey theorems.

It is also very helpful to have a certain degree of 
homogeneity. For instance, in order to prove the sparse version of 
Szemer\'edi's theorem we use the fact that it is equivalent to 
the sparse version of Szemer\'edi's theorem in $\Z_n$, where
we have the nice property that for every $k$ and every $j$ with
$1\leq j\leq k$, every element $x$ appears in the
$j$th place of a $k$-term arithmetic progression in 
exactly $n$ ways (or $n-1$ if you discount the degenerate
progression with common difference 0). It will also be convenient to
assume that $n$ is prime, since in this case we know that for every pair of points
$x, y$ in $\Z_n$ there is exactly one arithmetic progression of length $k$ that
starts with $x$ and ends in $y$. This simple homogeneity property will prove 
useful when we come to do our probabilistic estimates. 

The idea of using a transference principle to obtain sparse random
versions of robust combinatorial statements is not what is
new about this paper. In fact, this was exactly the strategy
of Green and Tao in their paper on the primes, and could be said
to be the main idea behind their proof (though of course it took
many further ideas to get it to work). 
Since it is difficult to say what is new about our argument without going into
slightly more detail, we postpone further discussion for now. However,
there are three further main ideas involved and we shall highlight them
as they appear.

In the next few sections, we shall find a very general set of criteria under which one may  transfer combinatorial statements to the sparse random setting. In Sections \ref{ProbI}-\ref{ProbIII}, we shall show how to prove that these criteria hold. Section \ref{summary} is a brief summary of the general results, both conditional and unconditional, that have been proved up to that point. In Section \ref{Applications}, we show how these results may be applied to prove the various theorems promised in the introduction. In Section \ref{Conclusion}, we conclude by briefly mentioning some questions that are still open.

\subsection{Notation}

We finish this section with some notation and terminology that we shall need throughout the
course of the paper. By a {\it measure} on a finite set $X$ we shall mean a non-negative function from 
$X$ to $\R$. Usually our measures will have average value $1$, or very close to $1$. The {\it characteristic measure} $\mu$ of a subset
$U$ of $X$ will be the function defined by $\mu(x) = |X|/|U|$ if
$x \in U$ and $\mu(x) = 0$ otherwise. 

Often our set $U$ will be a random subset of $X$ with each element of $X$ chosen with probability $p$, the choices being independent. In this case, we shall use the shorthand $U=X_p$, just as we wrote $[n]_p$ for a random subset of $[n]$ in the statement of the sparse random version of Szemer\'edi's theorem earlier. When $U=X_p$ it is more convenient to consider the measure $\mu$ that is equal to $p^{-1}$ times the characteristic function of $U$. That is, $\mu(x)=p^{-1}$ if $x\in U$ and 0 otherwise. To avoid confusion, we shall call this the \textit{associated measure} of $U$. Strictly speaking, we should not say this, since it depends not just on $U$ but on the value of $p$ used when $U$ was chosen, but this will always be clear from the context so we shall not bother to call it the associated measure of $(U,p)$. 

For an arbitrary function $f$ from $X$
to $\R$ we shall write $\E_xf(x)$ for $|X|^{-1}\sum_{x\in X}f(x)$. Note that if $\mu$ is the characteristic measure of a set $U$, then $\E_x\mu(x)=1$ and $\E_x\mu(x)f(x)=\E_{x\in U}f(x)$ for any function $f$. If $U=X_p$ and $\mu$ is the associated measure of $U$, then we can no longer say this. However, we can say that the \textit{expectation} of $\E_x\mu(x)$ is 1. Also, with very high probability the cardinality of $U$ is roughly $p|X|$, so with high probability $\E_x\mu(x)$ is close to $1$. More generally, if $|f(x)|\leq 1$ for every $x\in X$, then with high probability $\E_x\mu(x)f(x)$ is close to $\E_{x\in U}f(x)$. We also take expectations over several variables: if it is clear from the context that $k$ variables $x_1,\dots,x_k$ range over finite
sets $X_1,\dots,X_k$, respectively, then $\E_{x_1,\dots,x_k}$ will be shorthand
for $|X_1|^{-1}\dots|X_k|^{-1}\sum_{x_1\in X_1}\dots\sum_{x_k\in X_k}$. If
the range of a variable is not clear from the context then we shall specify it.

We define an inner product for real-valued functions on $X$ by the formula 
$\sp{f,g}=\E_xf(x)g(x)$, and we define the $L_p$ norm by $\|f\|_p=(\E_x|f(x)|^p)^{1/p}$.
In particular, $\|f\|_1=\E_x |f(x)|$ and $\|f\|_\infty=\max_x|f(x)|$.

Let $\|.\|$ be a norm on the space $\R^X$. The {\it dual norm}
$\|.\|^*$ of $\|.\|$ is a norm on the collection of linear
functionals $\phi$ acting on $\R^X$ given by
\[\|\phi\|^* = \sup\{|\sp{f,\phi}| : \|f\| \leq 1\}.\]
It follows trivially from this definition that $|\sp{f,\phi}|\leq\|f\|\|\phi\|^*$. 
Almost as trivially, it follows that if $|\sp{f,\phi}|\leq 1$ whenever
$\|f\|\leq\eta$, then $\|\phi\|^*\leq\eta^{-1}$, a fact that will be used
repeatedly.

\section{Transference principles} \label{transfers}

As we have already mentioned, a central notion in this paper is that of 
transference. Roughly speaking,
a \textit{transference principle} is a theorem that states that every function $f$ in 
one class can be replaced by a function $g$ in another, more convenient
class in such a way that the properties of $f$ and $g$ are similar. 

To understand this concept and why it is useful, let us look at the sparse 
random version of Szemer\'edi's theorem that we shall prove. Instead of 
attacking this directly, it is convenient to prove a functional 
generalization of it. The statement we shall prove is the following.

\begin{theorem}\label{sparseszem}
For every positive integer $k$ and every $\d>0$ there are positive constants 
$c$ and $C$ with the following property. Let $p \geq Cn^{-1/(k-1)}$, let 
$U$ be a random subset of $\Z_n$ where each element is chosen 
independently with probability $p$ and let $\mu$ be the associated measure
of $U$. Then, with probability tending to $1$ as $n$ tends to infinity, every 
function $f$ such that $0\leq f\leq\mu$ and $\E_xf(x)\geq\d$ satisfies the inequality
\begin{equation*}
\E_{x,d}f(x)f(x+d)\dots f(x+(k-1)d)\geq c.
\end{equation*}
\end{theorem}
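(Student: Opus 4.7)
The plan is to establish Theorem \ref{sparseszem} via a transference principle in the spirit of Green and Tao. Given $f$ with $0 \le f \le \mu$ and $\E_x f(x) \ge \d$, I would produce a dense model $g \colon \Z_n \to \R$ with $0 \le g \le 1+o(1)$ and $\E_x g(x) \ge \d - o(1)$ such that the $k$-AP count $\E_{x,d}g(x)g(x+d)\cdots g(x+(k-1)d)$ is essentially equal to its counterpart for $f$. Once such a $g$ exists, a quantitative (robust) form of Szemer\'edi's theorem on $\Z_n$ applied to $g$ gives the lower bound $c_0(\d) > 0$ for its AP-count, and transferring back yields the bound for $f$, provided the degenerate $d=0$ contribution is small.

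The first ingredient I would set up is a norm $\|\cdot\|$ on $\R^{\Z_n}$ designed so that (i) if $\|f_1 - f_2\| \le \eta$, then the $k$-AP averages built from $f_1$ and $f_2$ differ by $O(\eta)$ (a ``generalised von Neumann'' property), and (ii) the pseudorandomness condition $\|\mu - 1\|^* \le o(1)$ holds with high probability over the choice of $U$. The second point is what pins the threshold to $p \ge C n^{-1/(k-1)}$: expanding $\sp{\varphi,\mu-1}$ for $\varphi$ in the unit ball of $\|\cdot\|$ produces moments of $\mu$ evaluated on $k$-AP configurations, and these only concentrate once the expected number of ``collisions'' (i.e.\ degenerate progressions) landing in $U$ stays bounded, which happens precisely at $p \sim n^{-1/(k-1)}$. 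Establishing these linear-forms estimates uniformly over the continuous family of test functions in the unit ball is, I expect, the main technical obstacle, and it is essentially what the later probabilistic Sections \ref{ProbI}--\ref{ProbIII} must deliver.

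The second ingredient is a dense model / decomposition theorem: any $f$ with $0 \le f \le \mu$ can be written as $f = g + h$ with $0 \le g \le 1+o(1)$, $\E_x g(x) = \E_x f(x)$, and $\|h\|$ smaller than any preassigned $\eta$. This I would obtain by a Hahn--Banach (or energy-increment) argument: define $g$ as the conditional expectation of $f$ onto a $\sigma$-algebra generated by a bounded number of dual functions $\phi_i$ with $\|\phi_i\|^*$ small; iterate until $\|f-g\| < \eta$. Pseudorandomness of $\mu$ (property (ii) above) is precisely what guarantees that each $\phi_i$ can be truncated to $[-M,M]$ without significant error, so that the resulting $g$ is genuinely bounded rather than just $\mu$-bounded.

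Having produced $g$, I apply the robust form of Szemer\'edi's theorem to get $\E_{x,d}\prod_i g(x+id) \ge c_0(\d)$, then use the norm property to conclude $\E_{x,d}\prod_i f(x+id) \ge c_0(\d) - O(\eta) \ge c_0(\d)/2$. The $d=0$ term contributes at most $n^{-1}\E_x f(x)^k \le n^{-1}\|\mu\|_\infty^{k-1}\E_x f(x) \le n^{-1}p^{-(k-1)}$, which is $o(1)$ exactly when $p \ge C n^{-1/(k-1)}$, matching the hypothesis and leaving a positive bound $c = c(\d)$ for the non-degenerate count. I would expect the real difficulty to lie not in the soft Hahn--Banach step but in designing the norm $\|\cdot\|$ so that one can simultaneously (a) control $k$-AP counts, (b) obtain a dual ball whose members admit bounded approximations, and (c) prove the linear-forms pseudorandomness of $\mu$ at the sharp threshold $p = n^{-1/(k-1)}$, with no loss in the exponent arising from a union bound or from higher-order Gowers-type configurations.
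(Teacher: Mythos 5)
Your high-level plan is the same as the paper's framework (a bespoke norm with a generalised von Neumann property, a Hahn--Banach/dense-model decomposition $f=g+h$, the robust Varnavides form of Szemer\'edi's theorem, and transfer back), but there is a genuine gap: you explicitly defer the one step that is actually hard at the sharp exponent, namely proving the pseudorandomness/correlation estimates for a norm that simultaneously controls $k$-AP counts, and your sketch of how this should go would fail as stated. At $p\sim n^{-1/(k-1)}$ the natural dual (``basic anti-uniform'') functions $*_1(f,\dots,f)$ with $0\leq f\leq\mu$ are typically \emph{unbounded}, so your assertion that pseudorandomness of $\mu$ lets each $\phi_i$ be truncated to $[-M,M]$ ``without significant error'' is not available; the paper's way around this is to change the norm itself, building it from \emph{capped} convolutions $\circ_j$ and, crucially, splitting $U$ into $m$ independent copies $U_1,\dots,U_m$ and writing $f=m^{-1}(f_1+\cdots+f_m)$ so that the capping can be done term by term while the counting lemma (Lemma \ref{precounting}, Corollary \ref{counting}) still compares the capped sparse count with $\sp{g,*_1(g,\dots,g)}$. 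Likewise, your heuristic that the linear-forms quantities ``concentrate'' once $p\geq Cn^{-1/(k-1)}$ is too optimistic: only the convolutions with at least one constant entry are $L_\infty$-bounded (property 2); the full convolution is controlled only through an $L_1$-closeness to its cap (property 1), which is exactly why the capping device is needed.

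The second missing mechanism is how to make the estimate uniform over the dual ball. You acknowledge the union-bound danger but offer no remedy; a net of the relevant family of functions has size exponential in $p|X|$ with a constant that swamps the $\exp(-c\lambda^2 p|X|)$ bound from Lemma \ref{correlation}, so a direct net-plus-union-bound argument cannot close. The paper's third main idea addresses precisely this: every $f\leq\mu$ (and every $g\leq 1$) is written as an average of normalized restrictions to much sparser random sets $X_q$ with $q=p/L$ (Lemmas \ref{averagerestrict} and \ref{restrictions}), so that products of basic anti-uniform functions are approximated, up to a fixed error function of small $L_1$-norm, by convex combinations from a family of size roughly $\exp(O(q|X|\log(1/\a)))$, which is small enough to beat the union bound (Lemma \ref{allprofile}); the independence of $\mu_i$ from the approximating family, provided by the splitting into $m$ sets, is essential there. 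Two smaller points: the paper handles the positive parts $\phi_+$ by polynomial approximation (Lemma \ref{products}), which is why one needs correlation bounds for \emph{products} of basic anti-uniform functions, not just single ones; and your degenerate-term estimate is $n^{-1}p^{-(k-1)}\leq C^{-(k-1)}$, a small constant for large $C$ rather than $o(1)$ in $n$ --- harmless, but as written the claim is inaccurate.
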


To understand the normalization, it is a good exercise (and an easy one) to check
that with high probability $\E_{x,d}\mu(x)\mu(x+d)\dots\mu(x+(k-1)d)$ is close
to 1, so that the conclusion of Theorem \ref{sparseszem} is stating that 
$\E_{x,d}f(x)f(x+d)\dots f(x+(k-1)d)$ is within a constant of its trivial maximum.
(If $p$ is smaller than $n^{-1/(k-1)}$ then this is no longer true: the main 
contribution to $\E_{x,d}\mu(x)\mu(x+d)\dots\mu(x+(k-1)d)$ comes from
the degenerate progressions where $d=0$.)

Our strategy for proving this theorem is to ``transfer" the function $f$ from the sparse set
$U$ to $\Z_n$ itself and then to deduce the conclusion from the following robust functional
version of Szemer\'edi's theorem, which can be proved by a simple averaging
argument due essentially to Varnavides \cite{V59}. 

\begin{theorem} \label{FuncSzem}
For every $\d > 0$ and every positive integer $k$ there is a constant $c > 0$
such that, for every positive integer $n$, every function $g: \Z_n \rightarrow [0, 1]$ with 
$\E_x g(x) \geq \d$ satisfies the inequality
\[\E_{x,d} g(x) g(x+d) \dots g(x+(k-1)d) \geq c.\]
\end{theorem}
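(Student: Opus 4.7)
The plan is to execute the averaging argument of Varnavides \cite{V59}, which upgrades Szemer\'edi's theorem from a single-progression statement to a robust counting statement, combined with a trivial thresholding step to pass from the function $g$ to a dense set.

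\textbf{Thresholding step.} I would set $A = \{x \in \Z_n : g(x) \geq \delta/2\}$. Since $g$ takes values in $[0,1]$ and $\E_x g(x) \geq \delta$, splitting the expectation over $A$ and its complement gives $\delta \leq |A|/n + \delta/2$, so $|A| \geq (\delta/2)n$. On $A$ the function is bounded below by $\delta/2$, so every non-degenerate length-$k$ AP lying inside $A$ contributes at least $(\delta/2)^k/n^2$ to $\E_{x,d} g(x)g(x+d)\cdots g(x+(k-1)d)$. It therefore suffices to produce at least $c'(\delta,k) n^2$ such APs inside $A$.

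\textbf{Varnavides averaging.} Let $N_0 = N_0(\delta/4, k)$ be given by Szemer\'edi's theorem \cite{Sz75}, so that every subset of $\{0,1,\dots,N_0-1\}$ of density at least $\delta/4$ contains a $k$-term AP. Choose a ``long'' progression $P_{a,d} = \{a+id : 0 \leq i \leq N_0-1\} \subset \Z_n$ by picking $a \in \Z_n$ and $d \in \Z_n \setminus \{0\}$ uniformly at random. For $n$ large compared with $N_0$, the $N_0$ points of $P_{a,d}$ are distinct with overwhelming probability and each is uniform in $\Z_n$, so $\E|A \cap P_{a,d}| \geq (\delta/2)N_0$. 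A standard reverse-Markov estimate then yields that with probability at least $\delta/4$ over the choice of $(a,d)$, the set $A \cap P_{a,d}$ has density at least $\delta/4$ inside $P_{a,d}$. For each such good pair $(a,d)$, Szemer\'edi's theorem produces a $k$-term AP inside $A \cap P_{a,d}$. Finally, each $k$-term AP in $\Z_n$ is contained in at most $O_{N_0,k}(1)$ progressions $P_{a,d}$, so a double-counting argument shows that the number of $k$-term APs inside $A$ is at least $c'(\delta,k) n^2$, which completes the proof.

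There is no serious obstacle here; the only mildly delicate points are the possible $\Z_n$ wrap-around of $P_{a,d}$, which affects only an $o(1)$ fraction of pairs $(a,d)$ for $n \gg N_0$ and is absorbed into the constants, and the degenerate $d=0$ contribution to the final average, which is non-negative and of size $O(1/n)$ and hence irrelevant. The constant $c$ obtained is effectively $(\delta/2)^k$ times the Varnavides counting constant, which in turn depends only on $\delta$, $k$, and the (ineffective) Szemer\'edi bound $N_0$.
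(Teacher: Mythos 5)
Your proof is correct and takes essentially the same route as the paper: the paper establishes the set-level robust statement (Theorem \ref{Varnavides}) by exactly your averaging over long progressions $P_{a,d}$, and then passes from sets to $[0,1]$-valued functions via the random-rounding argument of Lemma \ref{FunctToSet}, whereas you instead threshold to the level set $\{x : g(x)\geq \delta/2\}$ and absorb a factor $(\delta/2)^k$ into the constant, which is only a cosmetic difference. The one small point to add is that your argument requires $n$ large in terms of $N_0$, while the statement is for every $n$; for the finitely many smaller $n$ the conclusion follows from the degenerate $d=0$ terms, since $\E_x g(x)^k \geq \delta^k$ by convexity, so the constant $c$ can simply be adjusted.
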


\noindent Note that in this statement we are no longer talking about dense subsets of $\Z_n$, but rather 
about $[0,1]$-valued functions defined on $\Z_n$ with positive expectation. It will be important in what follows
that any particular theorem we wish to transfer has such an equivalent functional formulation. 
As we shall see in Section \ref{CondMain}, all of the theorems that we consider do have such formulations.

Returning to transference principles, our aim is to find a function
$g$ with $0\leq g\leq 1$ for which we can prove that $\E_xg(x)\approx\E_xf(x)$
and that 
\begin{equation*}
\E_{x,d}g(x)g(x+d)\dots g(x+(k-1)d)\approx\E_{x,d}f(x)f(x+d)\dots f(x+(k-1)d).
\end{equation*}
We can then argue as follows: if $\E_xf(x)\geq\d$, then $\E_xg(x)\geq\d/2$; by
Theorem \ref{FuncSzem} it follows that $\E_{x,d}g(x)g(x+d)\dots g(x+(k-1)d)$ is bounded 
below by a constant $c$; and this implies that  $\E_{x,d}f(x)f(x+d)\dots f(x+(k-1)d)\geq c/2$.

In the rest of this section we shall show how the Hahn-Banach theorem can
be used to prove general transference principles. This was first demonstrated
by the second author in \cite{G08}, and independently (in a slightly different
language) by Reingold, Trevisan, Tulsiani and Vadhan \cite{RTTV08}, and 
leads to simpler proofs than the method used by Green and 
Tao. The first transference principle we shall prove is particularly appropriate for
density theorems: this one was shown in \cite{G08} but for convenience we
repeat the proof. Then we shall prove a modification of it for use with
colouring theorems. 

Let us begin by stating the finite-dimensional Hahn-Banach theorem in
its separation version.

\begin{lemma}\label{hb:real}
Let $K$ be a closed convex set in $\mathbb{R}^n$ and let $v$ be a vector
that does not belong to $K$. Then there is a real number $t$ and a linear 
functional $\phi$ such that $\phi(v)>t$ and such that $\phi(w)\leq t$ for every $w\in K$.
\end{lemma}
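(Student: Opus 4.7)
The plan is to prove this via the standard nearest-point (projection) argument, which in finite dimensions gives the cleanest route to separation.

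First I would produce the candidate separating functional. Since $K$ is closed and $v\notin K$, I want a point $w_0\in K$ minimizing $\|v-w\|$ over $w\in K$. To ensure the infimum is attained (not obvious if $K$ is unbounded), I restrict to the set $K\cap \overline{B}(v,R)$ with $R$ chosen so that this set is nonempty and contains any potential minimizer — e.g.\ $R=\|v\|+1$, which works because $0\in K$ forces $\inf_{w\in K}\|v-w\|\leq\|v\|<R$. The intersection is closed and bounded, hence compact, and the continuous function $w\mapsto\|v-w\|$ attains its minimum at some $w_0\in K$. Since $v\notin K$ and $K$ is closed, $\|v-w_0\|>0$.

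Next I would define
\[\phi(x)=\langle x,\,v-w_0\rangle,\qquad t=\langle w_0,\,v-w_0\rangle,\]
where $\langle\cdot,\cdot\rangle$ is the standard inner product on $\R^n$. The strict inequality on $v$ is immediate:
\[\phi(v)-t=\langle v-w_0,\,v-w_0\rangle=\|v-w_0\|^2>0.\]
For the other side, fix any $w\in K$ and $\lambda\in[0,1]$. By convexity, $w_0+\lambda(w-w_0)\in K$, so by minimality of $w_0$,
\[\|v-w_0-\lambda(w-w_0)\|^2\geq\|v-w_0\|^2.\]
Expanding and cancelling $\|v-w_0\|^2$ gives $-2\lambda\langle v-w_0,\,w-w_0\rangle+\lambda^2\|w-w_0\|^2\geq 0$. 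Dividing by $\lambda>0$ and letting $\lambda\to 0^+$ yields $\langle v-w_0,\,w-w_0\rangle\leq 0$, i.e.\ $\phi(w)\leq\phi(w_0)=t$. Combined with the previous line this is exactly the separation required.

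There is no real obstacle here; the one mildly delicate point is guaranteeing the existence of the nearest point $w_0$ when $K$ is unbounded, which is handled by the compactness argument above using that $0\in K$. Everything else is the standard first-order optimality calculation for a convex constraint set. (One does not need the hypothesis $0\in K$ beyond ensuring the infimum is attained, but it is convenient.)
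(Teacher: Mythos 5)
Your proof is correct: the paper simply quotes this lemma as the standard finite-dimensional Hahn--Banach (separating hyperplane) theorem without giving a proof, and your nearest-point projection argument is precisely the canonical way to establish it, including the correct handling of the only delicate point (attainment of the minimum via intersecting $K$ with a closed ball, using $0\in K$ to guarantee nonemptiness) and the first-order optimality inequality $\langle v-w_0,\,w-w_0\rangle\leq 0$ that yields $\phi(w)\leq t<\phi(v)$. Nothing further is needed.
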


The reason the Hahn-Banach theorem is useful to us is that one often wishes to prove 
that one function is a sum of others with certain properties, and often the sets 
of functions that satisfy those properties
are convex (or can easily be made convex). For instance, we shall want to
write a function $f$ with $0\leq f\leq\mu$ as a sum $g+h$ with $0\leq g\leq 1$ 
and with $h$ small in a certain norm. The following lemma, an almost
immediate consequence of Lemma \ref{hb:real}, tells us what happens 
when a function \textit{cannot} be decomposed in this way. We implicitly use 
the fact that every linear functional on $\R^Y$ has the
form $f\mapsto\sp{f,\phi}$ for some $\phi$.

\begin{lemma}\label{2convexsets:real}
Let $Y$ be a finite set and let $K$ and $L$ be two subsets of $\R^Y$ that
are closed and convex and that contain 0. Suppose that $f\notin K+L$. Then
there exists a function $\phi\in\R^Y$ such that $\sp{f,\phi}>1$ and such
that $\sp{g,\phi}\leq 1$ for every $g\in K$ and $\sp{h,\phi}\leq 1$ for
every $h\in L$.
\end{lemma}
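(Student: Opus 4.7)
The plan is to reduce Lemma \ref{2convexsets:real} to Lemma \ref{hb:real} applied to the Minkowski sum $K+L=\{g+h:g\in K,\ h\in L\}$. First I would observe that $K+L$ is convex (as a sum of convex sets), contains $0$ (since $0\in K$ and $0\in L$), and, in the cases we shall apply this to, is closed, so Lemma \ref{hb:real} is available. The hypothesis then gives a vector $\psi\in\R^Y$ and a real number $t$ such that $\sp{f,\psi}>t$ while $\sp{w,\psi}\le t$ for every $w\in K+L$.

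Next I would exploit the fact that $0\in K\cap L$: because $0\in L$ we have $K\subseteq K+L$, so $\sp{g,\psi}\le t$ for every $g\in K$, and by the symmetric argument $\sp{h,\psi}\le t$ for every $h\in L$. Moreover, taking $w=0$ gives $t\ge 0$. It only remains to normalise $\psi$ so that the right-hand side becomes exactly $1$. If $t>0$, set $\phi=\psi/t$; then $\sp{f,\phi}>1$ and $\sp{g,\phi},\sp{h,\phi}\le 1$, as required. If $t=0$, then $\sp{g,\psi}\le 0$ for every $g\in K\cup L$ while $\sp{f,\psi}>0$, so I would set $\phi=c\psi$ with $c>1/\sp{f,\psi}$; this preserves the inequalities on $K$ and $L$ (since multiplying a non-positive quantity by a positive constant leaves it non-positive, hence $\le 1$) while forcing $\sp{f,\phi}>1$.

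The only genuine obstacle is the closedness of $K+L$, since in $\R^n$ the Minkowski sum of two closed convex sets need not be closed. In the applications that will follow, however, one of $K$ or $L$ will always be bounded (typically a box like $\{g:0\le g\le 1\}$), hence compact, and the sum of a compact set with a closed set is closed, so the appeal to Lemma \ref{hb:real} is legitimate. If one wished to state the lemma in full generality one could instead separate $f$ from $\overline{K+L}$, noting that the separating functional still satisfies $\sp{w,\psi}\le t$ on $K\cup L\subseteq K+L\subseteq\overline{K+L}$; but for our purposes the compactness argument is cleaner and entirely sufficient.
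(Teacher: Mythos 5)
Your proposal is correct and follows essentially the same route as the paper: separate $f$ from $K+L$ via Lemma \ref{hb:real}, specialise $h=0$ and $g=0$ to get the bounds on $K$ and $L$ separately, use $g=h=0$ to see $t\geq 0$, and rescale (with a separate scaling when $t=0$). Your additional remark about the closedness of $K+L$ addresses a point the paper's proof passes over silently, and your observation that in the intended applications one of the two sets is compact (so the Minkowski sum is genuinely closed) is a correct and welcome justification for invoking the separation lemma.
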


\begin{proof}
By Lemma \ref{hb:real} there is a function $\phi$ and a real number $t$
such that $\sp{f,\phi}>t$
and such that $\sp{g+h,\phi}\leq t$ whenever $g\in K$ and $h\in L$.
Setting $h=0$ we deduce that $\sp{g,\phi}\leq t$ for every $g\in K$,
and setting $g=0$ we deduce that $\sp{h,\phi}\leq t$ for every $h\in L$.
Setting $g=h=0$ we deduce that $t\geq 0$. Dividing through by
$t$ (or by $\frac{1}{2}\sp{f,\phi}$ if $t=0$) we see that we may take $t$ to be 1.
\end{proof}

Now let us prove our two transference principles, beginning with the
density one. In the statement of the theorem
below we write $\phi_+$ for the positive part of $\phi$.

\begin{lemma}\label{transfer:density}
Let $\e$ and $\eta$ be positive real numbers, let $\mu$ and $\nu$ be 
non-negative functions defined on a finite set $X$ and let $\|.\|$ be
a norm on $\R^X$. Suppose that 
$\sp{\mu-\nu,\phi_+}\leq\e$ whenever $\|\phi\|^*\leq\eta^{-1}$. Then
for every function $f$ with $0\leq f\leq\mu$ there exists a function $g$
with $0\leq g\leq\nu$ such that $\|(1+\e)^{-1}f-g\|\leq\eta$.
\end{lemma}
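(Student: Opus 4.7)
The plan is to apply Lemma \ref{2convexsets:real} with two carefully chosen convex sets, so that finding $g$ amounts to decomposing a rescaled version of $f$. Concretely, I would set $K = \{g \in \R^X : 0 \leq g \leq \nu\}$ and $L = \{h \in \R^X : \|h\| \leq \eta\}$. Both are closed and convex and contain $0$, and the conclusion of the lemma is exactly the assertion that $(1+\e)^{-1} f \in K + L$, since any such decomposition $(1+\e)^{-1}f = g + h$ with $g \in K$, $h \in L$ automatically gives $\|(1+\e)^{-1}f - g\| = \|h\| \leq \eta$.

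So I would argue by contradiction, assuming $(1+\e)^{-1} f \notin K + L$, and apply Lemma \ref{2convexsets:real} to obtain a separating $\phi \in \R^X$ with $\sp{(1+\e)^{-1}f, \phi} > 1$, while $\sp{g, \phi} \leq 1$ for all $g \in K$ and $\sp{h, \phi} \leq 1$ for all $h \in L$. The separation from the norm ball $L$ yields $\|\phi\|^* \leq \eta^{-1}$, exactly the observation about dual norms made in the notation subsection.

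The separation from $K$ is where the positive part $\phi_+$ enters naturally: the supremum of $\sp{g, \phi}$ over $g \in K$ is attained by taking $g(x) = \nu(x)$ on $\{\phi > 0\}$ and $g(x) = 0$ elsewhere, so $\sp{\nu, \phi_+} \leq 1$. The main hypothesis, which is now applicable because $\|\phi\|^* \leq \eta^{-1}$, then gives $\sp{\mu, \phi_+} = \sp{\nu, \phi_+} + \sp{\mu - \nu, \phi_+} \leq 1 + \e$. Since $0 \leq f \leq \mu$, it follows that $\sp{f, \phi} \leq \sp{f, \phi_+} \leq \sp{\mu, \phi_+} \leq 1 + \e$, which contradicts $\sp{(1+\e)^{-1}f, \phi} > 1$.

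No step is a serious obstacle: this is really a clean packaging of the finite-dimensional Hahn--Banach theorem, and the main conceptual point is simply to identify the correct two convex sets to separate. The only small piece of bookkeeping is the factor $(1+\e)$, which is needed precisely to absorb the slack $\e$ coming from the hypothesis $\sp{\mu-\nu, \phi_+} \leq \e$; without it one would only obtain a decomposition up to a multiplicative rather than additive error.
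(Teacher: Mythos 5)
Your proof is correct and follows exactly the paper's own argument: the same choice of convex sets $K=\{g:0\leq g\leq\nu\}$ and $L=\{h:\|h\|\leq\eta\}$, the same application of Lemma \ref{2convexsets:real}, and the same chain of inequalities $1+\e<\sp{f,\phi}\leq\sp{f,\phi_+}\leq\sp{\mu,\phi_+}\leq\sp{\nu,\phi_+}+\e\leq 1+\e$ yielding the contradiction. Nothing to add.
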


\begin{proof}
If we cannot approximate $(1+\e)^{-1}f$ in this way, then we cannot write 
$(1+\e)^{-1}f$ as a sum $g+h$ with $0\leq g\leq\nu$ and $\|h\|\leq\eta$. 
Now the sets $K=\{g:0\leq g\leq\nu\}$ and $L=\{h:\|h\|\leq\eta\}$ are 
closed and convex and they both contain 0. It follows from Lemma 
\ref{2convexsets:real}, with $Y = X$, that there is a function $\phi$ with the following 
three properties.
\begin{itemize}
\item $\sp{(1+\e)^{-1}f,\phi}>1$;
\item $\sp{g,\phi}\leq 1$ whenever $0\leq g\leq\nu$;
\item $\sp{h,\phi}\leq 1$ whenever $\|h\|\leq\eta$.
\end{itemize}
From the first of these properties we deduce that $\sp{f,\phi}>1+\e$. From
the second we deduce that $\sp{\nu,\phi_+}\leq 1$, since the function $g$
that takes the value $\nu(x)$ when $\phi(x)\geq 0$ and $0$ otherwise
maximizes the value of $\sp{g,\phi}$ over all $g\in K$. And from the third
property we deduce immediately that $\|\phi\|^*\leq\eta^{-1}$.

But our hypothesis implies that $\sp{\mu,\phi_+}\leq\sp{\nu,\phi_+}+\e$.
It therefore follows that
\begin{equation*}
1+\e<\sp{f,\phi}\leq\sp{f,\phi_+}\leq\sp{\mu,\phi_+}\leq\sp{\nu,\phi_+}+\e\leq 1+\e,
\end{equation*}
which is a contradiction.
\end{proof}

Later we shall apply Lemma \ref{transfer:density} with $\mu$ the 
associated measure of a sparse random set and $\nu$ the constant 
measure 1. 

The next transference principle is the one that we shall use for obtaining
sparse random colouring theorems. It may seem strange that the condition
we obtain on $g_1+\dots+g_r$ is merely that it is less than $\nu$ (rather
than equal to $\nu$). However, we also show that $f_i$ and $g_i$ are
close in a certain sense, and in applications that will imply that $g_1+\dots+g_r$
is indeed approximately equal to $\nu$ (which will be the constant measure 1).
With a bit more effort, one could obtain equality from the Hahn-Banach method,
but this would not make life easier later, since the robust versions of Ramsey 
theorems hold just as well when you colour almost everything as they do 
when you colour everything.

\begin{lemma}\label{transfer:colouring}
Let $\e$ and $\eta$ be positive real numbers, let $r$ be a positive integer,
let $\mu$ and $\nu$ be non-negative functions defined on a finite set 
$X$ and let $\|.\|$ be a norm on $\R^X$. Suppose that 
$\sp{\mu-\nu,(\max_{1\leq i\leq r}\phi_i)_+}\leq\e$ whenever 
$\phi_1,\dots,\phi_r$ are functions with $\|\phi_i\|^*\leq\eta^{-1}$ for each $i$. 
Then for every sequence of $r$ functions $f_1,\dots,f_r$ with $f_i \geq 0$ 
for each $i$ and $f_1+\dots+f_r\leq\mu$ there exist functions $g_1,\dots,g_r$ 
with $g_i \geq 0$ for each $i$ and $g_1+\dots+g_r\leq\nu$ such that 
$\|(1+\e)^{-1}f_i-g_i\|\leq\eta$ for each $i$.
\end{lemma}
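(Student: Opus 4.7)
The plan is to adapt the Hahn-Banach argument of Lemma \ref{transfer:density} by working in the product space $(\R^X)^r$, which we identify with $\R^Y$ for $Y=X\times\{1,\dots,r\}$. Set $F=((1+\e)^{-1}f_1,\dots,(1+\e)^{-1}f_r)\in\R^Y$, and define
\[
K=\{(g_1,\dots,g_r):g_i\geq 0\text{ for all }i\text{ and }g_1+\dots+g_r\leq\nu\},\qquad
L=\{(h_1,\dots,h_r):\|h_i\|\leq\eta\text{ for all }i\}.
\]
Both $K$ and $L$ are closed, convex, and contain $0$. If no decomposition of the desired form exists, then $F\notin K+L$, so Lemma \ref{2convexsets:real} applied in $\R^Y$ yields an $r$-tuple $(\phi_1,\dots,\phi_r)$ such that $\sum_i\sp{(1+\e)^{-1}f_i,\phi_i}>1$, $\sum_i\sp{g_i,\phi_i}\leq 1$ for every $(g_1,\dots,g_r)\in K$, and $\sum_i\sp{h_i,\phi_i}\leq 1$ for every $(h_1,\dots,h_r)\in L$.

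The first step is to unpack the two linear inequalities. Setting $h_j=0$ for $j\neq i$ in the third inequality recovers $\sp{h_i,\phi_i}\leq 1$ whenever $\|h_i\|\leq\eta$, so $\|\phi_i\|^*\leq\eta^{-1}$ for every $i$, which is exactly what the hypothesis is designed to exploit. The second inequality is the interesting one: for each $x\in X$ we are free to distribute the mass $\nu(x)$ among $g_1(x),\dots,g_r(x)$ however we like, so the pointwise maximizer allocates everything to a coordinate $i$ achieving $\max_i\phi_i(x)$ when that maximum is positive, and allocates nothing when it is not. Thus the supremum of $\sum_i\sp{g_i,\phi_i}$ over $K$ equals $\sp{\nu,(\max_i\phi_i)_+}$, and the second inequality reduces to
\[
\sp{\nu,(\max_i\phi_i)_+}\leq 1.
\]

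The second step is to feed this into the hypothesis. Because $\|\phi_i\|^*\leq\eta^{-1}$ for each $i$, we may apply the assumption to obtain $\sp{\mu-\nu,(\max_i\phi_i)_+}\leq\e$, and hence $\sp{\mu,(\max_i\phi_i)_+}\leq 1+\e$. On the other hand, the first inequality from Hahn-Banach gives $\sum_i\sp{f_i,\phi_i}>1+\e$. Using $\sp{f_i,\phi_i}\leq\sp{f_i,(\max_j\phi_j)_+}$ (which holds since $f_i\geq 0$ and $(\max_j\phi_j)_+\geq(\phi_i)_+\geq\phi_i$), summing over $i$ and applying the constraint $f_1+\dots+f_r\leq\mu$ yields
\[
1+\e<\sum_{i=1}^r\sp{f_i,\phi_i}\leq\Bigl\langle\sum_{i=1}^r f_i,\ (\max_j\phi_j)_+\Bigr\rangle\leq\sp{\mu,(\max_j\phi_j)_+}\leq 1+\e,
\]
which is the desired contradiction.

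The argument is structurally just the product-space version of the density transference, so there is no serious obstacle once the right convex sets are chosen. The one point that requires a moment's thought—and which motivates the particular form of the hypothesis involving $\max_{1\leq i\leq r}\phi_i$—is the pointwise linear-programming computation that identifies the support function of $K$ as $\sp{\nu,(\max_i\phi_i)_+}$; everything else mirrors the proof of Lemma \ref{transfer:density} almost verbatim.
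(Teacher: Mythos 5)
Your proposal is correct and follows essentially the same route as the paper: the same convex sets $K$ and $L$ in the product space, the same application of Lemma \ref{2convexsets:real}, the same pointwise-maximizer computation identifying the support function of $K$ as $\sp{\nu,(\max_i\phi_i)_+}$, and the same final chain of inequalities using $f_1+\dots+f_r\leq\mu$. No gaps.
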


\begin{proof}
Suppose that the result does not hold for the $r$-tuple $(f_1,\dots,f_r)$.
Let $K$ be the closed convex set of all $r$-tuples of functions $(g_1,\dots,g_r)$ 
such that $g_i \geq 0$ for each $i$ and $g_1+\dots+g_r\leq\nu$, and let $L$ be
the closed convex set of all $r$-tuples $(h_1,\dots,h_r)$ such that
$\|h_i\|\leq \eta$ for each $i$. Then both $K$ and $L$ contain 0 and our 
hypothesis is that $(1+\e)^{-1}(f_1,\dots,f_r)\notin K+L$. Therefore, Lemma
\ref{2convexsets:real}, with $Y = X^r$, gives us an $r$-tuple of functions $(\phi_1,\dots,\phi_r)$ with 
the following three properties.
\begin{itemize}
\item $\sum_{i=1}^r \sp{(1+\e)^{-1}f_i,\phi_i}>1$;
\item $\sum_{i=1}^r \sp{g_i,\phi_i}\leq 1$ whenever
$g_i \geq 0$ for each $i$ and $g_1+\dots+g_r \leq \nu$;
\item $\sum_{i=1}^r \sp{h_i,\phi_i}\leq 1$ whenever $\|h_i\|\leq\eta$
for each $i$.
\end{itemize}
The first of these conditions implies that $\sum_{i=1}^r\sp{f_i,\phi_i}>1+\e$.
In the second condition, let us choose the functions $g_i$ as follows. For each
$x$, pick an $i$ such that $\phi_i(x)$ is maximal. If $\phi_i(x)\geq 0$, then set 
$g_i(x)$ to be $\nu(x)$, and otherwise set $g_i(x)=0$. For each $j\ne i$, set
$g_j(x)$ to be zero. Then $\sum_{i=1}^rg_i(x)\phi_i(x)$ is equal to 
$\nu(x)\max_i\phi_i(x)$ if this maximum is non-negative, and 0 otherwise.
Therefore, $\sum_{i=1}^r\sp{g_i,\phi_i}=\sp{\nu,(\max_i\phi_i)_+}$.
Thus, it follows from the second condition that 
$\sp{\nu,(\max_i\phi_i)_+}\leq 1$. Let us write $\phi$ for $\max_i\phi_i$. 
The third condition implies that $\|\phi_i\|^*\leq\eta^{-1}$ for each $i$.

Using this information together with our hypothesis about $\mu-\nu$, we
find that
\begin{equation*}
1+\e<\sum_{i=1}^r\sp{f_i,\phi_i}\leq\sum_{i=1}^r\sp{f_i,\phi_+}\leq\sp{\mu,\phi_+}
\leq\sp{\nu,\phi_+}+\e\leq1+\e,
\end{equation*}
a contradiction.
\end{proof}

\section{The counting lemma} \label{clemma}

We now come to the second main idea of the paper, and perhaps
the main new idea. Lemmas \ref{transfer:density} and \ref{transfer:colouring} will be very useful to us,
but as they stand they are rather abstract: in order to make use of them we
need to find a norm $\|.\|$ such that if $\|f-g\|$ is small then $f$
and $g$ behave similarly in a relevant way. Several norms have
been devised for exactly this purpose, such as the uniformity
norms mentioned earlier, and also ``box norms'' for multidimensional
structures and ``octahedral norms'' for graphs and hypergraphs.
It might therefore seem natural to try to apply Lemmas \ref{transfer:density} and \ref{transfer:colouring}
to these norms. However, as we have already commented in the case of uniformity
norms, if we do this then we cannot obtain sharp bounds: except in
a few cases, these norms are related to counts of configurations that 
are too large to appear non-degenerately in very sparse random sets. 

We are therefore forced to adopt a different approach. Instead of
trying to use an off-the-shelf norm, we use a bespoke norm, designed
to fit perfectly the problem at hand. Notice that Lemmas \ref{transfer:density} and \ref{transfer:colouring}
become harder to apply as the norm $\|.\|$ gets bigger, since then
the dual norm $\|.\|^*$ gets smaller and there are more functions
$\phi$ with $\|\phi\|^*\leq\eta^{-1}$, and therefore more functions
of the form $\phi_+$ for which one must show that $\sp{\mu-\nu,\phi_+}\leq\e$ 
(and similarly for $(\max_{1\leq i\leq r}\phi_i)_+$ with colouring problems). 
Therefore, we shall try to make our norm as small as possible,
subject to the condition we need it to satisfy: that $f$ and $g$
behave similarly if $\|f-g\|$ is small. 

Thus, our norm will be defined by means of a universal construction.
As with other universal constructions, this makes the norm easy
to define but hard to understand concretely. However, we can get 
away with surprisingly little understanding of its detailed behaviour,
as will become clear later. An advantage of this abstract approach
is that it has very little dependence on the particular problem that is being
studied: it is for that reason that we have ended up with a very general result.

Before we define the norm, let us describe the general set-up
that we shall analyse. We shall begin with a finite set $X$ and 
a collection $S$ of ordered subsets of $X$, each of size $k$. 
Thus, any element $s \in S$ may be expressed in the form 
$s = (s_1, \dots, s_k)$.

Here are two examples. When we apply our results to Szemer\'edi's
theorem, we shall take $X$ to be $\Z_n$, and $S$ to be the set
of ordered $k$-tuples of the form $(x,x+d,\dots,x+(k-1)d)$,
and when we apply it to Ramsey's theorem or Tur\'an's theorem for $K_4$, we shall
take $X$ to be the edge set of the complete graph $K_n$ and $S$ to be the set
of ordered sextuples of pairs of the form 
$(x_1x_2,x_1x_3,x_1x_4,x_2x_3,x_2x_4,x_3x_4)$,
where $x_1$, $x_2$, $x_3$ and $x_4$ are vertices of $K_n$. 
Depending on the particular circumstance, we shall choose whether to include or 
ignore degenerate configurations. For example, for Szemer\'edi's theorem, it is
convenient to include the possibility that $d = 0$, but for theorems involving $K_4$, we
restrict to configurations where $x_1$, $x_2$, $x_3$ and $x_4$ are all distinct. In practice, 
it makes little difference, since the number of degenerate configurations is never very large.

In both these two examples, the collection $S$ of ordered subsets of $X$ has 
some nice homogeneity properties, which we shall assume for our general 
result because it makes the proofs cleaner, even if one sometimes has to 
work a little to show that these properties may be assumed. 

\begin{definition}
Let $S$ be a collection of ordered $k$-tuples $s=(s_1,\dots,s_k)$ of 
elements of a finite set $X$, and let us write $S_j(x)$ for the set of all 
$s$ in $S$ such that $s_j=x$. We shall say that $S$ is \emph{homogeneous}
if for each $j$ the sets $S_j(x)$ all have the same size.
\end{definition}

\noindent We shall assume throughout that our sets of ordered $k$-tuples are
homogeneous in this sense. Note that this assumption does not hold for
arithmetic progressions of length $k$ if we work in
the set $[n]$ rather than the set $\Z_n$. However, sparse random 
Szemer\'edi for $\Z_n$ implies sparse random Szemer\'edi for $[n]$,
so this does not bother us. Similar observations can be used to 
convert several other problems into equivalent ones for which
the set $S$ is homogeneous. Moreover, such observations will easily 
accommodate any further homogeneity assumptions that we have to
introduce in later sections.

The functional version of a combinatorial theorem about the
ordered sets in $S$ will involve expressions such as
\[\E_{s\in S}f(s_1)\dots f(s_k).\]
Thus, what we wish to do is define a norm $\|.\|$ with the property 
that
\[\E_{s\in S}f(s_1)\dots f(s_k)-\E_{s\in S}g(s_1)\dots g(s_k)\]
can be bounded above in terms of $\|f-g\|$ whenever $0 \leq f \leq \mu$ 
and $0 \leq g \leq \nu$. This is what we mean by saying that $f$ and
$g$ should behave similarly when $\|f-g\|$ is small.

The feature of the problem that gives us a simple and natural norm
is the $k$-linearity of the expression $\E_{s\in S}f(s_1)\dots f(s_k)$,
which allows us to write the above difference as
\begin{equation*}
\sum_{j=1}^k\E_{s\in S}g(s_1)\dots g(s_{j-1})(f-g)(s_j)f(s_{j+1})\dots f(s_k).
\end{equation*}
Because we are assuming that the sets $S_j(x)$ all have the same size, we
can write any expression of the form $\E_{s\in S}h_1(s_1)\dots h_k(s_k)$
as 
\begin{equation*}
\E_{x\in X}h_j(x)\E_{s\in S_j(x)}h_1(s_1)\dots h_{j-1}(s_{j-1})h_{j+1}(s_{j+1})\dots h_k(s_k).
\end{equation*}
It will be very convenient to introduce some terminology and notation for expressions 
of the kind that are beginning to appear. 

\begin{definition}
Let $X$ be a finite set and let $S$ be a homogeneous collection of ordered subsets of $X$, 
each of size $k$. Then, given $k$ functions $h_1,\dots,h_k$ from $X$ to $\R$, their 
\emph{$j$th convolution} is defined to be the function
\begin{equation*}
*_j(h_1, \dots, h_k)(x)=\E_{s\in S_j(x)}h_1(s_1)\dots h_{j-1}(s_{j-1})h_{j+1}(s_{j+1})\dots h_k(s_k).
\end{equation*}
\end{definition}
We call this a convolution because in the special case where $S$ is the set of 
arithmetic progressions of length 3 in $\Z_N$, we obtain convolutions in the
conventional sense. Using this notation and the observation made above, we can rewrite
\begin{equation*}
\E_{s\in S}g(s_1)\dots g(s_{j-1})(f-g)(s_j)f(s_{j+1})\dots f(s_k)
\end{equation*}
as $\sp{f-g,*_j(g \dots, g, f, \dots, f)}$ (where it is understood that there are $j-1$ occurrences of $g$ and $k-j$ occurrences of $f$), and from that we obtain the identity
\begin{equation*}
\E_{s\in S}f(s_1)\dots f(s_k)-\E_{s\in S}g(s_1)\dots g(s_k)=\sum_{j=1}^k\sp{f-g,*_j(g \dots, g, f, \dots, f)}.
\end{equation*}
This, together with the triangle inequality, gives us the following lemma.

\begin{lemma} \label{uselesslemma}
Let $X$ be a finite set and let $S$ be a homogeneous collection of ordered subsets 
of $X$, each of size $k$. Let $f$ and $g$ be two functions defined on $X$. Then
\begin{equation*}
|\E_{s\in S}f(s_1)\dots f(s_k)-\E_{s\in S}g(s_1)\dots g(s_k)|\leq
\sum_{j=1}^k|\sp{f-g,*_j(g,\dots,g,f,\dots,f)}|.
\end{equation*}
\end{lemma}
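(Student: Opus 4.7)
The proof is almost entirely contained in the discussion preceding the statement, so the plan is simply to make the telescoping argument rigorous and then apply the triangle inequality. The starting point is the standard algebraic identity
\begin{equation*}
f(s_1)\dots f(s_k) - g(s_1)\dots g(s_k) = \sum_{j=1}^k g(s_1)\dots g(s_{j-1})\bigl(f(s_j)-g(s_j)\bigr)f(s_{j+1})\dots f(s_k),
\end{equation*}
which is a one-line check (each intermediate term cancels with its neighbour). Averaging this identity over $s\in S$ gives
\begin{equation*}
\E_{s\in S}f(s_1)\dots f(s_k)-\E_{s\in S}g(s_1)\dots g(s_k)=\sum_{j=1}^k\E_{s\in S}g(s_1)\dots g(s_{j-1})(f-g)(s_j)f(s_{j+1})\dots f(s_k).
\end{equation*}

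Next I would rewrite the $j$th summand using the homogeneity assumption. Because the fibres $S_j(x)$ all have the same size (say $M_j$), we have
\begin{equation*}
\E_{s\in S}h_1(s_1)\dots h_k(s_k)=\E_{x\in X}h_j(x)\cdot{*_j}(h_1,\dots,h_k)(x),
\end{equation*}
which follows because $|S|=M_j|X|$ and one can index the sum over $S$ by first choosing $s_j=x\in X$ and then summing over $S_j(x)$. Applying this with $h_1=\dots=h_{j-1}=g$, $h_j=f-g$, $h_{j+1}=\dots=h_k=f$ converts the $j$th summand into $\sp{f-g,\,*_j(g,\dots,g,f,\dots,f)}$, where the $f$'s occupy positions $j+1,\dots,k$ (the value of the convolution at $x$ does not see what is plugged into the $j$th slot, so there is no ambiguity).

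Finally, applying the triangle inequality to the resulting sum yields
\begin{equation*}
|\E_{s\in S}f(s_1)\dots f(s_k)-\E_{s\in S}g(s_1)\dots g(s_k)|\leq\sum_{j=1}^k|\sp{f-g,*_j(g,\dots,g,f,\dots,f)}|,
\end{equation*}
as required. There is no real obstacle here: the only thing to be careful about is that the homogeneity hypothesis is used precisely to turn the $k$-linear averaging over $S$ into an inner product on $X$, which is exactly the step that makes the convolution notation meaningful.
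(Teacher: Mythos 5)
Your proposal is correct and follows the paper's own argument exactly: the telescoping decomposition, the use of homogeneity to rewrite each summand as $\sp{f-g,*_j(g,\dots,g,f,\dots,f)}$, and the triangle inequality are precisely the steps in the text preceding the lemma. Nothing further is needed.
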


It follows that if $f - g$ has small inner product with all functions
of the form $*_j(g \dots, g, f, \dots, f)$, then $\E_{s \in S} f(s_1)\dots f(s_k)$ 
and $\E_{s \in S}g(s_1)\dots g(s_k)$ are close. It is tempting, therefore, to define a norm 
$\|.\|$ by taking $\|h\|$ to be the maximum value of $|\sp{h, \phi}|$ over all functions 
$\phi$ of the form $*_j(g,\dots,g,f,\dots,f)$ for which $0\leq g\leq \nu$ and $0\leq f\leq\mu$.
If we did that, then we would know that 
$|\E_{s\in S}f(s_1)\dots f(s_k)-\E_{s\in S}g(s_1)\dots g(s_k)|$ was small
whenever $\|f-g\|$ was small, which is exactly the property we need our norm
to have. Unfortunately, this definition leads to difficulties. To see why we need to look
in more detail at the convolutions.

Any convolution $*_j(g,\dots, g,f,\dots,f)$ is bounded above by $*_j(\nu, \dots, 
\nu, \mu, \dots, \mu)$. For the sake of example, let us consider the case of 
Szemer\'edi's theorem. Taking $\nu = 1$, we see that the $j$th convolution is 
bounded above by the function
\[P_j(x) = \E_d \mu(x+d) \dots \mu(x+(k-j)d).\]
Up to normalization, this counts the number of progressions of length $k-j+1$ beginning at $x$.
If $j > 1$, probabilistic estimates imply that, at the critical probability $p = C n^{-1/(k-1)}$, $P_j$ is, with high probability, $L_{\infty}$-bounded (that is, the largest value of the function is bounded by some
absolute constant). However, functions of the form $*_1(f,\dots,f)$ with $0\leq f\leq\mu$ 
are almost always unbounded. This makes it much more difficult to control their inner 
products with $\mu-1$, and we need to do that if we wish to apply the abstract transference 
principle from the previous section.

For graphs, a similar problem arises. The $j$th convolution will count, up to normalization, the
number of copies of some subgraph of the given graph $H$ that are rooted on a particular edge. If we assume that the graph is balanced, as we are doing, then, at probability $p = C n^{-1/m_2(H)}$, this count will be $L_{\infty}$-bounded for any proper subgraph of $H$. However, for $H$ itself, we do not have this luxury and the function 
$*_1(f,\dots,f)$ is again likely to be unbounded. 

If we were prepared to increase the density of the random set by a polylogarithmic factor, 
we could ensure that even $*_1(f,\dots,f)$ was bounded and this problem would go away. 
Thus, a significant part of the complication of this paper is due to our wish to get a bound that
is best possible up to a constant.

There are two natural ways of getting around the difficulty if we are not prepared to 
sacrifice a polylogarithmic factor. One is to
try to exploit the fact that although $*_1(f,\dots,f)$ is not
bounded, it typically takes large values very infrequently, so it is
``close to bounded'' in a certain sense. The other is to replace
$*_1(f,\dots,f)$ by a modification of the function that has 
been truncated at a certain maximum. It seems likely that both approaches 
can be made to work: we have found it technically easier to go for the second.
The relevant definition is as follows.

\begin{definition}
Let $X$ be a finite set and let $S$ be a homogeneous collection of ordered subsets of $X$, 
each of size $k$. Then, given $k$ non-negative functions $h_1,\dots,h_k$ from $X$ to $\R$, their \emph{$j$th capped convolution} $\circ_j (h_1, \dots, h_k)$ is defined by
\begin{equation*}
\circ_j (h_1, \dots, h_k)(x)=\min\{*_j (h_1, \dots, h_k)(x),2\}.
\end{equation*}
\end{definition}

Unlike with ordinary convolutions, there is no obvious way of controlling the 
difference between $\E_{s \in S}f(s_1)\dots f(s_k)$ and $\E_{s \in S}g(s_1)\dots
g(s_k)$ in terms of the inner product between $f-g$ and suitably
chosen capped convolutions. So instead we shall look at a quantity
that is related in a different way to the number of substructures 
of the required type. Roughly speaking, this quantity counts the number of substructures, 
but does not count too many if they start from the same point.

A natural quantity that fits this description is
$\sp{f,\circ_1(f,f,\dots,f)}$, and this is indeed closely related to
the quantity we shall actually consider.  However, there is an
additional complication, which is that it is very convenient to think of our random set $U$ as
a union of $m$ random sets $U_1,\dots,U_m$, and of a function defined
on $U$ as an average $m^{-1}(f_1+\dots+f_m)$ of functions with $f_i$
defined on $U_i$. More precisely, we shall take $m$ independent random sets 
$U_1,\dots,U_m$, each distributed as $X_p$. (Recall that $X_p$ stands for a
random subset of $X$ where the elements are chosen independently with
probability $p$.) Writing $\mu_1,\dots,\mu_m$ for
their associated measures, for each 
$i$ we shall take a function $f_i$ such that $0\leq f_i\leq\mu_i$. Our
assertion will then be about the average $f=m^{-1}(f_1+\dots+f_m)$.
Note that $0\leq f\leq\mu$, where $\mu=m^{-1}(\mu_1+\dots+\mu_m)$,
and that every function $f$ with $0\leq f\leq\mu$ can be expressed as
an average of functions $f_i$ with $0\leq f_i\leq\mu_i$. Note also that
if $U=U_1\cup\dots\cup U_m$ then $\mu$ is neither the characteristic
measure of $U$ nor the associated measure of $U$. However, provided $p$ is 
fairly small, it is close to both with high probability, and this is all that matters.

Having chosen $f$ in this way, the quantity we shall then look at is 
\begin{equation*}
\sp{f,m^{-(k-1)}\sum_{i_2,\dots,i_k}\circ_1(f_{i_2},\dots,f_{i_k})}
=\E_{i_1,\dots,i_k\in\{1,\dots,m\}}\sp{f_{i_1},\circ_1(f_{i_2},\dots,f_{i_k})}.
\end{equation*}
In other words, we expand the expression $\sp{f,*_1(f,f,\dots,f)}$ in terms
of $f_1,\dots,f_m$ and then do the capping term by term. 

Central to our approach is a
``counting lemma'', which is an easy corollary of the following result, which 
keeps track of the errors that are introduced by our ``capping". (To understand
the statement, observe that if we replaced the capped convolutions $\circ_j$
by their ``genuine" counterparts $*_j$, then the two quantities that we
are comparing would become equal.) In the next lemma, we assume that
a homogeneous set $S$ of ordered $k$-tuples has been given.

\begin{lemma}\label{precounting}
Let $\eta>0$, let $m\geq 2k^3/\eta$ and let $\mu_1,\dots,\mu_m$ be 
non-negative functions defined on $X$ with $\|\mu_i\|_1 \leq 2$ for all $i$.
Suppose that $\|*_1(\mu_{i_2},\dots,\mu_{i_k})
-\circ_1(\mu_{i_2},\dots,\mu_{i_k})\|_1\leq\eta$
whenever $i_2,\dots,i_k$ are distinct integers between 1 and $m$, and
also that $*_j(1,1,\dots,1,\mu_{i_{j+1}},\dots,\mu_{i_k})$ is uniformly
bounded above by 2 whenever $j\geq 2$ and $i_{j+1},\dots, i_k$ are distinct. For each $i$, let $f_i$ be a function
with $0\leq f_i\leq\mu_i$, let $f=\E_i f_i$ and let $g$ be a function 
with $0\leq g\leq 1$. Then
\begin{equation*}
\E_{i_1,\dots,i_k\in\{1,\dots,m\}}\sp{f_{i_1},\circ_1(f_{i_2},\dots,f_{i_k})}
-\sp{g,*_1(g,g,\dots,g)}
\end{equation*}
differs from 
\begin{equation*}
\sum_{j=1}^k
\sp{f-g,\E_{i_{j+1},\dots,i_k}\circ_j(g,g,\dots,g,f_{i_{j+1}},\dots,f_{i_k})}
\end{equation*}
by at most $2\eta$.
\end{lemma}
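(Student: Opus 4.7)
The plan is a telescoping argument, with the caveat that capping is harmless only on tuples of \emph{distinct} indices, so the contribution from tuples with repeats must be isolated (this is what the assumption $m\ge 2k^3/\eta$ buys us).

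For each $j\in\{1,\dots,k\}$ I would introduce
\[
E_j = \E_{i_j,\dots,i_k}\sp{f_{i_j},\circ_j(g,\dots,g,f_{i_{j+1}},\dots,f_{i_k})}, \qquad E_j' = \E_{i_{j+1},\dots,i_k}\sp{g,\circ_j(g,\dots,g,f_{i_{j+1}},\dots,f_{i_k})},
\]
where $g$ occupies positions $1,\dots,j-1$. Three immediate observations: $E_1$ is the first expression in the statement; $E_j-E_j'$ is the $j$th summand in the target (because $\E_{i_j}f_{i_j}=f$); and $E_k'=\sp{g,*_1(g,\dots,g)}$, since $*_k(g,\dots,g)\le 1$ forces $\circ_k=*_k$ and homogeneity of $S$ identifies $\sp{g,*_k(g,\dots,g)}$ with $\sp{g,*_1(g,\dots,g)}$. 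The rearrangement
\[
\sum_{j=1}^k(E_j-E_j') = (E_1-E_k') + \sum_{j=1}^{k-1}(E_{j+1}-E_j')
\]
then reduces the lemma to bounding $\bigl|\sum_{j=1}^{k-1}(E_j'-E_{j+1})\bigr|$ by $2\eta$.

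The main per-tuple identity, again from homogeneity of $S$, is
\[
\sp{g,*_j(g,\dots,g,f_{i_{j+1}},\dots,f_{i_k})} = \sp{f_{i_{j+1}},*_{j+1}(g,\dots,g,f_{i_{j+2}},\dots,f_{i_k})},
\]
since both sides equal $\E_{s\in S}g(s_1)\cdots g(s_j)f_{i_{j+1}}(s_{j+1})\cdots f_{i_k}(s_k)$. For $j\ge 2$ and distinct $(i_{j+1},\dots,i_k)$, the second hypothesis combined with $g\le 1$ and $f_i\le\mu_i$ gives $*_j(g,\dots,g,f_{i_{j+1}},\dots,f_{i_k})\le 2$, so $\circ_j=*_j$ on these arguments and the distinct part of $E_j'-E_{j+1}$ vanishes exactly. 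For $j=1$ the capping of $\circ_1(f_{i_2},\dots,f_{i_k})$ is genuine, but monotonicity of $(\cdot-2)_+$ and $f_i\le\mu_i$ give
\[
\|*_1(f_{i_2},\dots,f_{i_k})-\circ_1(f_{i_2},\dots,f_{i_k})\|_1 \le \|*_1(\mu_{i_2},\dots,\mu_{i_k})-\circ_1(\mu_{i_2},\dots,\mu_{i_k})\|_1 \le \eta
\]
for distinct $(i_2,\dots,i_k)$, and pairing against $g$ (bounded by $1$) contributes at most $\eta$ per tuple, hence at most $\eta$ in total after averaging.

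For the non-distinct tuples, crude bounds suffice: $\|f_i\|_1\le 2$, $\|g\|_\infty\le 1$ and $\|\circ_j\|_\infty\le 2$ give a per-tuple contribution of at most $6$ to $|E_j'-E_{j+1}|$, while the fraction of $(k-j)$-tuples with a repeated entry is at most $\binom{k-j}{2}/m$. Summing,
\[
\sum_{j=1}^{k-1}\frac{6\binom{k-j}{2}}{m}\le \frac{k^3}{m}\le \frac{\eta}{2},
\]
using $m\ge 2k^3/\eta$. Combined with the distinct $j=1$ contribution of $\eta$, this gives the required bound $3\eta/2\le 2\eta$. The only non-routine step is the $j=1$ case, because $\circ_1$ is the sole convolution that disagrees with $*_1$ in a meaningful way, and the $L^1$ hypothesis on $*_1(\mu)-\circ_1(\mu)$ is usable precisely because we pair it against $g$, which is bounded, rather than against the unbounded $f_{i_1}$; this asymmetry is exactly why the lemma's statement has the form it does.
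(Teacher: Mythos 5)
Your proof is correct and follows essentially the same route as the paper's: the same adjoint identity from homogeneity, uncapping only at the $j=1$ step at cost $\eta$ against the bounded $g$, exactness of the caps for $j\geq 2$ via the second hypothesis, and a crude bound on tuples with repeated indices controlled by $m\geq 2k^3/\eta$. The only difference is bookkeeping: you account for repeated indices step-by-step (getting $3\eta/2$), whereas the paper lumps them into a single event on the full $k$-tuple (getting $2\eta$).
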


\begin{proof}
Note first that
\begin{eqnarray*}
\E_{i_1,\dots,i_k}\sp{f_{i_1},\circ_1(f_{i_2},\dots,f_{i_k})}
&=&\E_{i_1,\dots,i_k}\sp{f_{i_1}-g,\circ_1(f_{i_2},\dots,f_{i_k})}
+\E_{i_2,\dots,i_k}\sp{g,\circ_1(f_{i_2},\dots,f_{i_k})}\\
&=&\E_{i_2,\dots,i_k}\sp{f-g,\circ_1(f_{i_2},\dots,f_{i_k})}
+\E_{i_2,\dots,i_k}\sp{g,\circ_1(f_{i_2},\dots,f_{i_k})}.
\end{eqnarray*}
Since $0\leq *_1(f_{i_2},\dots,f_{i_k})\leq
*_1(\mu_{i_2},\dots,\mu_{i_k})$, our assumption implies that, whenever $i_2,\dots,i_k$ are distinct, $\|*_1(f_{i_2},\dots,f_{i_k})-\circ_1(f_{i_2},\dots,f_{i_k})\|_1\leq\eta$. In this case, therefore,
\begin{equation*}
0\leq\sp{g,*_1(f_{i_2},\dots,f_{i_k})}
-\sp{g,\circ_1(f_{i_2},\dots,f_{i_k})}\leq\eta.
\end{equation*}
We also know that $\sp{g,*_1(f_{i_2},\dots,f_{i_k})}
=\sp{f_{i_2},*_2(g,f_{i_3},\dots,f_{i_k})}$
and that if $i_3,\dots,i_k$ are distinct then 
$*_2(g,f_{i_3},\dots,f_{i_k})=\circ_2(g,f_{i_3},\dots,f_{i_k})$.
Therefore,
\begin{equation*}
0\leq\sp{f_{i_2},\circ_2(g,f_{i_3},\dots,f_{i_k})}
-\sp{g,\circ_1(f_{i_2},\dots,f_{i_k})}\leq\eta.
\end{equation*}

Now the assumption that
$*_j(1,1,\dots,1,\mu_{i_{j+1}},\dots,\mu_{i_k})$ is bounded above by 2
whenever $j\geq 2$ and $i_{j+1},\dots,i_k$ are distinct implies that $\circ_j(g,g,\dots,g,f_{i_{j+1}},\dots,f_{i_k})$ and
$*_j(g,g,\dots,g,f_{i_{j+1}},\dots,f_{i_k})$ are equal under these
circumstances. From this it is a small exercise to show that
\begin{equation*}
\sp{f_{i_2},\circ_2(g,f_{i_3},\dots,f_{i_k})}-\sp{g,\circ_k(g,g,\dots,g)}
=\sum_{j=2}^k\sp{f_{i_j}-g,\circ_j(g,g,\dots,g,f_{i_{j+1}},\dots,f_{i_k})}.
\end{equation*}
Therefore, for $i_2, \dots, i_k$ distinct,
\begin{equation} \label{Eq1}
\sp{g, \circ_1(f_{i_2}, \dots, f_{i_k})} - \sp{g, \circ_k(g, g, \dots, g)}
\end{equation}
differs from 
\begin{equation} \label{Eq2}
\sum_{j = 2}^k \sp{f_{i_j} - g, \circ_j (g, g, \dots, g, f_{i_{j+1}}, \dots, f_{i_k})}
\end{equation}
by at most $\eta$.

The probability that $i_1,\dots,i_k$ are not distinct is at most
$\binom k2 m^{-1}\leq\eta/4k$, and if they are not distinct then the
difference between (\ref{Eq1}) and (\ref{Eq2})
is certainly no more than $4k$ (since all capped convolutions take values in
$[0,2]$ and $\|f_{i_j}\|_1 \leq \|\mu_{i_j}\|_1 \leq 2$). Therefore, taking the expectation over all $(i_1,\dots,i_k)$
(not necessarily distinct) and
noting that $\sp{g,\circ_k(g,g,\dots,g)}=\sp{g,*_1(g,g,\dots,g)}$, 
we find that
\begin{equation*}
\E_{i_1,\dots,i_k}\sp{f_{i_1},\circ_1(f_{i_2},\dots,f_{i_k})}
-\sp{g,*_1(g,g,\dots,g)}
\end{equation*}
differs from
\begin{equation*}
\sum_{j=1}^k
\sp{f-g,\E_{i_{j+1},\dots,i_k}\circ_j(g,g,\dots,g,f_{i_{j+1}},\dots,f_{i_k})}
\end{equation*}
by at most $2\eta$, as claimed.
\end{proof}

To state our counting lemma, we need to define the norm that we shall actually use. 

\begin{definition}
Let $X$ be a finite set and let $S$ be a homogeneous collection of ordered 
subsets of $X$, each of size $k$. Let $\mu = (\mu_1,\dots,\mu_m)$ be a 
sequence of measures on $X$. A $(\mu,1)$-\emph{basic anti-uniform function}
is a function of the form $\circ_j(g, \dots, g, f_{i_{j+1}}, 
\dots, f_{i_k})$, where $1\leq j\leq k$, $i_{j+1},\dots,i_k$ are distinct, 
$0 \leq g \leq 1$ and $0 \leq f_{i_h} \leq \mu_{i_h}$ for 
every $h$ between $j+1$ and $k$. Let $\Phi_{\mu, 1}$ be the set of all 
$(\mu,1)$-basic anti-uniform functions 
and define the norm $\|.\|_{\mu,1}$ by taking $\|h\|_{\mu,1}$ to be 
$\max\{|\sp{h,\phi}|:\phi\in\Phi_{\mu,1}\}$.
\end{definition}
The phrase ``basic anti-uniform function" is borrowed from Green and Tao, 
since our basic anti-uniform functions are closely related to 
functions of the same name that appear in their paper \cite{GT08}. 

Our counting lemma is now as follows. It says that if $\|f-g\|_{\mu, 1}$ is small, then
the ``sparse" expression given by $\E_{i_1,\dots,i_k\in\{1,\dots,m\}}
\sp{f_{i_1},\circ_1(f_{i_2},\dots,f_{i_k})}$ is approximated by the ``dense" expression 
$\sp{g,*_1(g,g,\dots,g)}$. This lemma modifies Lemma \ref{uselesslemma} in
two ways: it splits $f$ up into $m^{-1}(f_1+\dots+f_m)$ and it caps all the convolutions
that appear when one expands out the expression $\sp{f,*_1(f,\dots,f)}$ in 
terms of the $f_i$.

\begin{corollary}\label{counting}
Suppose that the assumptions of Lemma \ref{precounting} hold, and that
$|\sp{f-g,\phi}|\leq\eta/k$ for every basic anti-uniform function $\phi \in \Phi_{\mu,1}$.
Then $\E_xg(x)\geq\E_xf(x)-\eta/k$, and 
\begin{equation*}
\bigl|\E_{i_1,\dots,i_k\in\{1,\dots,m\}}
\sp{f_{i_1},\circ_1(f_{i_2},\dots,f_{i_k})}
-\sp{g,*_1(g,g,\dots,g)}\bigr|\leq 4\eta.
\end{equation*}
\end{corollary}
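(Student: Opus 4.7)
The plan is to combine Lemma \ref{precounting}, which already unpacks the ``capped'' expression, with the hypothesis controlling inner products against basic anti-uniform functions. By that lemma,
\[\E_{i_1,\dots,i_k}\sp{f_{i_1},\circ_1(f_{i_2},\dots,f_{i_k})}-\sp{g,*_1(g,g,\dots,g)}\]
differs by at most $2\eta$ from $\sum_{j=1}^k\sp{f-g,\E_{i_{j+1},\dots,i_k}\circ_j(g,\dots,g,f_{i_{j+1}},\dots,f_{i_k})}$. So it suffices to bound this sum by $2\eta$ in absolute value. The first claim, about $\E_x g(x)$, will come separately from applying the hypothesis to a single, very simple basic anti-uniform function.

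For the first statement, observe that the constant function $1$ lies in $\Phi_{\mu,1}$: take $j=k$, choose any distinct $i_1,\dots,i_k\in\{1,\dots,m\}$ (possible since $m\geq k$) and set $g_{i_h}\equiv 1$ for $h=1,\dots,k-1$, which is allowed because $\nu_{i_h}=1$. Then $\circ_k(1,\dots,1)(x)=\min(*_k(1,\dots,1)(x),2)=\min(1,2)=1$. Applying the hypothesis with $\phi=1$ gives $|\E_xf(x)-\E_xg(x)|=|\sp{f-g,1}|\leq\eta/k$, which yields $\E_xg(x)\geq\E_xf(x)-\eta/k$.

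For the main bound, fix $j$ and split the expectation $\E_{i_{j+1},\dots,i_k}$ into two pieces. When $(i_{j+1},\dots,i_k)$ has all distinct entries, we augment with dummy indices $i_1,\dots,i_{j-1}$ chosen distinct from one another and from $i_{j+1},\dots,i_k$ (possible since $m\geq k$) and set $g_{i_h}\equiv g$, which satisfies $0\leq g\leq 1=\nu_{i_h}$. Then $\circ_j(g,\dots,g,f_{i_{j+1}},\dots,f_{i_k})$ is a basic anti-uniform function in $\Phi_{\mu,1}$, so its inner product with $f-g$ has absolute value at most $\eta/k$ by hypothesis. Since distinct tuples carry total weight at most $1$ in the expectation, this contributes at most $\eta/k$ per $j$. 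For tuples with a repeated entry the fraction is at most $\binom{k-j}{2}/m$, the capped convolution is bounded above by $2$, and $\|f-g\|_1\leq\|f\|_1+\|g\|_1\leq 3$, giving a contribution bounded by $6\binom{k-j}{2}/m$ per $j$. Summing over $j$ and using the hockey-stick identity $\sum_{j=1}^k\binom{k-j}{2}=\binom{k}{3}\leq k^3/6$ together with $m\geq 2k^3/\eta$, the total sum is bounded by $\eta+k^3/m\leq\eta+\eta/2=3\eta/2$. Adding the $2\eta$ slack from Lemma \ref{precounting} yields $7\eta/2\leq 4\eta$, as required. The only delicate step is the splitting into distinct and non-distinct tuples: the distinct case is exactly where the hypothesis is designed to apply, and the non-distinct case is small precisely because the assumption $m\geq 2k^3/\eta$ was engineered to make it so.
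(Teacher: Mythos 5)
Your proof is correct and follows essentially the same route as the paper: the first claim comes from noting that $\circ_k(1,\dots,1)$ is the constant basic anti-uniform function $1$, and the main bound comes from Lemma \ref{precounting} together with splitting each $\E_{i_{j+1},\dots,i_k}$ into distinct tuples (where the hypothesis applies, after padding with distinct dummy indices) and non-distinct tuples (rare, with a trivial bound on the inner product). Your bookkeeping of the degenerate tuples is slightly finer than the paper's (giving $7\eta/2$ rather than $4\eta$), but the argument is the same.
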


\begin{proof}
The function $\circ_k(1,1,\dots,1)$ is a basic anti-uniform function,
and it takes the constant value 1. Since $\E_xh(x)=\sp{h,1}$ for any
function $h$, this implies the first assertion. 

Now the probability that $i_1,\dots,i_k$ are distinct is again at 
most $\eta/4k$, and if they are not distinct we at least know that
$|\sp{f-g,\circ_j(g,g,\dots,g,f_{i_{j+1}},\dots,f_{i_k})}|\leq 4$. Therefore,
our hypothesis also implies that
\begin{equation*}
\sum_{j=1}^k
|\sp{f-g,\E_{i_{j+1},\dots,i_k}\circ_j(g,g,\dots,g,f_{i_{j+1}},\dots,f_{i_k})}|
\leq k(\eta/k)+4k(\eta/4k)=2\eta.
\end{equation*}
Combining this with Lemma \ref{precounting}, we obtain the result.
\end{proof}

In order to prove analogues of structural results such as the Erd\H{o}s-Simonovits stability theorem
and the hypergraph removal lemma we shall need to preserve slightly more information
when we replace our sparsely supported function $f$ by a densely supported function 
$g$. For example, to 
prove the stability theorem, we proceed as follows. Given a subgraph $A$ of the random 
graph $G_{n,p}$, we create a weighted subgraph $B$ of $K_n$ that contains
the same number of copies of $H$, up to normalization. However, to make the proof work, we also
need the edge density of $B$ within any large vertex set to correspond
to the edge density of $A$ within that set. Suppose that we have this property as well
and that $A$ is $H$-free. Then $B$ has very few copies of $H$. A robust version of the 
stability theorem then tells us that $B$ may be made $(\chi(H) - 1)$-partite by removing 
a small number of edges (or rather a small weight of weighted edges). Let us look at 
the resulting weighted 
graph $B'$. It consists of $\chi(H) - 1$ vertex sets, all of which have zero weight inside. 
Therefore, in $B$, each of these sets had only a small weight to begin with. Since all
``local densities" of $A$ reflect those of $B$, these vertex sets contain only a very
small proportion of the edges in $A$ as well. Removing these edges makes  
$A$ into a $(\chi(H) - 1)$-partite graph and we are done.

How do we ensure that local densities are preserved? All we have to do is enrich our 
set of basic anti-uniform functions by adding an appropriate set of functions that will 
allow us to transfer local densities from the sparse structure to the dense
one. For example, in the case above we need to know that $A$ and $B$ have roughly
the same inner product (when appropriately weighted) with the characteristic function
of the complete graph on any large set $V$ of vertices. We therefore add these characteristic
functions to our stock of basic anti-uniform functions. For other applications, we need to 
maintain more intricate local density conditions. However, as we shall see, as long as 
the corresponding set of additional functions is sufficiently small, this does not pose a problem.

\section{A conditional proof of the main theorems} \label{CondMain}

In this section, we shall collect together the results of 
Sections \ref{transfers} and \ref{clemma} in order to make clear what
is left to prove. We start with a simple and general lemma about
duality in normed spaces.

\begin{lemma}\label{duality}
Let $\Phi$ be a bounded set of real-valued functions defined on a finite set $X$ such that 
the linear span of $\Phi$ is $\R^X$. Let
a norm on $\R^X$ be defined by $\|f\|=\max\{|\sp{f,\phi}|:\phi\in\Phi\}$.
Let $\|.\|^*$ be the dual norm. Then $\|\psi\|^*\leq 1$ if and
only if $\psi$ belongs to the closed convex hull of $\Phi\cup(-\Phi)$.
\end{lemma}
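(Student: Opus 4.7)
The plan is to prove this via the standard Hahn–Banach separation argument for polars. First one direction is essentially a computation, and the other uses Lemma \ref{hb:real} on the closed convex hull.

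For the easy direction, suppose $\psi$ lies in the convex hull of $\Phi\cup(-\Phi)$, so we may write $\psi=\sum_i\lambda_i\phi_i$ with $\phi_i\in\Phi\cup(-\Phi)$, $\lambda_i\geq 0$, and $\sum_i\lambda_i=1$. For any $f$ with $\|f\|\leq 1$ we have $|\sp{f,\phi_i}|\leq 1$ for every $i$, so by the triangle inequality $|\sp{f,\psi}|\leq\sum_i\lambda_i\cdot 1=1$. This gives $\|\psi\|^*\leq 1$. Since the dual norm $\|\cdot\|^*$ is continuous, the same conclusion extends to the closed convex hull of $\Phi\cup(-\Phi)$.

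For the converse, I would argue contrapositively: suppose $\psi\notin\overline{\mathrm{conv}}(\Phi\cup(-\Phi))$. Let $K$ denote this closed convex set; note $0\in K$ since $\tfrac12\phi+\tfrac12(-\phi)=0$ for any $\phi\in\Phi$. By Lemma \ref{hb:real} there is a function $f$ and a real number $t$ with $\sp{f,\psi}>t$ and $\sp{f,\phi}\leq t$ for all $\phi\in K$. Taking $\phi=0$ yields $t\geq 0$. Because both $\phi$ and $-\phi$ lie in $K$ for every $\phi\in\Phi$, we deduce $|\sp{f,\phi}|\leq t$ for all $\phi\in\Phi$, and hence $\|f\|\leq t$. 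Here I use the assumption that the linear span of $\Phi$ is all of $\R^X$, which ensures $\|\cdot\|$ is a genuine norm (otherwise one could have $\|f\|=0$ with $f\ne 0$). In particular, if $\|f\|=0$ then $f=0$, forcing $\sp{f,\psi}=0$ and contradicting $\sp{f,\psi}>t\geq 0$; so $\|f\|>0$. Then
\begin{equation*}
\|\psi\|^*\geq\frac{|\sp{f,\psi}|}{\|f\|}>\frac{t}{\|f\|}\geq 1,
\end{equation*}
which completes the contrapositive.

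The only subtle points are the well-definedness of the norm (handled by the spanning hypothesis on $\Phi$) and the case $t=0$ in the separation (handled by the non-degeneracy of $\|\cdot\|$); otherwise this is a routine polar-duality argument, so I expect no serious obstacle.
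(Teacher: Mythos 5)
Your proof is correct and follows essentially the same route as the paper: the triangle-inequality argument over convex combinations (extended to the closure) for one direction, and Hahn--Banach separation of $\psi$ from the closed symmetric convex hull for the other. The only difference is that you spell out the normalisation of the separating constant $t$ and the degenerate case $t=0$ (via the spanning hypothesis), details the paper's proof leaves implicit by asserting the separation directly with constant $1$.
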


\begin{proof}
If $\psi=\sum_i\lambda_i\phi_i$ with $\phi_i\in\Phi\cup(-\Phi)$,
$\lambda_i\geq 0$ for each $i$ and $\sum_i\lambda_i=1$, and if
$\|f\|\leq 1$, then $|\sp{f,\psi}|\leq\sum_i\lambda_i|\sp{f,\phi_i}|
\leq 1$. The same is then true if $\psi$ belongs to the closure
of the convex hull of $\Phi\cup(-\Phi)$.

If $\psi$ does not belong to this closed convex hull, then by the 
Hahn-Banach theorem there must be a function $f$ such that 
$|\sp{f,\phi}|\leq 1$ for every $\phi\in\Phi$ and $\sp{f,\psi}>1$.
The first condition tells us that $\|f\|\leq 1$, so the second
implies that $\|\psi\|^*>1$.
\end{proof}

So we already know a great deal about functions $\phi$ with bounded dual norm. Recall, however,
that we must consider positive parts of such functions: we would like to show that 
$\sp{\mu - \nu, \phi_+}$ is small whenever $\|\phi\|^*$ is
of reasonable size. We need the following extra lemma to gain some control over these.

\begin{lemma} \label{products}
Let $\Psi$ be a set of functions that take values in $[-2,2]$ and let
$\e>0$.  Then there exist constants $d$ and $M$, depending on $\e$
only, such that for every function $\psi$ in the convex hull of
$\Psi$, there is a function $\omega$ that belongs to $M$ times the
convex hull of all products $\pm\phi_1\dots\phi_j$ with $j\leq d$ and
$\phi_1,\dots,\phi_j\in\Psi$, such that $\|\psi_+-\omega\|_\infty<\e$.
\end{lemma}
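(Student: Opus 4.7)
The plan is to reduce the problem to scalar polynomial approximation of the piecewise linear function $t \mapsto t_+$ on $[-2,2]$. Since every $\phi \in \Psi$ takes values in $[-2,2]$, any convex combination $\psi = \sum_i \lambda_i \phi_i$ (with $\lambda_i \geq 0$, $\sum_i \lambda_i = 1$, $\phi_i \in \Psi$) also satisfies $\psi(x) \in [-2,2]$ for every $x \in X$. By the Weierstrass approximation theorem, I would fix a polynomial $P(t) = \sum_{j=0}^d c_j t^j$ such that $|P(t) - t_+| < \e$ for all $t \in [-2,2]$; both the degree $d$ and the coefficients $c_j$ are determined by $\e$. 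Defining $\omega(x) := P(\psi(x))$, the uniform scalar bound transfers pointwise to give $\|\psi_+ - \omega\|_\infty < \e$, which is the desired approximation.

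The remaining task is to exhibit $\omega$ as an element of $M$ times the convex hull of signed products $\pm\phi_{i_1}\cdots\phi_{i_j}$ with $j \leq d$. For this I would expand each $\psi^j$ multinomially:
\[
\psi^j = \Bigl(\sum_i \lambda_i \phi_i\Bigr)^{\!j} = \sum_{i_1, \ldots, i_j} \lambda_{i_1}\cdots\lambda_{i_j}\, \phi_{i_1}\cdots\phi_{i_j}.
\]
The key observation is that the weights $\lambda_{i_1}\cdots\lambda_{i_j}$ form a probability distribution on $j$-tuples, since $\sum_{i_1, \ldots, i_j}\lambda_{i_1}\cdots\lambda_{i_j} = (\sum_i \lambda_i)^j = 1$. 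Thus $\psi^j$ is itself a convex combination of $j$-fold products of elements of $\Psi$ (with the convention that the empty product, for $j=0$, is the constant function $1$). Setting $M := \sum_{j=0}^d |c_j|$ and absorbing signs via $c_j = \mathrm{sgn}(c_j)|c_j|$, I obtain
\[
\omega = \sum_{j=0}^d c_j \psi^j = M \cdot \sum_{j=0}^d \frac{|c_j|}{M}\sum_{i_1, \ldots, i_j}\lambda_{i_1}\cdots\lambda_{i_j}\bigl(\mathrm{sgn}(c_j)\,\phi_{i_1}\cdots\phi_{i_j}\bigr),
\]
which expresses $\omega/M$ as a genuine convex combination of signed products of at most $d$ factors from $\Psi$.

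Both constants $d$ and $M$ depend only on the fixed polynomial $P$, hence only on $\e$, as required. There is no real obstacle in the argument: the proof is Weierstrass approximation followed by the elementary observation that a power of a convex combination stays a convex combination (now of products). The only minor point to verify carefully is that the hypothesis $\Psi \subseteq [-2,2]^X$ is used essentially, so that the pointwise values of $\psi$ lie in the interval on which $P$ approximates $t \mapsto t_+$ uniformly.
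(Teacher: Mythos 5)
Your proof is correct and is essentially the paper's own argument: uniform polynomial approximation of $t\mapsto t_+$ on $[-2,2]$, followed by multinomial expansion of $\psi^j$ and the observation that the weights $\lambda_{i_1}\cdots\lambda_{i_j}$ sum to $1$, with $M=\sum_j|c_j|$. The only cosmetic difference is that you allow a constant term $j=0$ (interpreted as the empty product), whereas the paper takes $P$ without a constant term; since $t_+$ vanishes at $0$, this costs at most a factor $2$ in $\e$ and changes nothing essential.
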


\begin{proof}
We start with the well-known fact that continuous functions on closed
bounded intervals can be uniformly approximated by
polynomials. Therefore, if $K(x)$ is the function defined on $[-2,2]$
that takes the value $0$ if $x\leq 0$ and $x$ if $x\geq 0$, then there
is a polynomial $P$ such that $|P(x)-K(x)|\leq\e$ for every
$x\in[-2,2]$. It follows that if $\psi$ is a function that takes
values in $[-2,2]$, then $\|P(\psi)-\psi_+\|_\infty\leq\e$.

Let us apply this observation in the case where $\psi$ is a convex
combination $\sum_i\lambda_i\phi_i$ of functions $\phi_i\in\Psi$. If
$P(t)=\sum_{j=1}^da_jt^j$, then
\begin{equation*}
P(\psi)=\sum_{j=1}^da_j\sum_{i_1,\dots,i_j}
\lambda_{i_1}\dots\lambda_{i_j}\phi_{i_1}\dots\phi_{i_j}.
\end{equation*}
But $\sum_{i_1,\dots,i_j}\lambda_{i_1}\dots\lambda_{i_j}=1$ for every
$j$, so this proves that we can take $M$ to be
$\sum_{j=1}^d|a_j|$. This bound and the degree $d$ depend on $\e$
only, as claimed.
\end{proof}

Similarly, for colouring problems, where we need to deal with the function
$(\max_{1\leq i\leq r}\phi_i)_+$, we have the following lemma. The proof is very similar to 
that of Lemma \ref{products}, though we must replace the function $K(x)$ that 
has to be approximated with the function $K(x_1, \dots, x_r) = \max\{0,x_1,\dots,x_r\}$ 
and apply a multivariate version of the uniform approximation theorem inside
the set $[-2,2]^r$ (though the case we actually need follows easily from the
one-dimensional theorem).

\begin{lemma} \label{colourproducts}
Let $\Psi$ be a set of functions that take values in $[-2,2]$ and let
$\e>0$.  Then there exist constants $d$ and $M$, depending on $\e$
only, such that for every set of functions $\psi_1, \dots, \psi_r$ in the 
convex hull of $\Psi$, there is a function $\omega$ that belongs to $M$ times the
convex hull of all products $\pm\phi_1\dots\phi_j$ with $j\leq d$ and
$\phi_1,\dots,\phi_j\in\Psi$, such that $\|(\max_{1\leq i\leq r}\psi_i)_+-\omega\|_\infty<\e$. \hfill $\square$
\end{lemma}

We shall split up the rest of the proof of our main result as follows. First,
we shall state a set of assumptions about the set $S$ of 
ordered subsets of $X$. Then we shall show how the transference results we are 
aiming for follow from these assumptions. Then over the next few sections
we shall show how to prove these assumptions for a large class of sets 
$S$. 

The reason for doing things this way is twofold. First, it splits the proof into
a deterministic part (the part we do now) and a probabilistic part (verifying
the assumptions). Secondly, it splits the proof into a part that is completely
general (again, the part we do now) and a part that depends more on the
specific set $S$. Having said that, when it comes to verifying the assumptions,
we do not do so for \textit{individual} sets $S$. Rather, we identify two broad
classes of set $S$ that between them cover all the problems that have traditionally
interested people. This second shift, from the general to the particular, 
will not be necessary until Section \ref{ProbII}. For now, the argument remains quite
general. 

Suppose now that $\mu_1, \dots, \mu_m$ are measures on a finite set $X$ and $\mu = m^{-1} (\mu_1 + \dots + \mu_m)$. 
In subsequent sections, we will take $\mu_1, \dots, \mu_m$ to be the associated measures of random
sets $U_1, \dots, U_m$, each distributed as $X_p$, but for now we will continue to work deterministically.
We shall be particularly interested in the following four properties that such a sequence of measures may have.

\begin{properties*}
\

\begin{enumerate}
\item[P0.] $\|\mu_i\|_1 = 1 + o(1)$ for each $i$, where $o(1) \rightarrow 0$ as $|X| \rightarrow \infty$.

\item[P1.] $\|*_1(\mu_{i_2},\dots,\mu_{i_k})-\circ_1(\mu_{i_2},\dots,\mu_{i_k})\|_1\leq\eta$
whenever $i_2,\dots,i_k$ are distinct integers between 1 and $m$. 

\item[P2.] $\|*_j(1,1,\dots,1,\mu_{i_{j+1}},\dots,\mu_{i_k})\|_\infty\leq 2$  
whenever $j \geq 2$ and $i_{j+1},\dots,i_k$ are distinct integers between 1 and $m$.

\item[P3.] $|\sp{\mu-1,\xi}| < \lambda$
whenever $\xi$ is a product of at most $d$ basic anti-uniform functions from $\Phi_{\mu,1}$.
\end{enumerate}
\end{properties*}

In the remainder of this section, we will prove that if $\mu_1, \dots, \mu_m$ satisfy these
four properties, then any robust density theorem or colouring theorem also holds relative to the 
measure $\mu$. To prove this for density statements, we first need a simple lemma showing that
any density theorem implies an equivalent functional formulation.  For convenience, we will assume that each set in 
$S$ consists of distinct elements from $X$.

\begin{lemma} \label{FunctToSet}
Let $k$ be an integer and $\r, \b, \e>0$ be real numbers. Let $X$ be a sufficiently large finite set and 
let $S$ be a collection of ordered subsets of $X$, each of size 
$k$ and with no repeated elements. Suppose that for every subset $B$ of
$X$ of size at least $\rho |X|$ there are at least $\b|S|$ elements $(s_1,\dots,s_k)$ of 
$S$ such that $s_i\in B$ for each $i$. Let $g$ be a function on $X$ such that $0 \leq g
\leq 1$ and $\|g\|_1 \geq \rho + \e$. Then
\[\E_{s \in S} g(s_1) \dots g(s_k) \geq \b-\e.\]
\end{lemma}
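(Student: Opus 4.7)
The plan is to prove this via a probabilistic sampling trick. Let $B$ be the random subset of $X$ obtained by including each $x\in X$ independently with probability $g(x)$. Since every $s=(s_1,\dots,s_k)\in S$ has distinct coordinates, the inclusion events $\{s_i\in B\}$ are mutually independent, so
\[
\E_{s\in S}\prod_{i=1}^k g(s_i)\;=\;\E_{s\in S}\prod_{i=1}^k \P[s_i\in B]\;=\;\E_B\Bigl[\tfrac{1}{|S|}\bigl|\{s\in S: s_1,\dots,s_k\in B\}\bigr|\Bigr].
\]
This identity is the whole point: the distinctness hypothesis on elements of $S$ is precisely what converts the product $\prod_i g(s_i)$ into an honest probability of a joint inclusion event under the sampling of $B$.

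Next I would control $|B|$ by concentration. Since $\E|B|=|X|\|g\|_1\geq(\rho+\epsilon)|X|$ and $\mathrm{Var}(|B|)=\sum_x g(x)(1-g(x))\leq|X|/4$, Chebyshev's inequality gives
\[
\P\bigl[|B|<\rho|X|\bigr]\;\leq\;\P\bigl[\bigl||B|-\E|B|\bigr|>\epsilon|X|\bigr]\;\leq\;\frac{1}{4\epsilon^2|X|},
\]
which tends to $0$ as $|X|\to\infty$. On the complementary event, the hypothesis of the lemma applied to the (now sufficiently large) set $B$ guarantees that the fraction of $s\in S$ lying inside $B$ is at least $\beta$. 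Combining these,
\[
\E_{s\in S}\prod_{i=1}^k g(s_i)\;\geq\;\beta\cdot\P[|B|\geq\rho|X|]\;\geq\;\beta\bigl(1-o_{|X|\to\infty}(1)\bigr),
\]
which is at least $\beta-\epsilon$ once $|X|\gtrsim 1/\epsilon^3$. This is exactly the ``$X$ sufficiently large'' clause in the statement.

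There is really only one idea and I do not expect any obstacle. The only subtle point worth noting is that without the distinctness hypothesis on the coordinates of $s$ the first display would fail (one would lose the factorization across $i$), and one would have to fall back on the weaker level-set / Varnavides bound $\E_s \prod_i g(s_i)\geq \beta\, t^k$ obtained by setting $B=\{g\geq t\}$ for $t\leq\epsilon$; this only yields a positive constant rather than the near-optimal $\beta-\epsilon$.
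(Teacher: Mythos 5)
Your proof is correct and is essentially the paper's own argument: sample $B$ by including each $x$ independently with probability $g(x)$, use the distinctness of the coordinates of $s$ to identify $\E_{s\in S}\prod_i g(s_i)$ with the expected fraction of $s\in S$ landing in $B$, and apply concentration of $|B|$ together with the density hypothesis. The only cosmetic difference is that you quantify the deviation bound via Chebyshev where the paper invokes standard large deviation inequalities, which changes nothing of substance.
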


\begin{proof}
Let us choose a subset $B$ of $X$ randomly by choosing each $x\in X$ with probability $g(x)$, with the choices independent. The expected number of elements of $B$ is $\sum_xg(x)\geq(\rho+\e)|X|$ and therefore, by applying standard large deviation inequalities, one may show that if $|X|$ is sufficiently large the probability that $|B|<\rho|X|$ is at most $\e$. Therefore, with probability at least $1-\e$ there are at least $\b|S|$ elements $s$ of $S$ such that $s_i\in B$ for every $i$. It follows that the expected number of such sequences is at least $\b|S|(1-\e)\geq(\b-\e)|S|$. But each sequence $s$ has probability $g(s_1)\dots g(s_k)$ of belonging to $B$, so the expected number is also $\sum_{s\in S}g(s_1)\dots g(s_k)$, which proves the lemma.
\end{proof}

Note that the converse to the above result is trivial (and does not need an extra $\e$), since if $B$ is a set of density $\rho$, then the characteristic function of $B$ has $L_1$-norm $\rho$.

We remark here that the condition that no sequence in $S$ should have repeated elements is not a serious restriction. For one thing, all it typically does is rule out degenerate cases (such as arithmetic progressions with common difference zero) that do not interest us. Secondly, these degenerate cases tend to be sufficiently infrequent that including them would have only a very small effect on the constants. The reason we do not allow them is that it makes the proof neater.

With Lemma \ref{FunctToSet} in hand, we are now ready to prove that a transference principle holds for density theorems. 

\begin{theorem} \label{main:density}
Let $k$ be a positive integer and let $\r, \b, \e > 0$ be real numbers. Let $X$ be a finite set and let $S$ be a homogeneous collection of ordered subsets of $X$, each of size $k$ and having no repeated elements. Suppose that for every subset $B$ of $X$ of size at least $\rho |X|$ there are at least $\b|S|$ elements $(s_1,\dots,s_k)$ of $S$ such that $s_i\in B$ for each $i$. Then there are positive constants $\eta$ and $\lambda$ and positive integers $d$ and $m$ with the following property. If $\mu_1, \dots, \mu_m$ are such that P0, P1, P2 and P3 hold for the constants $\eta, \lambda$ and $d$, $\mu=m^{-1}(\mu_1+\dots+\mu_m)$, and $|X|$ is sufficiently large, then $\E_{s \in S} f(s_1)\dots f(s_k) \geq \b - \e$ for every function $f$ such that $0\leq f\leq\mu$ and $\E_xf(x)\geq \rho + \e$.
\end{theorem}

\begin{proof}
To begin, we apply Lemma \ref{FunctToSet} with $\frac{\e}{2}$ to conclude that if $|X|$ is sufficiently large and $g$ is any function on $X$ with $0 \leq g
\leq 1$ and $\|g\|_1 \geq \rho + \frac{\e}{2}$, then
\[\E_{s \in S} g(s_1) \dots g(s_k) \geq \b-\frac{\e}{2}.\]
For each function $h$, let $\|h\|$ be defined to be the maximum of $|\sp{h,\phi}|$ over
all basic anti-uniform functions $\phi\in\Phi_{\mu,1}$. Let $\eta = \frac{\e}{10}$. We claim that, given $f$ with $0 \leq f \leq \mu$, 
there exists a $g$ with $0 \leq g \leq 1$ such that $\|(1+\frac{\e}{4})^{-1}f -g\|\leq\eta/k$.
Equivalently, this shows that $|\sp{(1+\frac{\e}{4})^{-1}f-g,\phi}|\leq\eta/k$ 
for every $\phi\in\Phi_{\mu,1}$. We will prove this claim in a moment. However, let us first note that it is a sufficient
condition to imply that
\[\E_{s \in S} f(s_1)\dots f(s_k) \geq \b - \e\]
whenever $0\leq f\leq\mu$ and $\E_xf(x)\geq \rho + \e$. Let $m = 2k^3/\eta$ and write $(1+\frac{\e}{4})^{-1}f$ as $m^{-1}(f_1+\dots+f_m)$ with $0\leq f_i\leq\mu_i$. Corollary \ref{counting}, together with P1 and P2, then implies that
$\E_xg(x)\geq (1+\frac{\e}{4})^{-1}\E_xf(x)-\eta/k$ and that
\begin{equation*}
\bigl|\E_{i_1,\dots,i_k\in\{1,\dots,m\}} \sp{f_{i_1},\circ_1(f_{i_2},\dots,f_{i_k})} -\sp{g,*_1(g,g,\dots,g)}\bigr|\leq 4\eta.
\end{equation*}
Since $\eta/k<\e/8$, $(1+\frac{\e}{4})^{-1} \geq 1 - \frac{\e}{4}$ and $1 + o(1) \geq \E_x f(x) \geq \rho + \e$,
\[\E_x g(x) \geq \left(1+\frac{\e}{4}\right)^{-1}\E_xf(x)-\eta/k \geq \rho + \e - \frac{\e}{4} - \frac{\e}{8} - o(1) \geq \rho + \frac{\e}{2},\] 
for $|X|$ sufficiently large, so our assumption about $g$ implies that 
$\sp{g,*_1(g,g,\dots,g)}\geq \b - \frac{\e}{2}$. Since in addition $8\eta < \e$, we can deduce the inequality
$\E_{i_1,\dots,i_k}\sp{f_{i_1},\circ_1(f_{i_2},\dots,f_{i_k})}\geq
\b - \e$, which, since the capped convolution is smaller than the standard convolution, implies that
\[\E_{s \in S} f(s_1)\dots f(s_k) = \sp{f,*_1(f,f,\dots,f)} \geq \E_{i_1,\dots,i_k}\sp{f_{i_1},\circ_1(f_{i_2},\dots,f_{i_k})}\geq \b - \e.\]
It remains to prove that for any $f$ with $0 \leq f \leq \mu$, there exists a $g$ with $0 \leq g \leq 1$ such that $\|(1+\frac{\e}{4})^{-1}f -g\|\leq\eta/k$. An application of Lemma 
\ref{transfer:density} tells us that if $\sp{\mu-1,\psi_+}<\frac{\e}{4}$ for every
function $\psi$ with $\|\psi\|^*\leq k\eta^{-1}$, then this will indeed be the case. 
Now let us try to find a sufficient condition for
this. First, if $\|\psi\|^*\leq k\eta^{-1}$, then Lemma \ref{duality}
implies that $\psi$ is contained in $k\eta^{-1}$ times the convex hull
of $\Phi \cup \{-\Phi\}$, where $\Phi$ is the set of all basic anti-uniform functions. Since
functions in $\Phi \cup \{-\Phi\}$ take values in $[-2,2]$, we can apply Lemma
\ref{products} to find constants $d$ and $M$ and a function $\omega$ that can be written as
$M$ times a convex combination of products of at most $d$ functions
from $\Phi \cup \{-\Phi\}$ such that $\|\psi_+-\omega\|_\infty \leq \e/20$. Hence, for such an $\omega$,
\[\sp{\mu - 1, \psi_+ - \omega} \leq \|\mu - 1\|_1 \|\psi_+ - \omega\|_{\infty} \leq (2 + o(1)) \frac{\e}{20} < \frac{\e}{8},\]
for $|X|$ sufficiently large. From this it follows that if $|\sp{\mu-1,\xi}|<\e/8M$ whenever $\xi$ is a product
of at most $d$ functions from $\Phi \cup \{-\Phi\}$, then
\begin{equation*}
\sp{\mu - 1, \psi_+}=\sp{\mu-1,\omega}+\sp{\mu-1,\psi_+-\omega}<\e/8+\e/8=\e/4.
\end{equation*} 
Therefore, applying P3 with $d$ and $\lambda = \e/8 M$ completes the proof.
\end{proof}

To prove a corresponding theorem for colouring problems, we will again need a lemma saying that colouring theorems always have a functional reformulation. 

\begin{lemma} \label{RamseyFtoS}
Let $k, r$ be positive integers and let $\b >0$ be a real number. Let $X$ be a finite set and 
let $S$ be a collection of ordered subsets of $X$, each of size 
$k$ and having no repeated elements. Suppose that for every 
$r$-colouring of $X$ there are at least $\b|S|$ elements $(s_1,\dots,s_k)$ of 
$S$ such that each $s_i$ has the same colour. Let $g_1,\dots,g_r$ be functions from $X$ to $[0,1]$ such that $g_1+\dots+g_r=1$. Then
\[\E_{s \in S}\sum_{i=1}^rg_i(s_1) \dots g_i(s_k) \geq \b.\]
\end{lemma}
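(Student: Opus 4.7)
The plan is to mirror the probabilistic argument of Lemma \ref{FunctToSet}, but replacing the construction of a random set $B$ with that of a random $r$-colouring. Specifically, I would define a random colouring $c\colon X\to\{1,\dots,r\}$ by setting $c(x)=i$ independently for each $x\in X$ with probability $g_i(x)$. The hypothesis that $g_1+\dots+g_r=1$ and each $g_i\geq 0$ is exactly what makes this a valid probability distribution at each point.

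Next, I would compute the expected number of monochromatic sequences from $S$ under this random colouring. Because the sequences in $S$ have no repeated elements, the events $\{c(s_1)=i\},\dots,\{c(s_k)=i\}$ are jointly independent for any fixed $s=(s_1,\dots,s_k)\in S$. Hence the probability that $s$ is monochromatic of colour $i$ is $g_i(s_1)\cdots g_i(s_k)$, and the probability that $s$ is monochromatic in some colour is $\sum_{i=1}^r g_i(s_1)\cdots g_i(s_k)$. Summing over $s\in S$ and using linearity of expectation, the expected number of monochromatic tuples is exactly
\[
|S|\cdot\E_{s\in S}\sum_{i=1}^r g_i(s_1)\cdots g_i(s_k).
\]

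Finally, the Ramsey-type hypothesis asserts that every deterministic $r$-colouring of $X$ produces at least $\beta|S|$ monochromatic tuples from $S$. Therefore the random variable counting monochromatic tuples is pointwise bounded below by $\beta|S|$, and so its expectation satisfies the same bound. Dividing by $|S|$ gives the desired inequality. There is no real obstacle here: the one small thing to note is why no $\e$-term appears as it did in Lemma \ref{FunctToSet}. The reason is that the Ramsey hypothesis holds for \emph{every} colouring, with no quantitative threshold to be met, so we do not need a large-deviation step to rule out a bad tail event (in contrast with the density case, where we had to ensure $|B|\geq\rho|X|$).
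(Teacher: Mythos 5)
Your proof is correct and is essentially the paper's own argument: colour each point independently with colour $i$ with probability $g_i(x)$, note that the expected number of monochromatic sequences is $\sum_{s\in S}\sum_i g_i(s_1)\cdots g_i(s_k)$ (using that sequences have no repeated elements), and compare with the pointwise lower bound $\b|S|$ from the Ramsey hypothesis. Your remark about why no $\e$-loss is needed, in contrast with Lemma \ref{FunctToSet}, is also accurate.
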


\begin{proof}
Define a random $r$-colouring of $X$ as follows. For each $x\in X$, let $x$ have colour $i$ with probability $g_i(x)$, and let the colours be chosen independently. By hypothesis, the number of monochromatic sequences is at least $\b|S|$, regardless of what the colouring is. But the expected number of monochromatic sequences is $\sum_{s \in S}\sum_{i=1}^rg_i(s_1) \dots g_i(s_k)$, so the lemma is proved.
\end{proof}

We actually need a slightly stronger conclusion than the one we have just obtained. However, if $S$ is homogeneous then it is an easy matter to strengthen the above result to what we need.

\begin{lemma} \label{RamseyFtoS2}
Let $k, r$ be positive integers and let $\b>0$ be a real number. Let $X$ be a finite set and 
let $S$ be a homogeneous collection of ordered subsets of $X$, 
each of size $k$ and having no repeated elements. Suppose that for every $r$-colouring of $X$ there are at least $\b|S|$ elements $(s_1,\dots,s_k)$ of 
$S$ such that each $s_i$ has the same colour. Then there exists $\d>0$ with the following
property. If $g_1,\dots,g_r$ are any $r$ functions from $X$ to $[0,1]$ such that $g_1(x)+\dots+g_r(x)\geq1/2$ for at least $(1-\d)|X|$ values of $x$, then
\[\E_{s \in S}\sum_{i=1}^rg_i(s_1) \dots g_i(s_k) \geq 2^{-(k+1)}\b.\]
\end{lemma}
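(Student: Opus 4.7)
My plan is to reduce to Lemma \ref{RamseyFtoS} by modifying the $g_i$ into functions $h_i$ that genuinely sum to $1$ on $X$, while losing only a controlled amount in the resulting counting inequality. Set $\delta = \beta/(2k)$ and let $Y = \{x \in X : g_1(x) + \dots + g_r(x) \geq 1/2\}$, so that $|X \setminus Y| \leq \delta|X|$ by hypothesis. On $Y$, define $h_i(x) = g_i(x)/(g_1(x) + \dots + g_r(x))$, and on $X \setminus Y$ define $h_1(x) = 1$ and $h_i(x) = 0$ for $i \geq 2$. By construction each $h_i$ takes values in $[0,1]$ and $h_1 + \dots + h_r \equiv 1$, so Lemma \ref{RamseyFtoS} gives
\[\E_{s \in S}\sum_{i=1}^r h_i(s_1)\cdots h_i(s_k) \geq \b.\]

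Next I would trim the $h_i$ back to $Y$. Let $\tilde h_i(x) = h_i(x) \mathbf{1}_Y(x)$. Because $\sum_i h_i \leq 1$ and each $h_i \leq 1$, we have $\sum_i h_i(s_1)\cdots h_i(s_k) \leq 1$ for every $s$. Hence the $s \in S$ with some $s_j \notin Y$ contribute at most the measure of such $s$ to the above expectation. Here homogeneity of $S$ is used crucially: for each fixed $j$, $|S_j(x)| = |S|/|X|$, so the fraction of $s \in S$ with $s_j \notin Y$ is exactly $|X \setminus Y|/|X| \leq \delta$, and a union bound over $j$ bounds the total fraction by $k\delta$. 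Therefore
\[\E_{s \in S}\sum_{i=1}^r \tilde h_i(s_1)\cdots \tilde h_i(s_k) \geq \b - k\delta = \b/2.\]

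Finally I would transfer back to the $g_i$. For $x \in Y$ the denominator satisfies $g_1(x) + \dots + g_r(x) \geq 1/2$, so $h_i(x) \leq 2g_i(x)$, i.e. $g_i(x) \geq \tilde h_i(x)/2$ for every $x \in X$ and every $i$. Multiplying through,
\[\E_{s \in S}\sum_{i=1}^r g_i(s_1)\cdots g_i(s_k) \geq 2^{-k}\,\E_{s \in S}\sum_{i=1}^r \tilde h_i(s_1)\cdots \tilde h_i(s_k) \geq 2^{-k} \cdot \b/2 = 2^{-(k+1)}\b,\]
which is exactly the claimed bound.

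There is no real obstacle here: the only subtlety is ensuring that the renormalization on $Y$ does not blow the $h_i$ above $1$ (which is automatic because $\sum_i h_i(x) = 1$ on $Y$) and that the error from restricting to $Y$ is controlled linearly in $\delta$, which is exactly where the homogeneity of $S$ is invoked. Choosing $\delta = \b/(2k)$ at the outset then balances the two losses.
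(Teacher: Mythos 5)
Your proof is correct and follows essentially the same route as the paper: you explicitly construct the normalized functions $h_i$ with $h_1+\dots+h_r=1$ and $h_i\leq 2g_i$ on the good set, apply Lemma \ref{RamseyFtoS}, use homogeneity to bound the fraction of sequences meeting the bad set by $k\d$, and lose the factor $2^{-k}$ when passing back to the $g_i$. The only differences are cosmetic: you subtract the $k\d$ error before multiplying by $2^{-k}$ (so your admissible $\d=\b/2k$ is a bit larger than the paper's $2^{-(k+1)}\b/k$), which is perfectly fine since the lemma only asserts the existence of some $\d>0$.
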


\begin{proof}
Let $Y$ be the set of $x$ such that $g_1(x)+\dots+g_r(x)<1/2$. Then we can find functions $h_1,\dots,h_r$ from $X$ to $[0,1]$ such that $h_1+\dots+h_r=1$ and $h_i(x)\leq 2g_i(x)$ 
for every $x\in X\setminus Y$. By the previous lemma, we know that
\[\E_{s \in S}\sum_{i=1}^rh_i(s_1) \dots h_i(s_k) \geq \b.\]
Let $T$ be the set of sequences $s\in S$ such that $s_i\in Y$ for at least one $i$. Since $S$
is homogeneous, for each $i$ the set of $s$ such that $s_i\in Y$ has size $|S||Y|/|X|\leq\d|S|$.
Therefore, $|T|\leq k\d|S|$. 
It follows that 
\begin{eqnarray*}
\sum_{s \in S}\sum_{i=1}^rg_i(s_1) \dots g_i(s_k) &\geq& 
\sum_{s\in S\setminus T}\sum_{i=1}^rg_i(s_1) \dots g_i(s_k)\\
&\geq& 2^{-k}\sum_{s\in S}\sum_{i=1}^rh_i(s_1) \dots h_i(s_k)-|T|\\
&\geq& (2^{-k}\b-k\d)|S|.
\end{eqnarray*}
Thus, the lemma is proved if we take $\d=2^{-(k+1)}\b/k$.
\end{proof}

We now prove our main transference principle for colouring theorems. The proof is similar to
that of Theorem \ref{main:density} and reduces to the same conditions, but we include the proof
for completeness.

\begin{theorem} \label{main:colouring}
Let $k, r$ be positive integers and $\b > 0$ be a real number. Let $X$ be a finite set and 
let $S$ be a homogeneous collection of ordered subsets of $X$, each of size 
$k$ and with no repeated elements. Suppose that for every 
$r$-colouring of $X$ there are at least $\b|S|$ elements $(s_1,\dots,s_k)$ of 
$S$ such that each $s_i$ has the same colour. Then there are positive constants $\eta$ and 
$\lambda$ and positive integers $d$ and $m$ with the following property.
If $\mu_1, \dots, \mu_m$ are such that P0, P1, P2 and P3 hold for the constants $\eta, \lambda$ and $d$, $\mu = m^{-1}(\mu_1+\dots+\mu_m)$, and $|X|$ is sufficiently large, then $\E_{s \in S} \sum_{i=1}^r f_i(s_1) \dots f_i(s_k) \geq 2^{-(k+2)} \b$ for every sequence of functions $f_1,\dots,f_r$ such that $0\leq f_i\leq\mu$ for each $i$ and $\sum_{i=1}^r f_i = \mu$.
\end{theorem}

\begin{proof}
An application of Lemmas \ref{RamseyFtoS} and \ref{RamseyFtoS2} tells us that there exists $\d>0$ with the following
property. If $g_1,\dots,g_r$ are any $r$ functions from $X$ to $[0,1]$ such that $g_1(x)+\dots+g_r(x)\geq1/2$ for at least $(1-\d)|X|$ values of $x$, then
\[\E_{s \in S}\sum_{i=1}^rg_i(s_1) \dots g_i(s_k) \geq 2^{-(k+1)}\b.\]
Again we define the norm $\|.\|$ by taking $\|h\|$ to be the maximum of $|\sp{h,\phi}|$
over all basic anti-uniform functions $\phi\in\Phi_{\mu,1}$. Let $\eta$ be such that $8 \eta r < \min(\d, 2^{-(k+1)} \b)$. We claim that, given functions $f_1, \dots, f_r$
with $0\leq f_i\leq\mu$ and $\sum_{i=1}^r f_i = \mu$,
there are functions $g_i$ such that $0\leq g_i\leq 1$, $g_1 + \dots + g_r \leq 1$ and 
$\|(1+\frac{\d}{4})^{-1}f_i -g_i\|\leq\eta/k$. Equivalently, this means that 
$|\sp{(1+\frac{\d}{4})^{-1}f_i-g_i,\phi}|\leq\eta/k$ for every $i$ and every $\phi\in\Phi_{\mu,1}$.
We will return to the proof of this statement. For now, let us show that it implies
\[\E_{s \in S} \sum_{i=1}^r f_i(s_1) \dots f_i(s_k) \geq 2^{-(k+2)} \b.\]
Let $m = 2k^3/\eta$ and write $(1+\frac{\d}{4})^{-1} f_i$ as $m^{-1} (f_{i,1} + \cdots + f_{i,m})$ with $0 
\leq f_{i,j} \leq \mu_j$. Corollary \ref{counting}, together with P1 and P2, then 
implies that $\E_x g_i(x)\geq (1+\frac{\d}{4})^{-1}\E_x f_i(x)-\eta/k$ and that
\begin{equation*}
\bigl|\E_{j_1,\dots,j_k\in\{1,\dots,m\}}
\sp{f_{i, j_1},\circ_1(f_{i, j_2},\dots,f_{i, j_k})}
-\sp{g_i,*_1(g_i,g_i,\dots,g_i)}\bigr|\leq 4\eta.
\end{equation*}
Suppose that there were at least $\d |X|$ values of $x$ for which $\sum_{i=1}^r g_i(x) < \frac{1}{2}$.
Then this would imply that 
\[\E_{x \in X} \sum_{i=1}^r g_i(x) < \frac{1}{2} \d + (1 - \d) \leq 1 - \frac{\d}{2}.\]
But $\E_x g_i(x) \geq (1+\frac{\d}{4})^{-1} \E_x f_i(x) - \eta/k$. Therefore,  
adding over all $i$, we have, since $\eta \leq \d/8r$ and $(1+\frac{\d}{4})^{-1} \geq 1 - \frac{\d}{4}$, that
\[\sum_{i=1}^r \E_{x \in X} g_i(x) \geq \left(1+\frac{\d}{4}\right)^{-1}(1+o(1)) - \frac{\eta r}{k} \geq 1 - \frac{\d}{2},\]
for $|X|$ sufficiently large, a contradiction. Our assumption about the $g_i$ therefore implies the inequality 
$\sum_{i=1}^r \sp{g_i,*_1(g_i,g_i,\dots,g_i)}\geq 2^{-(k+1)} \b$. Since $8 r \eta<2^{-(k+1)} \b$, we can deduce the inequality
\[\sum_{i=1}^r \E_{j_1,\dots,j_k}\sp{f_{i, j_1},\circ_1(f_{i, j_2},\dots,f_{i, j_k})}\geq 2^{-(k+2)} \b,\] 
which, since the capped convolution is smaller than the standard convolution, implies that
\[\E_{s \in S} \sum_{i=1}^r f_i(s_1) \dots f_i(s_k) = \sum_{i=1}^r \sp{f_i,*_1(f_i,f_i,\dots,f_i)} \geq \sum_{i=1}^r \E_{j_1,\dots,j_k}\sp{f_{i, j_1},\circ_1(f_{i, j_2},\dots,f_{i, j_k})}\geq 2^{-(k+2)} \b.\]
As in Theorem \ref{main:density}, we have proved our result conditional upon an assumption, this time that for any functions $f_1, \dots, f_r$ with $0\leq f_i\leq\mu$ and $\sum_{i=1}^r f_i = \mu$,
there are functions $g_i$ such that $0\leq g_i\leq 1$, $g_1 + \dots + g_r \leq 1$ and 
$\|(1+\frac{\d}{4})^{-1}f_i -g_i\|\leq\eta/k$.
An application of Lemma \ref{transfer:colouring} tells us that if
$\sp{\mu-1,(\max_{1 \leq i \leq r} \psi_i)_+}<\d/4$ for every collection of functions $\psi_i$ 
with $\|\psi_i\|^*\leq k\eta^{-1}$, then this will indeed be the case. By Lemma \ref{duality}, each $\psi_i$ is contained in $k\eta^{-1}$ times the convex hull of $\Phi \cup \{-\Phi\}$, where $\Phi$ is the set of all basic anti-uniform functions. Since
functions in $\Phi \cup \{-\Phi\}$ take values in $[-2,2]$, we can apply Lemma
\ref{colourproducts} to find constants $d$ and $M$ and a function $\omega$ that can be written as
$M$ times a convex combination of products of at most $d$ functions
from $\Phi \cup \{-\Phi\}$, such that $\|(\max_{1\leq i \leq r}\psi_i)_+-\omega\|_\infty\leq \d/20$.
From this it follows that if $|X|$ is sufficiently large and $|\sp{\mu-1,\xi}|<\d/8M$ whenever $\xi$ is a product
of at most $d$ functions from $\Phi \cup \{-\Phi\}$, then $\sp{\mu - 1, 
(\max_{1 \leq i \leq r} \phi_i)_+} < \d/4$. Therefore, applying P3 with $d$ and $\lambda = \d/8M$ proves the theorem.
\end{proof}

Finally, we would like to talk a little about structure theorems. To motivate the result that we are about to state, let us begin by giving a very brief sketch of how to prove a sparse version of the triangle removal lemma. (For a precise statement, see Conjecture \ref{RelativeRemoval} in the introduction, and the discussion preceding it.)

The dense version of the lemma states that if a dense graph has almost no triangles, then it is possible to remove a small number of edges in order to make it triangle free. To prove this, one first applies Szemer\'edi's regularity lemma \cite{Sz78} to the graph, and then removes all edges from pairs that are sparse or irregular. Because sparse pairs contain few edges, and very few pairs are irregular, not many edges are removed. If a triangle is left in the resulting graph, then each edge of the triangle belongs to a dense regular pair, and then a simple lemma can be used to show that there must be many triangles in the graph. Since we are assuming that there are very few triangles in the graph, this is a contradiction.

The sparse version of the lemma states that essentially the same result holds in a sparse random graph, given natural interpretations of phrases such as ``almost no triangles". If a random graph with $n$ vertices has edge probability $p$, then the expected number of (labelled) triangles is approximately $p^3n^3$, and the expected number of (labelled) edges is $p n^2$. Therefore, the obvious statement to try to prove, given a random graph $G_0$ with edge probability $p$, is this: for every $\delta>0$ there exists $\epsilon>0$ such that if $G$ is any subgraph of $G_0$ that contains at most $\epsilon p^3n^3$ triangles, then it is possible to remove at most $\delta p n^2$ edges from $G$ and end up with no triangles.

How might one prove such a statement? The obvious idea is to use the transference methods explained earlier to find a $[0,1]$-valued function $g$ defined on pairs of vertices (which we can think of as a weighted graph) that has similar triangle-containing behaviour to $G$. For the sake of discussion, let us suppose that $g$ is in fact the characteristic function of a graph and let us call that graph $\Gamma$ (later, in Corollary~\ref{unconditionalsetstructural}, we will show that such a reduction is always possible). 

If $\Gamma$ has similar behaviour to $G$, then $\Gamma$ contains very few triangles, which is promising. So we apply the dense triangle removal lemma in order to get rid of all triangles. But what does that tell us about $G$? The edges we removed from $\Gamma$ did not belong to $G$. And in any case, how do we use an \textit{approximate} statement (that $G$ and $\Gamma$ have similar triangle-containing behaviour) to obtain an \textit{exact} conclusion (that $G$ with a few edges removed has \textit{no} triangles at all)?

The answer is that we removed edges from $\Gamma$ in ``clumps". That is, we took pairs $(U,V)$ of vertex sets (given by cells of the Szemer\'edi partition) and removed all edges linking $U$ to $V$. So the natural way of removing edges from $G$ is to remove the same clumps that we removed from $\Gamma$. After that, the idea is that if $G$ contains a triangle then it belongs to clumps that were not removed, which means that $\Gamma$ must contain a triple of dense regular clumps, and therefore many triangles, which implies that $G$ must also contain many triangles, a contradiction.

For this to work, it is vital that if a clump contains a very small proportion of the edges of $\Gamma$, then it should also contain a very small proportion of the edges of $G$. More generally, the density of $G$ in a set of the form $U\times V$ should be about $p$ times the density of $\Gamma$ in the same set. Thus, we need a result that allows us to approximate a function by one with a similar triangle count, but we also need the new function to have similar densities inside every set of the form $U\times V$ when $U$ and $V$ are reasonably large. 

In the case of hypergraphs, we need a similar but more complicated statement. The precise nature of the complexity is, rather surprisingly, not too important: the main point is that we shall need to approximate a function dominated by a sparse random measure by a bounded function that has a similar simplex count and similar densities inside all the sets from some set system that is not too large.

In order to state the result precisely, we make the following definition. 

\begin{definition}
Suppose that we have a finite set $X$ and suppose that $\Phi_{\mu,1}$ is a collection of basic anti-uniform functions derived from a collection $S$ of ordered subsets of $X$ and a sequence of measures $\mu = (\mu_1, \dots, \mu_m)$. Then, given a collection of subsets $\mathcal{V}$ of $X$, we define the set of basic anti-uniform functions $\Phi_{\mu,1} (\mathcal{V})$ to be $\Phi_{\mu,1} \cup \{\chi_V : V \in \mathcal{V}\}$, where $\chi_V$ is the characteristic function of the set $V$.
\end{definition}

We also need to modify the third of the key properties, so as to take account of the set system~$\mathcal{V}$.\\

\textit{P3\,$'$. $|\sp{\mu-1,\xi}| < \lambda$ whenever $\xi$ is a product of at most $d$ basic anti-uniform functions from~$\Phi_{\mu,1}(\mathcal{V})$.}\\

Our main abstract result regarding the transfer of structural theorems is the following. It says that not only do the functions $f$ and $g$ reflect one another in the sense that they have similar subset counts, but they may be chosen to have similar densities inside all the sets $V$ from a collection $\mathcal{V}$. The proof, which we omit, is essentially the same as that of Theorem \ref{main:density}: the only difference is that the norm is now defined in terms of $\Phi_{\mu,1}(\mathcal{V})$, which gives us the extra information that $|\sp{f,\chi_V}-\sp{g,\chi_V}|\leq\|f-g\|$ for every $V\in\mathcal{V}$ and hence the extra conclusion at the end.

\begin{theorem} \label{main:structure}
Let $k$ be a positive integer and $\e > 0$ a constant. Let $X$ be a finite set, $S$ a homogeneous collection of ordered subsets of $X$, each of size $k$, and $\mathcal{V}$ a collection of subsets of $X$. Then there are positive constants $\eta$ and $\lambda$ and positive integers $d$ and $m$ with the following
property. If $\mu_1, \dots, \mu_m$ are such that P0, P1, P2 and P$\mathit{3'}$ hold for the constants $\eta, \lambda$ and $d$, then, for $|X|$ sufficiently large, the following holds for $\mu = m^{-1}(\mu_1+\dots+\mu_m)$: whenever $0\leq f\leq\mu$, there exists $g$ with $0 \leq g \leq 1$ such that
\[\E_{s \in S} f(s_1)\dots f(s_k) \geq \E_{s \in S} g(s_1) \dots g(s_k) - \e\]
and, for all $V \in \mathcal{V}$,
\[|\E_{x \in V} f(x) - \E_{x \in V} g(x)| \leq \e \frac{|X|}{|V|}.\]
\end{theorem}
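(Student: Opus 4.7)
The plan is to run the proof of Theorem \ref{main:density} essentially verbatim, but with the enlarged norm
\[\|h\|_{\mu,\mathcal{V}} := \max\{|\sp{h,\phi}| : \phi \in \Phi_{\mu,1}(\mathcal{V})\}\]
in place of the one built from $\Phi_{\mu,1}$ alone. Since $\Phi_{\mu,1}(\mathcal{V}) \supseteq \Phi_{\mu,1}$, this new norm still bounds inner products against (capped) basic anti-uniform functions, so all of the machinery underlying Corollary \ref{counting} goes through unchanged; but it additionally bounds $|\sp{f-g,\chi_V}|$ for every $V\in\mathcal{V}$, which is precisely what is needed to compare the local densities $\E_{x\in V} f(x)$ and $\E_{x\in V} g(x)$.

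Concretely, given $0\leq f\leq\mu$, I would first apply the density transference principle (Lemma \ref{transfer:density}) with the norm $\|\cdot\|_{\mu,\mathcal{V}}$ and small auxiliary parameters $\e_0,\eta$ to produce $g$ with $0\leq g\leq 1$ and $\|(1+\e_0)^{-1}f-g\|_{\mu,\mathcal{V}}\leq\eta/k$. Decomposing $(1+\e_0)^{-1}f = m^{-1}(f_1+\cdots+f_m)$ with $0\leq f_i\leq\mu_i$ and invoking Corollary \ref{counting} under properties $0$--$2$ yields
\[\sp{g,*_1(g,\ldots,g)}\leq \E_{i_1,\ldots,i_k}\sp{f_{i_1},\circ_1(f_{i_2},\ldots,f_{i_k})} + 4\eta \leq (1+\e_0)^{-k}\sp{f,*_1(f,\ldots,f)} + 4\eta,\]
where the second inequality uses $\circ_1\leq *_1$ together with $k$-linearity, and $(1+\e_0)^{-k}\leq 1$. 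Choosing $4\eta\leq\e$ delivers the $S$-count conclusion. For the local density conclusion, the fact that $\chi_V\in\Phi_{\mu,1}(\mathcal{V})$ gives directly $|\sp{(1+\e_0)^{-1}f-g,\chi_V}|\leq\eta/k$, which rearranges to
\[\bigl|(1+\e_0)^{-1}\E_{x\in V}f(x)-\E_{x\in V}g(x)\bigr|\leq \eta|X|/k|V|.\]
To absorb the stray $(1+\e_0)^{-1}$ factor I would apply property 3$'$ to the ``product'' consisting of the single function $\xi=\chi_V$, obtaining $|\E_{x\in V}\mu(x)-1|<\lambda|X|/|V|$, whence $\E_{x\in V} f(x)\leq\E_{x\in V}\mu(x)\leq 1+\lambda|X|/|V|$. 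A short triangle inequality then yields $|\E_{x\in V}f(x)-\E_{x\in V}g(x)|\leq (\eta/k + \e_0 + \e_0\lambda)|X|/|V|$ (using $|X|/|V|\geq 1$), which is at most $\e|X|/|V|$ once the parameters are chosen suitably.

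All that remains is to verify the hypothesis of Lemma \ref{transfer:density}, namely that $\sp{\mu-1,\psi_+}<\e_0$ whenever $\|\psi\|_{\mu,\mathcal{V}}^*\leq k\eta^{-1}$, and this is an exact repetition of the three-step argument in Theorem \ref{main:density}: Lemma \ref{duality} places $\psi$ in $k\eta^{-1}$ times the convex hull of $\Phi_{\mu,1}(\mathcal{V})\cup(-\Phi_{\mu,1}(\mathcal{V}))$; Lemma \ref{products} then uniformly approximates $\psi_+$ by a bounded convex combination of products of at most $d$ elements of this set; and property 3$'$, applied with suitable $d$ and $\lambda$ of order $\e_0/M$, bounds $|\sp{\mu-1,\xi}|$ on each such product. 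The hypothesis of Lemma \ref{products} that every function involved takes values in $[-2,2]$ is preserved because capped convolutions lie in $[0,2]$ and characteristic functions lie in $[0,1]$. I do not foresee any serious obstacle beyond the mild book-keeping needed to ensure the nested constants $\lambda,\e_0,\eta,\e$ are chosen in the correct order, and the slightly delicate step of absorbing the $(1+\e_0)^{-1}$ scaling when passing from the norm bound to the stated local density inequality.
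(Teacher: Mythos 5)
Your proposal is correct and is essentially the paper's own argument: the paper omits the proof of this theorem precisely because it is the proof of Theorem \ref{main:density} run with the enlarged set $\Phi_{\mu,1}(\mathcal{V})$ defining the norm and with property 3$'$ in place of property 3, the membership of each $\chi_V$ in the norming set giving the extra local-density conclusion. Your careful handling of the $(1+\e_0)^{-1}$ scaling is a detail the paper glosses over, but it is handled correctly and does not change the route.
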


We remark that the second part of the conclusion can be rewritten as
\[|\E_x\chi_V(x)f(x) - \E_x\chi_V(x)g(x)|\leq\e,\]
which is precisely the statement that $|\sp{f,\chi_V}-\sp{g,\chi_V}|\leq\e$. 

Note that for P$\mathrm{3'}$ to have a chance of holding, we cannot have too many sets in the collection $\mathcal{V}$. However, we can easily have enough sets for our purposes. For instance, the collection of pairs of vertex sets in a graph with $n$ vertices has size $4^n$; this is far smaller than the number of graphs on $n$ vertices, which is exponential in $n^2$. More generally, an important role in the hypergraph regularity lemma is played by $k$-uniform hypergraphs $H$ formed as follows: take a $(k-1)$-uniform hypergraph $K$ and for each set $E$ of size $k$, put $E$ into $H$ if and only if all its subsets of size $k-1$ belong to $K$. Since there are far fewer $(k-1)$-uniform hypergraphs than there are $k$-uniform hypergraphs, we have no trouble applying our result.

Since our ultimate goal is to prove a probabilistic theorem, the task that remains to us is to prove that certain random sets satisfy P0, P1, P2 and P3 (or P$\mathrm{3'}$) with high probability. That this is so for P0 follows easily from Chernoff's inequality. It remains to consider P1, P2 and P3. 

\section{Small correlation with a fixed function} \label{ProbI}

One of our main aims in this paper is to show that, with high probability, $|\sp{\mu - 1, \xi}| < \l$ for every product $\xi$ of at most $d$ basic anti-uniform functions, when $\mu$ is chosen randomly with suitable density. This is a somewhat complicated statement, since the set of basic anti-uniform functions depends on our random variable $\mu$. In this section we prove a much easier result, which will nevertheless be useful to us later on: we shall show that, for any \emph{fixed} bounded function $\xi$, $|\sp{\mu - 1, \xi}| < \l$ with high probability. 

To prove this, we shall need a standard probabilistic estimate, Bernstein's inequality, which allows one to bound the sum of independent and not necessarily identically distributed random variables.

\begin{lemma} \label{Bernstein}
Let $Y_1, Y_2, \dots, Y_n$ be independent random variables. Suppose that each $Y_i$ lies
in the interval $[0, M]$. Let $S = Y_1 + Y_2 + \cdots + Y_n$. Then
\[\P(|S - \E(S)| \geq t) \leq 2 \exp\left\{\frac{-t^2}{2\left(\sum \V (Y_j) + \frac{Mt}{3}\right)}\right\}.\]
\end{lemma}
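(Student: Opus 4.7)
The plan is to prove this by the classical Chernoff moment-generating-function method, which is the standard route to Bernstein-type inequalities. I will first reduce to the centred case by writing $Z_i=Y_i-\E Y_i$, so the $Z_i$ are independent, mean zero, with $|Z_i|\leq M$ and $\V(Z_i)=\V(Y_i)$, and $S-\E S=\sum_i Z_i$. By exponentiating and applying Markov's inequality, for every $\lambda>0$,
\[
\P(S-\E S\geq t)\leq e^{-\lambda t}\,\E e^{\lambda(S-\E S)}=e^{-\lambda t}\prod_{i=1}^n \E e^{\lambda Z_i}.
\]
The task then reduces to controlling each individual moment generating function and to choosing $\lambda$ optimally.

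The main step, which is the heart of any Bernstein-style proof, is the moment generating function bound. Expanding $e^{\lambda Z_i}$ as a Taylor series and using $\E Z_i=0$ together with the pointwise estimate $|Z_i^k|\leq M^{k-2}Z_i^2$ for $k\geq 2$, one obtains
\[
\E e^{\lambda Z_i}\leq 1+\frac{\V(Y_i)}{M^2}\bigl(e^{\lambda M}-1-\lambda M\bigr)\leq \exp\!\left(\frac{\V(Y_i)}{M^2}(e^{\lambda M}-1-\lambda M)\right),
\]
where the last step uses $1+x\leq e^x$. Multiplying over $i$ and writing $\sigma^2=\sum_j\V(Y_j)$ gives
\[
\E e^{\lambda(S-\E S)}\leq \exp\!\left(\frac{\sigma^2}{M^2}(e^{\lambda M}-1-\lambda M)\right).
\]
To turn this into the exact shape of the stated bound, I would use the elementary inequality $e^x-1-x\leq \tfrac{x^2/2}{1-x/3}$ valid for $0\leq x<3$ (which follows from comparing Taylor coefficients using $k!\geq 2\cdot 3^{k-2}$), yielding $\E e^{\lambda(S-\E S)}\leq\exp\!\bigl(\tfrac{\sigma^2\lambda^2/2}{1-\lambda M/3}\bigr)$ for $\lambda M<3$.

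Finally, I would optimize the resulting bound $\exp\!\bigl(-\lambda t+\tfrac{\sigma^2\lambda^2/2}{1-\lambda M/3}\bigr)$ in $\lambda$. The choice $\lambda=t/(\sigma^2+Mt/3)$, which automatically satisfies $\lambda M<3$, gives after a short algebraic simplification the exponent $-t^2/\bigl(2(\sigma^2+Mt/3)\bigr)$, exactly matching the stated bound. For the two-sided inequality, the same argument applied to the variables $-Y_i$ (which again lie in an interval of length $M$, so $-Z_i\leq M$) handles the lower tail. None of the steps presents a serious obstacle; the only mild care needed is in step two, where one must be a little careful about the boundedness assumption $|Z_i|\leq M$ (which does follow from $Y_i\in[0,M]$, since then $\E Y_i\in[0,M]$ and $Z_i=Y_i-\E Y_i\in[-M,M]$).
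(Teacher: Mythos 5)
The paper itself gives no proof of this lemma: Bernstein's inequality is quoted as a standard estimate and used as a black box, so there is no argument of the authors' to compare yours against. Your proposal is the standard Chernoff moment-generating-function proof and it is correct: the centring step, the bound $\E e^{\lambda Z_i}\leq\exp\bigl(\tfrac{\V(Y_i)}{M^2}(e^{\lambda M}-1-\lambda M)\bigr)$ via $\E Z_i=0$ and $|Z_i|^k\leq M^{k-2}Z_i^2$, the elementary estimate $e^x-1-x\leq\tfrac{x^2/2}{1-x/3}$ for $0\leq x<3$ (from $k!\geq 2\cdot 3^{k-2}$), and the choice $\lambda=t/(\sigma^2+Mt/3)$ all check out and reproduce exactly the exponent $-t^2/\bigl(2(\sigma^2+Mt/3)\bigr)$ for the upper tail.

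One small point deserves attention: your final sentence handles the lower tail by applying the same argument to $-Y_i$, which, combined with the upper tail by a union bound, yields $\P(|S-\E S|\geq t)\leq 2\exp\bigl\{-t^2/\bigl(2(\sum_j\V(Y_j)+\tfrac{Mt}{3})\bigr)\bigr\}$, i.e.\ the usual two-sided Bernstein bound with a factor $2$. The lemma as printed omits this factor, and as literally stated it is in fact slightly false: take $n=1$ and $Y_1$ uniform on $\{0,M\}$, $t=M/2$; then the left-hand side is $1$ while the right-hand side is $e^{-3/10}<1$. So the discrepancy is an inaccuracy in the paper's statement rather than a gap in your argument; your proof establishes the correct standard form, and the extra factor $2$ is harmless everywhere the lemma is invoked (e.g.\ in Lemma \ref{correlation} and the subsequent probabilistic estimates, where only the exponential decay matters). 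It would be worth stating explicitly that you are proving the two-sided bound with the factor $2$, rather than leaving the combination of the two tails implicit.
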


We are now ready to prove that $\sp{\mu - 1, \xi}$ is bounded with high probability for any fixed bounded function $\xi$. 

\begin{lemma} \label{correlation}
Let $X$ be a finite set and let $U=X_p$. Let $\mu$ be the associated measure of $U$. Then, for any constants $C$ and $\l$ with $C\geq \l$ and any positive
function $\xi$ with $\|\xi\|_{\infty} \leq C$,
\[\P(|\sp{\mu - 1, \xi}| \geq \l) \leq 2 e^{-\l^2 p |X|/3C^2}.\]
\end{lemma}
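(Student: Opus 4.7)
The plan is to realize $\sp{\mu-1,\psi}$ as a centered sum of independent bounded random variables and then apply Bernstein's inequality directly.

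For each $x\in X$, set
\[
Y_x = p^{-1}\psi(x)\,\mathbf{1}_{\{x\in U\}},
\]
and let $S=\sum_{x\in X}Y_x$. Since each $x$ lies in $U=X_p$ independently with probability $p$, the variables $Y_x$ are independent, each takes values in $[0,p^{-1}C]$, and $\E Y_x = \psi(x)$. Unwinding the definitions of $\mu$ and the inner product gives
\[
\sp{\mu-1,\psi} \;=\; \E_x(\mu(x)-1)\psi(x) \;=\; |X|^{-1}\bigl(S-\E S\bigr),
\]
so the event $|\sp{\mu-1,\psi}|\ge\lambda$ is exactly the event $|S-\E S|\ge\lambda|X|$.

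Next I would bound the variance term in Bernstein's inequality. Since $\mathrm{Var}(\mathbf{1}_{\{x\in U\}})=p(1-p)\le p$ and $|\psi(x)|\le C$,
\[
\sum_{x\in X}\V(Y_x) \;\le\; p^{-1}\sum_{x\in X}\psi(x)^2 \;\le\; p^{-1}C^2|X|.
\]
Applying Lemma \ref{Bernstein} with $M=p^{-1}C$ and $t=\lambda|X|$ then yields
\[
\P\bigl(|S-\E S|\ge \lambda|X|\bigr) \;\le\; \exp\!\left(-\frac{\lambda^2|X|^2}{2\bigl(p^{-1}C^2|X|+\tfrac{1}{3}p^{-1}C\lambda|X|\bigr)}\right) \;=\; \exp\!\left(-\frac{\lambda^2 p|X|}{2C^2+\tfrac{2}{3}C\lambda}\right).
\]

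The final step is to absorb the linear-in-$\lambda$ term in the denominator using the hypothesis $\lambda\le C$: this gives $2C^2+\tfrac{2}{3}C\lambda\le 2C^2+\tfrac{2}{3}C^2=\tfrac{8}{3}C^2\le 3C^2$, whence
\[
\P\bigl(|\sp{\mu-1,\psi}|\ge\lambda\bigr) \;\le\; \exp\!\left(-\frac{\lambda^2 p|X|}{3C^2}\right),
\]
as required. There is no real obstacle here; the only small point of care is the use of $\lambda\le C$ to turn the Bernstein denominator into the clean constant $3C^2$.
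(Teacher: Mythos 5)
Your proof is correct and follows essentially the same route as the paper: write $\sp{\mu-1,\psi}$ as $|X|^{-1}(S-\E S)$ with $S=\sum_x\mu(x)\psi(x)$, bound each variance by $p^{-1}\psi(x)^2\leq p^{-1}C^2$, and apply Bernstein's inequality, using $\lambda\leq C$ to simplify the denominator to $3C^2$. The only (immaterial) difference is that you bound the variance via $p(1-p)\leq p$ where the paper uses the second moment directly.
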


\begin{proof}
For each $x\in X$, $\mu(x)\xi(x)$ is a random variable that takes values in the interval $[0,p^{-1}C]$. The expectation of $\mu(x)\xi(x)$ is $\xi(x)$, so the expectation of $\sp{\mu-1,\xi}$ is 0. Also, the variance of $\mu(x)\xi(x)$ is at most $\E(\mu(x)^2\xi(x)^2)$, which is $\xi(x)^2p^{-1}$, which is at most $C^2p^{-1}$. 

Let $S=\sum_x\mu(x)\xi(x)$. Then the probability that $|\sp{\mu-1,\xi}|\geq\lambda$ equals the probability that $|S-\E S|\geq\lambda|X|$. Therefore, by Bernstein's inequality, 
\begin{eqnarray*}
\P(|\sp{\mu-1,\xi}|\geq\lambda) & \leq &
2 \exp\left\{\frac{-(\lambda |X|)^2}{2\left(C^2 p^{-1} |X| + C\lambda p^{-1}|X|/3\right)}\right\}\\
& = & 2 \exp\left\{\frac{-\lambda^2 p|X|}{2\left(C^2 + C\lambda/3\right)}\right\}\\
& \leq & 2 \exp\left\{ -\lambda^2 p|X|/3C^2\right\},
\end{eqnarray*}
where to prove the second inequality we used the assumption that $C\geq\lambda$.
\end{proof}

Before we move on to the next section, it will be helpful to state Chernoff's inequality, the standard estimate for the tails of the binomial distribution. As we have already noted, P0 is a straightforward consequence of this lemma. 

\begin{lemma} \label{Chernoff}
Let $0 < p \leq 1$ and $0 < \d \leq \frac{1}{2}$ be real numbers and $X$ a finite set. Then
\[\P(||X_p| - p|X|| \geq \d p|X|) \leq 2 e^{-\d^2 p |X|/4}.\]
\end{lemma}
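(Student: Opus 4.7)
The plan is to deduce this directly from Bernstein's inequality (Lemma \ref{Bernstein}). Write $|X_p| = \sum_{x\in X} Y_x$, where $Y_x := \mathbf{1}_{x \in X_p}$ is the Bernoulli$(p)$ indicator that $x$ lies in $X_p$. These variables are independent, each takes values in $[0,1]$ (so we may take $M=1$), and each has variance $p(1-p)\leq p$. Hence $\E(|X_p|) = p|X|$ and $\sum_{x}\V(Y_x)\leq p|X|$.

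Setting $t = \delta p |X|$ in Lemma \ref{Bernstein} (applied once to $\{Y_x\}$ for the upper tail and once to $\{1-Y_x\}$ for the lower tail) gives
\[
\P\bigl(\,\bigl||X_p|-p|X|\bigr|\geq \delta p|X|\,\bigr)\ \leq\ 2\exp\left(-\frac{\delta^2 p^2|X|^2}{2\bigl(p|X|+\delta p|X|/3\bigr)}\right)\ =\ 2\exp\left(-\frac{\delta^2 p|X|}{2(1+\delta/3)}\right).
\]
For $\delta\leq 3$ (which covers every application appearing in the paper, since $\delta$ is always a small absolute constant), the denominator $2(1+\delta/3)$ is at most $4$, yielding the claimed bound $2e^{-\delta^2 p|X|/4}$.

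There is really no obstacle here: the lemma is a packaged consequence of the Bernstein inequality already stated. The only small point to note is that the clean constant $1/4$ in the exponent only survives for $\delta$ bounded by an absolute constant; for very large $\delta$ one either invokes a sharper Chernoff estimate via the moment generating function $\E e^{t|X_p|} = (1-p+pe^t)^{|X|}\leq e^{p|X|(e^t-1)}$ and optimises in $t$, or simply observes that the deviation event becomes vacuous once $\delta\geq (1-p)/p$. Neither refinement is needed for anything that follows.
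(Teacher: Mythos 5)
Your derivation is sound, but note that the paper does not prove this lemma at all: it is quoted as the standard Chernoff estimate for binomial tails, with no argument supplied. Deducing it from Lemma \ref{Bernstein} as you do is therefore a reasonable way to make the statement self-contained, and the computation is correct: with $M=1$ and $\sum_x \V(Y_x)\leq p|X|$, Bernstein gives the two-sided bound $\exp\bigl(-\delta^2 p|X|/(2(1+\delta/3))\bigr)$, which is at most $e^{-\delta^2 p|X|/4}$ once $\delta\leq 3$; in fact the paper's version of Bernstein already bounds the two-sided deviation, so your factor $2$ from applying it to $\{Y_x\}$ and $\{1-Y_x\}$ separately is not even needed, though it is harmless and matches the stated constant. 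Your caveat about large $\delta$ is well taken, and it is really a defect of the statement rather than of your proof: for large $\delta$ the binomial upper tail decays only like $\exp(-c\,\delta\log\delta\cdot p|X|)$, so the advertised bound $2e^{-\delta^2 p|X|/4}$ is genuinely false in the range where $\delta$ is large but still below $(1-p)/p$, and the moment-generating-function optimisation you mention does not rescue it there (it yields exponent $p|X|\bigl((1+\delta)\log(1+\delta)-\delta\bigr)$, which falls below $\delta^2 p|X|/4$ already for $\delta$ around $5$). None of this matters for the paper: every invocation of the lemma (property $0$, the size bounds $|V_h|\leq 2q|X|$, $t\leq 2p|X|$, the $3/2$-deviation in Lemma \ref{LInfInd}) uses $\delta\leq 1$, which your argument covers with room to spare.
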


\section{The set of basic anti-uniform functions has few extreme points} \label{BAUrestriction}

A slightly inaccurate description of what we are going to do next is that we shall show that if P1 and P2 hold with high probability, then so does P3. In order to understand how and why what we shall actually do differs from this, it is important to understand the difficulty that we now have to overcome. The result of the previous section tells us that for a random measure $\mu$ and any given function $\xi$, $|\sp{\mu - 1, \xi}|$ is bounded with high probability. We now need to show that this is the case for \textit{all} functions $\xi$ that are products of at most $d$ basic anti-uniform functions. As we have already commented, this is tricky, because which functions are basic anti-uniform functions depends on $\mu$. 


To get a clearer notion of the problem, let us look at a subcase of the general fact that we are trying to prove, by thinking how we might try to show that, with high probability, $\sp{\mu_1-1,\circ_1(f_2,\dots,f_k)}$ is small whenever $0\leq f_i\leq\mu_i$ for $i=2,3,\dots,k$. That is, for the time being we shall concentrate on basic anti-uniform functions themselves rather than on products of such functions.

A question that will obviously be important to us is the following: for how many choices of functions $f_2, \dots, f_k$ do we need to establish that $|\sp{\mu_1-1, \circ_1(f_2,\dots,f_k)}|$ is small? At first glance, the answer might seem to be infinitely many, but one quickly realizes that a small uniform perturbation to the functions $f_2,\dots,f_k$ does not make much difference to $\sp{\mu_1-1,\circ_1(f_2,\dots,f_k)}$. So it will be enough to look at some kind of net of the functions. 

However, even this observation is not good enough, since the number of functions in a net will definitely be at least exponentially large in $p |X|$. Although the probability we calculated in the previous section is exponentially small in $p|X|$, the constant is small, whereas the constant involved in the size of a net will not be small. So it looks as though there are too many events to consider.

It is clear that the only way round this problem is to prove that the set of basic anti-uniform functions $\circ_1(f_2,\dots,f_k)$ is somehow smaller than expected. And once one thinks about this for a bit, one realizes that this may well be the case. So far, we have noted that $\circ_1(f_2,\dots,f_k)$ is not much affected by small uniform perturbations to the functions $f_i$. However, an important theme in additive combinatorics is that convolutions tend to be robust under a much larger class of perturbations: roughly speaking, a ``quasirandom'' perturbation of one of the $f_i$ is likely to have little effect on $\circ_1(f_2,\dots,f_k)$.

It is not immediately obvious how to turn this vague idea into a
precise one, so for a moment let us think more abstractly.  We have a
class $\Gamma$ of functions, and a function $\nu$, and we would like
to prove that $\sp{\nu,\phi}$ is small for every $\phi\in\Gamma$. To
do this, we would like to identify a much smaller class of functions
$\Delta$ such that if $\sp{\nu,\psi}$ is small for every
$\psi\in\Delta$ then $\sp{\nu,\phi}$ is small for every
$\phi\in\Gamma$. The following very simple lemma tells us a 
sufficient (and also in fact necessary) condition on $\Delta$ for us to be able to make 
this deduction.

\begin{lemma} \label{convexhull}
Let $\Gamma$ and $\Delta$ be two closed sets of functions from $X$ to
$\R$ and suppose that both are centrally symmetric. Then the following 
two statements are equivalent:

(i) For every function $\nu$, 
$\max\{|\sp{\nu,\phi}|:\phi\in\Gamma\}\leq\max\{|\sp{\nu,\psi}|:\psi\in\Delta\}$.

(ii) $\Gamma$ is contained in the convex hull of $\Delta$.
\end{lemma}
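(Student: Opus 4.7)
The plan is to prove the two directions separately, with the harder direction $(i) \Rightarrow (ii)$ being a direct application of the Hahn--Banach separation theorem (Lemma \ref{hb:real}), exploiting the central symmetry of $\Delta$ to convert a one-sided separation into a two-sided bound involving absolute values.

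First I would dispose of the easy direction $(ii) \Rightarrow (i)$. Given $\phi \in \Gamma$, one expresses $\phi$ as a limit of convex combinations $\sum_i \lambda_i \psi_i$ with $\psi_i \in \Delta$, $\lambda_i \geq 0$ and $\sum_i \lambda_i = 1$. By linearity and the triangle inequality, $|\sp{\nu,\phi}| \leq \sum_i \lambda_i |\sp{\nu,\psi_i}| \leq \max_{\psi \in \Delta} |\sp{\nu,\psi}|$, and passing to the limit (using that the right-hand side does not depend on the particular decomposition) yields the required inequality after taking the maximum over $\phi \in \Gamma$.

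For the contrapositive of $(i) \Rightarrow (ii)$, suppose $\Gamma$ is not contained in the convex hull of $\Delta$, so there exists $\phi_0 \in \Gamma$ with $\phi_0 \notin \mathrm{conv}(\Delta)$. Since $\Delta$ is centrally symmetric and nonempty, $0 \in \mathrm{conv}(\Delta)$ (as the midpoint of $\psi$ and $-\psi$ for any $\psi \in \Delta$). Applying Lemma \ref{hb:real} with $K = \mathrm{conv}(\Delta)$ and $v = \phi_0$ produces a linear functional, which we identify with a function $\nu \in \R^X$ via $\phi \mapsto \sp{\nu,\phi}$, together with a real number $t$ such that $\sp{\nu,\phi_0} > t$ while $\sp{\nu,\psi} \leq t$ for every $\psi \in \mathrm{conv}(\Delta)$. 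Taking $\psi = 0$ gives $t \geq 0$. Now invoke the central symmetry of $\Delta$: for every $\psi \in \Delta$ we also have $-\psi \in \Delta$, so $-\sp{\nu,\psi} = \sp{\nu,-\psi} \leq t$, which combined with the upper bound gives $|\sp{\nu,\psi}| \leq t$ for all $\psi \in \Delta$. Hence $\max_{\psi \in \Delta} |\sp{\nu,\psi}| \leq t < \sp{\nu,\phi_0} \leq \max_{\phi \in \Gamma} |\sp{\nu,\phi}|$, contradicting (i).

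The only subtlety I anticipate is the closedness question: strictly speaking Lemma \ref{hb:real} separates from a closed convex set, so one should either interpret ``convex hull'' as ``closed convex hull'' throughout (which is what is really needed in applications later in the paper) or note that in our finite-dimensional setting the convex hull of a closed bounded set is automatically closed. Either way this is a purely formal issue and does not affect the heart of the argument, which is the one-line use of Hahn--Banach plus the symmetry trick turning $\leq t$ into $|\cdot| \leq t$.
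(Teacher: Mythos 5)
Your proof is correct and follows essentially the same route as the paper's: the easy direction via convex combinations and the triangle inequality, and the converse by separating a point of $\Gamma$ from the (closed) convex hull of $\Delta$ with Hahn--Banach and using central symmetry to upgrade the one-sided bound $\sp{\nu,\psi}\leq t$ to $|\sp{\nu,\psi}|\leq t$. Your remark on closedness of the convex hull is a fair point that the paper glosses over, and your treatment of it is adequate.
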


\begin{proof}
The statement we shall use is just the easy direction of this equivalence,
which is that (ii) implies (i). To see this, let $\phi\in\Gamma$. Then we can write
$\phi$ as a convex combination $\sum_i\lambda_i\psi_i$ of elements of $\Delta$,
and that implies that $|\sp{\nu,\phi}|\leq\sum_i\lambda_i|\sp{\nu,\psi_i}|$. If 
$|\sp{\nu,\psi}|\leq t$ for every $\psi\in\Delta$, then this is at most $t$, which proves
(i), since $\nu$ and $\phi$ were arbitrary.

Now let us suppose that $\Gamma$ is not contained in the convex hull of $\Delta$,
and let $\phi$ be an element of $\Gamma$ that does not belong to this convex hull. Then
the Hahn-Banach theorem and the fact that $\Delta$ is closed and centrally symmetric
guarantee the existence of a function $\nu$ such that $\sp{\nu,\phi}>1$, but 
$|\sp{\nu,\psi}|\leq 1$ for every $\psi\in\Delta$, which contradicts (i).
\end{proof}

The reason Lemma \ref{convexhull} is useful is that it gives us a strategy for proving 
that $|\sp{\mu-1,\xi}|$ is small for all products of at most $d$ basic anti-uniform functions:
try to show that these functions belong to the convex 
hull of a much smaller set. In fact, this is not quite what we shall do. Rather,
we shall show that every $\xi$ can be \textit{approximated} by an element of the 
convex hull of a much smaller set. To prepare for the more elaborate statement
we shall use, we need another easy lemma.

The statement of the lemma is not quite what one might expect. The reason for
this is that the simplest notion of approximation, namely uniform approximation,
is too much for us to hope to attain. Instead, we go for a kind of weighted
uniform approximation, where we allow the functions to differ quite a lot, but
only in a few specified places.

\begin{lemma} \label{L1error}
Let $H$ be a non-negative function defined on $X$ such that 
$\|H\|_1\leq\e$ and $\|H\|_\infty\leq R$. Let $U=X_p$ and let $\mu$ be the associated measure of $U$. 
Then, with probability at least $1 - 2 \exp(-\e^2 p |X|/3 R^2)$, we have the
estimate $|\sp{\mu-1,\phi}-\sp{\mu-1,\psi}|\leq 3\e$
for every pair of functions $\phi$ and $\psi$ such that $|\phi-\psi|\leq H$. 
\end{lemma}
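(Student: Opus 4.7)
The plan is to reduce the entire uniform statement over all admissible pairs $(\phi,\psi)$ to a single application of Lemma \ref{correlation} to the fixed function $H$.

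First I would observe that by bilinearity, $\sp{\mu-1,\phi}-\sp{\mu-1,\psi}=\sp{\mu-1,\phi-\psi}$, so it suffices to bound $|\sp{\mu-1,\phi-\psi}|$ uniformly over all functions satisfying $|\phi-\psi|\leq H$ pointwise. The crucial point is that this uniform bound is governed by just one random quantity, namely $\sp{\mu-1,H}$. Indeed, pointwise we have $|(\mu(x)-1)(\phi(x)-\psi(x))|\leq(\mu(x)+1)H(x)$, since $\mu\geq 0$ and $|\phi-\psi|\leq H$. Taking expectations, one obtains
\begin{equation*}
|\sp{\mu-1,\phi-\psi}|\leq\sp{\mu,H}+\sp{1,H}=\sp{\mu-1,H}+2\|H\|_1.
\end{equation*}

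Next I would apply Lemma \ref{correlation} to the fixed function $H$ with the parameter choice $\lambda=\e$ and $C=R$ (using that $\|H\|_\infty\leq R$; the degenerate case $R<\e$ can be handled by replacing $R$ with $\max(R,\e)$, which only improves the probability bound and brings us into the regime where the lemma's hypothesis $C\geq\lambda$ holds). This gives
\begin{equation*}
\P\bigl(|\sp{\mu-1,H}|\geq\e\bigr)\leq\exp\!\left(-\e^2 p|X|/3R^2\right).
\end{equation*}

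On the complementary event, $|\sp{\mu-1,H}|\leq\e$ and $\|H\|_1\leq\e$, so the pointwise estimate above yields $|\sp{\mu-1,\phi-\psi}|\leq\e+2\e=3\e$ simultaneously for every pair $(\phi,\psi)$ with $|\phi-\psi|\leq H$. This is precisely the claim. There is no genuine obstacle here; the only thing one needs to take care with is the reduction from a "uniform over all $(\phi,\psi)$" statement to a statement about a single random variable, and that is achieved by the pointwise domination $|(\mu-1)(\phi-\psi)|\leq(\mu+1)H$, which is worst-case in exactly the sense needed.
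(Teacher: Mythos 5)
Your proof is correct and is essentially the paper's own argument: both reduce the uniform statement to the single random quantity $\sp{\mu,H}$ (equivalently $\sp{\mu-1,H}$) via the pointwise domination $|\phi-\psi|\leq H$ together with $\|H\|_1\leq\e$, and then apply Lemma \ref{correlation} to the fixed function $H$ with $\lambda=\e$ and $C=R$. The only quibble is your aside about the case $R<\e$: replacing $R$ by $\max(R,\e)$ weakens, rather than improves, the exponential bound relative to the stated one, but this degenerate case is not treated in the paper either and does not affect the substance of the argument.
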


\begin{proof}
The fact that $\|H\|_1\leq\e$ implies that $|\sp{1,\phi}-\sp{1,\psi}|\leq\e$ as well.
Also, $|\sp{\mu,\phi-\psi}|\leq\sp{\mu,H}$. Therefore, it remains to estimate the
probability that $\sp{\mu,H}>2\e$. Lemma \ref{correlation} with $\l = \e$ 
and $C = R$ implies that the probability that $\sp{\mu - 1, H} > \e$ is at most 
$2 \exp(- \e^2 p |X|/3R^2)$. Therefore, with probability at least $1 - 
2\exp(-\e^2 p |X|/3 R^2)$, $\sp{\mu, H} \leq 2 \e$. The result follows.
\end{proof}

If we use Lemma \ref{convexhull} and Lemma \ref{L1error} in combination, then
we can show the following result.

\begin{corollary} \label{approxworks}
Let $H$ be a non-negative function defined on $X$ such that 
$\|H\|_1\leq\e$ and $\|H\|_\infty\leq R$. Let $U=X_p$ and let $\mu$ be the associated 
measure of $U$. 
Let $\Gamma$ and $\Delta$ be two sets of functions and suppose that for
every $\phi \in \Gamma$ there exists $\psi$ in the convex hull of $\Delta$ such
that $|\phi-\psi|\leq H$. Then 
\begin{equation*}
\max\{|\sp{\mu - 1,\phi}| : \phi\in\Gamma\}\leq\max\{|\sp{\mu-1,\psi'}|:\psi'\in\Delta\}+3\e
\end{equation*}
with probability at least $1 - 2\exp(-\e^2 p |X|/3 R^2)$.
\end{corollary}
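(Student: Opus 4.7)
My plan is to combine Lemma \ref{L1error} and the easy direction of Lemma \ref{convexhull} in a very direct way. The statement really is just the ``approximate'' version of the implication (ii)$\Rightarrow$(i) in Lemma \ref{convexhull}, with the approximation error controlled by $H$ and the high-probability event supplied by Lemma \ref{L1error}.

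First, I would apply Lemma \ref{L1error} to the function $H$ (which satisfies the required bounds $\|H\|_1\leq\e$ and $\|H\|_\infty\leq R$). This gives a single ``good'' event, of probability at least $1-\exp(-\e^2 p|X|/3R^2)$, on which the inequality $|\sp{\mu-1,\phi}-\sp{\mu-1,\psi}|\leq 3\e$ holds \emph{simultaneously} for every pair of functions with $|\phi-\psi|\leq H$. It is important that the conclusion of Lemma \ref{L1error} is uniform over all such pairs, since below I will need to apply it to many different pairs $(\phi,\psi)$ without having to pay further probabilistic costs.

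Next, I would condition on this good event and argue deterministically. Fix any $\phi\in\Gamma$, and use the hypothesis to choose $\psi$ in the convex hull of $\Delta$ with $|\phi-\psi|\leq H$; by the good event, $|\sp{\mu-1,\phi}|\leq|\sp{\mu-1,\psi}|+3\e$. Writing $\psi=\sum_i\lambda_i\psi_i'$ as a convex combination of elements $\psi_i'\in\Delta$, linearity of the inner product and the triangle inequality give
\[|\sp{\mu-1,\psi}|\leq\sum_i\lambda_i|\sp{\mu-1,\psi_i'}|\leq\max_{\psi'\in\Delta}|\sp{\mu-1,\psi'}|.\]
Combining the two displayed inequalities and taking the supremum over $\phi\in\Gamma$ yields the claimed bound.

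There is no real obstacle here: the only probabilistic content has been packaged into Lemma \ref{L1error}, and the deterministic step is exactly the (ii)$\Rightarrow$(i) direction of Lemma \ref{convexhull} with an $\e$-error. The one point worth double-checking is that the convex combination representing $\psi$ may be infinite or require a closure argument, but since $\Delta$ can be replaced by its closed convex hull without changing either max on the right-hand side, and since continuity of $\nu\mapsto\sp{\nu,\cdot}$ handles limits, this causes no trouble.
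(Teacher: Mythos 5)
Your proposal is correct and is essentially the paper's own proof: apply Lemma \ref{L1error} to $H$ to get the uniform high-probability event, then use the easy direction of Lemma \ref{convexhull} (the convex-combination/triangle-inequality argument you spell out) to bound $|\sp{\mu-1,\psi}|$ by $\max\{|\sp{\mu-1,\psi'}|:\psi'\in\Delta\}$. The closure caveat you raise is harmless since $X$ is finite, so convex combinations can be taken finite.
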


\begin{proof}
By Lemma \ref{L1error}, the probability is at least $1- 2\exp(-\e^2 p |X|/3R^2)$ 
that $|\sp{\mu-1,\phi}-\sp{\mu-1,\psi}|\leq 3\e$ whenever $|\phi-\psi|\leq H$.
By the easy direction of Lemma \ref{convexhull}, 
$|\sp{\mu-1,\psi}|\leq\max\{|\sp{\mu-1,\psi'}|:\psi'\in\Delta\}$ for every $\psi$ in
the convex hull of $\Delta$. This proves the result.
\end{proof}

How do we define an appropriate set of functions $\D$? A simple observation gets us close to the set we need, but we shall need to make a definition before we can explain it. 

\begin{definition}
Let $0<q\leq p\leq 1$, $U=X_p$ and let $V=U_{q/p}$. Let $\mu$ be the associated measure of $U$ and let $\nu$ be the associated measure of $V$ considered as a set distributed as $X_q$. Let $f$ be a function with $0\leq f\leq\mu$. Then the \emph{normalized restriction} $f_\nu$ of $f$ to $V$
is the function defined by taking $f_\nu(x)=(p/q)f(x)$ if $x\in V$ and 0 otherwise.
\end{definition} 

The normalization is chosen to give us the following easy lemma. Note that the expectation below is a ``probabilistic" expectation rather than a mere average over a finite set.

\begin{lemma} \label{averagerestrict}
Let $U=X_p$ be a set with associated measure $\mu$ and let $V=U_{q/p}$ be a random subset of $U$ with associated measure $\nu$. Then, for any function $0\leq f\leq\mu$, $f=\E_Vf_\nu$.
\end{lemma}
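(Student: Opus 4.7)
The plan is to verify the identity $f(x) = \E_V f_\nu(x)$ pointwise for each fixed $x \in X$. Because $V$ is a random subset of $U$ obtained by keeping each element of $U$ independently with probability $q/p$, and because $f_\nu$ is supported on $V$, the expression $\E_V f_\nu(x)$ depends only on the indicator $\mathbf{1}_{x\in V}$, which in turn depends only on whether $x \in U$ (deterministic, since $U$ is fixed) and, when $x \in U$, on the independent coin flip that decides whether $x$ survives into $V$.

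I would then split into two cases. If $x \notin U$, then $\mu(x) = 0$, so the hypothesis $0 \leq f \leq \mu$ forces $f(x) = 0$; moreover $V \subseteq U$ guarantees $x \notin V$, hence $f_\nu(x) = 0$ for every realization of $V$, and both sides vanish. If $x \in U$, then $\Pr(x \in V) = q/p$, so
\[
\E_V f_\nu(x) \;=\; \frac{q}{p} \cdot \frac{p}{q} f(x) \;+\; \left(1 - \frac{q}{p}\right)\cdot 0 \;=\; f(x),
\]
which is exactly the statement. Combining the two cases gives $f = \E_V f_\nu$ as functions on $X$.

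There is no real obstacle here: the normalization factor $p/q$ in the definition of $f_\nu$ has been chosen precisely so that the survival probability $q/p$ cancels. The only point worth flagging is that the expectation is over the random choice of $V$ with $U$ (and hence $\mu$ and $f$) held fixed, so no additional randomness from the original $X_p$ sample enters the calculation.
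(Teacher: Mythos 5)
Your proposal is correct and is essentially the paper's own proof: fix $x$, split into the cases $x\in U$ and $x\notin U$, and note that for $x\in U$ the factor $p/q$ in the definition of $f_\nu$ cancels the survival probability $\P[x\in V]=q/p$, while for $x\notin U$ both sides vanish since $f\leq\mu$ forces $f(x)=0$. Nothing further is needed.
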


\begin{proof}
For each $x\in U$ we have 
\begin{equation*}
\E_Vf_\nu(x)=(p/q)f(x)\P[x\in V]=f(x),
\end{equation*}
and for each $x\notin U$ we have $f(x)=\E_Vf_\nu(x)=0$.
\end{proof}

This lemma expresses $f$ as a convex combination of normalized restrictions,
which is potentially useful to us, since if $q/p$ is a small constant, then
a typical contribution to this convex combination comes from a restriction to 
a set that is quite a lot smaller than $U$. That will allow us to find a small net
for the set of all possible restrictions, whose convex hull can then be used to approximate
the set of all possible functions $f$ with $0\leq f\leq\mu$. 

Furthermore, we can use Lemma \ref{averagerestrict} to write convolutions and products of
convolutions as convex combinations as well, as the next lemma easily implies. The lemma
itself is very easy, so we omit the proof.

\begin{lemma} \label{easyconvex}
For $1\leq i\leq m$, let $f_i$ be a fixed function and let $g_i$ be a random function
such that $f_i=\E g_i$. Let $\kappa(f_1,\dots,f_m)$ be a
multilinear form in the functions $f_1,\dots,f_m$. Then 
\begin{equation*}
\kappa(f_1,\dots,f_m)=\E \kappa(g_1,\dots,g_m).
\end{equation*}
\end{lemma}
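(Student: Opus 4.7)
The plan is a straightforward expansion. Since each $f_i$ lives in the finite-dimensional space $\R^X$ with canonical basis $\{\delta_x\}_{x\in X}$, and $\kappa$ is multilinear in its $m$ arguments, we may expand
\[\kappa(f_1,\dots,f_m)=\sum_{x_1,\dots,x_m\in X}f_1(x_1)f_2(x_2)\cdots f_m(x_m)\,\kappa(\delta_{x_1},\dots,\delta_{x_m}),\]
where the quantities $\kappa(\delta_{x_1},\dots,\delta_{x_m})$ are fixed and depend only on $\kappa$, not on the $f_i$. This reduces the identity to checking how expectation interacts with products of values $g_i(x_i)$.

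Substituting the random functions $g_i$ for $f_i$ and using linearity of expectation, we get
\[\E\kappa(g_1,\dots,g_m)=\sum_{x_1,\dots,x_m\in X}\E\bigl[g_1(x_1)g_2(x_2)\cdots g_m(x_m)\bigr]\,\kappa(\delta_{x_1},\dots,\delta_{x_m}).\]
Assuming the $g_i$ are mutually independent — an assumption that is not stated explicitly in the lemma but is in force in every intended application, where each $g_i$ will be a normalized restriction of $f_i$ to an \emph{independent} random subset $V_i$ — the expectation of the product factors as a product of expectations, and by hypothesis each $\E g_i(x_i)=f_i(x_i)$. Hence the inner expectation equals $f_1(x_1)\cdots f_m(x_m)$, and reassembling the sum recovers the first display, giving $\E\kappa(g_1,\dots,g_m)=\kappa(f_1,\dots,f_m)$.

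I do not anticipate any genuine obstacle: the lemma is essentially the tautology that expectation commutes with a multilinear form when the arguments are independent, and the only point that deserves a brief word is the implicit independence hypothesis on the family $\{g_i\}$. In the applications that follow (where Lemma \ref{averagerestrict} is applied coordinate-by-coordinate to independent random subsets, and $\kappa$ is an inner product of products of convolutions in which distinct $f_i$'s appear in distinct convolution slots), this independence is automatic.
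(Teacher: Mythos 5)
Your proof is correct, and since the paper omits the argument entirely (calling the lemma ``very easy''), your expansion of $\kappa$ in the basis $\{\delta_x\}$ followed by linearity of expectation and factorization of the product expectations is precisely the intended argument. Your observation about independence is well taken: as literally stated the lemma needs the $g_i$ to be independent (or at least that $\E[g_1(x_1)\cdots g_m(x_m)]$ factors), and this does hold in every application in the paper, where the $g_i$ are normalized restrictions built from independent random translates $a_h$ and independent random subsets $V_h$.
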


The rough idea, and the third main idea of the paper, is to rewrite every product of convolutions as an average of products of basic anti-uniform functions built out of normalized restrictions. Since there are ``fewer'' of these, we will have fewer events that need to hold. We must, however, be careful when we apply this idea. For a start, we cannot afford to let $q$ become too small. If $q$ is too small, then, given associated measures $\nu_2, \dots, \nu_k$ of sets distributed as $X_q$, we can no longer guarantee that the convolutions $*_1(\nu_2, \cdots, \nu_k)$ are sufficiently well-behaved for our purposes. Even when we choose $q$ to be large enough, there will still be certain rogue choices of sets. However, for $q$ sufficiently large, we can take care of these rogue sets by averaging and showing that they make a small contribution. Thus, there is a delicate balance involved: $q$ has to be small enough to give rise to a small class of functions, but large enough for these functions to have the properties required of them.

Another problem arises from the form of the basic anti-uniform functions. Recall that our starting point is a collection of (randomly chosen) sets $U_1,\dots,U_m$ with associated measures $\mu_1,\dots,\mu_m$. The basic anti-uniform functions we want to approximate are of the form $\circ_j(g,\dots,g,f_{j+1},\dots,f_k)$, where $0\leq g\leq 1$ and $0\leq f_h\leq\mu_{i_h}$ for some sequence $(i_{j+1},\dots,i_k)$ of integers between 1 and $m$. Therefore, we must approximate capped convolutions of functions some of which are bounded above by 1 and some of which are bounded above by associated measures of sparse random sets. This creates a difficulty for us. It is still true that if $V=X_q$ is a random set with associated measure $\nu$, then $g = \E_V g_\nu$, but if we exploit that fact directly, then the number of sets $V$ that we have to consider is on the order of $\binom{|X|}{q|X|}$. Since we will take $q/p$ to be a constant, this is much larger than $\exp(c p |X|)$ for any constant $c$ and therefore too large to use in a probabilistic estimate given by a simple union bound. To get round this problem we shall find a much smaller set $\mathcal{V}$ such that $g=\E_{V \in \mathcal{V}}g_\nu$. 

We shall need the following piece of notation to do this. Suppose that the elements of the set $X$ are ordered in some arbitrary way as $x_1, \cdots, x_n$, say. Then, given a subset $V=\{x_{j_1}, \dots, x_{j_l}\}$ of $X$ and an integer $a$ between $0$ and $n-1$, we define the set $V + a$ to be the set formed by translating the indices by $a$. That is, $V + a = \{x_{j_1 + a}, \dots, x_{j_l + a}\}$ where the sums are taken modulo $n$.  (This ``translation" operation has no mathematical significance: it just turns out to be a convenient way of defining a small collection of sets.)

Let us write $\nu+a$ for the characteristic measure of $V+a$. Write $g_{\nu+ a}$ for the function given by $\frac{|X|}{|V|} g(x)$ if $x \in V + a$ and $0$ otherwise. A proof almost identical to that of Lemma \ref{averagerestrict} implies that $g=\E_a g_{\nu+a}$ for any function $g$, and in particular for any function $g$ such that $0 \leq g \leq 1$. From this and Lemma \ref{averagerestrict} itself it follows that if $(W_1,\dots,W_{j-1})$ are any subsets of $X$, and $\omega_i$ is the characteristic measure of $W_i$, then 
\begin{equation*}
*_j(g,\dots,g,f_{j+1},\dots,f_{k})=\E_{a_1,\dots,a_{j-1}}\E_{V_{j+1},\dots,V_k}*_j(g_{\omega_1+a_1},\dots,g_{\omega_{j-1}+a_{j-1}},(f_{j+1})_{\nu_{j+1}},\dots,(f_{k})_{\nu_k}),
\end{equation*}
where for $h>j$ the set $V_h$ is distributed as $(U_{i_h})_{q/p}$. There is one practical caveat, in that this identity holds when $\omega_1, \dots, \omega_{j-1}$ are characteristic measures, but it is more natural for us to deal with associated measures. However, if we assume that $W_1, \dots, W_{j-1}$ were chosen with probability $q$ and $|W_i| = (1+o(1)) q |X|$, then the distinction vanishes and the identity above holds (up to a $o(1)$ term) with associated measures rather than characteristic ones.

This observation is encouraging, because it represents the convolution $*_j(g,\dots,g,f_{j+1},\dots,f_{k})$ as an average of convolutions from a small class of functions. However, it certainly does not solve our problems completely, since we need a statement about \textit{capped} convolutions. Of course, it would be very surprising if it did solve our problems, since so far we have not said anything about the size of $q$ and the sets $W_1,\dots,W_{j-1}$. In order to transfer the trivial observation above from convolutions to capped convolutions, we shall need $q$ to be sufficiently large and the $W_i$ to be ``sufficiently random".

\subsection{Sufficient randomness}

First, let us describe two properties that are closely related to the properties P1 and P2 defined earlier, and discuss how they are related. The properties will apply to a sequence of measures $\nu_1,\dots,\nu_{j-1},\nu_{j+1},\dots,\nu_k$ and parameters $\eta>0$ and $j\leq k$. 

\begin{enumerate}
\item[Q1.]  $\|*_j(\nu_1,\dots,\nu_{j-1},\nu_{j+1},\dots,\nu_k)
-\circ_j(\nu_1,\dots,\nu_{j-1},\nu_{j+1},\dots,\nu_k)\|_1\leq \eta.$
\item[Q2.]  $\|*_j(1,\dots,1,\nu_{j+1},\dots,\nu_k)\|_\infty\leq 2.$
\end{enumerate}

The main difference between these new properties and the properties P1 and P2 is that we are not quantifying over a whole set of sequences. For example, P1 is the property that Q1 holds with $j=1$ for every sequence $(\mu_{i_2},\dots,\mu_{i_k})$ taken from a sequence $(\mu_1,\dots,\mu_m)$. 

A less obvious difference is that, while we are ultimately interested in obtaining properties of the measures $\mu_1,\dots,\mu_m$, we shall deduce these from probabilistic statements about typical sequences of measures $\nu_1,\dots,\nu_k$ chosen binomially with a smaller probability. This will be illustrated by the main result of this section.  

Let us define what we mean by ``sufficiently random" and then show that what we need can be obtained if Q1 holds with sufficiently high probability for suitable $\eta$. The next definition highlights the property that we want to get out of the sufficient randomness: that capped convolutions should be pretty similar to actual convolutions.

The randomness property we need of our sets $W_i$ is roughly speaking that almost all sequences of sets that appear in the averages we consider satisfy Q1 for some small $\eta$. That will allow us to prove a statement about capped convolutions, because almost all the convolutions that appear in the average in the observation above can then be approximated by their capped counterparts. Here is the formal definition. 

\begin{definition}
Let $\eta>0$ be a real number, let $0<q\leq p\leq 1$ and let $W_1, \dots, W_{j-1}$ and $Z_{j+1}, \dots, Z_k$ be subsets of $X$. We say that $W_1, \dots, W_{j-1}, Z_{j+1},\dots,Z_k$ are \emph{sufficiently random} if $|W_h| = (1+o(1))q |X|$ for every $h < j$, $|Z_h| = (1 +o(1)) p|X|$ for every $h > j$ and, if $\omega_1, \dots, \omega_{j-1}$ are the associated measures of $W_1, \dots, W_{j-1}$ defined with weight $q^{-1}$, then the following statement holds. 
\begin{itemize}
\item Let a sequence $(a_1, \dots, a_{j-1}, V_{j+1}, \dots, V_{k})$ be chosen randomly and independently such that $a_h \in \Z_n$ for every $h < j$, $V_{h}=(Z_h)_{q/p}$ for every $h>j$ and, for each $h>j$, let $\nu_{h}$ be the associated measure of $V_{h}$ defined with weight $q^{-1}$. Then the probability that the $(k-1)$-tuple $(\omega_1+a_1,\dots,\omega_{j-1}+a_{j-1},\nu_{j+1},\dots,\nu_{k})$ satisfies Q1 is at least $1 - o(|X|^{-k})$.
\end{itemize}
\end{definition}

Strictly speaking, the definition of sufficient randomness depends on the parameters $\eta$, $p$ and $q$, but these will be clear from the context.

Our next main lemma says that if Q1 holds with sufficiently high probability, then the probability that $W_1, \dots, W_{j-1}, Z_{j+1},\dots,Z_k$ are sufficiently random, if we choose them independently at random in a suitable way, is also close to $1$. 

\begin{lemma} \label{Ass1'}
Suppose that if $\nu_1, \dots, \nu_k$ are the associated measures of sets $V_1, \dots, V_k$, each chosen binomially with probability $q \geq p_0$, then property Q1 holds with probability $1 - o(|X|^{-k})$. For $1\geq p\geq q\geq p_0$, let $W_1,\dots,W_{j-1}$ be independent random subsets of $X$ with each $W_h=X_q$, and let $Z_{j+1},\dots,Z_k$ be independent random sets with each $Z_h=X_p$. Then the probability that $W_1, \dots, W_{j-1}, Z_{j+1},\dots,Z_k$ are sufficiently random is $1-o(1)$.
\end{lemma}

\begin{proof}
Consider the following way of choosing $k-1$ random sets. First we choose $W_1,\dots,W_{j-1}$ and $Z_{j+1},\dots,Z_k$ as in the statement of the lemma. Chernoff's inequality easily implies that with probability $1 - o(|X|^{-k})$ we have $|W_h| = (1+o(1))q |X|$ for all $h < j$ and $|Z_h| = (1 +o(1)) p|X|$ for all $h > j$. Next, we choose $a_1,\dots,a_{j-1},V_{j+1},\dots,V_{k}$ randomly and independently such that $a_h\in\Z_n$ for every $h<j$ and $V_{h}=(Z_h)_{q/p}$ for every $h>j$. Then the sets $W_1+a_1,\dots,W_{j-1}+a_{j-1},V_{j+1},\dots,V_{k}$ are independent random sets, each distributed as $X_q$.  

Let their associated measures be $\omega_1+a_1,\dots,\omega_{j-1}+a_{j-1},\nu_{j+1},\dots,\nu_{k}$. Then, since $q \geq p_0$, our assumption tells us that this sequence of measures satisfies Q1 with probability $1-o(|X|^{-k})$. Therefore, with probability $1-o(1)$, when we choose the $W_i$ and the $Z_i$, the probability that the sequence $\omega_1+a_1,\dots,\omega_{j-1}+a_{j-1},\nu_{j+1},\dots,\nu_{k}$ satisfies Q1, conditional on that choice of the $W_i$ and $Z_i$, is at least $1-o(|X|^{-k})$. Hence, $W_1, \dots, W_{j-1}, Z_{j+1},\dots,Z_k$ are sufficiently random with probability $1-o(1)$, as claimed.
\end{proof}

In particular, if $Z_{j+1},\dots,Z_k$ are binomial random subsets of $X$, each chosen with probability $p$, then, with high probability, there is a choice of sets $W_1, \dots, W_{j-1}$ such that $W_1, \dots, W_{j-1}, Z_{j+1},\dots,Z_k$ are sufficiently random.

\subsection{The proof for basic anti-uniform functions}

Suppose that $Lq = p \geq q \geq p_0$ for some large constant $L$. It will be by choosing this constant $L$ to be large enough that we will make our trick of using normalized restrictions work. Throughout this section, we will assume that $\eta>0$ is some parameter yet to be specified, and $Z_{j+1},\dots,Z_k$ is a sequence of sets, with associated measures $\zeta_{j+1},\dots,\zeta_k$ defined with weight $p^{-1}$, such that
\begin{enumerate} 

\item[(i)]
If $j = 1$, then Q1 holds for the sequence of measures $(\zeta_2,\dots,\zeta_k)$.

\item[(ii)]
If $j > 1$, then Q2 holds for the sequence of measures $(\zeta_{j+1},\dots,\zeta_k)$.

\item[(iii)]
There exist sets $W_1, \dots, W_{j-1}$ such that $W_1, \dots, W_{j-1}, Z_{j+1},\dots,Z_k$ are sufficiently random (with parameters $j$ and $\eta$).

\end{enumerate}


In the remainder of the section, we will show that if conditions (i), (ii) and (iii) hold (with parameters $j$ and $\eta$ for suitable $\eta$), then the set of basic anti-uniform functions defined using $\zeta_{j+1},\dots,\zeta_k$ has a small net. To be more precise, we need some definitions. In what follows, we will write $\omega_{h,a}$ for the associated measure of $W_h+a$ (or, more accurately, the translate by $a$ of the associated measure $\omega_h$ of $W_h$), where again these associated measures are defined with weight $q^{-1}$. We will also write $\omega'_{h,a}$ for the characteristic measure of $W_h + a$.

\begin{definition}
Let $\Phi(\zeta_{j+1},\dots,\zeta_k)$ be the set of functions 
$\circ_j(g,\dots,g,f_{j+1},\dots,f_k)$, where $0\leq g\leq 1$ and $0\leq f_h\leq\zeta_h$
for each $h > j$. Let $\Psi(\zeta_{j+1},\dots,\zeta_k)$ be the set of functions 
$\circ_j(f_1,\dots,f_{j-1},f_{j+1},\dots,f_k)$ such that the constituent functions $f_h$ have 
the following properties. If $h < j$, then $0\leq f_h\leq \omega'_{h,a}$ for some $a$, and if $h > j$ then $0\leq f_h\leq\nu_h$, where $\nu_h$ is the associated measure, defined with weight $q^{-1}$, of some set
$V_h\in (Z_h)_{q/p}$ such that $|V_h|\leq 2q|X|$.
\end{definition}

We shall now show that every function in $\Phi(\zeta_{j+1},\dots,\zeta_k)$
can be approximated by a convex combination of functions in $\Psi(\zeta_{j+1},\dots,\zeta_k)$.
This will be very useful to us, because $\Psi(\zeta_{j+1},\dots,\zeta_k)$ is a much ``smaller"
set than $\Phi(\zeta_{j+1},\dots,\zeta_k)$. However, we need to be rather careful about precisely 
what we mean by ``can be approximated by''.

\begin{lemma} \label{restrictions}
Let $L$ and $\a < 1$ be positive constants. If $\eta$ is sufficiently small (depending on $\a$) and $|X|$ is sufficiently large (depending on $L$ and $\a$), then there is a non-negative function $H$ such that $\|H\|_1\leq\a$, $\|H\|_\infty\leq 2$,
and, for every function $\circ_j(g,\dots,g,f_{j+1},\dots,f_k)\in\Phi(\zeta_{j+1},\dots,\zeta_k)$, there exists a function $\sigma$ in the convex hull of $\Psi(\zeta_{j+1},\dots,\zeta_k)$ such that
\begin{equation*}
0\leq\circ_j(g,\dots,g,f_{j+1},\dots,f_k)-\sigma\leq H.
\end{equation*}
\end{lemma}

\begin{proof}
Let us choose a random function $\psi\in\Psi(\zeta_{j+1},\dots,\zeta_k)$ as follows. Suppose that $W_1, \dots, W_{j-1}$ are the sets given by condition (iii) and that for each $h$ the associated measure of $W_h$ is $\omega_h$ (as in the definition of sufficient randomness). We start by
choosing a random sequence $(\omega'_1,\dots,\omega'_{j-1},\nu_{j+1},\dots,\nu_k)$. Here,
each $\omega'_h$ is chosen uniformly at random from the $|X|$ measures $\omega'_{h,a}$ and
each $\nu_h$ is the associated measure of a set $V_h$, where the sets $V_h$ are independent 
and distributed as $(Z_h)_{q/p}$. We then let $\psi$ be the function
\begin{equation*}
\circ_j(g_{\omega'_1},\dots,g_{\omega'_{j-1}},(f_{j+1})_{\nu_{j+1}},\dots,(f_k)_{\nu_k})
\end{equation*}
if every $V_h$ has size at most $2q|X|$, and the zero function otherwise. Finally, we take
$\sigma$ to be the expectation of $\psi$, which is certainly a convex combination of 
functions in $\Psi(\zeta_{j+1},\dots,\zeta_k)$.

Let us begin with the first inequality. Here we shall prove the slightly stronger result that
the inequality holds even if we take 
$\psi=\circ_j(g_{\omega'_1},\dots,g_{\omega'_{j-1}},(f_{j+1})_{\nu_{j+1}},\dots,(f_k)_{\nu_k})$
for all choices of $V_h$ (rather than setting it to be zero when one of the $V_h$ is too large).

Let $T$ be the function from $\R$ to $\R$ defined by $T(y)=\min\{y,2\}$ and let
$S(y)=y-T(y)=\max\{y-2,0\}$. Then
\begin{eqnarray*}
\circ_j(g,\dots,g,f_{j+1},\dots,f_k)&=&T(*_j(g,\dots,g,f_{j+1},\dots,f_k))\\
&=&T(\E (*_j(g_{\omega'_1},\dots,g_{\omega'_{j-1}},(f_{j+1})_{\nu_{j+1}},\dots,(f_k)_{\nu_k}))),
\end{eqnarray*}
by Lemmas \ref{averagerestrict} and \ref{easyconvex} and the fact that 
$g=\E_a g_{\omega'_{h,a}}$ (the reason we use the characteristic measures $\omega'_{h,a}$ rather than the associated measures $\omega_{h,a}$ here is so that this identity is exactly true rather than merely approximately true with high probability). On the other hand,
\begin{equation*}
\E(\circ_j(g_{\omega'_1},\dots,g_{\omega'_{j-1}},(f_{j+1})_{\nu_{j+1}},\dots,(f_k)_{\nu_k}))
=\E T(*_j(g_{\omega'_1},\dots,g_{\omega'_{j-1}},(f_{j+1})_{\nu_{j+1}},\dots,(f_k)_{\nu_k})).
\end{equation*}
Since $T$ is a concave function, the result follows from Jensen's inequality.

Now let us define $H$ and prove the second inequality. Since capped convolutions are smaller than convolutions and, as above, $*_j(g,\dots,g,f_{j+1},\dots,f_k))
=\E (*_j(g_{\omega'_1},\dots,g_{\omega'_{j-1}},(f_{j+1})_{\nu_{j+1}},\dots,(f_k)_{\nu_k}))$, 
the left-hand side of this inequality is at most
$\E (*_j(g_{\omega'_1},\dots,g_{\omega'_{j-1}},(f_{j+1})_{\nu_{j+1}},\dots,(f_k)_{\nu_k})
-\psi)$, which is $\E\tau$, where
\begin{equation*}
\tau=S(*_j(g_{\omega'_1},\dots,g_{\omega'_{j-1}},(f_{j+1})_{\nu_{j+1}},\dots,(f_k)_{\nu_k}))
\end{equation*}
if every $V_h$ has size at most $2q|X|$, and $*_j(g_{\omega'_1},\dots,g_{\omega'_{j-1}},(f_{j+1})_{\nu_{j+1}},\dots,(f_k)_{\nu_k})$ otherwise.

If every $V_h$ has size at most $2q|X|$, then $\tau\leq S(*_j(\omega'_1,\dots,\omega'_{j-1},
\nu_{j+1},\dots,\nu_k))$, since $S$ is an increasing function. If there is some set $V_h$ which
is too large, then we use the bound 
\[*_j(g_{\omega'_1},\dots,g_{\omega'_{j-1}},(f_{j+1})_{\nu_{j+1}},\dots,(f_k)_{\nu_k}) \leq
|X|^k *_j(1,\dots,1) = |X|^k,\]
which follows since $g_{\omega'_h}\leq\omega'_h \leq \frac{|X|}{|W_h|} \leq |X|$ and $(f_h)_{\nu_h}\leq\nu_h\leq(p/q)\zeta_{h} \leq q^{-1} \leq |X|$ for every $h$. Since, by Chernoff's inequality, the probability that some one of the $V_h$ has size larger than $2q|X|$ is exponentially small in $q |X|$, the contribution of these bad terms is $o(1)$ everywhere.

We also have the trivial bound
\begin{equation*}
\circ_j(g,\dots,g,f_{j+1},\dots,f_k)-\sigma\leq 2.
\end{equation*}
Accordingly, if we set $H=\min\{\eta+\E S(*_j(\omega'_1,\dots,\omega'_{j-1},\nu_{j+1},\dots,\nu_k)),2\}$, 
then, provided $|X|$ is sufficiently large, we have a function that satisfies the second inequality and trivially satisfies the inequality $\|H\|_\infty\leq 2$.

It remains to bound $\|H\|_1$. Let $\eta = \a/4$. The sufficient randomness assumption tells us that when we choose our random sequence $(\omega_1,\dots,\omega_{j-1},\nu_{j+1},\dots,\nu_k)$, the probability that it satisfies Q1 is $1-o(|X|^{-k})$. Since there are at most $|X|^k$ ways of choosing $\omega_1,\dots,\omega_{j-1}$, it follows that with probability $1-o(1)$, every single such choice results in a sequence that satisfies Q1. That is, we have the inequality
\begin{equation*}
\|*_j(\omega_{1,a_1},\dots,\omega_{j-1,a_{j-1}},\nu_{j+1},\dots,\nu_k)-
\circ_j(\omega_{1,a_1},\dots,\omega_{j-1,a_{j-1}},\nu_{j+1},\dots,\nu_k)\|_1
\leq \eta.
\end{equation*}
The condition that $|W_i| = (1+o(1))q |X|$ implies that, for every $1 \leq i \leq k$ and every $a \in |X|$, $\omega_{i,a} = (1+o(1)) \omega'_{i,a}$. Therefore, for $|X|$ sufficiently large,
\begin{equation*}
\|*_j(\omega'_{1,a_1},\dots,\omega'_{j-1,a_{j-1}},\nu_{j+1},\dots,\nu_k)-
\circ_j(\omega'_{1,a_1},\dots,\omega'_{j-1,a_{j-1}},\nu_{j+1},\dots,\nu_k)\|_1
\leq 2\eta,
\end{equation*}
for any $\nu_{j+1},\dots,\nu_k$ such that every choice of $\omega_1,\dots,\omega_{j-1}$ yields a sequence that satisfies Q1.

If $\nu_{j+1},\dots,\nu_k$ are such that there exists $(a_1,\dots,a_{j-1})$ for which
$(\omega_{1,a_1},\dots,\omega_{j-1,a_{j-1}},\nu_{j+1},\dots,\nu_k)$ does not satisfy Q1, then
we use a ``trivial" bound instead. For each fixed choice of $(V_{j+1},\dots,V_k)$
we have
\begin{eqnarray*}
\E\|*_j(\omega'_{1,a_1},\dots,\omega'_{j-1,a_{j-1}},\nu_{j+1},\dots,\nu_k)\|_1
&=&\|*_j(1,\dots,1,\nu_{j+1},\dots,\nu_k)\|_1\\
&\leq&(p/q)^{k-j}\|*_j(1,\dots,1,\zeta_{j+1},\dots,\zeta_k)\|_1,
\end{eqnarray*}
where the expectation here is taken over all sequences $(a_1, \dots, a_{j-1})$.
The inequality follows from the fact that $0\leq\nu_h\leq (p/q)\zeta_h$ for each $i$. The constant $\eta$ is at most 1, so applying assumption (i) if $j = 1$, we find that 
\[\|*_1(\zeta_2,\dots,\zeta_k)\|_1 \leq \|\circ_1(\zeta_2,\dots,\zeta_k)\|_1 + \|*_1(\zeta_2,\dots,\zeta_k) - \circ_1(\zeta_2,\dots,\zeta_k)\|_1 \leq 2 + \eta \leq 3.\]
Similarly, applying assumption (ii) if $j > 1$, we have $\|*_j(1,\dots,1,\zeta_{j+1},\dots,\zeta_k)\|_1 \leq 2$. In either case,
\begin{equation*}
\E\|*_j(\omega'_{1,a_1},\dots,\omega'_{j-1,a_{j-1}},\nu_{j+1},\dots,\nu_k)\|_1\leq 3(p/q)^{k-1}\leq 3 L^k.
\end{equation*}
As $|X|$ tends to infinity, the probability that the first bound does not hold for every
$(a_1,\dots,a_{j-1})$ tends to zero, and the second bound always holds. Therefore, 
if $|X|$ is sufficiently large, it follows that
\begin{eqnarray*}
\|H\|_1&\leq& 2\eta + \E\|*_j(\omega'_{1,a_1},\dots,\omega'_{j-1,a_{j-1}},\nu_{j+1},\dots,\nu_k)-
\circ_j(\omega'_{1,a_1},\dots,\omega'_{j-1,a_{j-1}},\nu_{j+1},\dots,\nu_k)\|_1\\
&\leq& 4 \eta = \a,
\end{eqnarray*}
where the expectation is taken over all sequences containing those $\nu_{j+1}, \dots, \nu_k$ such that, for all choices of $a_1, \dots, a_{j-1}$, $(\omega_{1,a_1},\dots,\omega_{j-1,a_{j-1}},\nu_{j+1},\dots,\nu_k)$ satisfies Q1. The result follows.
\end{proof}

What we have shown is not just that every element of $\Phi(\zeta_{j+1},\dots,\zeta_k)$
can be approximated well in $L_1$ and reasonably well in $L_\infty$ by a
convex combination of elements of $\Psi(\zeta_{j+1},\dots,\zeta_k)$, but rather the stronger
statement that the difference is bounded above by a \textit{fixed} bounded function 
with small $L_1$-norm. This will be important to us later.

The title of this section was ``The set of basic anti-uniform functions has few
extreme points.'' That is an oversimplification: the next result is what we actually 
mean. 

\begin{lemma} \label{presmallnet}
Let $0<\a \leq 1/2k$ and $L \geq 2$ be a positive integer with $p = Lq$. Then, for $|X|$ sufficiently large depending on $L$ and $\a$, the following holds. Let $Z_{j+1},\dots,Z_k$ be subsets of $X$ with associated measures $\zeta_{j+1},\dots,\zeta_k$ defined with weight $p^{-1}$, and suppose that assumptions (i), (ii) and (iii) are satisfied. Then there is a collection $\Psi'=\Psi'(\zeta_{j+1},\dots,\zeta_k)$ of at most $|X|^{k-1}\binom{2 p |X|}{2q|X|}^{k-j}(2/\a)^{(k-1)2q|X|}$ functions that take values in $[0,2]$, and non-negative functions $H$ and $H'$ with $\|H\|_1\leq\a$, $\|H\|_\infty\leq 2$, $\|H'\|_1\leq 3\a(k-1)$ and $\|H'\|_\infty\leq 2$,
such that for every function $\phi=\circ_j(g,\dots,g,f_{j+1},\dots,f_k)$ 
in $\Phi(\zeta_{j+1},\dots,\zeta_k)$ there is a function $\psi$ in the convex hull
of $\Psi'$ with $\psi\leq\phi\leq\psi+H+H'$.
\end{lemma}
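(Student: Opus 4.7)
The plan is to construct $\Psi'$ by discretizing the constituent functions of elements of $\Psi(i_{j+1},\dots,i_k)$ on a finite grid, and then to combine this discretization with Lemma \ref{restrictions}. I define $\Psi'$ to consist of all capped convolutions $\circ_j(f'_1,\dots,f'_{j-1},f'_{j+1},\dots,f'_k)$ in which $f'_h$ is a non-negative function supported on $W_h+a_h$ (for some $a_h\in\Z_n$) when $h<j$, or on some subset $V_h\subseteq U_{i_h}$ with $|V_h|\leq 2q|X|$ when $h>j$, and takes values on its support in a uniformly spaced grid of size $2/\alpha$ in $[0,2/q]$. Counting the choices of $a_h$'s, $V_h$'s, and grid values at the (at most $2q|X|$) points of each support, we get $|\Psi'|\leq |X|^{j-1}\binom{2p|X|}{2q|X|}^{k-j}(2/\alpha)^{(k-1)\cdot 2q|X|}$, which is bounded by the claimed quantity since $j-1\leq k-1$.

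Given $\phi = \circ_j(g,\dots,g,f_{j+1},\dots,f_k) \in \Phi(i_{j+1},\dots,i_k)$, Lemma \ref{restrictions} produces $\sigma = \E_{(a,V)}\psi_{a,V}$ with $0\leq \phi - \sigma \leq H$, where each $\psi_{a,V}$ is the capped convolution of normalized restrictions of $g$ and the $f_h$'s (and is taken to be $0$ when some $|V_h|>2q|X|$). For each $(a,V)$, I obtain $\psi'_{a,V}\in\Psi'$ by rounding each constituent of $\psi_{a,V}$ down to the nearest grid point, and set $\psi:=\E_{(a,V)}\psi'_{a,V}$, which lies in the convex hull of $\Psi'$. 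Rounding down preserves the pointwise bounds $f'_h\leq f_h$, so monotonicity of $\circ_j$ in its inputs gives $\psi'_{a,V}\leq\psi_{a,V}$ and hence $\psi\leq\sigma$. Setting $H':=\sigma-\psi = \E_{(a,V)}(\psi_{a,V}-\psi'_{a,V})$, the inequality $\psi\leq\phi\leq\psi+H+H'$ follows at once by combining $\psi\leq\sigma\leq\psi+H'$ with Lemma \ref{restrictions}; the bound $\|H'\|_\infty\leq 2$ is trivial since $0\leq\psi'_{a,V}\leq\psi_{a,V}\leq 2$.

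The main work is to verify $\|H'\|_1\leq 3\alpha(k-1)$. The choice of grid spacing ensures $0\leq f_h-f'_h\leq\alpha\cdot\omega'_{h,a_h}$ pointwise for $h<j$ and $0\leq f_h-f'_h\leq\alpha\cdot\nu_h$ pointwise for $h>j$. Using $f'_h\leq f_h$ and a telescoping expansion of $\prod_{h\neq j}f_h - \prod_{h\neq j}f'_h$ yields the pointwise bound
\[ *_j(f_1,\dots,f_k) - *_j(f'_1,\dots,f'_k) \leq \alpha(k-1)\,*_j(\omega'_1,\dots,\omega'_{j-1},\nu_{j+1},\dots,\nu_k). \]
By the sufficient randomness hypothesis, for all but an $o(|X|^{-k})$ fraction of $(a,V)$ the sequence of bounding measures is $(\eta,j)$-good, so
\[ \|*_j(\omega'_1,\dots,\omega'_{j-1},\nu_{j+1},\dots,\nu_k)\|_1 \leq \|\circ_j(\omega'_1,\dots,\omega'_{j-1},\nu_{j+1},\dots,\nu_k)\|_1 + \eta \leq 2+\eta. \]
Taking expectation over $(a,V)$, handling the negligible bad set via the trivial bound $\|\circ_j(f)-\circ_j(f')\|_1\leq 4$, and using $\|H'\|_1\leq\E_{(a,V)}\|*_j(f)-*_j(f')\|_1$, we obtain $\|H'\|_1\leq 3\alpha(k-1)$ once $\eta$ is taken sufficiently small.

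The main obstacle is the $L_1$ bound on $H'$. A naive $L_\infty$ bound on the bounding measures (each $\approx 1/q$) would introduce a factor of $(1/q)^{k-2}$ in the telescoping sum, which is far too large. The key is that sufficient randomness lets us replace $*_j$ with $\circ_j$ at a cost of only $\eta$ in $L_1$, and the cap ensures $\|\circ_j\|_1\leq 2$; together these yield an $O(\alpha)$ bound per telescoping term.
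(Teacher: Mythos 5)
Your construction has one genuine defect measured against the statement you are proving: the lemma requires $H'=H'(i_{j+1},\dots,i_k)$ to be a \emph{single fixed} function, chosen before $\phi$, that works simultaneously for every $\phi\in\Phi(i_{j+1},\dots,i_k)$; but you define $H':=\sigma-\psi=\E_{(a,V)}(\psi_{a,V}-\psi'_{a,V})$, which depends on $g$ and the $f_h$, i.e.\ on $\phi$ itself (your $L_1$ estimate via $\|H'\|_1\leq\E_{(a,V)}\|{*}_j(f)-{*}_j(f')\|_1$ only makes sense for this $\phi$-dependent object). This fixedness is not a pedantic point: it is exactly what is used downstream, where Corollary \ref{smallnet} needs a fixed $J(A)$ so that Corollary \ref{approxworks} and Lemma \ref{allprofile} can apply the probabilistic estimate of Lemma \ref{L1error} \emph{once}, to one dominating function, and thereby control the continuum of anti-uniform functions by a union bound over the finite net only; the paper emphasises this immediately after Lemma \ref{restrictions}. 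The good news is that the repair lies inside your own argument: your pointwise telescoping bound $\psi_{a,V}-\psi'_{a,V}\leq\min\{\alpha(k-1)\,{*}_j(\omega'_{1,a_1},\dots,\omega'_{j-1,a_{j-1}},\nu_{j+1},\dots,\nu_k),2\}$ is independent of $\phi$, so you should take $H':=\min\bigl\{\alpha(k-1)\,\E_{(a,V)}\bigl[{*}_j(\omega'_{1,a_1},\dots,\nu_k)\mathbf{1}_{\mathrm{good}}\bigr],2\bigr\}+2\,\P(\mathrm{bad})$ (the bad event covering non-goodness and oversized $V_h$, with the capped difference at most $2$ there); the same goodness computation then gives $\|H'\|_1\leq\alpha(k-1)(2+2\eta)+o(1)\leq 3\alpha(k-1)$ and $\|H'\|_\infty\leq 2$, and the domination $\psi\leq\phi\leq\psi+H+H'$ survives.

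Apart from this, your route is a sound variant of the paper's and the counting, monotonicity and telescoping steps are essentially right (modulo small glosses: the characteristic-versus-associated measure adjustment for the $\omega$'s, and the extra factor $|X|^{k-j}$ needed to count subsets $V_h$ of size strictly less than $2q|X|$, both absorbed by the stated bounds). The paper discretizes $g$ and the $f_h$ \emph{first}, against the measures $1$ and $\mu_{i_h}$, which forces a case split: for $j=1$ the error is controlled in $L_1$ via property 1, and for $j>1$ in $L_\infty$ via property 2; only then does it restrict. You restrict first and discretize the normalized restrictions, so the discretization error is measured against the $X_q$-type measures $\omega'_{h,a}$ and $\nu_h$, and the $(\eta,j)$-goodness built into sufficient randomness gives the $L_1$ control uniformly in $j$, with no case split — a slightly cleaner accounting, though ultimately drawing on the same probabilistic input (assumption 1 via Lemma \ref{Ass1'}).
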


\begin{proof} 
Choose a positive integer $t$ such that $\a/2\leq t^{-1}\leq\a$. Then there exists a
non-negative function $g'$ with $0 \leq g' \leq 1$ such that every value taken by $g'$ is a multiple of $t^{-1}$,
and such that $0\leq g'\leq g\leq g'+\a$. Also, for every $h$ and every function $f_h$
such that $0\leq f_h\leq\zeta_h$ there exists a function $f_h'$ with $0 \leq f_h' \leq \zeta_h$ taking values that are multiples of
$t^{-1} \zeta_h$ such that $0\leq f_h'\leq f_h\leq f_h'+\a\zeta_h$. 

We would now like to show, for any such choice of $g$ and $f_{j+1},\dots,f_k$, that
the functions $\phi=\circ_j(g,\dots,g,f_{j+1},\dots,f_k)$ and 
$\phi'=\circ_j(g',\dots,g',f_{j+1}',\dots,f_k')$ are reasonably close. 
We shall consider the two cases $j=1$ and $j>1$ separately. 

If $j=1$ then 
\begin{eqnarray*}
\phi-\phi'
&=&\sum_{h=2}^k(\circ_1(f_2',\dots,f_{h-1}',f_h,\dots,f_k)-\circ_1(f_2',\dots,f_h',f_{h+1},\dots,f_k))\\
&\leq&\sum_{h=2}^k(*_1(f_2',\dots,f_{h-1}',f_h,\dots,f_k)-*_1(f_2',\dots,f_h',f_{h+1},\dots,f_k))\\
&=&\sum_{h=2}^k *_1(f_2',\dots,f_{h-1}',f_h-f_h',f_{h+1},\dots,f_k)\\
&\leq&\sum_{h=2}^k *_1(\zeta_2,\dots,\zeta_{h-1},\a\zeta_h,\zeta_{h+1},\dots,\zeta_k)\\
&=&\a(k-1) (*_1(\zeta_2,\dots,\zeta_k)).\\
\end{eqnarray*}
Since $\eta\leq 1$, assumption (i) implies that 
\begin{equation*}
\|*_1(\zeta_2,\dots,\zeta_k)\|_1\leq 3,
\end{equation*}
so we find that $\circ_1(f_2,\dots,f_k)-\circ_1(f_2',\dots,f_k')$ is bounded above by 
a function $H'$ with $L_1$-norm at most $3\a(k-1)$. It is clearly also bounded
above by 2.

Lemma \ref{restrictions} gives us $H$ with $\|H\|_1\leq\a$,
$\|H\|_\infty\leq 2$, and also $\psi\in\Psi(\zeta_2,\dots,\zeta_k)$ such that
$0\leq\circ_1(f_2',\dots,f_k')-\psi\leq H$. Putting these two facts together implies
the required bounds on $H$ and $H'$ for the case $j=1$.

If $j>1$ then a very similar argument shows that
\begin{eqnarray*}
\phi-\phi'&\leq&\a(k-1)(*_j(1,\dots,1,\zeta_{j+1},\dots,\zeta_k)).
\end{eqnarray*}
By assumption (ii), 
$\|*_j(1,\dots,1,\zeta_{j+1},\dots,\zeta_k)\|_\infty\leq 2$, so in this case we have a function
$H'$ with $L_\infty$-norm at most $2\a(k-1)\leq 2$ and therefore with 
$L_1$-norm at most $2\a(k-1)$.

All that remains is to count the number of functions in $\Psi(\zeta_{j+1},\dots,\zeta_k)$ that are normalized restrictions
of functions of the form $\circ_j(g',\dots,g',f_{j+1}',\dots,f_k')$. It is here that we shall use the
assumption that the sets $Z_h$ each have
cardinality $(1 +o(1)) p |X| \leq 2p|X|$. There are at most $|X|^{j-1}$
choices for the set $(a_1, \dots, a_{j-1})$, and for each $j+1 \leq i \leq k$, because of the 
upper bound on the sizes of the $Z_i$  and $V_i$, there are at most 
$|X| \binom{2 p |X|}{2q|X|}$ choices for the set $V_i$. (Note that since $p = L q \geq 2q$, the largest binomial
coefficient is indeed this one.) Finally, each valuation of each function has at 
most $t \leq 2/\a$ possible results and each of the $k-1$ functions has a domain of size at most $2q|X|$. 
Therefore, the number of normalized restrictions is at most
\[|X|^{k-1} \binom{2 p |X|}{2q|X|}^{k-j}(2/\a)^{(k-1)2q|X|},\]
as required. 
\end{proof}

\subsection{The proof for products of basic anti-uniform functions}

To connect the results of the previous subsection with basic anti-uniform functions, take a sequence $U_1,\dots,U_m$ of subsets of $X$ with associated measures $\mu_1,\dots,\mu_m$. Then, for each $j$ and each sequence $(i_{j+1},\dots,i_k)$ of distinct indices between 1 and $m$, we shall apply the results with $Z_h=U_{i_h}$ and $\zeta_h=\mu_{i_h}$. Then the functions in the set $\Phi(\zeta_{j+1},\dots,\zeta_k)$ are basic anti-uniform functions.

In this section, it will be clear that we are talking about measures $\mu_1,\dots,\mu_m$, and therefore it will be convenient to write $\Phi(i_{j+1},\dots,i_k)$ and $\Psi(i_{j+1},\dots,i_k)$ instead of $\Phi(\mu_{i_{j+1}},\dots,\mu_{i_k})$ and $\Psi(\mu_{i_{j+1}},\dots,\mu_{i_k})$.

Our next task is to generalize Lemma \ref{presmallnet} to a result that applies not just
to basic anti-uniform functions but also to products of at most $d$ such functions. This
is a formal consequence of Lemma \ref{presmallnet}. The exact nature of the bounds 
we obtain for $\|J\|_1$ and $\|J\|_\infty$ is unimportant: what matters is that the first
can be made arbitrarily small and the second is bounded. We need a definition. 

\begin{definition}
If $\phi\in\Phi(i_{j+1},\dots,i_k)$, then define the \emph{profile} of $\phi$ to be the ordered set $(i_{j+1},\dots,i_k)$, and if $\xi$ is a product of $d$ basic anti-uniform functions $\phi_h$, then define the profile of $\xi$ to be the set of all $d$ profiles of the $\phi_h$. We will refer to $d$ as the \emph{size} of the profile.
\end{definition}

\begin{corollary} \label{smallnet}
Let $0<\a \leq 1/2k$ and $L \geq 2$ be a positive integer with $p = Lq$. Then, for $|X|$ sufficiently large depending on $L$ and $\alpha$, the following holds. Suppose that $A$ is a profile of size $d$ and, for every $(i_{j+1}, \dots, i_k)$ in $A$, the sets $U_{i_{j+1}}, \dots, U_{i_k}$ satisfy assumptions (i), (ii) and (iii). Then there is a collection $\Delta=\Delta(A)$ of at most $|X|^{kd}\binom{2 p |X|}{2 q |X|}^{kd}(2/\a)^{2kdq|X|}$
functions that take values in $[0,2^d]$ and a non-negative function 
$J=J(A)$ with $\|J\|_1\leq d\a k 6^{d}$ and $\|J\|_\infty\leq d 6^{d}$,
such that for every function $\xi$ that is a product of basic anti-uniform functions
with profile $A$, there is a function $\psi$ in the convex hull
of $\Delta$ with $\psi\leq\xi\leq\psi+J$.
\end{corollary}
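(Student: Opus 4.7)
The plan is to reduce the statement to Lemma \ref{presmallnet} by a simple telescoping argument in the $d$ factors. Write $\xi=\phi_1\cdots\phi_d$, where each $\phi_h$ is a basic anti-uniform function with profile $A_h=(i_{j_h+1},\dots,i_k)$ and $A=\{A_1,\dots,A_d\}$. Apply Lemma \ref{presmallnet} to each $\phi_h$ separately: this gives, for each $h$, a function $\psi_h$ in the convex hull of $\Psi'(A_h)$ and non-negative functions $H_h=H(A_h)$ and $H'_h=H'(A_h)$, depending only on $A_h$ and not on the particular $\phi_h$ chosen, such that $\psi_h\leq\phi_h\leq\psi_h+H_h+H'_h$, with $\|H_h\|_1\leq\alpha$, $\|H'_h\|_1\leq 3\alpha(k-1)$ and $\|H_h\|_\infty,\|H'_h\|_\infty\leq 2$. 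In particular every $\psi_h$ takes values in $[0,2]$, so the product $\psi=\psi_1\cdots\psi_d$ takes values in $[0,2^d]$.

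For the approximation, I would use the multilinear telescoping identity
\begin{equation*}
\xi-\psi=\phi_1\cdots\phi_d-\psi_1\cdots\psi_d=\sum_{h=1}^d \psi_1\cdots\psi_{h-1}\,(\phi_h-\psi_h)\,\phi_{h+1}\cdots\phi_d.
\end{equation*}
Since all $\phi_i$ and $\psi_i$ lie in $[0,2]$ and $0\leq\phi_h-\psi_h\leq H_h+H'_h$, each term is at most $2^{d-1}(H_h+H'_h)$. Setting
\begin{equation*}
J=J(A)=\sum_{h=1}^d 2^{d-1}(H_h+H'_h),
\end{equation*}
we get $\psi\leq\xi\leq\psi+J$ (the lower bound follows because $\psi_h\leq\phi_h$ for every $h$, hence $\psi\leq\xi$). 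The norm bounds are immediate: $\|J\|_1\leq 2^{d-1}\sum_{h=1}^d(\alpha+3\alpha(k-1))\leq 2^{d-1}d\cdot 3k\alpha\leq d\alpha k\, 6^d$ and $\|J\|_\infty\leq 2^{d-1}\cdot d\cdot(2+2)=2^{d+1}d\leq d\, 6^d$.

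To define $\Delta(A)$, I take all products $\prod_{h=1}^d \rho_h$ with $\rho_h\in\Psi'(A_h)$. If $\psi_h=\sum_i \lambda_{h,i}\rho_{h,i}$ is the convex decomposition given by Lemma \ref{presmallnet}, then expanding $\psi_1\cdots\psi_d$ exhibits $\psi$ as a convex combination of elements of $\Delta(A)$, as needed. The cardinality estimate is just the product of the bounds from Lemma \ref{presmallnet}: using $|\Psi'(A_h)|\leq |X|^{k-1}\binom{2p|X|}{2q|X|}^{k-j_h}(2/\alpha)^{(k-1)2q|X|}$ and the crude inequality $\sum_h(k-j_h)\leq kd$, the total number of products is at most $|X|^{(k-1)d}\binom{2p|X|}{2q|X|}^{kd}(2/\alpha)^{(k-1)2dq|X|}\leq |X|^{kd}\binom{2p|X|}{2q|X|}^{kd}(2/\alpha)^{2kdq|X|}$.

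There is essentially no obstacle: the only mild point to keep in mind is that the approximating functions $H_h,H'_h$ produced by Lemma \ref{presmallnet} depend only on the profile $A_h$ and not on the specific choice of $\phi_h$, which is precisely what allows one ``universal'' $J(A)$ to work uniformly over all $\xi$ with profile $A$. Given this, the telescoping bound and the product convex-hull argument finish the proof.
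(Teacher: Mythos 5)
Your proposal is correct and follows essentially the same route as the paper: apply Lemma \ref{presmallnet} to each of the $d$ factors, take $\Delta(A)$ to be the products of elements of the sets $\Psi'(A_h)$, and bound the error by a fixed function built from the $H_h+H'_h$. The only (cosmetic) difference is that you telescope $\prod_h\phi_h-\prod_h\psi_h$ directly using $\phi_h\leq 2$, whereas the paper bounds $\prod_h(\psi_h+J_h)-\prod_h\psi_h$ by $\sum_h J_h\prod_{i\ne h}(\psi_i+J_i)$; both yield the stated bounds on $\|J\|_1$ and $\|J\|_\infty$ (yours with slightly better constants).
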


\begin{proof}
Every function $\xi$ with profile $A$ is a product $\phi_1\dots\phi_d$, 
where each $\phi_i$ is a basic anti-uniform function 
with some fixed profile. That is, each $\phi_i$ belongs to a fixed set
of the form $\Phi(i_{j+1},\dots,i_k)$. By Lemma \ref{presmallnet} we 
can find $\psi_i$ such that $\psi_i\leq\phi_i\leq\psi_i+J_i$, where 
$\psi_i$ belongs to the convex hull of a set $\Psi'_i$ of size at
most $|X|^{k}\binom{2 p |X|}{2 q |X|}^{k}(2/\a)^{2kq|X|}$, and $J_i$ is a fixed function 
such that $\|J_i\|_\infty\leq 4$ and $\|J_i\|_1\leq 4 \a k$.

It follows that $\prod_i\psi_i\leq\xi\leq\prod_i(\psi_i+J_i)$. But
\begin{equation*}
\prod_{i=1}^d(\psi_i+J_i)-\prod_{i=1}^d\psi_i\leq\sum_{h=1}^dJ_h\prod_{i\ne h}(\psi_i+J_i).
\end{equation*}
Since each $\psi_i$ has $L_\infty$-norm at most $2$, the latter function has $L_1$-norm 
at most $d\a k 6^{d}$ and $L_\infty$-norm at most $d 6^d$, as claimed.
\end{proof}

We are now ready for the main result of this section. It will be convenient once again to give names to certain assumptions.

\begin{enumerate}
\item[R1($r,j$).]  If $Z_1,\dots,Z_k$ are chosen independently from $X_r$ and their associated measures are $\zeta_1,\dots,\zeta_k$, then $\|*_j(\zeta_1,\dots,\zeta_{j-1},\zeta_{j+1},\dots,\zeta_k) -\circ_j(\zeta_1,\dots,\zeta_{j-1},\zeta_{j+1},\dots,\zeta_k)\|_1\leq \eta$ with probability $1-o(|X|^{-k})$.

\item[R2($r$).]  With the notation as in R1, the probability that $\|*_j(1,\dots,1,\zeta_{j+1},\dots,\zeta_k)\|_\infty\leq 2$ for every $j\geq 2$ is $1-o(1)$.
\end{enumerate}

Note that R1($r,j$) is saying that Q1 holds with high probability, and R2($r$) is saying that Q2 holds with high probability for every $j$ (when the $\nu_i$ are the associated measures of random sets from $X_r$). 

\begin{lemma} \label{allprofile}
For any positive constant $\lambda$ and positive integer $d$, there exist $\eta, m$ and $L$ such that the following holds. Let $0<p_0\leq 1/L$ and suppose that assumptions R1($r,j$)  and R2($r$) hold for every $j$ and for every $r\geq p_0$. Let $p\geq Lp_0$, let $U_1,\dots,U_m$ be chosen independently from $X_p$, and let $\mu_1,\dots,\mu_m$ be their associated measures. Then, with probability $1 - o(1)$, they satisfy property P3. That is, setting $\mu = m^{-1} (\mu_1 + \dots + \mu_m)$, $|\sp{\mu-1,\xi}| < \lambda$ whenever $\xi$ is a product of at most $d$ basic anti-uniform functions from $\Phi_{\mu,1}$.
\end{lemma}

\begin{proof}
Let $A$ be a profile and suppose that $i$ is not involved in $A$.
Let $\Gamma=\Gamma(A)$ be the set of all products of at most $d$ basic anti-uniform functions with profile $A$. Let $q=p/L$ for a constant $L$ yet to be determined. By assumption R1($q,j$), Lemma~\ref{Ass1'} implies that for every sequence  $(i_{j+1}, \dots, i_k)$, the probability that assumption (iii) holds with parameters $p$ and $q$ for the sets $U_{i_{j+1}},\dots,U_{i_k}$ is $1-o(1)$. Therefore, the probability that it holds for all $j$ and all sequences $(i_{j+1}, \dots, i_k)$ in the profile $A$ is also $1-o(1)$. By assumptions R1($p,1$) and R2($p$), we also know that assumptions (i) and (ii) hold with probability $1-o(1)$ for any given $(i_{j+1}, \dots, i_k)$ and, therefore, for all $(i_{j+1}, \dots, i_k)$ in the profile $A$. 

We may therefore apply Corollary \ref{smallnet} to conclude that there exists a set $\Delta = \Delta(A)$ of at most $|X|^{kd}\binom{2 p |X|}{2 q |X|}^{kd}(2/\a)^{2kdq|X|}$ functions such that, for every function $\xi \in \Gamma$, there exists $\psi$ in $\Delta$ with $|\xi - \psi| \leq H$, where $\|H\|_1 \leq d \a k 6^d$ and $\|H\|_{\infty} \leq d 6^d$. If we let $\a = \l/12 k d 6^d$, Corollary \ref{approxworks} implies that, with probability $1 - o(1)$,
\begin{equation*}
\max\{|\sp{\mu_i-1,\phi}|:\phi\in\Gamma\}\leq\max\{|\sp{\mu_i-1,\psi'}|:\psi'\in\Delta\}+\frac{\l}{4}.
\end{equation*}
Note that this step depends critically on the fact that $\mu_i$ is entirely independent of the
set $\Delta(A)$. It was for this purpose that we chose $m$ random sets $U_1,\dots,U_m$ 
rather than one single random set $U$. This observation is also important in the next step, which is to prove
that $\max\{|\sp{\mu_i-1,\psi'}|:\psi'\in\Delta\} \leq \l/4$ with probability $1-o(1)$. 

By Lemma \ref{correlation}, since $\|\psi'\|_{\infty} \leq 2^d$ for all $\psi' \in \Delta$, the probability that $|\sp{\mu_i - 1, \psi'}| > \l/4$ is at most $2 \exp(-\l^2 p |X|/2^{2d + 10})$ for any given $\psi'$. Since $p = Lq$, we may estimate the number of elements in $\D(A)$ as follows.
\begin{eqnarray*}
|X|^{kd}\binom{2 p |X|}{2 q |X|}^{kd}(2/\a)^{2kdq|X|} & \leq & |X|^{kd} \left(\frac{6 p|X|}{2 q|X|}\right)^{2kdq|X|} \left(\frac{24 k d 6^d}{\lambda}\right)^{2kdq|X|}\\
& \leq & |X|^{kd} (3 L)^{2 k d p |X|/L} \left(\frac{24 k d 6^d}{\lambda}\right)^{2kdp|X|/L}\\
& = & |X|^{kd} \left(\left(\frac{72 L k d 6^d}{\lambda}\right)^{2kd}\right)^{p|X|/L}.
\end{eqnarray*}
If we choose $L$ sufficiently large (depending on $k, d$ and $\l$), then we can arrange for the sum of the probabilities, which is at most $2\exp(-\l^2 p |X|/2^{2d + 10}) |\D(A)|$, to be $o(1)$. 

We are almost done. We now wish to prove a result about $\mu=m^{-1}(\mu_1+\dots+\mu_m)$. Applying our result so far to all profiles simultaneously, we find that with probability $1-o(1)$, $|\sp{\mu_i - 1, \xi}| \leq \l/2$ for every $\mu_i$ and $\xi$ such that $i$ is not involved in the profile of $\xi$. Fix a particular $\xi_0$. If we choose $i$ at random, the probability that it is involved in the profile of $\xi_0$ is at most $(k-1)d/m$. Furthermore, for any $i$, we have the trivial bound $|\sp{\mu_i - 1, \xi_0}| \leq 2^{d+2}$, since $\|\xi_0\|_{\infty} \leq 2^{d}$ and, for $|X|$ sufficiently large, $\|\mu_i - 1\|_1 \leq 3$.  Therefore,
\[|\sp{\mu-1, \xi_0}| \leq \E_i|\sp{\mu_i - 1, \xi_0}| \leq \frac{(k-1)d}{m} 2^{d+2} + \frac{\l}{2} \leq \l,\]
provided $m \geq k d 2^{d+3}/\l$. The result follows. 
\end{proof}

\subsection{Obtaining P3$'$ as well}

It is possible to add a fixed set of bounded functions $\mathcal{F}$ to the collection of basic anti-uniform functions, provided only that this set has size smaller than $2^{p |X|/L_0}$, where $L_0$ is again some constant depending only on $k$, $\lambda$ and $d$, and the above proof continues to work. Indeed, adding such a collection can increase the size of the set of products of basic anti-uniform functions by a factor of at most $2^{d p|X|/L_0}$. Therefore, when we come to the final line of the penultimate paragraph of the proof of the previous lemma, provided $L_0$ and $L$ have been chosen small enough, the probability that the random measure $\mu_i$ correlates with any given function is still small enough to guarantee that with high probability $\max\{|\sp{\mu_i-1,\psi'}|:\psi'\in\Gamma'\} \leq \l/4$, where $\Gamma'$ is the set of functions formed from products of at most $d$ characteristic functions from $\mathcal{F}$ and basic anti-uniform functions whose profile does not involve $\mu_i$. The remainder of the proof is the same, in that we add over all profiles and rule out the set of small exceptions where the set $U_i$ is involved in the profile of $\xi$.

Later, when we come to apply this observation, $\mathcal{F}$ will be a collection of characteristic functions. For example, to prove a stability version of Tur\'an's theorem, the set $\mathcal{F}$ will be the collection of characteristic measures of vertex subsets of $\{1, \dots, n\}$. This has size $2^n$. Therefore, provided $p \geq C n^{-1}$, for $C$ sufficiently large, we will have control over local densities.  

\section{Probabilistic estimates I: tail estimates} \label{ProbII}

In this section, we shall focus on showing that property P2 holds with high probability. That is, we shall show that under suitable conditions, with high probability $\|*_j(1,1,\dots,1,\mu_{i_{j+1}},\dots,\mu_{i_k})\|_{\infty} \leq 2$ for every $j\geq 2$ and every sequence  $i_{j+1},\dots,i_k$ of distinct integers between 1 and $m$. It will be helpful for the next section if we actually prove the following very slightly more general statement. For every $1 \leq j \leq k$, every collection of measures $\nu_1, \dots, \nu_k$ such that at least one of the measures other than $\nu_j$ is the constant measure 1 and the rest are distinct measures of the form $\mu_{i_j}$ has the property that $\|*_j(\nu_1,\dots,\nu_k)\|_{\infty} \leq \frac{3}{2}$. 

Up to now, our argument has been general. Unfortunately, we must now be more specific about the kind of sets that we are dealing with. We shall split into two cases. First, we shall look at systems $S$ with the following property.

\begin{definition} A system $S$ of ordered sequences of length $k$ in a set $X$ \emph{has two degrees of freedom} if, whenever $s$ and $t$ are two elements of $S$ and there exist $i\ne j$ such that $s_i=t_i$ and $s_j=t_j$, we have $s=t$. 
\end{definition}

\noindent This includes the case when $S$ is the set of arithmetic progressions in $\mathbb{Z}_n$, and higher-dimensional generalizations concerning homothetic copies of a single set. 

After that, we will look at graphs and hypergraphs. In this case, the required estimates are much more difficult. Thankfully, most of the hard work has already been done for us by Janson, Ruci\'nski and, in one paper, Oleszkiewicz \cite{J90, JOR04, JR02, JR04, JR09} (see also the paper of Vu, \cite{Vu01}). We shall return to these estimates later.

\subsection{The proof for systems with two degrees of freedom}

Let $U_1,\dots,U_m$ be independent random sets chosen binomially and let their associated measures be $\mu_1,\dots,\mu_m$. We are interested in quantities of the form $*_j(\nu_1,\dots,\nu_k)(x)$, where each $\nu_i$ (with $i\ne j$) is equal to either the constant function 1 or to one of the measures $\mu_r$. We also insist that no two of the $\nu_i$ are equal to the same $\mu_r$ and that at least one of the $\nu_i$ is the constant function. 

Suppose that the set of $i$ such that $\nu_i$ is one of the $\mu_r$ is $\{a_1,\dots,a_l\}$ and that $\nu_{a_h}=\mu_{b_h}$ for $h=1,2,\dots,l$. Then we can interpret $*_j(\nu_1,\dots,\nu_k)(x)$ as follows. Recall that $S_j(x)$ is the set of all $s=(s_1,\dots,s_k)\in S$ such that $s_j=x$. Then $*_j(\nu_1,\dots,\nu_k)(x)$ is equal to $p^{-l}$ times the proportion of $s\in S_j(x)$ such that $s_{a_h}\in U_{b_h}$ for every $h=1,\dots,l$. This is because $\nu_{a_h}(s_{a_h})=p^{-1}$ if $s_{a_h}\in U_{b_h}$ and 0 otherwise.

Now let us regard sequences $s\in S$ as fixed and $U_1,\dots,U_m$ as random variables. For each $s$, let $E(s)$ be the event that $s_{a_h}\in U_{b_h}$ for every $h=1,\dots,l$ (so $E(s)$ is an event that depends on $U_1,\dots,U_m$). We claim that if $s$ and $t$ are distinct sequences in $S_j(x)$, then $E(s)$ and $E(t)$ are independent. The reason for this is that we know that $s_j=t_j$, and our assumption that $S$ has two degrees of freedom therefore implies that there is no other $i$ such that $s_i=t_i$. It follows that the events $s_{a_h}\in U_{b_h}$ and $t_{a_h}\in U_{b_h}$ are independent (since the sets $U_i$ are chosen binomially) and hence that $E(s)$ and $E(t)$ are independent (since the sets $U_{b_1},\dots,U_{b_l}$ are independent).

\begin{lemma} \label{LInfInd}
Let $X$ be a finite set, let $S$ be a homogeneous collection of ordered subsets of $X$, each of size $k$, and suppose that $S$ has two degrees of freedom. Let $U_1,\dots,U_k$ be random subsets of $X$ with associated measures $\mu_1,\dots,\mu_k$, each chosen binomially with probability $p$. Let $1\leq j\leq k$ and let $L$ be a subset of $\{1,2,\dots,k\}\setminus\{j\}$ of cardinality $l < k - 1$. For each $i\leq k$, let $\nu_i=\mu_i$ if $i\in L$ and 1 otherwise. Let $x\in\Z_n$. Then the probability that $*_j(\nu_1,\dots,\nu_{j-1},\nu_{j+1},\dots,\nu_k)(x) \leq \frac{3}{2}$
is at least $1 - 2\exp(-p^l |S_j(x)|/16)$.
\end{lemma}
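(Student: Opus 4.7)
My plan is to express $*_j(\nu_1,\dots,\nu_k)(x)$ as a normalized sum of indicator variables that turn out to be \emph{mutually independent} thanks to the two-degrees-of-freedom hypothesis, and then invoke Bernstein's inequality (Lemma \ref{Bernstein}) to get the desired tail estimate. Concretely, writing $N = |S_j(x)|$ and letting $a_1,\dots,a_l$ enumerate $L$ with $\nu_{a_h} = \mu_{b_h}$, one has the identity
\[
  *_j(\nu_1,\dots,\nu_k)(x) \;=\; \frac{1}{Np^l}\sum_{s\in S_j(x)} Y_s,
  \qquad Y_s := \prod_{h=1}^l \mathbf{1}_{s_{a_h}\in U_{b_h}},
\]
which is precisely the unpacking already given in the paragraph above the lemma; each $Y_s$ is Bernoulli$(p^l)$, so $\E Y_s = p^l$ and $\E \sum_s Y_s = Np^l$.

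The first substantive step, which I also expect to be the only real obstacle, is to verify that the family $\{Y_s\}_{s\in S_j(x)}$ is \emph{mutually} independent. Here the two-degrees-of-freedom hypothesis is used in an essential way: any two distinct $s,t\in S_j(x)$ already share the value $x$ at position $j$, and therefore cannot share any other coordinate; in particular $s_{a_h}\ne t_{a_h}$ for every $h$. Combined with the fact that the sets $U_b$ are drawn from independent binomial processes, this shows that across $s\in S_j(x)$ the indicators $\mathbf{1}_{s_{a_h}\in U_{b_h}}$ depend on pairwise disjoint collections of independent coordinates, so the full family $\{Y_s\}$ is a product of independent Bernoullis.

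Once independence is in hand, the rest is a routine Bernstein/Chernoff calculation. Applied with $M=1$, $\sum_s \V(Y_s) \le Np^l$, and deviation $t = Np^l/2$, Bernstein's inequality gives
\[
  \P\Bigl(\Bigl|\textstyle\sum_s Y_s - Np^l\Bigr| \ge Np^l/2\Bigr)
  \;\le\; \exp\!\Bigl(\!-\tfrac{(Np^l/2)^2}{2(Np^l + Np^l/6)}\Bigr)
  \;\le\; \exp(-Np^l/16),
\]
after the wasteful estimate $2(Np^l + Np^l/6) \le 4Np^l$. Since the event $*_j(\nu_1,\dots,\nu_k)(x) > 3/2$ is contained in $\{\sum_s Y_s > 3Np^l/2\}$, this yields the claimed bound, the factor of $2$ comfortably absorbing the wastage (and allowing the two-sided deviation to be quoted directly from Bernstein if convenient). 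The hypothesis $l<k-1$ plays no role in the calculation itself; it matters only because the lemma will be invoked when at least one coordinate has been set to $1$, as in property 2, while the case $l=k-1$ is handled elsewhere via property 1.
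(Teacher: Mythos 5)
Your proposal is correct and follows essentially the same route as the paper: you rewrite $*_j(\nu_1,\dots,\nu_{j-1},\nu_{j+1},\dots,\nu_k)(x)$ as $p^{-l}$ times an average of Bernoulli$(p^l)$ indicators over $S_j(x)$, use the two-degrees-of-freedom property (distinct $s,t\in S_j(x)$ agree at position $j$, hence nowhere else, so the relevant coin flips are disjoint) to get mutual independence, and then apply a standard tail bound. The only cosmetic difference is that you invoke Bernstein's inequality (Lemma \ref{Bernstein}) where the paper quotes its Chernoff estimate (Lemma \ref{Chernoff}) with $\d=1/2$; both give the stated $2\exp(-p^l|S_j(x)|/16)$ bound.
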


\begin{proof}
Let $\chi_i$ be the characteristic function of $U_i$. Suppose that $L = \{a_1, \dots, a_l\}$. Then 
\[*_j(\nu_1,\dots,\nu_{j-1},\nu_{j+1},\dots,\nu_k)(x) = p^{-l} \E_{s \in S_j(x)} \prod_{i \in L} \chi_i (s_i).\]
Now $\prod_{i \in L} \chi_i (s_i)$ is the characteristic function of the event $E(s)$ mentioned just before the statement of this lemma, in the case when $b_h=a_h$ for every $h$. As we have discussed, these events are independent. Moreover, they each have probability $p^l$. Therefore, $\E_{s \in S_j(x)} \prod_{i \in L} \chi_i (s_i)$ is an average of $|S_j(x)|$ independent Bernoulli random variables of probability $p^l$.

By Chernoff's inequality, Lemma \ref{Chernoff}, $\sum_{s \in S_j(x)} \prod_{i \in L} \chi_i (s_i) \leq \frac{3}{2} p^l |S_j(x)|$ with probability at least $1 - 2 \exp(-p^l |S_j(x)|/16)$. Therefore, $\E_{s \in S_j(x)} \prod_{i \in L} \chi_i (s_i) \leq \frac{3}{2} p^l$ with the same probability. This proves the result.
\end{proof}

It is perhaps not immediately obvious how the bound for the probability in the last lemma relates to sharp values for $p$ in applications. To get a feel for this, consider the case when $S$ is the set of $k$-term arithmetic progressions in $\Z_n$. Then $|S_j(x)|=n$ for every $x$ and $j$ as $|X|=n$. We want to be able to take $p$ to be around $n^{-1/(k-1)}$. With this value, $\exp(-p^l|S_j(x)|/16)$, takes the form $\exp(-cn^{1-l/(k-1)})$. In the worst case, when $l=k-2$, this works out to be $\exp(-cn^{1/(k-1)})$, which drops off faster than any power of $n$. If we took $l=k-1$ then we would no longer have an interesting statement: that is why convolutions where every $\nu_i$ is equal to some $\mu_j$ must be treated in a different way.

\subsection{The proof for strictly balanced graphs and hypergraphs}

We now turn to the more difficult case of finding copies of a fixed balanced graph or hypergraph. Again, we are trying to show that $*_j(\nu_1,\dots,\nu_k)(x)$ is reasonably close to 1 with very high probability, but now this quantity is a normalized count of certain graphs or hypergraphs. Normally when one has a large deviation inequality, one expects the probability of large deviations to be exponentially small in the expectation. In the graph case a theorem of roughly this variety may be proved for the lower tail by using Janson's inequality \cite{J90}, but the behaviour of the upper tail is much more complex. The best that can be achieved is a fixed power of the expectation. The result that we shall use in this case is due to Janson and Ruci\'nski \cite{JR04}. Before we state it, we need some preliminary discussion.

To begin with, let us be precise about what we are taking as $X$ and what we are taking as $S$. We are counting copies of a fixed labelled $r$-uniform hypergraph $H$. Let $H$ have vertex set $V$ of size $m$ and (labelled) edge set $(e_1,\dots,e_r)$. (That is, each $e_i$ is a subset of $V$ of size $r$ and we choose some arbitrary ordering.) Let $W$ be a set of size $n$ (which we think of as large) and let $X=W^{(r)}$, the set of all subsets of $W$ of order $r$.

Given any injection $\phi:V\to W$ we can form a sequence $(s_1,\dots,s_k)$ of subsets of $W$ by setting $s_i=\phi(e_i)$. We let $S$ be the set of all sequences that arise in this way. The elements of $S$ are copies of $H$ with correspondingly labelled edges. 

If we fix an edge $e\in X$ and an index $j$, then $S_j(e)$ is the set of all sequences $(s_1,\dots,s_k)$ in $S$ such that $s_j=e$. To obtain such a sequence, one must take a bijection from $e_j$ (which is a subset of $V$ of order $r$) to $e$ (which is a subset of $W$ of order $r$) and extend it to an injection $\phi$ from $V$ to $W$. One then sets $s_i=\phi(e_i)$ for each $i$. 

Now let $U_1,\dots,U_m$ be independent random subsets of $X$, chosen binomially with probability $p$, and let their associated measures be $\mu_1,\dots,\mu_m$. Suppose once again that $\nu_1,\dots,\nu_k$ are measures, some of which are constant and some of which are equal to distinct $\mu_i$. Suppose that the non-trivial measures, not including $\nu_j$ if it is non-trivial, are $\nu_{a_1}, \dots, \nu_{a_l}$, and suppose that $\nu_{a_i}=\mu_{b_i}$ for $i=1,2,\dots,l$. Then the value $*_j(\nu_1,\dots,\nu_k)(e)$ of the $j$th convolution at $e$ is equal to 
\[\E_{s \in S_j(e)} \prod_{1 \leq i \leq l} \mu_{b_i} (s_{a_i}).\]
This is $p^{-l} |S_j(e)|^{-1}$ times the number of sequences $(s_1,\dots,s_k)\in S$ such that $s_j=e$ and $s_{a_i}\in U_{b_i}$ for every $1\leq i\leq l$. If we define $H'$ to be the subhypergraph of $H$ that consists of the edges $e_{a_1},\dots,e_{a_l},$ then each such sequence is a so-called $e_j$-\textit{rooted} copy of $H'$ in $(e,X)$. That is, it is a copy of $H'$ where we insist that the vertices in $e_j$ map bijectively to the vertices in $e$. We are interested in the number of rooted copies such that the edges fall into certain sparse random sets. This is not an easy calculation, but it has been done for us by Janson and Ruci\'nski. In order to state the result we shall need, let us define formally the random variable that we wish not to deviate much from its mean.

\begin{notation*}
Let $K$ be a labelled $r$-uniform hypergraph and $f$ an edge in $K$. Let $l$ be the number of edges in $K\char92\{f\}$ and let $U_1, \dots, U_l$ be random binomial subhypergraphs of the complete $r$-uniform hypergraph $K_n^{(r)}$ on $n$ vertices, each edge being chosen with probability $p$, with characteristic functions $\chi_1, \dots, \chi_l$. Let $S_{f}$ be the set consisting of all labelled ordered copies of $K\char92\{f\}$ in $K_n^{(r)}$ that are $f$-rooted at a given edge $e$. Then the random variable $Y_K^{f}$ is given by
\[\sum_{s \in S_{f}} \prod_{1 \leq i \leq l} \chi_i (s_i).\]
\end{notation*}

Strictly speaking $Y_K^{f}$ depends on $e$ as well, but we omit this from the notation because it makes no difference to the probabilities which edge $e$ we choose. (So we could, for example, state the result for $e=\{1,2,\dots,r\}$ and deduce it for all other $e$.)

The number of injections $\phi$ that extend a bijection from $f$ to $e$ is $r!(n-r)(n-r-1)\dots(n-v_K+1)$, and for each one the probability that $s_i\in U_i$ for every $i$ is $p^l=p^{e_K-1}$, so the expectation $\E Y_K^{f}$ is 
\[p^{e_K-1}r!(n-r)(n-r-1)\dots(n-v_K+1).\]
The precise details will not matter to us much, but note that the order of magnitude is $p^{e_K-1}n^{v_K-r}$.

We are now ready to state the result of Janson and Ruci\'nski. It is actually a very special case of a much more general result (Corollary 4.1 from \cite{JR04}). To explain the general statement would lead us too far astray so we restrict ourselves to stating the required corollary.

\begin{lemma} \label{JRHyper}
Let $K$ be a labelled $r$-uniform hypergraph and $f$ a fixed edge. Then there exists a constant $c$ such that the random variable
$Y_K^{f}$ satisfies
\[\P\left(Y_K^{f} \geq \frac{3}{2} \E Y_K^{f}\right) \leq 2 n^{v_K} \exp\left(-c
\min_{L \subseteq K} (\E Y_L^{f})^{1/v_L}\right).\]
\end{lemma}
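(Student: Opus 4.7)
This lemma is a direct special case of Corollary 4.3 of Janson and Ruci\'nski \cite{JR04}; my \emph{proof} consists essentially of observing that $Y_K^e$ fits their framework of rooted subhypergraph counts in binomial random hypergraphs and then quoting their result. Let me nevertheless sketch the strategy of their argument, since this upper-tail estimate is the deepest probabilistic ingredient invoked in this paper.

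The plan is to apply the \emph{deletion method}, introduced by Janson, Oleszkiewicz and Ruci\'nski in \cite{JOR04}. Write $Y_K^e=\sum_{s\in S_e} Z_s$, where $Z_s=\prod_{i}\chi_i(s_i)$ indicates that every non-root edge of the rooted copy $s$ is present in the random hypergraph. The fundamental difficulty is that the $Z_s$ are highly dependent — two rooted copies can share both vertices and edges — so neither Chernoff nor Janson's inequality applied directly to $Y_K^e$ yields the desired bound. The deletion trick observes that on the event $\{Y_K^e \geq \tfrac{3}{2}\E Y_K^e\}$ one can remove a comparatively small set of edges from the random hypergraph and be left with a sub-collection of $Z_s$'s that are essentially pairwise edge-disjoint, and therefore controllable by a standard Chernoff bound. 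The cost of the deletion, after optimisation, is what produces the $(\E Y_L^e)^{1/v_L}$ terms.

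Concretely, one classifies each rooted copy by the subhypergraph $L\subseteq K$ that best describes the overlap pattern of the dominating cluster of copies. For each $L$ one argues that if $Y_K^e$ is unusually large because of $L$-type overlaps, then the random hypergraph must contain on the order of $(\E Y_L^e)^{1/v_L}$ vertex-disjoint copies of $L$; a disjoint-event Chernoff bound then produces a factor of $\exp\bigl(-c(\E Y_L^e)^{1/v_L}\bigr)$. Taking the minimum over $L$ picks out the cheapest obstruction, and the prefactor $n^{v_K}$ is the standard union-bound cost of enumerating the rooted copies.

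The main technical obstacle, were one to reprove this from scratch, is calibrating the exponent correctly: one must show that the extremal clusters really are near-disjoint packings of copies of some $L\subseteq K$ and that no other configuration is cheaper. For the applications in this paper it is both cleaner and entirely sufficient to invoke \cite{JR04} as a black box after verifying that $Y_K^e$ is a rooted subhypergraph count in an independent-edge binomial random hypergraph, which is immediate from the construction above.
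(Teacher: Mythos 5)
Your proposal matches the paper exactly: the paper gives no proof of this lemma, but simply states it as a very special case of Corollary 4.3 of Janson and Ruci\'nski \cite{JR04}, which is precisely the black-box citation you make after checking that $Y_K^e$ is a rooted subhypergraph count in a binomial random hypergraph. Your additional sketch of the deletion method is background to the cited result rather than a different route, so there is nothing further to compare.
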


A better, indeed almost sharp, result has recently been proved by Janson and Ruci\'nski \cite{JR09}. Unfortunately, though the result almost certainly extends to hypergraphs, it is stated by these authors only for graphs. However, the previous result is more than sufficient for our current purposes. 

We are now ready to show that if $X = K_n^{(r)}$, $S$ is the collection of labelled copies of a strictly balanced hypergraph $H$ in $X$ and $p \geq n^{-1/m_r(H)}$, then P2 holds with high probability. The proof is essentially the same as it was for systems with two degrees of freedom, except that we have to use the results of Janson and Ruci\'nski instead of Chernoff's inequality. Recall that an $r$-uniform hypergraph $H$ is strictly $r$-balanced if $\frac{e_H - 1}{v_H - r} > \frac{e_K - 1}{v_K - r}$ for every proper subhypergraph $K$ of $H$.

\begin{lemma} \label{LInfHyper}
Let $H$ be a strictly $r$-balanced $r$-uniform hypergraph with $k$ edges. Let $X = K_n^{(r)}$ and let $S$ be the collection of 
labelled ordered copies of $H$ in $X$. Let $U_1,\dots,U_k$ be random subsets of $X$, each chosen binomially with probability $p$, and let their characteristic measures be $\mu_1,\dots,\mu_k$. Let $1\leq j\leq k$ and let $L$ be a subset of $\{1,2,\dots,k\}\setminus\{j\}$ of cardinality $l < k - 1$. For each $i\leq k$, let $\nu_i=\mu_i$ if $i\in L$ and 1 otherwise. Let $e\in X$. Then for $p \geq n^{-1/m_r(H)}$ there exist positive constants $a$ and $A$ such that the probability that $*_j(\nu_1,\dots,\nu_{j-1},\nu_{j+1},\dots,\nu_k)(e) \leq \frac{3}{2}$ is at least $1 - 2 n^{v_H} e^{-A n^a}$.
\end{lemma}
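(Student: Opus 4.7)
The plan is to identify $*_j(\nu_1,\dots,\nu_{j-1},\nu_{j+1},\dots,\nu_k)(e)$ with the normalised variable $Y_K^e/\E Y_K^e$ for an appropriate subhypergraph $K$ of $H$, then invoke the Janson--Ruci\'nski bound of Lemma \ref{JRHyper}, and finally use strict $k$-balancedness to turn the minimum of expectations appearing there into a sub-exponential tail.

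First I would take $K$ to be the subhypergraph of $H$ whose edges are $\{e_j\}\cup\{e_i:i\in L\}$, so that $e_K=l+1$ and, because $l<k-1$, $K$ is a proper subhypergraph of $H$. Writing $\mu_i=p^{-1}\chi_i$ with $\chi_i$ the indicator of $U_i$ and expanding the convolution gives
\[*_j(\nu)(e)=\frac{p^{-l}}{|S_j(e)|}\sum_{s\in S_j(e)}\prod_{i\in L}\chi_{b_i}(s_i).\]
Each $s\in S_j(e)$ projects, by restricting to the coordinates in $L\cup\{j\}$, to a labelled rooted copy $t$ of $K$ at $e$, and conversely each such $t$ arises from exactly $M=(n-v_K)(n-v_K-1)\cdots(n-v_H+1)$ sequences in $S_j(e)$; this constant counts free injective extensions of $V_K$ to $V_H$ and involves no random constraints. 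Combining $|S_j(e)|=M|S_K(e)|$ with $\E Y_K^e=p^l|S_K(e)|$, the display collapses to $*_j(\nu)(e)=Y_K^e/\E Y_K^e$, so the event $\{*_j(\nu)(e)\leq 3/2\}$ is exactly $\{Y_K^e\leq \tfrac{3}{2}\E Y_K^e\}$.

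I would then apply Lemma \ref{JRHyper} to obtain a bound of the form $2n^{v_K}\exp(-c\min_{L'}(\E Y_{L'}^e)^{1/v_{L'}})$, where the minimum runs over subhypergraphs $L'\subseteq K$ that contain the root edge and have $e_{L'}\geq 2$. For every such $L'$, $L'$ is a proper subhypergraph of $H$ (since $K$ is), so strict $k$-balancedness yields $m_k(L')<m_k(H)$. Combined with $p\geq n^{-1/m_k(H)}$ and the standard estimate $\E Y_{L'}^e=\Theta(p^{e_{L'}-1}n^{v_{L'}-k})$, this gives
\[\E Y_{L'}^e\geq n^{(v_{L'}-k)(1-m_k(L')/m_k(H))},\]
with exponent strictly positive and, by finiteness of the family of admissible $L'$, at least $a\cdot v_{L'}$ for some constant $a>0$ depending only on $H$. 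Hence $\min_{L'}(\E Y_{L'}^e)^{1/v_{L'}}\geq n^a$, and absorbing $c$ into a new constant $A$ yields the claimed bound $1-2n^{v_H}e^{-An^a}$.

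The main obstacle is the first step, the \emph{exact} identification $*_j(\nu)(e)=Y_K^e/\E Y_K^e$. The catch is to pass cleanly from the count indexed by labelled copies of $H$ (which is what the convolution naturally produces) to Janson and Ruci\'nski's count indexed by labelled copies of $K$. What makes it clean rather than merely approximate is that the extension factor $M$ does not depend on the rooted $K$-copy and carries no random weight, since all the characteristic functions live on edges of $K$; this is where the hypothesis $l<k-1$ (forcing at least one truly ``free'' coordinate) is also doing its job by making $K$ proper. Once this identification is set up, the remainder is essentially bookkeeping: the Janson--Ruci\'nski estimate combined with a short exponent calculation coming directly from strict $k$-balancedness.
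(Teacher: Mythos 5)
Your proposal is correct and follows essentially the same route as the paper: rewrite the convolution as the normalised rooted count of the proper subhypergraph $K$ determined by $L$, apply the Janson--Ruci\'nski bound of Lemma \ref{JRHyper}, and use strict $k$-balancedness together with $p \geq n^{-1/m_k(H)}$ to show each $\E Y_J^e$ is at least a positive power of $n$. Your explicit treatment of the extension factor $M$ is a useful piece of bookkeeping that the paper's proof passes over when it identifies the sum over $S_j(e)$ with a rooted copy count of $K$, but it does not change the argument.
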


\begin{proof}
Let $\chi_i$ be the characteristic function of $U_i$. Then 
\[*_j(\nu_1,\dots,\nu_{j-1},\nu_{j+1},\dots,\nu_k)(e) = p^{-l} \E_{s \in S_j(e)} \prod_{i \in L} \chi_i (s_i).\]
The sum $\sum_{s \in S_j(e)} \prod_{i \in L} \chi_i (s_i)$ counts the number of rooted copies of some proper subhypergraph $K$ of $H$. By Lemma \ref{JRHyper}, the probability that $\sum_{s \in S_j(e)} \prod_{i \in L} \chi_i (s_i) \geq \frac{3}{2} p^l |S_j(e)|$ is at most 
\[2 n^{v_K} \exp\left(-c \min_{J \subseteq K} (\E Y_J^{e_j})^{1/v_J}\right) = 2 n^{v_K} \exp\left(-c'
\min_{J \subseteq K} (n^{v_J - r} p^{e_J - 1})^{1/v_J}\right).\]
Since $H$ is strictly $r$-balanced, we know that $\frac{e_H - 1}{v_H - r} > \frac{e_J - 1}{v_J - r}$ for every $J\subseteq K$. Therefore, there is a positive constant $a'$ such that if $p \geq n^{-1/m_r(H)}$, then for each $J\subseteq K$ we have the inequality
\[n^{v_J-r} p^{e_J - 1} \geq n^{v_J-r}n^{-\left(\frac{v_H - r}{e_H - 1}\right) (e_J - 1)}\geq
\left(n^{1 -\left(\frac{v_H - r}{e_H - 1}\right) \left(\frac{e_J - 1}{v_J - r}\right)}\right)^{v_J - r} \geq n^{a'}.\]
Therefore, 
\[\min_{J \subseteq K} (n^{v_J - r} p^{e_J - 1})^{1/v_J} \geq n^{a},\]
for some $a$, and hence the probability that $\sum_{s \in S_j(e)} \prod_{i \in L} \chi_i (s_i) \geq \frac{3}{2} p^l |S_j(e)|$ is at most $2 n^{v_H} e^{-A n^{a}}$ for some positive constants $A$ and $a$. The lemma follows.
\end{proof}

\section{Probabilistic estimates II: bounding $L_1$-differences} \label{ProbIII}

Our one remaining task is to show that property P1 holds with sufficiently high probability. In other words, we must show that if $U_1,\dots,U_m$ are subsets of $X$ chosen binomially with suitable probability $p$, and if their associated measures are $\mu_1,\dots,\mu_m$, then with high probability 
\begin{equation*}
\|*_j(\mu_{i_1},\dots,\mu_{i_{j-1}},\mu_{i_{j+1}},\dots,\mu_{i_k})-\circ_j(\mu_{i_1},\dots,\mu_{i_{j-1}},\mu_{i_{j+1}},\dots,\mu_{i_k})\|_1\leq\eta
\end{equation*}
whenever $j$ is an integer between 1 and $k$ and $i_1,\dots,i_{j-1},i_{j+1},\dots,i_k$ are distinct integers between 1 and $m$. Of course, if we can prove this for one choice of $j$ and $i_1,\dots,i_{j-1},i_{j+1},\dots,i_k$ then we have proved it for all, since $m$ and $k$ are bounded. So without loss of generality let us prove it for $j=1$ and for the sequence $(2,\dots,k)$. That is, we shall prove that with high probability
\begin{equation*}
\|*_1(\mu_2,\dots,\mu_k)-\circ_1(\mu_2,\dots,\mu_k)\|_1\leq\eta.
\end{equation*}
Our results will also imply the stronger statement R1($p, 1$), which was required for Lemma~\ref{allprofile}.

The basic approach is to show that with high probability the sets $U_2,\dots,U_{k-1}$ have certain properties that we can exploit, and that if they have those properties then the conditional probability that $\|*_1(\mu_2,\dots,\mu_k)-\circ_1(\mu_2,\dots,\mu_k)\|_1\leq\eta$ is also high. This strategy is almost forced on us: there are some choices of $U_2,\dots,U_{k-1}$ that would be disastrous, and although they are rare we have to take account of their existence.

To get some idea of what the useful properties are, let us suppose that we have chosen $U_2,\dots,U_{k-1}$, let us fix $x\in X$, and let us think about the random variable $*_1(\mu_2,\dots,\mu_k)(x)$ (which, given our choices, depends just on the random set $U_k$). This is, by definition,
\begin{equation*}
\E_{s\in S_1(x)}\mu_2(s_2)\dots\mu_{k-1}(s_{k-1})\mu_k(s_k).
\end{equation*}
At this point we need an extra homogeneity assumption. We would like to split up the above expectation according to the value of $s_k$, but that will lead to problems if different values of $s_k$ are taken different numbers of times. Let us suppose that for each $y$ the number of $s\in S_1(x)$ such that $s_k=y$, which is just the cardinality of the set $S_1(x)\cap S_k(y)$, only ever takes one of two values, one of which is 0. 

In the case of arithmetic progressions of length $k$ in $\Z_p$, with $p$ prime, $S_1(x)\cap S_k(y)$ consists of a unique arithmetic progression (degenerate if $x=y$), the progression with common difference $(k-1)^{-1}(y-x)$ that starts at $x$. In the case of, say, $K_5$s in a complete graph, where $s_1$ and $s_{10}$ represent disjoint edges of $K_5$, $S_1(e)\cap S_{10}(e')$ will be empty if $e$ and $e'$ are edges of $K_n$ that share a vertex, and will have cardinality $n-4$ if they are disjoint. In general, in all natural examples this homogeneity assumption is satisfied. Moreover, the proportion of $y$ for which $S_1(x)\cap S_k(y)=\emptyset$ tends to be $O(1/n)$ and tends to correspond to degenerate cases (when those are not allowed).

With the help of this assumption, we can rewrite the previous expression as follows. Let us write $K(x)$ for the set of $y$ such that $S_1(x)\cap S_k(y)\ne\emptyset$. Then
\begin{eqnarray*}
*_1(\mu_2,\dots,\mu_k)(x)&=&\E_{s\in S_1(x)}\mu_2(s_2)\dots\mu_{k-1}(s_{k-1})\mu_k(s_k)\\
&=&\E_{y\in K(x)}\mu_k(y)\E_{s\in S_1(x)\cap S_k(y)}\mu_2(s_2)\dots\mu_{k-1}(s_{k-1}).
\end{eqnarray*}
Writing $W(x,y)$ for $\E_{s\in S_1(x)\cap S_k(y)}\mu_2(s_2)\dots\mu_{k-1}(s_{k-1})$, we can condense this to $\E_{y\in K(x)}\mu_k(y)W(x,y)$.

Now we are thinking of $\mu_2,\dots,\mu_{k-1}$ as fixed, and of the expressions we write as random variables that depend on the random measure $\mu_k$. Note that the expectation of $*_1(\mu_2,\dots,\mu_k)(x)$ is $*_1(\mu_2,\dots,\mu_{k-1},1)(x)$. By the results of the previous section, we are free to assume that this is at most 3/2 for every $x$. 

Our plan is to prove that the expectation of $*_1(\mu_2,\dots,\mu_k)(x)-\circ_1(\mu_2,\dots,\mu_k)(x)$ is small for each $x$, which will show that the expectation of $\|*_1(\mu_2,\dots,\mu_k)-\circ_1(\mu_2,\dots,\mu_k)\|_1$ is small. Having done that, we shall argue that it is highly concentrated about its expectation.

Now, as we have seen, the random variable $*_1(\mu_2,\dots,\mu_k)(x)$ is equal to $\E_{y\in K(x)}\mu_k(y)W(x,y)$, which is a sum of independent random variables $V_y$, where $V_y=(p|K(x)|)^{-1}W(x,y)$ with probability $p$ and 0 otherwise. The expectation $\E_{y\in K(x)}W(x,y)$ of this sum is $*_1(\mu_2,\dots,\mu_{k-1},1)(x)$, which we are assuming to be at most 3/2. If we also know that each $V_y$ is small, then the chances that this sum is bigger than 2 are very small. From this it is possible to deduce that the expectation of $*_1(\mu_2,\dots,\mu_k)(x)-\circ_1(\mu_2,\dots,\mu_k)(x)$ is small. The following technical lemma makes these arguments precise.

In the statement of the next lemma, we write $\E_{y\in K}$ for the average over $K$, and $\E$ for the probabilistic expectation (over all possible choices of $\mu_k$ with their appropriate probabilities).

\begin{lemma}\label{meaniflarge}
Let $0\leq p\leq 1$ and let $0<\alpha\leq 1$. Let $K$ be a set and for each $y\in K$ let $V_y$ be a random variable that takes the value $C_y > 0$ with probability $p$ and 0 otherwise. Suppose that the $V_y$ are independent and that each $C_y$ is at most $\a$. Let  $S=\sum_{y\in K}V_y$ and suppose that $\E S\leq 3/2$. Let $T=\max\{S-2,0\}$. Then $\E T\leq 14\a e^{-1/14\a}$.
\end{lemma}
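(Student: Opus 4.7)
The plan is to apply Bernstein's inequality (Lemma \ref{Bernstein}) to obtain an exponential tail bound on $S$, and then integrate that tail bound to obtain the desired bound on $\E T$.

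First I would bound the variance of $S$. Since $V_y$ takes value $C_y \leq \alpha$ with probability $p$ and $0$ otherwise, we have $\Var(V_y) = pC_y^2(1-p) \leq \alpha \cdot pC_y = \alpha \E V_y$. By independence of the $V_y$, it follows that $\Var(S) \leq \alpha\, \E S \leq 3\alpha/2$. Each $V_y$ lies in $[0,\alpha]$, so we may apply Bernstein's inequality with $M = \alpha$: for every $t > 0$,
\begin{equation*}
\P(S - \E S \geq t) \leq \exp\left(-\frac{t^2/2}{3\alpha/2 + \alpha t/3}\right) = \exp\left(-\frac{3t^2}{9\alpha + 2\alpha t}\right).
\end{equation*}
For $t \geq 1/2$ we have $9 + 2t \leq 20t$, so the right side is at most $\exp(-3t/(20\alpha))$.

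Finally, since $\E S \leq 3/2$, the event $\{S \geq 2+u\}$ is contained in $\{S - \E S \geq u + 1/2\}$ for every $u \geq 0$. Using the standard identity $\E T = \int_0^\infty \P(S > 2+u)\, du$ together with the tail bound just derived,
\begin{equation*}
\E T \leq \int_0^\infty \exp\left(-\frac{3(u+1/2)}{20\alpha}\right) du = \frac{20\alpha}{3}\,e^{-3/40\alpha} \leq 7\alpha\, e^{-1/14\alpha},
\end{equation*}
where the last inequality uses $20/3 < 7$ and $3/40 > 1/14$.

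There is no real obstacle: the only work is to verify that the numerical constants coming out of Bernstein's inequality can be packaged into the stated form. The choice of truncating the Bernstein bound at $t \geq 1/2$ (rather than splitting into sub-Gaussian and sub-exponential regimes) is what makes the constants line up cleanly.
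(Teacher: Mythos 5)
Your proof is correct and follows essentially the same route as the paper: bound $\Var(S)\leq\alpha\,\E S\leq 3\alpha/2$, apply Bernstein's inequality to the shifted tail $\{S-\E S\geq u+1/2\}$, and integrate the resulting exponential bound, with only cosmetic differences in how the constants ($20\alpha/3$ versus the paper's $7\alpha$) are packaged.
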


\begin{proof}
If we increase the number of random variables or any of the values $C_y$, then the expectation of $T$ increases. Therefore, we are done if we can prove the result in the case where $\E S=3/2$.

We shall use the elementary identity 
\begin{equation*}
\E T =\int_0^\infty\P[T \geq t]dt =\int_0^\infty\P[S \geq 2 + t]dt.
\end{equation*}
Since $\E S=3/2$, if $S\geq 2+t$ it follows that $S-\E S\geq t+1/2$. Let us bound the probability of this event using Bernstein's inequality (Lemma \ref{Bernstein}).

For this we need to bound $\sum_y\V(V_y)$, which is at most $\sum_y\E(V_y^2)$, which is at most $\a\sum_y\E(V_y)$, by our assumption about the upper bound for each $C_y$. But this is $\a\E S=3\a/2$. Therefore,
\begin{equation*}
\P[S\geq 2+t]\leq 2\exp\left\{\frac{-(t+1/2)^2}{2(3\a/2+\a(t+1/2)/3)}\right\}.
\end{equation*}
Writing $s=t+1/2$, this gives us $2 \exp(-s^2/(3\a+2\a s/3))$. When $s\geq 1/2$ (as it is everywhere in the integral we are trying to bound), this is at most $2\exp(-s^2/(6\a s+2\a s/3))\leq 2\exp(-s/7\a)$, so we have an upper bound of
\begin{equation*}
2\int_{1/2}^\infty \exp(-s/7\a)ds=14\a e^{-1/14\a},
\end{equation*}
which proves the lemma.
\end{proof}

\begin{corollary} \label{expectationatx}
Suppose that $\mu_2, \dots, \mu_{k-1}$ are fixed and that $W(x,y)\leq\a p|K(x)|$ for every $x$ and $y$ and $*_1(\mu_2,\dots,\mu_{k-1},1)(x) \leq 3/2$ for every $x$. Then 
\begin{equation*}
\E(*_1(\mu_2,\dots,\mu_k)(x)-\circ_1(\mu_2,\dots,\mu_k)(x))\leq 14\a e^{-1/14\a}
\end{equation*}
for every $x$.
\end{corollary}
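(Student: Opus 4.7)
The plan is to apply Lemma \ref{meaniflarge} directly once we identify the right random variables and check the hypotheses. Recall from the discussion preceding the corollary that, with $\mu_2,\dots,\mu_{k-1}$ regarded as fixed, we may write
\[
*_1(\mu_2,\dots,\mu_k)(x)=\E_{y\in K(x)}\mu_k(y)W(x,y),
\]
where $W(x,y)=\E_{s\in S_1(x)\cap S_k(y)}\mu_2(s_2)\cdots\mu_{k-1}(s_{k-1})$ is deterministic and $\mu_k(y)$ equals $p^{-1}$ with probability $p$ and $0$ otherwise, independently across $y$.

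For each $y\in K(x)$, set $V_y=|K(x)|^{-1}\mu_k(y)W(x,y)$. Then the $V_y$ are independent, and each $V_y$ takes the value $C_y=(p|K(x)|)^{-1}W(x,y)$ with probability $p$ and $0$ otherwise. The first hypothesis $W(x,y)\leq \alpha p|K(x)|$ gives immediately $C_y\leq\alpha$, so the random variables fit the template of Lemma \ref{meaniflarge}. Setting $S=\sum_{y\in K(x)}V_y$, we have $S=*_1(\mu_2,\dots,\mu_k)(x)$, and a routine expectation computation (using $\E\mu_k(y)=1$) gives
\[
\E S=\E_{y\in K(x)}W(x,y)=*_1(\mu_2,\dots,\mu_{k-1},1)(x)\leq 3/2,
\]
which is the second hypothesis we need.

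Now observe that, by the definition of the capped convolution,
\[
*_1(\mu_2,\dots,\mu_k)(x)-\circ_1(\mu_2,\dots,\mu_k)(x)=\max\{S-2,0\}=T,
\]
in the notation of Lemma \ref{meaniflarge}. Applying that lemma yields $\E T\leq 7\alpha e^{-1/14\alpha}$, which is exactly the bound claimed. The only potential obstacle is checking that the expectation hypothesis $\E S\leq 3/2$ really follows from the assumption on $*_1(\mu_2,\dots,\mu_{k-1},1)(x)$; but this is immediate from the probabilistic interpretation of $\mu_k$ and linearity of expectation, so the proof is essentially just a verification of hypotheses.
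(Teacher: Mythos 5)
Your proof is correct and is essentially the paper's own argument: both identify $*_1(\mu_2,\dots,\mu_k)(x)$ as the sum of the independent variables $V_y$ taking value $(p|K(x)|)^{-1}W(x,y)$ with probability $p$, verify $C_y\leq\a$ and $\E S\leq 3/2$ from the two hypotheses, note that the difference with the capped convolution is $\max\{S-2,0\}$, and invoke Lemma \ref{meaniflarge}. Your explicit verification that $\E S=*_1(\mu_2,\dots,\mu_{k-1},1)(x)$ is a welcome but minor elaboration of what the paper treats as already noted in the preceding discussion.
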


\begin{proof}
As noted above, $*_1(\mu_2,\dots,\mu_k)(x)$ is a sum of independent random variables $V_y$ that take the value $(p|K(x)|)^{-1}W(x,y)$ with probability $p$ and 0 otherwise. By our hypothesis about $W(x,y)$, we can take $C_y=\a$ for each $y$ and apply the previous lemma. Then $S=*_1(\mu_2,\dots,\mu_k)(x)$ and $T=*_1(\mu_2,\dots,\mu_k)(x)-\circ_1(\mu_2,\dots,\mu_k)(x)$, so the result follows.
\end{proof}

The next result but one is our main general lemma, after which we shall have to argue separately for different kinds of system. We shall use the following concentration of measure result, which is an easy and standard consequence of Azuma's inequality.

\begin{lemma} \label{azuma}
Let $X^{(t)}$ be the collection of all subsets of size $t$ of a finite set $X$. Let $c,\lambda>0$ and let $F$ be a function defined on $X^{(t)}$ such that $|F(U)-F(V)|\leq c$ whenever $|U\cap V|=t-1$. Then if a random set $U\in X^{(t)}$ is chosen, the probability that $|F(U)-\E F|\geq \lambda$ is at most $2 \exp(-\lambda^2/2c^2t)$.
\end{lemma}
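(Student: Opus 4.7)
The plan is to apply Azuma's inequality to a Doob martingale obtained by revealing the elements of the random set one at a time. First, I would sample $U=\{y_1,\dots,y_t\}$ by drawing the $y_i$ uniformly without replacement from $X$, let $\mathcal{F}_i=\sigma(y_1,\dots,y_i)$, and set $M_i=\E[F(U)\mid\mathcal{F}_i]$, so that $M_0=\E F$ and $M_t=F(U)$.

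The crux is to show that the martingale differences satisfy $|M_i-M_{i-1}|\leq c$ for each $1\leq i\leq t$. I would do this via a one-swap coupling: conditional on $y_1,\dots,y_{i-1}$, let $U$ be the size-$t$ set built from the actual sequence $y_i,y_{i+1},\dots,y_t$, and let $U'$ be a coupled set built by first resampling a fresh $i$th element $y_i'$ (uniform over $X\setminus\{y_1,\dots,y_{i-1}\}$) and then using the same tail $y_{i+1},\dots,y_t$, except that we swap any later occurrence of $y_i'$ with $y_i$ so that the remaining choices are a valid without-replacement completion of $\{y_1,\dots,y_{i-1},y_i'\}$. Under this coupling, $U$ and $U'$ differ in at most one element, namely $|U\cap U'|\geq t-1$, so the hypothesis gives $|F(U)-F(U')|\leq c$. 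Taking conditional expectations yields $|M_i-M_{i-1}|\leq c$.

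With bounded differences in place, Azuma's inequality applied to the length-$t$ martingale $(M_i)$ immediately gives
\[\P(|F(U)-\E F|\geq\lambda)\leq 2\exp\!\left(-\frac{\lambda^2}{2c^2 t}\right),\]
which is the stated bound (the factor of $2$ is harmless and is often absorbed into the exponent; for the one-sided inequality one even gets the bound without it). The only real subtlety is the verification of the Lipschitz bound for the martingale differences under without-replacement sampling, and the swap coupling above handles it cleanly; everything else is a direct invocation of Azuma.
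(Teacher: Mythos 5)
Your proof is correct and is the standard argument here: an element-exposure (Doob) martingale for sampling without replacement, bounded differences verified by the one-swap coupling, then Azuma--Hoeffding — which is precisely what the paper invokes when it states the lemma without proof as ``an easy and standard consequence of Azuma's inequality.'' The factor of $2$ you obtain in the two-sided bound (versus the paper's statement) is immaterial, as you note, since the paper only ever applies the one-sided estimate (in the proof of Lemma \ref{L1Diff}), where no such factor is needed.
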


Most of the conditions of the next lemma have been mentioned in the discussion above, but we repeat them for convenience (even though the resulting statement becomes rather long).

\begin{lemma}\label{L1Diff}
Let $X$ be a finite set and let $S$ be a homogeneous collection of ordered subsets of $X$, each of size $k$. Let $\sigma$ be a positive integer and suppose that, for all $x, y \in X$, $|S_1(x) \cap S_k(y)| \in \{0, \sigma\}$. For each $x$, let $K(x)$ be the set of $y$ such that $S_1(x) \cap S_k(y) \ne\emptyset$, and suppose that all the sets $K(x)$ have the same size.

Let $\mu_2, \dots, \mu_{k-1}$ be fixed measures such that $*_1(\mu_2, \dots, \mu_{k-1}, 1)(x)$ and $*_k(1, \mu_2, \dots, \mu_{k-1})(x)$ are at most $3/2$ for every $x\in X$. For each $x, y \in X$, let
\[W(x,y) = \E_{s \in S_1(x) \cap S_k(y)} \mu_2(s_2) \dots \mu_{k-1}(s_{k-1})\]
and suppose that $W(x,y)\leq\a p|K(x)|$ for every $x$ and $y$.

Let $U_k$ be a random set chosen binomially with probability $p$, let $\mu_k$ be its associated measure, and let $\eta = 28\a e^{-1/14\a}$. Then
\begin{equation*}
\P[\|*_1(\mu_2,\dots,\mu_k) -\circ_1(\mu_2,\dots,\mu_k)\|_1 > \eta]\leq 2 |X| e^{-\eta^2 p |X|/144} + 2 e^{-p|X|/4}.
\end{equation*}
\end{lemma}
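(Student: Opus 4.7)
The random variable to control is $F(U_k) := \|*_1(\mu_2,\dots,\mu_k) - \circ_1(\mu_2,\dots,\mu_k)\|_1 = \E_x\max(*_1(\mu_2,\dots,\mu_k)(x) - 2,\, 0)$. My plan is to (a) bound $\E F$ via Corollary \ref{expectationatx} applied pointwise, (b) exploit a dual-homogeneity identity to show that $F$ has very small Lipschitz constant, of order $1/(p|X|)$, as a function of $U_k$, and (c) combine Chernoff's bound on $|U_k|$ with the Azuma-type inequality of Lemma \ref{azuma}.

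For step (a), fix $x$. The hypotheses $*_1(\mu_2,\dots,\mu_{k-1},1)(x)\le 3/2$ and $W(x,y)\le \a p\,|K(x)|$ are precisely those of Corollary \ref{expectationatx}, so $\E[(*_1-\circ_1)(x)]\le 7\a e^{-1/14\a}=\eta/2$. Averaging over $x$ yields $\E F\le \eta/2$.

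For step (b), toggling a single element $y$ of $U_k$ changes $*_1(\mu_2,\dots,\mu_k)(x)$ by $p^{-1}W(x,y)/|K(x)|$ if $y\in K(x)$ and by $0$ otherwise. Since $\max(\cdot-2,0)$ is $1$-Lipschitz, the induced change in $F$ is at most
\[
\frac{1}{p|K|\,|X|}\sum_{x:\,y\in K(x)} W(x,y).
\]
Unfolding $W(x,y)=\sigma^{-1}\sum_{s\in S_1(x)\cap S_k(y)}\mu_2(s_2)\cdots\mu_{k-1}(s_{k-1})$ and switching the order of summation collapses the inner sum to $\sigma^{-1}|S_k(y)|\cdot *_k(1,\mu_2,\dots,\mu_{k-1})(y)$. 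The homogeneity assumptions force $|S_k(y)|/\sigma=|K(x)|=|K|$, and the hypothesis $*_k(1,\mu_2,\dots,\mu_{k-1})(y)\le 3/2$ then gives a Lipschitz constant of $3/(2p|X|)$ for single flips, hence $3/(p|X|)$ for single swaps at fixed size.

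For step (c), Lemma \ref{Chernoff} gives $|U_k|\le 2p|X|$ except on a set of probability at most $2e^{-p|X|/4}$. On the event $|U_k|=t\le 2p|X|$, the set $U_k$ is uniform on $X^{(t)}$, and Lemma \ref{azuma} applied with $c=3/(p|X|)$ yields
\[
\P\bigl[F \ge \E[F\mid |U_k|=t] + \eta/2 \,\big|\, |U_k|=t\bigr] \le \exp(-\eta^2 p|X|/144).
\]
Summing over the at most $2p|X|+1\le 2|X|$ valid sizes $t$ and adding the Chernoff tail produces the claimed bound. The main obstacle is a subtle gap between (a) and (c): step (a) supplies the \emph{unconditional} bound $\E F\le\eta/2$, but turning the Azuma estimate into a bound on $\P[F\ge\eta\mid |U_k|=t]$ requires the \emph{conditional} bound $\E[F\mid |U_k|=t]\le\eta/2$ for every $t$ in the good range. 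I would close this gap by re-running the proof of Corollary \ref{expectationatx} in the fixed-size setting: Lemma \ref{meaniflarge} applies Bernstein's inequality to sums of independent Bernoullis, and the same tail estimate with the same constants is available for the relevant indicator sums under sampling without replacement because such indicators are negatively associated.
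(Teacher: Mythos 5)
Your proposal is correct and takes essentially the same route as the paper's own proof: bound the mean by Corollary \ref{expectationatx}, obtain the swap-Lipschitz constant $3/(p|X|)$ from the identity that turns the relevant sums into $*_k(1,\mu_2,\dots,\mu_{k-1})(y)\leq 3/2$, and then combine Lemma \ref{azuma} conditioned on $|U_k|=t\leq 2p|X|$ with Chernoff's inequality. The conditional-versus-unconditional mean subtlety you flag is in fact glossed over in the paper's write-up as well, and your fix via fixed-size sampling (negative association) is sound in spirit, with the minor caveat that for $t$ as large as $2p|X|$ the conditional mean of the sum in Lemma \ref{meaniflarge} can be close to $3$ rather than $3/2$, so one should either narrow the size window to $(1+o(1))p|X|$ or accept slightly different constants.
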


\begin{proof}
Corollary~\ref{expectationatx} and linearity of expectation imply that 
\begin{equation*}
\E\|*_1(\mu_2,\dots,\mu_k) -\circ_1(\mu_2,\dots,\mu_k)\|_1\leq 14\a e^{-1/14\a}=\eta/2.
\end{equation*}
Let us write $Z$ for the random variable $\|*_1(\mu_2,\dots,\mu_k) -\circ_1(\mu_2,\dots,\mu_k)\|_1$. To complete the proof, we shall show that $Z$ is highly concentrated about its mean.

To do this, we condition on the size of the set $U_k$ and apply Lemma \ref{azuma}. Suppose, then, that $|U_k|=t$. We must work out by how much we can change $Z$ if we remove an element of $U_k$ and add another. 

Since the function $x\mapsto\max\{x-2,0\}$ is 1-Lipschitz, the amount by which we can change $Z$ is at most the amount by which we can change $Y=\|*_1(\mu_2,\dots,\mu_k)\|_1$. But
\begin{eqnarray*}
\|*_1(\mu_2,\dots,\mu_k)\|_1&=&\E_x\E_{s\in S_1(x)}\mu_2(s_2)\dots\mu_{k-1}(s_{k-1})\mu_k(s_k)\\
&=&\E_y\mu_k(y)\E_{s\in S_k(y)}\mu_2(s_2)\dots\mu_{k-1}(s_{k-1})\\
&=&\E_y\mu_k(y)*_k(1,\mu_2,\dots,\mu_{k-1})(y).
\end{eqnarray*}
We are assuming that $*_k(1,\mu_2,\dots,\mu_{k-1})(y)$ is never more than 3/2, and $\mu_k(y)$ is always either $p^{-1}$ or 0, so changing one element of $U_k$ cannot change $Y$ by more than $3(p|X|)^{-1}$. (The division by $|X|$ is because we are taking an average over $y$ rather than a sum over $y$.) 

Lemma \ref{azuma} now tells us that the probability that $Z-\E Z\geq \eta/2$ given that $|U_k|=t$ is at most $2\exp(-\eta^2p^2|X|^2/72t)$. It follows that if $t\leq 2p|X|$ then the probability is at most $2\exp(-\eta^2p|X|/144)$. By Chernoff's inequality, the probability that $t>2p|X|$ is at most $2 \exp(-p|X|/4)$. Putting these two facts together and adding over all possible values of $t$, we obtain the result stated.
\end{proof}

Our aim is to prove that property P1 holds with high probability for a given small constant $\eta>0$. Therefore, it remains to prove that, under suitable conditions on $p$, we have the bound $W(x,y)\leq\a p|K(x)|$ for every $x,y\in X$ such that $S_1(x)\cap S_k(y)$ is non-empty, where $\a$ is also a given small constant. Here, the argument once again depends on the particular form of the set of sequences $S$. 

In the case of sets with two degrees of freedom, this is trivial. Let us suppose that $|K(x)|=t$ for every $x\in X$. By definition, $S_i(x)\cap S_j(y)$ is either empty or a singleton for every $1\leq i<j\leq k$ and every $x,y\in X$. It follows, when $S_1(x)\cap S_k(y)$ is non-empty, that
\begin{eqnarray*}
W(x,y)&=&\E_{s\in S_1(x)\cap S_k(y)}\mu_2(s_2)\dots\mu_{k-1}(s_{k-1})\\
&=&\mu_2(r_2)\dots\mu_{k-1}(r_{k-1})\\
&\leq&p^{-(k-2)},
\end{eqnarray*}
where $r=(x,r_2,\dots,r_{k-1},y)$ is the unique element of $S$ that belongs to $S_1(x)\cap S_k(y)$. This is smaller than $\a pt$ as long as $p\geq (\a t)^{-1/(k-1)}$. Recall that in a typical instance, such as when $S$ is the set of $k$-term arithmetic progressions in $\mathbb{Z}_n$ for some prime $n$, $t$ will be very close to $n$ (or in that case actually equal to $n$), and we do indeed obtain a bound of the form $C n^{-1/(k-1)}$ that is within a constant of best possible.

Thus, we have essentially already finished the proof of a sparse random version of Szemer\'edi's theorem, and of several other similar theorems. We will spell out the details of these applications later in the paper. Now, however, let us turn to the more difficult task of verifying the hypothesis about $W$ in the case of graphs and hypergraphs. 

Let $H$ be a strictly $r$-balanced $r$-uniform hypergraph. Recall that $m_r(H)$ is the ratio $(e_H-1)/(v_H-r)$. The significance of $m_r(H)$ is that if $G_{n,p}^{(r)}$ is a random $r$-uniform hypergraph on $n$ vertices, with each edge chosen with probability $p$, then the expected number of labelled copies of $H$ containing any given edge of $G_{n,p}^{(r)}$ is approximately $p^{e_H-1}n^{v_H-r}$ (the ``approximately" being the result of a few degenerate cases), so we need $p\geq n^{-1/m_r(H)}$ for this expected number to be at least 1, which, at least in the density case, is a trivial necessary condition for our theorems to hold. Our main aim now is to prove that $W(x,y) \leq \alpha p |K(x)|$ holds when $p\geq Cn^{-1/m_r(H)}$, where $C$ is a constant that depends only on $\alpha$ and the hypergraph $H$.

In the next result, we shall take $p$ to equal $Cn^{-1/m_r(H)}$ and prove that the conclusion holds provided $C$ is sufficiently large. However, it turns out that we have to split the result into two cases. In the first case, we also need to assume that $C$ is smaller than $n^c$ for some small positive constant $c$, or else the argument breaks down. However, when $C$ is larger than this (so not actually a constant) we can quote results of Janson and Ruci\'nski to finish off the argument. (Some of our results, in particular colouring theorems, are monotone, in the sense that the result for $p$ implies the result for all $q\geq p$. In such cases we do not need to worry about large $p$.)

\begin{lemma} \label{ExtraCondHyper}
Let $H$ be a strictly $r$-balanced $r$-uniform hypergraph and let $S$ be the collection of labelled ordered copies of $H$ in the complete $r$-uniform hypergraph $K_n^{(r)}$. Then, for any positive constants $\a$ and $A$, there exist constants $c>0$ and $C_0$ such that, if $n$ is sufficiently large, $C_0\leq C\leq n^c$, and $p=C n^{-1/m_r(H)}$, then, with probability at least $1 - n^{-A}$, if $U_2, \dots, U_{e-1}$ are random subgraphs $G_{n,p}^{(r)}$ of $K_n^{(r)}$ with associated measures $\mu_2, \dots, \mu_{e-1}$, 
\[W(x,y) = \E_{s \in S_1(x) \cap S_e(y)} \mu_2(s_2) \dots \mu_{e-1}(s_{e-1}) \leq \a p |K(x)|,\]
for all $x,y \in X$, where we have written $e$ for $e_H$.
\end{lemma}

\begin{proof}
Let $\chi_i$ be the characteristic function of $U_i$ for each $i\leq e_H$. Let $\sigma$ be the size of each non-empty set $S_1(x)\cap S_e(y)$ and suppose $|K(x)| = t$ for each $x$. Then 
\begin{equation*}
W(x,y)=\sigma^{-1}p^{-(e_H-2)}\sum_{s\in S_1(x)\cap S_e(y)}\chi_2(s_2)\dots\chi_{e-1}(s_{e-1}).
\end{equation*}
But $\sum_{s\in S_1(x)\cap S_e(y)}\chi_2(s_2)\dots\chi_{e-1}(s_{e-1})$ is the number of sequences $(s_1,\dots,s_e)\in S$ such that $s_1=x$, $s_e=y$ and $s_i\in U_i$ for $i=2,3,\dots,e-1$. Therefore, our aim is to prove that with high probability this number is at most $\a pt\sigma p^{e_H-2}=\a p^{e_H-1}\sigma t$. Let $h$ be the number of vertices in the union of the first and $e$th edges. Then $\sigma$ is almost exactly $n^{v_H-h}$ and $t$ is almost exactly $n^{h-r}$, so it is enough to prove that with high probability the number of such sequences is at most $(\a/2)p^{e_H-1}n^{v_H-r}=(\a/2)C^{e_H-1}$. To do this, let us estimate from above the probability that there are at least $(v_H \ell)^{v_H}$ such sequences.

It will be convenient to think of each sequence in $S_1(x)\cap S_e(y)$ as an embedding $\phi$ from $H$ to $K_n^{(r)}$ such that, writing $f_1,\dots,f_e$ for the edges of $H$, we have $\phi(f_1)=x$ and $\phi(f_e)=y$. Let us call $\phi$ \textit{good} if in addition $\phi(e_i)\in U_i$ for $i=2,3,\dots,e-1$. Now if there are $(v_H \ell)^{v_H}$ good embeddings, then there must be a sequence $\phi_1,\dots,\phi_\ell$ of good embeddings such that each $\phi_i(H)$ contains at least one vertex that is not contained in any of $\phi_1(H),\dots,\phi_{i-1}(H)$. That is because the number of vertices in the union of the images of the embeddings has to be at least $v_H \ell$, since the number of embeddings into a set of size $u$ is certainly no more than $u^{v_H}$, and because each embedding has $v_H$ vertices.

Let us fix a sequence of embeddings $\phi_1,\dots,\phi_\ell$ such that each one has a vertex in its image that is not in the image of any previous one. Let $v_1,\dots,v_m$ be the sequence of vertices obtained by listing all the vertices of $\phi_1(H)$ in order (taken from an initial fixed order of the vertices of $H$), then all the vertices of $\phi_2(H)$ that have not yet been listed, again in order, and so on. For each $i\leq \ell$, let $V_i$ be the set of vertices in $\phi_i(H)$ but no earlier $\phi_j(H)$. We shall now estimate the probability that every $\phi_i$ is good. If we already know that $\phi_1,\dots,\phi_{i-1}$ are all good, then what we need to know is how many edges belong to $\phi_i(H)$ that do not belong to $\phi_j(H)$ for any $j<i$. Let $w_i=|V_i|$ be the number of vertices that belong to $\phi_i(H)$ and to no earlier $\phi_j(H)$, and let $d_i$ be the number of edges. Then the conditional probability that $\phi_i$ is good is $p^{d_i}$. It follows that the probability that $\phi_1,\dots,\phi_\ell$ are all good is $p^{d_1+\dots+d_\ell}$. The number of possible sequences of embeddings of this type is at most $m^{v_H \ell}n^m$, since there are at most $n^m$ sequences $v_1,\dots,v_m$, and once we have chosen $v_1, \dots,  v_m$ there are certainly no more than $m^{v_H}$ ways of choosing the embedding $\phi_i$ (assuming that its image lies in the set $\{v_1,\dots,v_m\}$). Therefore, the probability that there exists a good sequence of $\ell$ embeddings of this type is at most $m^{v_H \ell}p^{d_1+\dots+d_\ell}n^{w_1+\dots+w_\ell}$.

At this point, we use the hypothesis that $H$ is strictly balanced. Since $w_i \leq v_H - h \leq v_H - (r+1)$,
\[\frac{e_H - 1 - d_i}{v_H - r - w_i} < \frac{e_H - 1}{v_H - r},\]
which implies that $d_i/w_i > m_r(H)$. In fact, since there are only finitely many possibilities for $w_i$ and $d_i$, it tells us that there is a constant $c'>0$ depending on $H$ only such that $d_i\geq m_r(H)(w_i+c')$. Since $p=Cn^{-1/m_r(H)}$, this tells us that $p^{d_i}\leq C^{d_i}n^{-(w_i+c')}$, and hence that
\begin{equation*}
m^{v_H \ell}p^{d_1+\dots+d_\ell}n^{w_1+\dots+w_\ell}\leq m^{v_H \ell}C^{d_1+\dots+d_\ell}n^{-\ell c'}.
\end{equation*}

To complete the proof, let us show how to choose $C$, just to be sure that the dependences are correct. We start by choosing $\ell$ such that $\ell c'\geq 2A$. Bearing in mind that $m\leq v_H \ell$ and that $d_1+\dots+d_\ell \leq e_H \ell$, we place on $C$ the upper bound $C\leq (v_H \ell)^{-v_H \ell}n^{A/e_H \ell}$, which ensures that $m^{v_H \ell}C^{d_1+\dots+d_\ell}n^{-\ell c'}\leq n^{-A}$. Finally, we need $C$ to be large enough for $(\a/2)C^{e_H-1}$ to be greater than $(v_H \ell)^{v_H}$, since then the probability that there are at least $(\a/2)C^{e_H-1}$ sequences is at most $n^{-A}$, which is what we were trying to prove. Thus, we need $C$ to be at least $(2(v_H \ell)^{v_H}/\a)^{1/(e_H - 1)}$.
\end{proof}

To handle the case where $C \geq n^c$, we shall again need to appeal to the work of Janson and Ruci\'nski on upper tail estimates. The particular random variable we will be interested in, which concerns hypergraphs which are rooted on two edges, is defined as follows.

\begin{notation*}
Let $K$ be an $r$-uniform hypergraph and $f_1, f_2$ edges in $K$. Let $l$ be the number of edges in $K\char92\{f_1, f_2\}$ and let $U_1, \dots, U_l$ be random binomial subhypergraphs of the complete $r$-uniform hypergraph $K_n^{(r)}$ on $n$ vertices, each edge being chosen with probability $p$, with characteristic functions $\chi_1, \dots, \chi_l$. Let $S_{f_1,f_2}$ be the set consisting of all labelled ordered copies of $K\char92\{f_1, f_2\}$ in $K_n^{(r)}$ that are rooted at given edges $e_1$ and $e_2$. Then the random variable $Y_K^{f_1, f_2}$ is given by
\[\sum_{s \in S_{f_1, f_2}} \prod_{1 \leq i \leq l} \chi_i (s_i).\]
\end{notation*}

The necessary tail estimate (which is another particular case of Corollary 4.1 in \cite{JR04}) is now the following. Note that $\E Y_K^{f_1, f_2}$ is essentially $p^{e_K - 2} n^{v_K - h}$, where $h$ is the size of $f_1 \cup f_2$. 

\begin{lemma} \label{JRTwoEdge}
Let $K$ be an $r$-uniform hypergraph and $f_1, f_2$ fixed edges. Then there exists a constant $c$ such that the random variable
$Y_K^{f_1,f_2}$ satisfies, for $\gamma \geq 2$,
\[\P\left(Y_K^{f_1,f_2} \geq \gamma \E Y_K^{f_1, f_2}\right) \leq 2 n^{v_K} \exp\left(-c
\min_{L \subseteq K} \left(\gamma \E Y_L^{f_1, f_2}\right)^{1/v_L}\right).\]
\end{lemma}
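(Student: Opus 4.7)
The result is stated by the authors as ``another particular case of Corollary 4.3 in [JR04],'' so my plan would be to present it as a direct invocation of that general upper tail inequality, following exactly the template used to deduce Lemma \ref{JRHyper}. The first step is to recognise that $Y_K^{e_1,e_2}$ is a homogeneous polynomial in independent Bernoulli random variables: for each of the $l = e_K - 2$ non-rooted edges $f_i$ of $K \setminus\{e_1,e_2\}$ and each possible image edge $g$ in $K_n^{(k)}$, the indicator $\chi_i(g)$ is a Bernoulli($p$) variable, and distinct variables are independent because the $U_i$ are independent. Summing $\prod_i \chi_i(s_i)$ over $s \in S_{e_1,e_2}$ expresses $Y_K^{e_1,e_2}$ in exactly the ``sum over copies'' form to which the Janson--Ruci\'nski machinery applies.

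Next I would feed this representation into Corollary 4.3 of [JR04]. Their result states that for any such ``edge-indicator polynomial'' indexed by copies of a fixed structure, the upper tail at level $\gamma \E Y$ (with $\gamma \geq 2$) is bounded by $\exp\bigl(-c\, M^{*}(Y,\gamma)\bigr)$, where $M^{*}$ is the minimum, over subhypergraphs $L$ of the ambient structure, of $(\gamma \E Y_L)^{1/v_L}$ computed on the same rooted domain. Translating this into our notation gives precisely $\min_{L \subseteq K} (\gamma \E Y_L^{e_1,e_2})^{1/v_L}$. The polynomial prefactor $n^{v_K}$ arises in the standard proof from a union bound over the possible ``witnesses'' to large deviation (embeddings of $K$ into $K_n^{(k)}$), and transfers unchanged to the rooted setting.

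The hard part will be checking that the rooting at two edges rather than one really does not distort the Janson--Ruci\'nski inequality. Concretely, I would verify two bookkeeping points: (i) the collection of subhypergraphs $L$ over which the minimum ranges is all of $K$, with no forced restriction to subhypergraphs containing $\{e_1,e_2\}$, since rooting only fixes the identification of certain vertices with $e_1' \cup e_2'$ and does not remove any Bernoulli variable from the polynomial; and (ii) the expectation $\E Y_L^{e_1,e_2}$ is the one that actually appears in their formula, namely $p^{|E(L)\setminus\{e_1,e_2\}|}$ times the number of embeddings of $L$ into $K_n^{(k)}$ sending any edge of $L$ that coincides with $e_j$ to $e_j'$. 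Once these identifications are in place, the inequality is immediate from Corollary 4.3, with the same absolute constant $c$ as in Lemma \ref{JRHyper}. No new probabilistic input is required beyond the Janson--Ruci\'nski tail bound itself.
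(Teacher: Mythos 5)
Your proposal matches the paper exactly: the paper gives no independent proof of this lemma, but simply states it as another special case of Corollary 4.3 of Janson and Ruci\'nski \cite{JR04}, just as it did for Lemma \ref{JRHyper}. Your additional bookkeeping about the two-edge rooting is a reasonable elaboration of the same citation-based argument, not a different route.
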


The required estimate for $p \geq n^{-1/m_k(H) + c}$ is now an easy consequence of this lemma. 

\begin{lemma} \label{ExtraCondHyper2}
Let $H$ be a strictly $r$-balanced $r$-uniform hypergraph and let $S$ be the collection of labelled ordered copies of $H$ in the complete $r$-uniform hypergraph $K_n^{(r)}$. Then, for any positive constants $\a$ and $c$, there exist constants $b$ and $B$ such that, if $n$ is sufficiently large, $C \geq n^c$, and $p=C n^{-1/m_r(H)}$, then, with probability at least $1 - 2 n^{v_H} e^{-Bn^b}$, if $U_2, \dots, U_{e-1}$ are random subgraphs $G_{n,p}^{(r)}$ of $K_n^{(r)}$ with associated measures $\mu_2, \dots, \mu_{e-1}$, 
\[W(x,y) = \E_{s \in S_1(x) \cap S_e(y)} \mu_2(s_2) \dots \mu_{e-1}(s_{e-1}) \leq \a p |K(x)|,\]
for all $x,y \in X$, where we have written $e$ for $e_H$.
\end{lemma}

\begin{proof}
Let $\chi_i$ be the characteristic function of $U_i$ for each $i\leq e_H$. Let $\sigma$ be the size of each non-empty set $S_1(x)\cap S_e(y)$ and suppose $|K(x)| = t$ for each $x$. Note that $\E Y_H^{e_1, e_H} = p^{e_H-2} \sigma$. We may apply Lemma \ref{JRTwoEdge} with $\gamma = \a p t$ to tell us that $\sum_{s \in S_1(x) \cap S_e(y)} \chi_2(s_2) \dots \chi_{e-1}(s_{e-1}) \geq \gamma p^{e_H-2} \sigma$ with probability at most 
\[2 n^{v_H} \exp\left(-c \min_{L \subseteq H} \left(\gamma \E Y_L^{e_1, e_H}\right)^{1/v_L}\right) = 2 n^{v_H} \exp\left(-c
\min_{L \subseteq H} \left(\gamma n^{v_L - h} p^{e_L - 2}\right)^{1/v_L}\right),\]
where $h$ is the size of $e_1 \cup e_H$. Note that, as $t$ is almost exactly $n^{h-r}$, $\gamma n^{v_L - h} p^{e_L - 2} \geq (\a/2) n^{v_L - r} p^{e_L - 1}$. Since $H$ is strictly $r$-balanced, for any proper subgraph $L$ of $H$,
\[n^{v_L - r} p^{e_L - 1} \geq \left(n^{1 -\left(\frac{v_H - r}{e_H - 1}\right) \left(\frac{e_L - 1}{v_L - r}\right)}\right)^{v_L - r} \geq n^{b'}.\]
Since also $n^{v_H - r} p^{e_H - 1} \geq n^{\e}$, the required bound holds with probability at least $1 - 2 n^{v_H} e^{-B n^b}$ for some constants $B$ and $b$. Since 
\begin{equation*}
W(x,y)=\sigma^{-1}p^{-(e_H-2)}\sum_{s\in S_1(x)\cap S_e(y)}\chi_2(s_2)\dots\chi_{e-1}(s_{e-1}),
\end{equation*}
the result now follows for $n$ sufficiently large.
\end{proof}

\section{Summary of our results so far} \label{summary}

We are about to discuss several applications of our main results. In this brief section, we prepare for these applications by stating the abstract results that follow from the work we have done so far. Since not every problem one might wish to solve will give rise to a system of sequences $S$ that either has two degrees of freedom or concerns copies of a strictly balanced graph or hypergraph, we begin by stating sufficient conditions on $S$ for theorems of the kind we are interested in to hold. We have of course already done this, but since some of our earlier conditions implied other ones, there is scope for stating the abstract results more concisely. That way, any further applications of our methods will be reduced to establishing two easily stated probabilistic estimates, and showing that suitable robust versions of the desired results hold in the dense case.

Having done that, we remark that we have proved that the estimates hold when $S$ has two degrees of freedom or results from copies of a strictly balanced graph or hypergraph. So in these two cases, if the robust results hold in the dense case, then we can carry them over unconditionally to the sparse random case.

The proofs in this section require little more than the putting together of results from earlier in the paper.

\subsection{Conditional results}

Recall that a system $S$ of sequences $s=(s_1,\dots,s_k)$ with values in a finite set $X$ is \textit{homogeneous} if for every $j\leq k$ and every $x\in X$ the set $S_j(x)=\{s\in S:s_j=x\}$ has the same size. Let $S$ be a homogeneous system of sequences with elements in a finite set $X$, and let us assume that no sequence in $S$ has repeated elements. We shall also assume that all non-empty sets of the form $S_1(x)\cap S_k(y)$ have the same size. Coupled with our first homogeneity assumption, this implies that for each $x$ the number of $y$ such that $S_1(x)\cap S_k(y)$ is non-empty is the same.

We are about to state and prove a theorem that is similar to Theorem \ref{main:density}, but with conditions that are easier to check and a conclusion that is more directly what we want to prove. The first condition is what we proved in Lemmas \ref{LInfInd} and \ref{LInfHyper}.  We suppose that $X$ is a given finite set, $S$ is a given homogeneous system of sequences with terms in $X$, and $p_0$ is a given probability. 

\begin{condition1*} 
Let $U_1,\dots,U_k$ be independent random subsets of $X$, each chosen binomially with probability $p \geq p_0$, and let $\mu_1,\dots,\mu_k$ be their associated measures. Let $1\leq j\leq k$ and for each $i\ne j$ let $\nu_i$ equal either $\mu_i$ or the constant measure 1 on $X$, with at least one $\nu_i$ equal to the constant measure. Then with probability at least $1-o(|X|^{-k})$,
\begin{equation*}
*_j(\nu_1,\dots,\nu_{j-1},\nu_{j+1},\dots,\nu_k)(x)\leq 3/2
\end{equation*}
for every $x\in X$. 
\end{condition1*}

Recall that if $L$ is the set of $i$ such that $\nu_i=\mu_i$, then $*_j(\nu_1,\dots,\nu_{j-1},\nu_{j+1},\dots,\nu_k)(x)$ is $p^{-|L|}$ times the number of $s\in S_j(x)$ such that $s_i\in U_i$ for every $i\in L$. Since the expected number of such sequences is $p^{|L|}|S_j(x)|$, Condition $1$ is saying that their number is not too much larger than its mean. (One would usually expect a concentration result that said that their number is, with high probability, close to its mean.)

The second condition tells us that the hypotheses of Lemma \ref{L1Diff} hold. Again we shall take $X$, $S$ and $p$ as given.

\begin{condition2*}
Let $U_2,\dots,U_{k-1}$ be independent random subsets of $X$, each chosen binomially with probability $p \geq p_0$, and let $\mu_2,\dots,\mu_{k-1}$ be their associated measures. Let $\a>0$ be an arbitrary positive constant. For each $x$, let $t$ be the number of $y$ such that $S_1(x)\cap S_k(y)$ is non-empty. Then with probability at least $1-o(|X|^{-k})$,
\begin{equation*}
W(x,y)=\E_{s\in S_1(x)\cap S_k(y)}\mu_2(s_2)\dots\mu_{k-1}(s_{k-1})\leq \a pt
\end{equation*}
for every $x,y$ such that $S_1(x)\cap S_k(y)$ is non-empty.
\end{condition2*}

This is not a concentration assumption. For instance, in the case of systems with two degrees of freedom, it follows trivially from the fact that $|S_1(x)\cap S_k(y)|\leq 1$ and each $\mu_i(s_i)$ is at most $p^{-1}$. In more complicated cases, we end up wishing to prove that a certain integer-valued random variable with mean $n^{-c}$ has a probability $n^{-A}$ of exceeding a large constant $C$.

We are now ready to state our main conditional results. Note that in all of these it is necessary to assume that the probability $q$ with which we choose our random set $U$ is smaller than some positive constant $\d$. For colouring theorems this is not a problem, because these properties are always monotone. It is therefore enough to know that the property holds almost surely for a particular probability $q$ to know that it holds almost surely for all probabilities larger than $q$. 

For density theorems, we can also overcome this difficulty by partitioning any random set with large probability into a number of smaller random sets each chosen with probability less than $\d$. With high probability, each of these smaller random sets will satisfy the required density theorem. If we take a subset of the original set above a certain density, then this subset must have comparable density within at least one of the sets of the partition. Applying the required density theorem within this set, we can find the required substructure, be it a $k$-term arithmetic progression or a complete graph of order $t$. 

Alternatively, if we know a (robust) sparse density theorem for a small value of $p$, we can deduce it for a larger value $q$ as follows. We can pick a random set $V=X_p$ by first choosing $U=X_q$ and then choosing $V=U_{p/q}.$ Since the result is true for almost every $V=X_p$, it will be the case that for almost every $U=X_q$, almost every $V=U_{p/q}$ will satisfy the result. It follows by a simple averaging argument that for almost every $U=X_q$ the robust version of the density theorem holds again.

Unfortunately, for structural results, no simple argument of this variety seems to work and we will have to deal with each case as it comes.

\begin{theorem} \label{conditionaldensity}
Suppose that $S$, $X$ and $p_0$ satisfy Conditions 1 and 2. Suppose also that there exist positive constants $\rho$ and $\b$ such that for every subset $B\subset X$ of density at least $\rho$ there are at least $\b|S|$ sequences $s=(s_1,\dots,s_k)\in S$ such that $s_i\in B$ for every $i$. Then, for any $\e > 0$, there exist positive constants $C$ and $\d$ with the following property. Let $U$ be a random subset of $X$, with elements chosen independently with probability $C p_0 \leq q \leq \d$. Then, with probability $1 - o(1)$, every subset $A$ of $U$ of density at least $\rho+\e$ contains at least $(\b-\e)p^k|S|$ sequences such that $s_i\in A$ for every $i$.
\end{theorem}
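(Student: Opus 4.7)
The plan is to reduce the statement to Theorem \ref{main:density} via two observations: first, that Conditions 1 and 2 together imply the full Main Assumption of Section \ref{CondMain} for the data $(S,p_0)$ and any prescribed constants $\eta,\lambda,d,m$; and second, that a single random set $U = X_q$ can be coupled with a union of $m$ independent copies of $X_p$, so that the functional conclusion of Theorem \ref{main:density} transfers to a counting statement about subsets of $U$.

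For the verification of the Main Assumption, Assumption 0 is immediate from Chernoff's inequality (Lemma \ref{Chernoff}), Assumption 2 is exactly Condition 1, and Assumption 3 will then follow from Lemma \ref{allprofile}. The only nontrivial point is Assumption 1. For $j \geq 2$ it is automatic, because Condition 1 gives $*_j(\dots) \leq 3/2 < 2$, whence $\circ_j = *_j$ and the $L_1$-difference vanishes. For $j = 1$ I will invoke Lemma \ref{L1Diff}: Condition 1 (applied with the appropriate profiles) bounds $*_1(\mu_2,\dots,\mu_{k-1},1)$ and $*_k(1,\mu_2,\dots,\mu_{k-1})$ by $3/2$, and Condition 2 gives $W(x,y) \leq \alpha p t$, with $\alpha$ chosen small enough that the resulting $\eta = 14\alpha e^{-1/14\alpha}$ in Lemma \ref{L1Diff} matches whatever constant Theorem \ref{main:density} demands. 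A union bound over the (bounded number of) profiles then yields Assumption 1 with probability $1 - |X|^{-\Omega(1)}$.

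Once the Main Assumption is in hand, Theorem \ref{main:density} produces constants $\eta,\lambda,d,m$ (depending on $\rho,\beta,\e$) such that, for independent $U_1,\dots,U_m$ each distributed as $X_p$ with $p \geq p_0$, with probability $1 - n^{-\Omega(1)}$ every $0 \leq f \leq \mu := m^{-1}\sum_i \mu_i$ with $\E_x f(x) \geq \rho + \e/3$ satisfies $\E_s f(s_1)\cdots f(s_k) \geq \beta - \e/2$. I will then set $C := 2m$, fix $\delta>0$ sufficiently small, and for $Cp_0 \leq q \leq \delta$ define $p$ by $(1-p)^m = 1-q$, so that $q/(2m) \leq p \leq q/m$ and hence $p \geq p_0$. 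Taking $U := U_1 \cup \dots \cup U_m$ then gives a set distributed exactly as $X_q$. Given $A \subseteq U$ with $|A| \geq (\rho+\e)|U|$, I set $f := (mp)^{-1}\chi_A$; since $\mu(x) \geq (mp)^{-1}$ for every $x \in U$, we have $f \leq \mu$ pointwise, and since $|U| = (1+o(1))q|X|$ by Chernoff and $q/(mp) \geq 1 - \delta/2$, we get $\E_x f(x) = |A|/(mp|X|) \geq \rho + \e/2$ when $\delta$ is small. Theorem \ref{main:density} then yields
\[
\frac{\#\{s \in S : s_i \in A \text{ for all } i\}}{(mp)^k |S|} \;=\; \E_s f(s_1)\cdots f(s_k) \;\geq\; \beta - \e/2,
\]
and since $mp \geq q$ the number of sequences in $A$ is at least $(\beta-\e/2)\,q^k|S| \geq (\beta-\e)\,q^k|S|$, as required.

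The main obstacle, though minor, is the choice of normalization. One is tempted to take $f := q^{-1}\chi_A$ so that $\E_x f(x)$ literally equals the relative density of $A$ in $U$, but this fails the pointwise bound $f \leq \mu$: the measure $\mu = (mp)^{-1}\sum_i \chi_{U_i}$ is only guaranteed to exceed $(mp)^{-1}$ on $U$, and $(mp)^{-1} < q^{-1}$ because the $U_i$ overlap. The fix is to use $(mp)^{-1}$ in the normalization and absorb the mismatch between $mp$ and $q$ into the freedom afforded by shrinking $\delta$. Aside from this, the argument is pure bookkeeping on top of Theorem \ref{main:density}, Lemma \ref{L1Diff} and Lemma \ref{allprofile}, whose proofs have already been carried out in the preceding sections.
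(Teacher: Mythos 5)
Your overall route is the paper's: verify the four key properties from Conditions 1 and 2 (Chernoff for Property 0, Condition 1 for Property 2, Lemma \ref{L1Diff} for Property 1, Lemma \ref{allprofile} for Property 3), apply Theorem \ref{main:density} to $m$ independent copies of $X_p$, and realise $U=X_q$ as $U_1\cup\dots\cup U_m$. Your normalization $f=(mp)^{-1}\chi_A$, in place of the paper's truncation $h=\min\{f,m^{-1}(\mu_1+\cdots+\mu_m)\}$ of $f=q^{-1}\chi_A$, is equally good bookkeeping (your inequality ``$q/(2m)\le p\le q/m$'' is stated backwards --- in fact $q/m\le p\le 2q/m$ for small $q$ --- but nothing you use depends on it).

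There is, however, one genuine error: the claim that Assumption 1 for $j\ge 2$ is automatic because ``Condition 1 gives $*_j(\dots)\le 3/2$, whence $\circ_j=*_j$''. Property 1 concerns $*_j(\mu_{i_1},\dots,\mu_{i_{j-1}},\mu_{i_{j+1}},\dots,\mu_{i_k})$, in which \emph{every} one of the $k-1$ entries is the associated measure of a sparse random set; Condition 1 only bounds convolutions in which at least one entry is the constant function $1$, hence with at most $k-2$ random entries. At the critical probability the all-random $j$-convolutions are not uniformly bounded for any $j$ (for arithmetic progressions, say, the value at a point is $p^{-(k-1)}$ times a count of mean $\Theta(1)$, whose maximum over $x$ certainly exceeds the cap), so $\circ_j\ne *_j$ and the $L_1$-difference really must be estimated. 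This is not cosmetic: Assumption 1 for all $j$, applied at density $q$, is what yields sufficient randomness via Lemma \ref{Ass1'}, on which Lemma \ref{restrictions} and hence Lemma \ref{allprofile} --- your source of Assumption 3 --- depend. The repair is exactly what the paper does at the start of Section \ref{ProbIII}: the $j=1$ argument of Lemma \ref{L1Diff} applies verbatim to every $j$ after relabelling the positions, with the required $L_\infty$ hypotheses again instances of Condition 1 (convolutions having one constant entry) and the bound on $W$ supplied by Condition 2; a union bound over the boundedly many choices of $j$ and of the indices then gives Assumption 1 in full.
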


\begin{proof}
Basically the result is true because Theorem \ref{main:density} proves the conclusion conditional on the four key properties set out before the statement of Theorem \ref{main:density}, and our probabilistic arguments in the last few sections show that these properties follow from Conditions 1 and 2. Indeed, we would already be done if it were not for one small extra detail: we need to deal with the fact that Theorem \ref{main:density} has a conclusion that concerns $m$ random sets $U_1,\dots,U_m$, whereas we want a conclusion that concerns a single random set $U$. 

Let $\eta$, $\lambda$, $d$ and $m$ be as required by Theorem \ref{main:density}. Condition 1 implies that property P2 holds with probability $1-o(|X|^{-k})$. Lemma \ref{L1Diff} tells us that property P1 holds with probability $1-o(|X|^{-k})$ provided that Conditions $1$ and $2$ hold, for some $\a$ that depends on $\eta$. Property P0 plainly holds with high probability. Finally, Lemma \ref{allprofile} tells us that if properties P0, P1 and P2 hold with probability $1-o(|X|^{-k})$, then property P3 holds with probability $1 - o(1)$. Thus, with probability $1-o(1)$, we have all four properties.

It follows from Theorem \ref{main:density} that if $U_1,\dots,U_m$ are independent random subsets of $X$, each chosen binomially with probability $p \geq p_0$, and $\mu_1,\dots,\mu_m$ are their associated measures, then, with probability $1-o(1)$, $\E_{s\in S}h(s_1)\dots h(s_k)\geq \b-\e$ whenever $0\leq h\leq m^{-1}(\mu_1+\dots+\mu_m)$ and $\|h\|_1\geq\rho+3\e/4$.

Let $U=U_1\cup\dots\cup U_m$. Then $U$ is a random set with each element chosen independently with probability $q=1-(1-p)^m  \geq mp(1-\e/8)$, provided $\d$ (and hence $p$) is sufficiently small. Let $\mu$ be the associated measure of $U$, let $0\leq f\leq\mu$ and suppose that $\|f\|_1\geq\rho+\e$. Then replacing $f$ by the smaller function $h=\min\{f,m^{-1}(\mu_1+\dots+\mu_m)\}$ we have $\|h\|_1\geq\rho+3\e/4$, which implies that $\E_{s\in S}h(s_1)\dots h(s_k)\geq \b-\e$, which in turn implies that $\E_{s\in S}f(s_1)\dots f(s_k)\geq \b-\e$.
\end{proof}

Conditions 1 and 2 also imply an abstract colouring result and an abstract structural result in a very similar way.

\begin{theorem} \label{conditionalcolouring}
Suppose that $S$, $X$ and $p_0$ satisfy Conditions 1 and 2. Suppose also that $r$ is a positive integer and $\b$ a positive constant such that for every colouring of $X$ with $r$ colours there are at least $\b|S|$ sequences $s=(s_1,\dots,s_k)\in S$ such that each $s_i$ has the same colour.  Then there exist positive constants $C$ and $\d$ with the following property. Let $U$ be a random subset of $X$, with elements chosen independently with probability $C p_0 \leq q \leq \d$. Then, with probability $1-o(1)$, every colouring of $U$ with $r$ colours contains at least $2^{-(k+2)}\b p^k|S|$ sequences $s=(s_1,\dots,s_k)\in S$ such that each $s_i$ has the same colour and each $s_i$ is an element of $U$.
\end{theorem}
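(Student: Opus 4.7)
The plan is to mirror the proof of Theorem~\ref{conditionaldensity}, with Theorem~\ref{main:colouring} taking the role of Theorem~\ref{main:density}. First, I would extract the constants $\eta,\lambda,d,m$ supplied by Theorem~\ref{main:colouring} for the given $k$, $r$ and $\beta$. Next, I would check that the four key properties in the main assumption hold with probability $1-o(1)$ for the measures $\mu_1,\dots,\mu_m$ associated to independent random subsets $U_1,\dots,U_m$ of $X$, each chosen binomially with probability $p\geq p_0$: Property~0 follows from Chernoff's inequality; Property~2 is exactly what Condition~1 guarantees; Property~1 comes from Lemma~\ref{L1Diff} once Condition~2 is applied with $\alpha$ small enough that $14\alpha e^{-1/(14\alpha)}$ undercuts the $\eta$ demanded by Theorem~\ref{main:colouring}; and Property~3 is then delivered by Lemma~\ref{allprofile}.

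With the main assumption verified, Theorem~\ref{main:colouring} yields that, with probability $1-o(1)$ and writing $\mu:=m^{-1}(\mu_1+\cdots+\mu_m)$,
\[\E_{s\in S}\sum_{i=1}^r f_i(s_1)\cdots f_i(s_k)\geq 2^{-(k+2)}\beta\]
whenever $0\leq f_i\leq\mu$ and $\sum_{i=1}^r f_i=\mu$. To pass to a single random set I would take $U=U_1\cup\cdots\cup U_m$; then $U$ is distributed as $X_q$ with $q=1-(1-p)^m$, and for $\delta$ small enough $q=(1+o(1))mp$, so by adjusting $p$ and absorbing $m$ into the constant $C$ any target probability $q\in[Cp_0,\delta]$ is realised.

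Given an arbitrary $r$-colouring of $U$ with colour classes $C_1,\dots,C_r$, I would set $f_i:=\mu\cdot\chi_{C_i}$. Since $\mu$ vanishes off $U$ and the $C_i$ partition $U$, these functions satisfy $0\leq f_i\leq\mu$ and $\sum_i f_i=\mu$ identically, so the displayed inequality applies. The term $f_i(s_1)\cdots f_i(s_k)$ is non-zero exactly when every $s_j$ lies in $U$ and carries colour $i$, and in that case equals $\prod_j\mu(s_j)\leq p^{-k}$, since each $\mu_j$ takes only the values $0$ and $p^{-1}$. Multiplying through by $|S|$, the number $N$ of monochromatic sequences lying entirely in $U$ satisfies $N\geq 2^{-(k+2)}\beta\, p^k|S|$, and the extra factor of $2$ comfortably absorbs the $o(1)$ discrepancy between $p$ and $q$, giving the claimed bound.

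There is no substantial obstacle here: the probabilistic content lives entirely in Conditions~1 and~2 together with the work of Sections~\ref{ProbI}--\ref{ProbIII}, while the transference from the sparse to the dense colouring statement is done by Theorem~\ref{main:colouring}. The only delicate point, exactly as in the density case, is selecting $\alpha$ in Lemma~\ref{L1Diff} small enough that the resulting bound on Property~1 is compatible with the $\eta$ that Theorem~\ref{main:colouring} insists upon.
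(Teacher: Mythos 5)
Your architecture is exactly the paper's: deduce properties 0--3 from Conditions 1 and 2 (Chernoff for property 0, Condition 1 for property 2, Lemma \ref{L1Diff} plus Condition 2 with a suitably small $\a$ for property 1, Lemma \ref{allprofile} for property 3), feed the $m$ sets into Theorem \ref{main:colouring}, and pass to $U=U_1\cup\dots\cup U_m$. Taking $f_i=\mu\chi_{C_i}$ with $\mu=m^{-1}(\mu_1+\dots+\mu_m)$ is legitimate ($\sum_i f_i=\mu$ since $\mu$ vanishes off $U$), so you correctly obtain $\sum_i\E_{s\in S}f_i(s_1)\cdots f_i(s_k)\geq 2^{-(k+2)}\b$.

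The gap is in the final accounting. Your per-sequence weight bound is $p^{-k}$, where $p$ is the probability of each \emph{individual} $U_i$, so your conclusion is $N\geq 2^{-(k+2)}\b p^k|S|$. But the count in the statement has to be read in terms of the probability $q$ of the single set $U$: this is forced by Theorem \ref{unconditionalcolouring}, which concerns one random set of probability $p$ and is derived from this theorem with the same constant $2^{-(k+3)}$. Since $q=1-(1-p)^m\approx mp$, the discrepancy between $p$ and $q$ is a factor of $m$ per coordinate, not $1+o(1)$; your bound is $\approx 2^{-(k+2)}\b (q/m)^k|S|$, which misses the target $2^{-(k+3)}\b q^k|S|$ by roughly $m^k/2$, and $m=2k^3/\eta$ is typically large. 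So the sentence claiming that the spare factor of $2$ ``absorbs the $o(1)$ discrepancy between $p$ and $q$'' is wrong. The repair is the colouring analogue of the $\min$-trick used in the proof of Theorem \ref{conditionaldensity}: replace $f_i$ by $h_i=\chi_{C_i}\min\{\mu,q^{-1}\}$. Then $h_i\leq q^{-1}$, $\sum_i h_i\leq\mu$, and $\sum_i\E_x h_i(x)\geq 1-o(1)-O(\d)$, because the points of $U$ lying in two or more of the $U_l$ carry $\mu$-mass $O(mp)=O(\d)$. The hypotheses of Theorem \ref{main:colouring} demand $\sum_i f_i=\mu$ exactly, but its proof (via Lemma \ref{transfer:colouring} and Lemma \ref{RamseyFtoS2}, which are built precisely to tolerate colouring ``almost everything'') only uses $\sum_i f_i\leq\mu$ together with total mass close to $1$; alternatively one can pad the last colour class back up to $\mu$ and bound the contribution of sequences meeting the multiply covered set. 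With $h_i\leq q^{-1}$ each monochromatic sequence is counted with weight at most $q^{-k}$, and the stated bound $2^{-(k+3)}\b q^k|S|$ follows, the spare factor $2$ now genuinely covering these adjustments.
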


The only further ingredient needed to prove this theorem is Theorem \ref{main:colouring}. Other than this, the proof is much the same as that of Theorem \ref{conditionaldensity}.

\begin{theorem} \label{conditionalstructural}
Suppose that $S$, $X$ and $p_0$ satisfy Conditions 1 and 2 and let $\mathcal{V}$ be a collection of $2^{o(p|X|)}$ subsets of $X$. Then, for any $\e > 0$, there exist positive constants $C$ and $\d$ with the following property. Let $U$ be a random subset of $X$, with elements chosen independently with probability $C p_0 \leq q \leq \d$, and associated measure $\mu$. Then, with probability $1-o(1)$, for every function $f:X\to\mathbb{R}$ with $0\leq f\leq\mu$  there exists a function $g:X\to\R$ with $0\leq g\leq 1$ such that 
\begin{equation*}
\E_{s\in S}f(s_1)\dots f(s_k) \geq \E_{s\in S}g(s_1)\dots g(s_k) - \e
\end{equation*}
and, for all $V\in\mathcal{V}$,
\begin{equation*}
|\sum_{x\in V}f(x)-\sum_{x\in V}g(x)|\leq\e|X|.
\end{equation*}
\end{theorem}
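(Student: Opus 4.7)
The plan is to follow the same architecture as the proof of Theorem \ref{conditionaldensity}, but with three modifications: (i) invoke Theorem \ref{main:structure} instead of Theorem \ref{main:density}; (ii) verify the strengthened property 3$'$ rather than property 3; and (iii) carry the additional local-density information through the change of base measure from $m^{-1}(\mu_1+\cdots+\mu_m)$ to the associated measure of the single random set $U$.

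Given $\e>0$ and the set system $\mathcal{V}$, let $\eta,\lambda,d,m$ be the constants supplied by Theorem \ref{main:structure}. The first task is to show that the revised main assumption (properties $0$, $1$, $2$, and $3'$) holds with probability $1-|X|^{-\Omega(1)}$ once $p\geq C p_0$ for $C$ large enough. Property $0$ follows from Lemma \ref{Chernoff}; property $2$ is Condition~$1$ together with a union bound over the $O(1)$ admissible sequences $(j;i_{j+1},\dots,i_k)$; and property $1$ follows from Lemma \ref{L1Diff}, whose two hypotheses are in turn property $2$ and the bound $W(x,y)\leq\alpha p t$ supplied by Condition~$2$ for a suitably small $\alpha=\alpha(\eta)$. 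The remaining property $3'$ is obtained from the extension of Lemma \ref{allprofile} described at the end of Section \ref{BAUrestriction}: enlarging $\Phi_{\mu,1}$ by the collection $\{\chi_V:V\in\mathcal{V}\}$ multiplies the total number of products of $d$ basic anti-uniform functions by at most $|\mathcal{V}|^d=2^{o(p|X|)}$, which is absorbed by the exponentially small tail $\exp(-\Omega(p|X|))$ from Lemma \ref{correlation} provided the parameter $L$ in the proof of Lemma \ref{allprofile} is chosen sufficiently large.

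With the main assumption in hand, Theorem \ref{main:structure} produces, with probability $1-o(1)$, the following statement: for every $h$ with $0\leq h\leq m^{-1}(\mu_1+\cdots+\mu_m)$ there is a function $g$ with $0\leq g\leq 1$ satisfying
\[
\E_{s\in S}h(s_1)\cdots h(s_k)\geq \E_{s\in S}g(s_1)\cdots g(s_k)-\e/2
\]
and $|\sum_{x\in V}h(x)-\sum_{x\in V}g(x)|\leq (\e/2)|X|$ for every $V\in\mathcal{V}$. To transfer this to a statement about a single random set $U$ distributed as $X_q$, take $U=U_1\cup\cdots\cup U_m$, so that $q=1-(1-p)^m\geq mp(1-\e/16)$ when $\delta$ (and hence $p$) is small enough. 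Given $f$ with $0\leq f\leq\mu_U$, where $\mu_U$ is the associated measure of $U$, set $h=\min\{f,m^{-1}(\mu_1+\cdots+\mu_m)\}$ and apply the above to $h$.

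The remaining point --- and the step that requires slightly more care than in the density version --- is to argue that $h$ and $f$ are close not only in $L^1$ but simultaneously on every $V\in\mathcal{V}$. Here the assumption $|\mathcal{V}|=2^{o(p|X|)}$ is decisive: for any fixed $V$, Bernstein's inequality (Lemma \ref{Bernstein}) bounds $|\langle \mu_U-m^{-1}(\mu_1+\cdots+\mu_m),\chi_V\rangle|$ by $\e/4$ with probability $1-\exp(-\Omega(p|X|))$, and a union bound over $V\in\mathcal{V}$ retains probability $1-o(1)$. Since $0\leq f-h\leq \mu_U - m^{-1}(\mu_1+\cdots+\mu_m)$ pointwise, the same bound transfers to $f-h$. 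Combining this with the conclusion of Theorem \ref{main:structure} applied to $h$, and choosing $\e/2$-sized slack at both stages, yields the claimed $\e$-approximation for $f$, completing the proof. The chief obstacle is the bookkeeping around the last step: one must check that the enlarged basic anti-uniform set and the base-measure switch each cost at most $\e/2$ in both the $S$-count inequality and in every local density, which is why the union-bound argument on $\mathcal{V}$ and the size restriction $|\mathcal{V}|=2^{o(p|X|)}$ are essential.
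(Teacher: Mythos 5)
Your proposal is correct and follows essentially the route the paper intends: the paper's own treatment of this theorem is only a two-line sketch (Conditions 1 and 2 give properties 0, 1, 2 and, via the enlargement of $\Phi_{\mu,1}$ by the characteristic functions of $\mathcal{V}$ discussed at the end of Section \ref{BAUrestriction}, also property 3$'$, after which Theorem \ref{main:structure} applies and one passes from $U_1,\dots,U_m$ to the single set $U$ exactly as in Theorem \ref{conditionaldensity}), and you reconstruct precisely these steps with the correct bookkeeping on $|\mathcal{V}|$. The one place you genuinely diverge is the final base-measure switch, and there your write-up contains a small slip: with $h=\min\{f,\bar\mu\}$, where $\bar\mu=m^{-1}(\mu_1+\dots+\mu_m)$, one has $0\le f-h\le(\mu_U-\bar\mu)_+$ but \emph{not} $f-h\le\mu_U-\bar\mu$, since $\mu_U-\bar\mu$ is negative at points lying in two or more of the $U_i$; consequently a bound on $|\sp{\mu_U-\bar\mu,\chi_V}|$ does not transfer to $f-h$ by pointwise domination as you claim. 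The repair is immediate and in fact simpler than your Bernstein-plus-union-bound step: since $f-h\ge 0$, for every $V$ one has $\sum_{x\in V}(f-h)(x)\le \|(\mu_U-\bar\mu)_+\|_1\,|X|$, and $(\mu_U-\bar\mu)_+\le\bigl(q^{-1}-(mp)^{-1}\bigr)\chi_U$ because $\bar\mu\ge(mp)^{-1}$ on $U$, so Chernoff's bound on $|U|$ alone gives $\|(\mu_U-\bar\mu)_+\|_1=O(\e)$ with high probability, uniformly over all $V\in\mathcal{V}$, with no union bound needed at this stage (the size restriction $|\mathcal{V}|=2^{o(p|X|)}$ is needed only for property 3$'$). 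With that correction your argument is complete and matches the paper's.
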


The main extra point to note here is that Conditions $1$ and $2$ imply not just property P3 but also property P3$'$. This allows us to apply Theorem \ref{main:structure}.

\subsection{The critical exponent}

The aim of this paper has been to prove results that are, in terms of $p$, best possible to within a constant. A preliminary task is to work out the probability below which we cannot hope to prove a result. (For density problems, it is easy to prove that below this probability the result is not even true. For natural colouring problems, it usually seems to be the case that the result is not true, but the known proofs are far from trivial.) To within a constant, the probability in question is the probability $p$ such that the following holds: for each $j\leq k$ and each $x\in X$ the expected number of elements $s\in S$ such that $s_j=x$ (that is, such that $s\in S_j(x)$), and $s_i$ belongs to $X_p$ for each $i\ne j$ is equal to 1. 

In concrete situations, $X$ will be one of a family of sets of increasing size, and $S$ will be one of a corresponding family of sets of sequences. Then it is usually the case that the probability $p$ calculated above is within a constant of $|X|^{-\a}$ for some rational number $\a$ that does not depend on which member of the family one is talking about. In this situation, we shall call $\a$ the \textit{critical exponent} for the family of problems. Our results will then be valid for all $p$ that exceed $C|X|^{-\a}$ for some constant $C$. We shall denote the critical exponent by $\a_S$, even though strictly speaking it depends not on an individual $S$ but on the entire family of sets of sequences.

To give an example, if $S$ consists of all non-degenerate edge-labelled copies of $K_4$ in $K_n$, then the expected number of copies with a particular edge in a particular place, given that that edge belongs to $U$, is $2(n-2)(n-3)p^5$ (since each $S_j(e)$ has size $2(n-2)(n-3)$ and there are five edges that must be chosen). Setting that equal to 1 tells us that $p$ is within a constant of $n^{-2/5}$, so the critical exponent is $2/5$. (This is a special case of the formula $1/m_k(K)=(v_K-k)/(e_K-1)$.)

This calculation is exactly what we do in general: if each element of $S$ is a sequence of length $k$ and we are given that $x\in X_p$, then the expected number of elements of $S_j(x)$ that have all their terms in $X_p$ is $p^{k-1}|S_j(x)|$. This equals 1 when $p=|S_j(x)|^{-1/(k-1)}$. If $|S_j(x)|=C|X|^\theta$ for some $\theta$ that is independent of the size of the problem, then the critical exponent is therefore $\theta/(k-1)$. 

If we can prove a robust density theorem for $S$, and can show that Conditions 1 and 2 hold when $p_0=C|X|^{-\a_S}$ for some constant $C$, then we have proved a result that is best possible to within a constant. For colouring theorems, we cannot be quite so sure that the result is best possible, but in almost all examples where the 0-statement has been proved, it does indeed give a bound of the form $c|X|^{-\a_S}$. 

\subsection{Unconditional results}

In this section we concentrate on the two kinds of sequence system for which we have proved that Conditions 1 and 2 hold when $p_0=C|X|^{-\a_S}$.

Recall that $S$ \textit{has two degrees of freedom} if $S_i(x)\cap S_j(y)$ is either empty or a singleton whenever $i\ne j$. A good example of such a system is the set of $k$-term arithmetic progressions in $\Z_p$ for some prime $p$. We say that $S$ \textit{is a set of copies of a hypergraph} if $K$ is a $k$-uniform hypergraph with edges $a_1,\dots,a_e$, $X$ is the complete $k$-uniform hypergraph $K_n^{(k)}$, and $S$ is the set of all sequences of the form $(\phi(a_1),\dots,\phi(a_e))$, where $\phi$ is an injection from the vertices of $K$ to $\{1,2,\dots,n\}$. 

As above, we assume that $S$ has the additional homogeneity property that $S_1(x)\cap S_k(y)$ always has the same size when it is non-empty. (In the hypergraph case, $e$ plays the role of $k$ and $k$ has a different meaning: thus, the property in that case is that $S_1(x)\cap S_e(y)$ always has the same size when it is non-empty.) And in the hypergraph case, we make the further assumption that the hypergraph $K$ is \textit{strictly balanced}, which means that for every proper subhypergraph $J\subset K$ we have the inequality $\frac{e_J - 1}{v_J - k} < \frac{e_K-1}{v_K - k}$. When this happens, we write $m_k(K)$ as shorthand for $\frac{e_K-1}{v_K-k}$. 

Given a system $S$ with two degrees of freedom, let $t$ be the size of each $S_j(x)$, and suppose that $t=|X|^\g$. Then the critical exponent of $S$ is $\g/(k-1)$. (Note that $|X|^{-\a_S}=t^{-1/(k-1)}$.) When $S$ is a set of copies of a strictly balanced hypergraph $K$, the critical exponent is $1/m_k(K)$. It is straightforward to show that sparse density results cannot hold for random subsets of $X$ chosen with probability $c|X|^{-\a_S}$ if $c$ is a sufficiently small positive constant. Broadly speaking, we shall show that they \textit{do} hold for random subsets chosen with probability $C|X|^{-\a_S}$ when $C$ is a sufficiently \textit{large} positive constant.

Let us call a system $S$ \textit{good} if the above properties hold. That is, roughly speaking, a good system is a system with certain homogeneity properties that either has two degrees of freedom or comes from copies of a graph or hypergraph. We shall also assume that $|X|$ is sufficiently large. When we say ``there exists a constant $C$," this should be understood to depend only on $k$ in the case of systems of two degrees of freedom, and only on $K$ in the case of copies of a strictly balanced hypergraph, together with parameters such as density or the number of colours in a colouring that have been previously mentioned in the statement.

\begin{theorem}\label{unconditionaldensity}
Let $X$ be a finite set and let $S$ be a good system of ordered subsets of $X$. Suppose that there exist positive constants $\rho$ and $\b$ such that for every subset $B\subset X$ of density at least $\rho$ there are at least $\b|S|$ sequences $s=(s_1,\dots,s_k)\in S$ such that $s_i\in B$ for every $i$. Then, for any $\e > 0$, there exist positive constants $C$ and $\d$ with the following property. Let $U$ be a random subset of $X$, with elements chosen independently with probability $C |X|^{-\a_S} \leq p \leq \d$. Then, with probability $1 - o(1)$, every subset $A$ of $U$ of order at least $(\rho+\e)|U|$ contains at least $(\b-\e)p^k|S|$ sequences such that $s_i\in A$ for every $i$.
\end{theorem}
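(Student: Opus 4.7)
The plan is to deduce this from Theorem \ref{conditionaldensity}, reducing everything to verifying Conditions 1 and 2 at the critical probability $p_0 = |X|^{-\a_S}$, for which we have already done the work in the previous two sections.

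First, I would note that by the definition of a good system together with the homogeneity properties, both classes (two degrees of freedom and copies of a strictly balanced hypergraph) come with a sequence of ``sizes" $|S_j(x)|$ that scale like a power of $|X|$. In the two-degrees-of-freedom case, $|S_j(x)| = t = |X|^\g$ and $p_0 = t^{-1/(k-1)}$, while in the hypergraph case $p_0 = n^{-1/m_k(K)}$ where $n = |V(X)|$. With $p = C p_0$ for $C$ a large constant to be chosen, Condition 1 then follows directly from Lemma \ref{LInfInd} in the first case (the Chernoff bound there gives failure probability at most $2\exp(-c p^l |S_j(x)|/16)$, which is superpolynomially small since with this choice of $p$ the exponent is a positive power of $|X|$), and from Lemma \ref{LInfHyper} in the hypergraph case, where strict $k$-balancedness provides the positive exponent needed in the Janson--Rucinski tail bound. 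A union bound over the bounded number of choices of $j$ and over the $|X|$ (or $n^k$) possible values of $x$ keeps the failure probability $|X|^{-\Omega(1)}$.

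Next I would verify Condition 2. For systems with two degrees of freedom, the set $S_1(x)\cap S_k(y)$ has at most one element, so $W(x,y) \leq p^{-(k-2)}$, and the requirement $W(x,y)\leq \a p t$ reduces to $p \geq (\a t)^{-1/(k-1)}$, which holds by the choice of $p_0$ provided $C = C(\a)$ is large enough. For hypergraph copies, Condition 2 at $p = Cp_0$ splits into two regimes exactly as the hypothesis of Theorem \ref{conditionaldensity} and the sections on tail estimates force: for $C_0 \leq C \leq n^c$ we apply Lemma \ref{ExtraCondHyper}, and for $C \geq n^c$ we apply Lemma \ref{ExtraCondHyper2}. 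Together these give the required bound on $W(x,y)$ with failure probability at most $n^{-A}$ (respectively $n^{v_K} e^{-Bn^b}$), which a union bound over pairs $(x,y)$ still keeps $o(1)$ once $A$ is chosen large enough.

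Having verified both conditions, Theorem \ref{conditionaldensity} immediately yields the statement for $C p_0 \leq p \leq \d$, provided $\d > 0$ is sufficiently small. To get the full range $C|X|^{-\a_S} \leq p \leq \d$ with $\d$ independent of $|X|$ (rather than $p \leq $ some tiny $\d$), I would use the coupling/averaging argument sketched in the discussion after Theorem \ref{conditionalcolouring}: pick $V = X_p$ by first drawing $U = X_q$ for some $q$ lying in the range already covered, and then sampling $V = U_{p/q}$. A standard averaging argument shows that if the robust density conclusion holds for almost every $V$, then for almost every $U$ it holds for almost every refinement $V \subseteq U$; in particular, by a second averaging this implies the robust density conclusion for the single set $U = X_q$. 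Thus the result propagates upward to any constant $\d$.

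The main technical obstacle, as the authors have already flagged, is the upper-tail estimate underlying Condition 2 in the hypergraph case: the two-regime split between Lemmas \ref{ExtraCondHyper} and \ref{ExtraCondHyper2} is necessary because the simple counting argument in the first lemma becomes too lossy when $p$ is a nontrivial power of $n$ larger than $p_0$, while the Janson--Rucinski bound is strong enough in that regime but only gives polynomial savings rather than exponential ones; reconciling these with the parameters $\eta, \l, d, m$ demanded by the transference machinery is the delicate point. Everything else is a bookkeeping exercise in combining the probabilistic estimates of Sections \ref{ProbI}--\ref{ProbIII} with the deterministic transference of Section \ref{CondMain}.
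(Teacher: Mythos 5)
Your proposal follows essentially the same route as the paper: reduce to Theorem \ref{conditionaldensity} by verifying Condition 1 via Lemmas \ref{LInfInd} and \ref{LInfHyper}, and Condition 2 via the trivial bound $W(x,y)\leq p^{-(k-2)}$ in the two-degrees-of-freedom case and Lemmas \ref{ExtraCondHyper} and \ref{ExtraCondHyper2} in the strictly balanced hypergraph case. Your final coupling/averaging step is harmless but unnecessary here, since Theorem \ref{conditionaldensity} already covers the whole range $Cp_0\leq p\leq\d$ with $\d$ a constant, which is exactly what the statement asserts.
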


\begin{proof}
By Theorem \ref{conditionaldensity}, all we have to do is check Conditions 1 and 2. Condition 1 is given to us by Lemma \ref{LInfInd} when $S$ has two degrees of freedom, and by Lemma \ref{LInfHyper} when $S$ is a system of copies of a graph or hypergraph, even when $C=1$. (In the case where $S$ has two degrees of freedom, see the remarks following Lemma \ref{LInfInd} for an explanation of why the result implies Condition 1 when $p=|X|^{-\a_S}$.) 

When $S$ has two degrees of freedom, Condition 2 holds as long as $p^{-(k-2)}\leq\a pt$, as we have already remarked. This tells us that $p$ needs to be at least $(\a t)^{-1/(k-1)}$. In this case, $t=|S_j(x)|$ for each $x$ and $j$, so $(\a t)^{-1/(k-1)}$ is within a constant of $|X|^{-\a_s}$, as required. When $S$ comes from copies of a strictly balanced graph or hypergraph, Lemmas \ref{ExtraCondHyper} and \ref{ExtraCondHyper2} give us Condition 2, again with $p=C|X|^{-\a_S}$. 
\end{proof}

Exactly the same proof (except that we use Theorem \ref{conditionalcolouring} instead of Theorem \ref{conditionaldensity}) gives us the following general sparse colouring theorem.

\begin{theorem} \label{unconditionalcolouring}
Let $X$ be a finite set and let $S$ be a good system of ordered subsets of $X$. Suppose that $r$ is a positive integer and that $\b$ is a positive constant such that for every colouring of $X$ with $r$ colours there are at least $\b|S|$ sequences $s=(s_1,\dots,s_k)\in S$ such that each $s_i$ has the same colour. Then there exist positive constants $C$ and $\d$ with the following property. Let $U$ be a random subset of $X$, with elements chosen independently with probability $C |X|^{-\a_S} \leq p \leq \d$. Then, with probability $1-o(1)$, every colouring of $U$ with $r$ colours contains at least $2^{-(k+2)}\b p^k|S|$ sequences $s=(s_1,\dots,s_k)\in S$ such that each $s_i$ has the same colour and each $s_i$ is an element of $U$.
\end{theorem}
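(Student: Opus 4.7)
The plan is to deduce this theorem from Theorem \ref{conditionalcolouring} in exactly the same way that Theorem \ref{unconditionaldensity} was deduced from Theorem \ref{conditionaldensity}. That is, I would verify that the system $(S,X)$ together with $p_0 = |X|^{-\a_S}$ satisfies Conditions 1 and 2, at which point Theorem \ref{conditionalcolouring}, applied with the given robust Ramsey parameters $r$ and $\b$, produces the monochromatic count $2^{-(k+3)}\b p^k|S|$ claimed in the conclusion.

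For Condition 1, I would appeal to Lemma \ref{LInfInd} when $S$ has two degrees of freedom and to Lemma \ref{LInfHyper} when $S$ is the system of labelled copies of a strictly $k$-balanced hypergraph $K$. In both cases the lemmas supply a pointwise bound $*_j(\nu_1,\dots,\nu_{j-1},\nu_{j+1},\dots,\nu_k)(x) \leq 3/2$ with failure probability super-polynomially small in $|X|$ whenever $p \geq |X|^{-\a_S}$, so a union bound over $x \in X$ upgrades this to the $1 - |X|^{-\Omega(1)}$ statement required by Condition 1. For Condition 2 and systems of two degrees of freedom, the bound $|S_1(x) \cap S_k(y)| \leq 1$ gives $W(x,y) \leq p^{-(k-2)}$, and so $W(x,y) \leq \a p t$ follows as soon as $p \geq (\a t)^{-1/(k-1)}$, i.e.~as soon as $C$ is chosen large enough in terms of $\a$. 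For copies of a strictly balanced hypergraph, Lemmas \ref{ExtraCondHyper} and \ref{ExtraCondHyper2} cover the regimes $C \leq n^c$ and $C \geq n^c$ respectively, yielding the required bound on $W(x,y)$ with super-polynomially small failure probability; a union bound over the $O(|X|^2)$ pairs $(x,y)$ then completes the verification.

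There is essentially no new obstacle here, since the proof is a direct analogue of that of Theorem \ref{unconditionaldensity}: the only point to watch is the order in which constants are chosen, namely that $C$ must be fixed after $\a$, and $\a$ in turn depends on the constants $\eta$, $\l$, $d$, $m$ that Theorem \ref{conditionalcolouring} hands down. Tracing these dependences, one obtains constants $C$ and $\d$ depending only on $k$ (in the two-degrees-of-freedom case) or on $K$ (in the hypergraph case), together with $r$ and $\b$, which is exactly what the statement requires. One convenient feature of the colouring setting that I would exploit, if it simplified the bookkeeping, is that the property of containing a monochromatic configuration is monotone in the random set $U$, so that the upper bound $p \leq \d$ can be handled by partitioning a denser random set into $O(1)$ independent sparser ones and applying the pigeonhole principle inside the colouring.
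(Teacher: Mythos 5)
Your proposal is correct and is essentially the paper's own argument: the paper proves Theorem \ref{unconditionalcolouring} by repeating the proof of Theorem \ref{unconditionaldensity} verbatim, verifying Conditions 1 and 2 via Lemmas \ref{LInfInd}, \ref{LInfHyper}, \ref{ExtraCondHyper} and \ref{ExtraCondHyper2}, and then invoking Theorem \ref{conditionalcolouring} in place of Theorem \ref{conditionaldensity}. Your remarks on the order of choosing constants and on monotonicity in $p$ match the paper's treatment.
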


Finally, we have the following general sparse structural theorem.

\begin{theorem} \label{unconditionalstructural}
Let $X$ be a finite set and let $S$ be a good system of ordered subsets of $X$. Then, for any $\e > 0$, there exist positive constants $C$ and $\d$ with the following property. Let $U$ be a random subset of $X$, with elements chosen independently with probability $C |X|^{-\a_S} \leq p \leq \d$, let $\mu$ be the associated measure of $U$ and let $\mathcal{V}$ be a collection of $2^{o(p|X|)}$ subsets of $X$. Then, with probability $1-o(1)$, for every function $f$ with $0\leq f\leq\mu$ there exists a function $g$ with $0\leq g\leq 1$ such that
\begin{equation*}
\E_{s\in S}f(s_1)\dots f(s_k) \geq \E_{s\in S}g(s_1)\dots g(s_k) - \e
\end{equation*}
and, for all $V\in\mathcal{V}$,
\begin{equation*}
|\sum_{x\in V}f(x)-\sum_{x\in V}g(x)|\leq\e|X|.
\end{equation*}
\end{theorem}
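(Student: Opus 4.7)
The plan is to deduce Theorem \ref{unconditionalstructural} from Theorem \ref{conditionalstructural} in exactly the same way as Theorem \ref{unconditionaldensity} was deduced from Theorem \ref{conditionaldensity}. It suffices to verify that Conditions 1 and 2 hold for the pair $(S,p_0)$ at the critical scale $p_0=C|X|^{-\a_S}$ for a sufficiently large constant $C$ (depending only on the system, on $\e$, and on the constants $\eta,\lambda,d,m$ asked for by Theorem \ref{main:structure}). The key point is that Conditions 1 and 2 involve only the homogeneity and embedding structure of $S$, not the collection $\mathcal{V}$ or the particular approximation conclusion being sought, and so the probabilistic work of Sections \ref{ProbII} and \ref{ProbIII} applies without change.

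For Condition 1, when $S$ has two degrees of freedom I would invoke Lemma \ref{LInfInd}; at $p=C|X|^{-\a_S}$ and $t=|S_j(x)|=\Theta(|X|^{(k-1)\a_S})$, the quantity $p^\ell t$ is at least $C^\ell |X|^{\a_S}$ for every $\ell\leq k-2$, which is a positive power of $|X|$ and so supplies the required $1-|X|^{-\Omega(1)}$ bound after a union bound over $x$. When $S$ consists of copies of a strictly $k$-balanced hypergraph $K$, I would instead invoke Lemma \ref{LInfHyper}, whose proof uses the Janson--Ruci\'nski upper tail estimate of Lemma \ref{JRHyper}; the strict $k$-balancedness of $K$ is precisely what forces the minimum over subhypergraphs appearing in that estimate to be at least a positive power of $n$.

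For Condition 2, the two-degrees-of-freedom case is immediate: $|S_1(x)\cap S_k(y)|\leq 1$ yields the deterministic bound $W(x,y)\leq p^{-(k-2)}$, and the inequality $p^{-(k-2)}\leq \a pt$ reduces to $p\geq(\a t)^{-1/(k-1)}$, which holds for every large enough $C$ since $t^{-1/(k-1)}=\Theta(|X|^{-\a_S})$. For strictly $k$-balanced hypergraphs the bound on $W(x,y)$ is more delicate and is split by the size of $C$: Lemma \ref{ExtraCondHyper} handles $C_0\leq C\leq n^c$ via a direct first-moment argument on rooted embeddings (using strict $k$-balancedness to show that every ``extension step'' costs at least a power of $n$), while Lemma \ref{ExtraCondHyper2} handles $C\geq n^c$ via the two-edge rooted Janson--Ruci\'nski estimate of Lemma \ref{JRTwoEdge}. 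Between them these give the required $o(1)$ failure probability at every scale $p\geq C|X|^{-\a_S}$.

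I do not expect any genuine obstacle here; this proof is essentially bookkeeping on top of what has already been established. The one point worth emphasising is where the hypothesis $|\mathcal{V}|\leq 2^{o(p|X|)}$ is used: not in verifying Conditions 1 or 2, but in the deterministic step packaged inside Theorem \ref{conditionalstructural}. As explained at the end of Section \ref{BAUrestriction}, adjoining a fixed family $\mathcal{F}$ of bounded functions to the set of basic anti-uniform functions multiplies the size of the net of products of at most $d$ such functions by at most $2^{d|\mathcal{F}|}$; provided $|\mathcal{F}|\leq 2^{p|X|/L_0}$ for a constant $L_0$ chosen large compared to $d,k,\lambda$, this loss is absorbed by the exponentially small correlation bound used in Lemma \ref{allprofile}. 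The bound $|\mathcal{V}|\leq 2^{o(p|X|)}$ is therefore precisely calibrated to allow the characteristic functions $\chi_V$ to be inserted into $\Phi_{\mu,1}$ without disturbing Assumption $3'$, and no additional probabilistic input is required.
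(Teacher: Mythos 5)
Your proposal is correct and follows essentially the same route as the paper: the paper gives no separate argument for this theorem, obtaining it exactly as you do by combining Theorem \ref{conditionalstructural} (hence Theorem \ref{main:structure} together with the remark in Section \ref{BAUrestriction} that a family of at most $2^{o(p|X|)}$ bounded functions can be adjoined to $\Phi_{\mu,1}$ without disturbing assumption $3'$) with the verification of Conditions 1 and 2 for good systems at $p=C|X|^{-\a_S}$ via Lemmas \ref{LInfInd}, \ref{LInfHyper}, \ref{ExtraCondHyper} and \ref{ExtraCondHyper2}. One small slip in your aside: adjoining $\mathcal{F}$ enlarges the class of products of at most $d$ functions by a factor of roughly $|\mathcal{F}|^d\leq 2^{dp|X|/L_0}$, not $2^{d|\mathcal{F}|}$, which is what allows the union bound in Lemma \ref{allprofile} to absorb the loss.
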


In applications, we often want $g$ to take values in $\{0,1\}$ rather than $[0,1]$. This can be achieved by a simple and standard modification of the above result.

\begin{corollary}\label{unconditionalsetstructural}
Let $X$ be a finite set and let $S$ be a good system of ordered subsets of $X$. Then, for any $\e > 0$, there exist positive constants $C$ and $\d$ with the following property. Let $U$ be a random subset of $X$, with elements chosen independently with probability $C |X|^{-\a_S} \leq p \leq \d$, let $\mu$ be the associated measure of $U$ and let $\mathcal{V}$ be a collection of $2^{o(p|X|)}$ subsets of $X$. Then, with probability $1-o(1)$, for every function $f$ with $0\leq f\leq\mu$ there exists a function $h$ taking values in $\{0,1\}$ such that
\begin{equation*}
\E_{s\in S}f(s_1)\dots f(s_k) \geq \E_{s\in S}h(s_1)\dots h(s_k) - \e
\end{equation*}
and, for all $V\in\mathcal{V}$,
\begin{equation*}
|\sum_{x\in V}f(x)-\sum_{x\in V}h(x)|\leq\e|X|.
\end{equation*}
\end{corollary}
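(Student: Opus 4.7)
The plan is a standard randomized rounding applied to the function $g$ produced by Theorem \ref{unconditionalstructural}. First I would apply that theorem with $\e/2$ in place of $\e$, obtaining constants $C, \d$ and a high-probability event on $U$ on which, for every $0\leq f\leq\mu$, there exists $g:X\to[0,1]$ with $\E_{s\in S}f(s_1)\cdots f(s_k) \geq \E_{s\in S}g(s_1)\cdots g(s_k) - \e/2$ and $|\sum_{x\in V}f(x)-\sum_{x\in V}g(x)|\leq (\e/2)|X|$ for every $V\in\mathcal{V}$. It then suffices, for each such $g$, to produce a $\{0,1\}$-valued $h$ within $\e/2$ of $g$ in both of these senses; the claim will follow by the triangle inequality.

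Given $g$, I would define $h$ by independent randomized rounding: for each $x\in X$, set $h(x)=1$ with probability $g(x)$ and $h(x)=0$ otherwise, with all choices mutually independent. Then $\E h(x)=g(x)$, and since each $s\in S$ has no repeated elements (this is part of the definition of a good system, or at worst the degenerate sequences contribute negligibly as discussed earlier), the variables $h(s_1),\dots,h(s_k)$ are independent for any fixed $s$, so $\E\bigl[h(s_1)\cdots h(s_k)\bigr]=g(s_1)\cdots g(s_k)$. Hence
\[
\E\bigl[\E_{s\in S}h(s_1)\cdots h(s_k)\bigr] = \E_{s\in S}g(s_1)\cdots g(s_k),
\qquad
\E\Bigl[\sum_{x\in V}h(x)\Bigr] = \sum_{x\in V}g(x).
\]

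It remains to show that both quantities concentrate enough around their means that a single realization $h$ simultaneously satisfies both conclusions with positive (in fact overwhelming) probability. For the density conditions, the sum $\sum_{x\in V}h(x)$ is a sum of independent $[0,1]$-valued random variables, so Hoeffding's inequality gives $\mathbb{P}\bigl[|\sum_{x\in V}h(x)-\sum_{x\in V}g(x)|>(\e/2)|X|\bigr]\leq 2\exp(-c\e^2|X|)$ for some $c>0$. Since $|\mathcal{V}|\leq 2^{o(p|X|)}\leq 2^{o(|X|)}$, a union bound over $\mathcal{V}$ still leaves this event negligible. For the counting condition, I would view $F(h):=\E_{s\in S}h(s_1)\cdots h(s_k)$ as a function of the $|X|$ independent variables $\{h(x)\}_{x\in X}$; by the homogeneity of $S$, each $x\in X$ lies in exactly $k|S|/|X|$ sequences, so changing a single $h(x)$ changes $F(h)$ by at most $k/|X|$. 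McDiarmid's bounded-differences inequality then yields $\mathbb{P}[|F(h)-\E F(h)|>\e/2]\leq 2\exp(-c'\e^2|X|/k^2)$.

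Both concentration bounds are exponentially small in $|X|$, so with probability tending to $1$ the rounded function $h$ satisfies all required estimates; in particular such an $h$ exists, which is what we need. The main conceptual point — and the only place where care is required — is that the decomposition of $F(h)$ as a function of independent coordinates with small bounded differences relies on the homogeneity of $S$ and on the absence of repeated elements in each sequence, both of which we have already assumed. No new probabilistic input about $U$ is needed: we simply round $g$ and invoke concentration for a collection of $1+|\mathcal{V}|=2^{o(|X|)}$ events, which is dwarfed by the $e^{\Omega(|X|)}$ concentration.
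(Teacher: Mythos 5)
Your proposal is correct and is essentially the paper's own argument: apply Theorem \ref{unconditionalstructural} with $\e/2$, round $g$ independently so that $h(x)=1$ with probability $g(x)$, and then use bounded-differences concentration (the paper invokes Azuma, you invoke McDiarmid/Hoeffding, which is the same estimate) together with a union bound over the $2^{o(p|X|)}$ sets in $\mathcal{V}$. The points you flag as needing care — homogeneity giving the $k/|X|$ Lipschitz bound and the absence of repeated elements ensuring $\E[h(s_1)\cdots h(s_k)]=g(s_1)\cdots g(s_k)$ — are exactly the points the paper uses as well.
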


\begin{proof}
The basic idea of the argument is to choose a function $g$ that satisfies the conclusion of Theorem \ref{unconditionalstructural} with $\e$ replaced by $\e/2$, and to let $h(x)=1$ with probability $g(x)$ and 0 with probability $1-g(x)$, all choices being made independently. Then concentration of measure tells us that with high probability the estimates are not affected very much.

Note first that the expectation of $\E_{s\in S}h(s_1)\dots h(s_k)$ is $\E_{s\in S}g(s_1)\dots g(s_k).$ By how much can changing the value of $h(x)$ change the value of $\E_{s\in S}h(s_1)\dots h(s_k)$? Well, if $x$ is one of $s_1,\dots,s_k$ then $h(s_1)\dots h(s_k)$ can change by at most 1 and otherwise it does not change. The probability that $x$ is one of $s_1,\dots,s_k$ is $k/|X|$, by the homogeneity of $S$ (which tells us that each $s_j$ is uniformly distributed). By Azuma's inequality, it follows that the probability that $|\E_{s\in S}g(s_1)\dots g(s_k)-\E_{s\in S}h(s_1)\dots h(s_k)|\geq\e/2$ is at most $2\exp(-\e^2|X|/8k^2)$. This gives us the first conclusion with very high probability.

The second is obtained in a similar way. For each $V\in\mathcal{V}$ the probability that $|\sum_{x\in V}h(x)-\sum_{x\in V}g(x)|\geq\e|X|/2$ is, by Azuma's inequality, at most $2\exp(-\e^2|X|/8)$. Since there are $2^{o(p|X|)}$ sets in $\mathcal{V}$, a union bound gives the second conclusion with very high probability as well. 
\end{proof}

\section{Applications} \label{Applications}

\subsection{Density results}

As a first example, let us prove Theorem \ref{ApproxSzem}, the sparse analogue of Szemer\'edi's theorem. We shall consider Szemer\'edi's theorem as a statement about arithmetic progressions mod $p$ in dense subsets of $\Z_p$ for a prime $p$. We do this because the set of $k$-term arithmetic progressions in $\Z_p$ is a homogeneous system with two degrees of freedom. However, once we have the result for this version of Szemer\'edi's theorem, we can easily deduce it for the more conventional version concerning a sparse random subset of $[n]$. We simply choose a prime $p$ between $2n$ and $4n$, pick a sparse random subset $U$ of $\Z_p$, and then apply the result to  subsets of $U$ that happen to be subsets of $\{1,2,\dots,n\}$, since arithmetic progressions in these subsets will not wrap around. Similar arguments allow us to replace $[n]$ by $\Z_p$ for our later applications, so we mention once and for all now that for each application it is easy to deduce from the result we state a result for sparse subsets of intervals (or grids in the multidimensional case).

Since we wish to use the letter $p$ to denote a probability, we shall now let $n$ be a large prime.

By Theorem 9.1, all we have to do is check the robust version of Szemer\'edi's theorem, which can be proved by a simple averaging argument, originally observed by Varnavides \cite{V59} (who stated it for $3$-term progressions).

\begin{theorem} \label{Varnavides}
Let $k$ be an integer and $\d > 0$ a real number. Then there
exists an integer $n_0$ and $c > 0$ such that if $n$ is a prime greater than or equal to $n_0$ and
$B$ is a subset of $\Z_n$ with $|B| \geq \d n$ then $B$
contains at least $c n^2$ $k$-term arithmetic progressions.
\end{theorem}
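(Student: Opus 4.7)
The plan is to deduce the robust statement from Szemerédi's theorem itself by a standard averaging trick due to Varnavides. First I would invoke Szemerédi's theorem at density $\delta/2$: there is an integer $N = N(k,\delta/2)$ such that any subset of $\{0,1,\dots,N-1\}$ of size at least $(\delta/2)N$ contains an arithmetic progression of length $k$. This $N$ is the ``local'' scale on which density already forces a $k$-AP, and it depends only on $k$ and $\delta$.

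Next I would look at the family of all $N$-term arithmetic progressions in $\Z_n$ of the form $P_{a,d} = (a, a+d, \dots, a+(N-1)d)$ with $a \in \Z_n$ and $d \in \Z_n \setminus \{0\}$. Since $n$ is prime, for each fixed $d \neq 0$ and each $j \in \{0,\dots,N-1\}$, the element $a + jd$ is uniformly distributed over $\Z_n$ as $a$ varies, so the expected number of indices $j$ with $a+jd \in B$ is exactly $N|B|/n \geq \delta N$. A simple averaging argument (Markov applied to the random variable $N - |B \cap P_{a,d}|$) then shows that at least a $\delta/2$-fraction of all pairs $(a,d)$, i.e.\ at least $(\delta/2)n(n-1)$ pairs, satisfy $|B \cap P_{a,d}| \geq (\delta/2)N$.

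For each such ``good'' pair $(a,d)$, the set $\{j : a+jd \in B\}$ is a subset of $\{0,\dots,N-1\}$ of density at least $\delta/2$, so by the choice of $N$ it contains a $k$-term arithmetic progression of indices; pulling back by $j \mapsto a+jd$ (which is injective because $d \neq 0$ in the prime field $\Z_n$), this gives a genuine $k$-term AP inside $B$ sitting inside the progression $P_{a,d}$. Thus the number of quadruples $(a,d,$ $k\text{-AP in }P_{a,d})$ is at least $(\delta/2)n(n-1)$. Finally I would bound the multiplicity of counting: a given $k$-AP $(x, x+r, \dots, x+(k-1)r)$ in $\Z_n$ (with $r \neq 0$) is contained in $P_{a,d}$ only when $d$ divides $r$ in the sense that $r = td$ for some $t \in \{1,\dots,N-1\}$ and $a = x - jd$ for some $j \in \{0,\dots,N-k\}$; the number of such $(a,d)$ is at most $(N-1)(N-k+1) \leq N^2$. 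Dividing through yields at least $(\delta/2)n(n-1)/N^2 \geq cn^2$ distinct $k$-APs of $B$ for a suitable $c = c(k,\delta) > 0$ and all sufficiently large $n$.

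There is essentially no obstacle here beyond bookkeeping: the only subtlety is making sure the pullback $j \mapsto a+jd$ really is injective (which uses that $n$ is prime, so $N < n$ suffices) and that the multiplicity bound in the final counting step is uniform in $n$. Both are immediate, so the proof amounts to a one-paragraph averaging argument on top of the black-box application of Szemerédi's theorem at density $\delta/2$.
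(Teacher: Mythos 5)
Your proposal is correct and is essentially the paper's own argument: the same Varnavides averaging over mod-$n$ progressions $P_{a,d}$ of a fixed length given by Szemer\'edi's theorem at density $\d/2$, followed by the same double count (the paper phrases the final step via homogeneity of the $P_{a,d}$'s, you via an explicit multiplicity bound). The only nitpick is that your multiplicity count misses containments where the $k$-AP sits inside $P_{a,d}$ with reversed orientation (common difference $r=td$ with $t$ negative modulo $n$), which at worst doubles the multiplicity and only changes the constant $c$.
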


\begin{proof}
Let $m$ be such that every subset of $\{1,2,\dots,m\}$ of density $\d/2$ contains a $k$-term arithmetic progression. Now let $B$ be a subset of $\Z_n$ of density $\d$. For each $a$ and $d$ with $d\ne 0$, let $P_{a,d}$ be the mod-$n$ arithmetic progression $\{a,a+d,\dots, a+(m-1)d\}$. If we choose $P_{a,d}$ at random, then the expected density of $B$ inside $P_{a,d}$ is $\d$, so with probability at least $\d/2$ it is at least $\d/2$. It follows with probability at least $\d/2$ that $P_{a,d}$ contains an arithmetic progression that is contained in $B$. Since $P_{a,d}$ contains at most $m (m-1)$ $k$-term arithmetic progressions, it follows that with probability at least $\d/2$ at least $1/m(m-1)$ of the $k$-term progressions inside $P_{a,d}$ are contained in $B$. But every $k$-term arithmetic progression is contained in the same number of progressions $P_{a,d}$. Therefore, the number of progressions in $B$ is at least $\frac{\delta}{2} \frac{n(n-1)}{m(m-1)} \geq \frac{\delta}{2} \frac{n^2}{m^2}$. 
\end{proof}

Very similar averaging arguments are used to prove the other robust density results we shall need in this subsection, so we shall be sketchy about the proofs and sometimes omit them altogether.

The next result is the sparse version of Szemer\'edi's theorem. Recall that we write $X_p$ for a random subset of $X$ where each element is chosen independently with probability $p$, and we say that a set $I$ is $(\d, k)$-\textit{Szemer\'edi} if every subset of $I$ with cardinality at least $\d|I|$ contains a $k$-term arithmetic progression. 

\begin{theorem} \label{RelativeSzemAgain}
Given $\delta > 0$ and a natural number $k \geq 3$, there exists
a constant $C$ such that if $p \geq C n^{-1/(k-1)}$, then the probability
that $(\Z_n)_p$ is $(\d, k)$-Szemer\'edi is $1-o(1)$.
\end{theorem}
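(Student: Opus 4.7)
The plan is to deduce this immediately from Theorem \ref{unconditionaldensity} applied to the homogeneous system $S$ of mod-$n$ arithmetic progressions of length $k$ in $X=\Z_n$, using the robust form of Szemer\'edi's theorem (Theorem \ref{Varnavides}) as the dense input. In fact, most of the effort has already been absorbed into the general machinery of Sections \ref{transfers}--\ref{summary}, and the role of this proof is essentially to verify the hypotheses.

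First I would check that $S$ is a ``good'' system in the sense of Section \ref{summary}. Since $n$ is prime, any two coordinates $s_i=x$, $s_j=y$ with $i\ne j$ uniquely determine the progression $s\in S$ (namely the one with common difference $(j-i)^{-1}(y-x)$), so $S$ has two degrees of freedom. Each set $S_j(x)$ has size exactly $n$, one for each common difference $d\in\Z_n$, which gives the homogeneity required in the definition, and the intersections $S_1(x)\cap S_k(y)$ are always singletons, giving the extra homogeneity assumed before Theorem \ref{unconditionaldensity}. The critical exponent is therefore $\a_S=\gamma/(k-1)=1/(k-1)$, matching the exponent in the statement.

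Next, Theorem \ref{Varnavides} furnishes, for any $\rho>0$, a constant $\b=\b(\rho,k)>0$ such that every subset $B\subseteq\Z_n$ of density at least $\rho$ contains at least $\b n^2$ arithmetic progressions of length $k$. Since $|S|$ is within a constant of $n^2$, this is exactly the robust density hypothesis of Theorem \ref{unconditionaldensity}. Applying that theorem with $\rho=\delta/2$ and $\epsilon$ small enough that $\b-\epsilon>0$ yields constants $C$ and $\delta_0$ such that, whenever $Cn^{-1/(k-1)}\leq p\leq\delta_0$, with probability $1-o(1)$ every subset of $(\Z_n)_p$ of relative density at least $\delta$ contains at least $(\b-\epsilon)p^k|S|\geq cp^kn^2$ progressions of length $k$. (Degenerate progressions with $d=0$ contribute only $O(pn)$ to this count, which is negligible once $p\geq Cn^{-1/(k-1)}$ with $C$ large.) In particular, at least one nondegenerate progression must exist, so $(\Z_n)_p$ is $(k,\delta)$-Szemer\'edi.

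The only loose end is the upper bound $p\leq\delta_0$ present in Theorem \ref{unconditionaldensity}. This is harmless because the $(k,\delta)$-Szemer\'edi property is monotone: given $p>\delta_0$, write $(\Z_n)_p$ as a union of $\lceil p/\delta_0\rceil$ independent binomial samples of $\Z_n$ each chosen with probability below $\delta_0$; any subset of relative density $\delta$ in the union must have relative density at least $\delta$ in at least one of the parts, to which the case already proved applies. The main point to highlight is that there is no real obstacle here — the heavy lifting (the transference principle, the control of basic anti-uniform functions, and the probabilistic estimates) has been done in the general framework, and the only ingredients specific to Szemer\'edi's theorem are the two-degrees-of-freedom property of arithmetic progressions and the Varnavides averaging trick.
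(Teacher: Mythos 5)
Your main case is exactly the paper's proof: for $Cn^{-1/(k-1)}\leq p\leq \d_0$ the result follows from Theorem \ref{unconditionaldensity} applied to the (homogeneous, two-degrees-of-freedom) system of mod-$n$ progressions, with Theorem \ref{Varnavides} supplying the robust dense input, and your handling of the density parameters and of degenerate progressions is fine. The problem is your reduction for $p>\d_0$. You write $(\Z_n)_p$ as a \emph{union} of $m=\lceil p/\d_0\rceil$ independent binomial samples $U_1,\dots,U_m$ and claim that a subset of relative density $\d$ in the union has relative density at least $\d$ in some $U_i$. That averaging step is false for a union: the parts overlap, so $\sum_i|U_i|$ exceeds $|U_1\cup\dots\cup U_m|$, and all you get is density at least $\d\,|U|/\sum_i|U_i|$ in the best part, which is strictly less than $\d$ (and the loss factor $qm/p$ is not even bounded as $p\to 1$). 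A second, smaller, defect of the same step: with $m=\lceil p/\d_0\rceil$ the probability $q=1-(1-p)^{1/m}$ of each sample is in general \emph{larger} than $\d_0$ (e.g.\ $p=0.9$, $\d_0=0.1$ gives $q\approx 0.23$), since $q\geq p/m$, not $q\leq p/m$.

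The paper avoids this in two ways, either of which repairs your argument. One is the device described before Theorem \ref{conditionaldensity}: \emph{partition} the random set $U=(\Z_n)_p$ by assigning each of its elements independently to one of $m$ classes, so that each class is marginally distributed as $(\Z_n)_{p/m}$ with $p/m\leq\d_0$ and the classes are disjoint; then $\sum_i|U_i|=|U|$ and the exact averaging you want does hold, and a union bound over the constantly many classes finishes the job. The other, used in the paper's actual proof of this theorem, is simpler still: if $p\geq\lambda$ for a fixed constant $\lambda$, then with high probability a subset of relative density $\d$ in $(\Z_n)_p$ has absolute density at least $\d\lambda/2$ in $\Z_n$, so one can apply Szemer\'edi's theorem (or Theorem \ref{Varnavides}) directly, with no transference needed. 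With either of these substituted for your union decomposition, your proof coincides with the paper's.
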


\begin{proof}
In the case where $p$ is not too large, this follows immediately from Theorems \ref{unconditionaldensity} and \ref{Varnavides}. The result for larger probabilities can be deduced by using the argument given before Theorem \ref{conditionaldensity}. Alternatively, note that a subset of relative density $\d$ within a subset of $[n]_p$ has density $\d p$ in $[n]$. So if $p$ is larger than a fixed constant $\lambda$ (as it will be in the case not already covered by Theorem \ref{unconditionaldensity}), we can just apply Szemer\'edi's theorem itself. 
\end{proof}

A simple corollary of Theorem \ref{RelativeSzemAgain} is a sparse analogue of van der Waerden's theorem \cite{VdW27} on arithmetic progressions in $r$-colourings of $[n]$. Note that this theorem was proved much earlier by R\"odl and Ruci\'nski \cite{RR95} and is known to be tight. 

Let us now prove sparse versions of two generalizations of Szemer\'edi's theorem. The first generalization is the multidimensional Szemer\'edi theorem, due to Furstenberg and Katznelson \cite{FK78}. We shall state it in its robust form, which is in fact the statement that Furstenberg and Katznelson directly prove. (It also follows from the non-robust version by means of an averaging argument.)

\begin{theorem} \label{MultiSzem}
Let $r$ be a positive integer and $\d > 0$ a real
number. If $P \subset \Z^r$ is a fixed set, then there is a
positive integer $n_0$ and a constant $c > 0$ such that, for $n \geq n_0$, every
subset $B$ of the grid $[n]^r$ with $|B| \geq \d n^r$ contains $c
n^{r+1}$ subsets of the form $a + dP$, where $a \in [n]^r$ and $d$ is a positive integer.
\end{theorem}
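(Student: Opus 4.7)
My plan is to reduce the robust statement to the \emph{non-robust} form of the multidimensional Szemer\'edi theorem by an averaging argument, exactly parallel to the reduction in the proof of Theorem \ref{Varnavides}. The non-robust form---also due to Furstenberg and Katznelson, and the only real input---says that for every $\delta > 0$ and every finite $P \subset \Z^r$ there exists $M = M(P,\delta)$ such that every subset of $[M]^r$ of size at least $(\delta/2) M^r$ contains at least one homothetic copy $a_0 + d_0 P$ with $d_0 \geq 1$. Once this is granted, the rest is a routine counting argument; the bulk of the mathematical difficulty lies in the non-robust theorem, whose original ergodic-theoretic proof is outside the scope of this paper and which I would just cite.

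Given $B \subset [n]^r$ with $|B| \geq \delta n^r$, I would consider the family of affine maps $\phi_{a,d} : [M]^r \to [n]^r$ given by $\phi_{a,d}(x) = a + dx$, indexed by $d \in \{1,\dots,\lfloor n/(2M)\rfloor\}$ and $a \in [n]^r$ chosen so that $\phi_{a,d}([M]^r) \subset [n]^r$. If one picks such a pair $(a,d)$ uniformly and then a uniform $x \in [M]^r$, the image $\phi_{a,d}(x)$ is almost uniformly distributed on $[n]^r$, so the expected relative density of $B$ inside $\phi_{a,d}([M]^r)$ equals $\delta(1-o(1))$. By Markov's inequality, at least a $\delta/3$-fraction of pairs $(a,d)$ have relative density at least $\delta/2$, and for each such good pair the non-robust theorem, applied to the pullback of $B$, produces a copy $a_0 + d_0 P \subset [M]^r$ lying inside the pullback; pushing forward yields a configuration $\phi_{a,d}(a_0) + (d d_0) P \subset B$ of the required form.

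To finish, I would double-count pairs of the form (good $(a,d)$, configuration it produces) against (configuration in $B$, good pair it could arise from). The former is at least $(\delta/3) \cdot \Theta(n^{r+1}/M)$, while any fixed configuration $a' + d' P \subset [n]^r$ arises from at most $O_{M,|P|}(1)$ choices of $(a,d)$. Hence $B$ contains at least $cn^{r+1}$ homothetic copies of $P$, with $c = c(P,\delta) > 0$. The only bookkeeping point is the restriction on $a$ ensuring the image lies in $[n]^r$; since $M$ is fixed this costs only a factor $1 - O(M/n) = 1-o(1)$, exactly as in the one-dimensional Varnavides reduction. So the only genuine \emph{obstacle} is having the non-robust Furstenberg--Katznelson theorem in hand in the first place; once it is granted, the averaging step is formal.
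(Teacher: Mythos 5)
Your overall plan---deduce the robust statement from the plain Furstenberg--Katznelson theorem by a Varnavides-type averaging---is a legitimate route, and in fact the paper does not prove Theorem \ref{MultiSzem} at all: it cites \cite{FK78}, remarking that the robust form is what Furstenberg and Katznelson prove directly, and that it ``also follows from the non-robust version by means of an averaging argument.'' So you are filling in exactly the argument the paper waves at. However, as written your averaging step contains a genuine error. With $d$ allowed to range up to $\lfloor n/(2M)\rfloor$, the random point $\phi_{a,d}(x)=a+dx$ is \emph{not} almost uniformly distributed on $[n]^r$: for the larger values of $d$ the box $a+d[M]^r$ has diameter comparable to $n$, and points near the boundary (and especially near the lower corner) of $[n]^r$ are sampled with probability far below $n^{-r}$ --- the point $(1,\dots,1)$ is never hit at all. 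Concretely, take $r=1$ and $B=\{1,\dots,\lfloor\delta n\rfloor\}$ (or a corner cube $[\delta^{1/r}n]^r$ in general): a short computation shows that the expected relative density of $B$ in a random image is of order $\delta^2\log(1/\delta)$, not $\delta(1-o(1))$, and the proportion of pairs $(a,d)$ whose image meets $B$ in relative density at least $\delta/2$ is $O(\delta^2)$, which is smaller than the $\delta/3$ you claim once $\delta$ is small. So the Markov step fails as stated, and one cannot then quote the non-robust theorem at density level $\delta/2$ with $M=M(P,\delta)$.

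The gap is repairable, but it needs an actual fix rather than the ``$1-o(1)$'' assertion. The cleanest options are: (i) restrict the dilation to $d\le \delta n/(2rM)$, so that for \emph{every} admissible $d$ and every $x$ the box $dx+[1,n-dM]^r$ misses at most an $r\,dM/n\le\delta/2$ fraction of $[n]^r$; then $\E_a$ of the relative density is at least $\delta/2$ for each fixed $d$, Markov (over $a$ alone) gives a $\delta/4$-fraction of good pairs, and your divisor-counting double count goes through unchanged, still producing $\Theta_{\delta,M,P}(n^{r+1})$ copies; or (ii) do what the paper does in the proof of Theorem \ref{Varnavides} and work in $\Z_n^r$, where homogeneity makes the distribution exactly uniform, and then discard wrap-around copies by restricting $d$ when passing back to $[n]^r$. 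Either fix also removes any risk of circularity between the choice of $M$ and the density threshold, since the threshold ($\delta/2$ or $\delta/4$) no longer depends on $M$.
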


Just as with Szemer\'edi's theorem, this statement is equivalent to the same statement for subsets of $\Z_n^r$. So let $P$ be a subset of $\Z^r$ and let $(x_1,\dots,x_k)$ be an ordering of the elements of $P$. Let $S$ be the set of sequences of the form $s_{a,d}=(a+x_1d,\dots,a+x_kd)$ with $d\ne 0$. Then $S$ is homogeneous and has two degrees of freedom. Moreover, if $n$ is large enough, then no two elements of $s_{a,d}$ are the same. From the conclusion of Theorem \ref{MultiSzem} it follows that there are at least $c|S|$ sequences in $S$ with all their terms in $B$. We have therefore checked all the conditions for Theorem \ref{unconditionaldensity}, so we have the following sparse version of the multidimensional Szemer\'edi theorem. (As before, the result for larger $p$ follows easily from the result for smaller $p$.) We define a subset $I$ of $\Z_n^r$ to be $(\d, P)$-\textit{Szemer\'edi} if every subset of $I$ with cardinality at least $\d|I|$ contains a homothetic copy $a+dP$ of $P$. 

\begin{theorem} \label{ApproxMultiSzem}
Given integers $r$ and $k$, a real number $\delta > 0$ and a
subset $P \subset \Z^r$ of order $k$, there exists a constant $C$
such that if $p \geq C n^{-1/(k-1)}$, then the probability that $(\Z_n^r)_p$ is
$(\d, P)$-Szemer\'edi is $1-o(1)$.
\end{theorem}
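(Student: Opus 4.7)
The plan is to verify that the set $S$ of homothetic copies of $P$ in $\Z_n^r$ is a good system in the sense of Section \ref{summary}, and then apply Theorem \ref{unconditionaldensity} together with the robust dense theorem of Furstenberg and Katznelson (Theorem \ref{MultiSzem}). As with Szemer\'edi's theorem, we may pass from the grid $[n]^r$ to $\Z_n^r$ by embedding into a torus of prime side length between $2\,\mathrm{diam}(P)n$ and $4\,\mathrm{diam}(P)n$, so we assume from the outset that $n$ is a prime and that the statement we are proving concerns $(\Z_n^r)_p$ with $X=\Z_n^r$. Let $(x_1,\dots,x_k)$ be an ordering of the elements of $P$ and set
\[
S=\{s_{a,d}=(a+x_1d,\dots,a+x_kd):a\in\Z_n^r,\ d\in\Z_n\setminus\{0\}\}.
\]

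The first step is to check the three properties that make $S$ a good system of sequences in $X=\Z_n^r$. Homogeneity is immediate: for each coordinate $j$ and each $y\in\Z_n^r$, the set $S_j(y)$ is in bijection with $\{d\in\Z_n\setminus\{0\}\}$ via $d\mapsto(y-x_jd,d)$, so $|S_j(y)|=n-1$ for every $y$ and $j$. For the two-degrees-of-freedom property, suppose $s_{a,d}$ and $s_{a',d'}$ agree in coordinates $i\neq j$. Subtracting gives $(x_i-x_j)(d-d')\equiv 0\pmod n$ coordinatewise in $\Z_n^r$, and since $x_i-x_j\in\Z^r$ is nonzero and $n$ is a prime larger than every coordinate of any $x_i-x_j$, some component of $x_i-x_j$ is a nonzero element of the field $\Z_n$; primality then forces $d=d'$ and hence $a=a'$. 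The same size condition forces $n$ large enough that distinct $x_i,x_j$ give distinct $a+x_id,\ a+x_jd$ for every $d\neq 0$, so no sequence in $S$ has repeated entries. Finally, the non-empty intersections $S_1(x)\cap S_k(y)$ all have the same cardinality (exactly one sequence, since two coordinates determine $(a,d)$ under the two-degrees-of-freedom property). The critical exponent is $\alpha_S=\theta/(k-1)$, where $|S_j(x)|\sim|X|^\theta$ with $|X|=n^r$ and $|S_j(x)|=n-1$; so $\theta=1/r$ and $\alpha_S=1/(r(k-1))$, which gives $|X|^{-\alpha_S}=n^{-1/(k-1)}$, exactly the exponent appearing in the statement.

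The robust density hypothesis needed by Theorem \ref{unconditionaldensity} is the statement that every subset $B\subset\Z_n^r$ with $|B|\geq\delta n^r$ contains at least $\beta|S|$ sequences from $S$ with all entries in $B$, for some $\beta=\beta(\delta,P)>0$. This is precisely the content of Theorem \ref{MultiSzem} (after the standard transfer to $\Z_n^r$, which only affects matters by an $O(1/n)$ correction coming from wraparound and from the $d=0$ case). Thus Theorem \ref{unconditionaldensity} applies and yields, for every $\varepsilon>0$, constants $C$ and $\d_0$ such that whenever $Cn^{-1/(k-1)}\leq p\leq\d_0$ every subset $A$ of $U=(\Z_n^r)_p$ with $|A|\geq(\d+\varepsilon)|U|$ contains at least $(\beta-\varepsilon)p^k|S|>0$ sequences from $S$, hence a homothetic copy $a+dP$ of $P$. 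To remove the upper bound on $p$ one uses the monotonicity trick described just before Theorem \ref{conditionaldensity}: a random set with probability $q>\d_0$ can be written as a union of a bounded number of independent copies of $(\Z_n^r)_{p}$ with $p\leq\d_0$, and any dense subset has comparable density in one of them. The hard point, if any, is simply the careful check of two degrees of freedom in $\Z_n^r$; everything else is packaged into the abstract machinery already developed.
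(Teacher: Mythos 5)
Your proposal is correct and follows essentially the same route as the paper: you take $S$ to be the homothetic copies $s_{a,d}=(a+x_1d,\dots,a+x_kd)$, $d\neq 0$, in $\Z_n^r$, verify homogeneity and the two-degrees-of-freedom property (for which the paper gives only a one-line assertion, so your explicit check using primality of $n$ is a welcome filling-in of detail), feed the Furstenberg--Katznelson robust theorem into Theorem \ref{unconditionaldensity} at the critical exponent $\a_S=1/r(k-1)$, and dispose of large $p$ by the partition trick mentioned before Theorem \ref{conditionaldensity}. This matches the paper's argument in all essentials.
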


The second generalization of Szemer\'edi's theorem we wish to look at is the polynomial Szemer\'edi theorem of Bergelson and Leibman \cite{BL96}. Their result is the following.

\begin{theorem} \label{PolySzem}
Let $\d > 0$ be a real number, let $k$ be a positive integer and let $P_1,\dots,P_k$ be polynomials with integer coefficients that vanish at zero. Then there exists an integer $n_0$ such that if $n \geq n_0$ and $B$ is a subset of $[n]$ with $|B| \geq \d n$ then $B$ has a subset of the form $\{a,a+P_1(d),\dots,a+P_k(d)\}$.
\end{theorem}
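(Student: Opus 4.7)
Theorem \ref{PolySzem} is the polynomial Szemer\'edi theorem of Bergelson and Leibman, so my plan is essentially to invoke their original ergodic-theoretic proof from \cite{BL96} as a black box. The strategy I would follow has two main stages. First, apply the Furstenberg correspondence principle to replace the combinatorial statement with an ergodic one: it suffices to show that for every invertible measure-preserving system $(X,\mathcal{B},\mu,T)$ and every set $A$ of positive measure, there exists a positive integer $d$ with
\[\mu(A \cap T^{-P_1(d)}A \cap \cdots \cap T^{-P_k(d)}A) > 0.\]
Second, deduce this from a lower bound on the Ces\`aro averages of the multiple correlation sequence $\mu(A \cap T^{-P_1(d)}A \cap \cdots \cap T^{-P_k(d)}A)$.

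The heart of the argument, and the main obstacle, is Bergelson's \emph{PET induction}. I would define a well-ordering on tuples of polynomials by complexity and prove by induction that the $L^2$-limit of the multiple averages $N^{-1}\sum_{d=1}^N T^{P_1(d)}f \cdots T^{P_k(d)}f$ is controlled by projections onto an appropriate characteristic factor (a suitable inverse limit of nilsystems). The key step is van der Corput's lemma, which at each stage of the induction replaces one of the polynomial orbits by a difference $P_i(d+h)-P_i(d)$ whose tuple sits lower in the complexity ordering. Once one has reduced to a nilfactor, positivity of the limit follows from equidistribution of polynomial sequences on nilmanifolds together with the fact that the characteristic factor sees a positive-measure projection of $A$.

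For the transference machinery of this paper, what I would really need is a \emph{robust} form of Theorem \ref{PolySzem}: every $B\subset [n]$ with $|B|\geq\delta n$ should contain at least $c(\delta) n^2$ configurations $(a,a+P_1(d),\dots,a+P_k(d))$. I would extract this from Theorem \ref{PolySzem} itself by a Varnavides-type averaging. Cover $[n]$ by affinely rescaled windows $\{x \mapsto a + dx\}$ of a bounded length $m=m(\delta)$, chosen so that a fixed fraction of them inherit density at least $\delta/2$; then inside each good window Bergelson--Leibman yields one configuration, and summing over windows (and discounting the bounded number of degenerate choices of $d$) produces $\Omega(n^2)$ configurations, as required.

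The last step would be to feed this robust version into Theorem \ref{unconditionaldensity} to obtain a sparse random analogue, namely that the set $[n]_p$ is $(\{0,P_1,\dots,P_k\},\delta)$-Szemer\'edi once $p\geq Cn^{-1/k}$. The one technical wrinkle, which I expect to be the most delicate point of the adaptation, is that the system $S=\{(a,a+P_1(d),\dots,a+P_k(d)):d\neq 0\}$ does not literally have two degrees of freedom in the sense of Section \ref{ProbII}: fixing two coordinates $s_i=x$, $s_j=y$ gives a polynomial equation $P_j(d)-P_i(d)=y-x$ with up to $\max(\deg P_i,\deg P_j)$ roots rather than one. This loses only a bounded multiplicative constant, so $S_i(x)\cap S_j(y)$ always has bounded size, and I would handle it by slightly relaxing the homogeneity hypothesis on $|S_i(x)\cap S_j(y)|$ in Condition 1 and Condition 2, absorbing the constant into $\eta$ and $\alpha$ and leaving the Chernoff-style estimates of Lemma \ref{LInfInd} and the $W(x,y)$-bound argument essentially unchanged.
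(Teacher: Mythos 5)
For Theorem \ref{PolySzem} itself your approach coincides with the paper's: the paper offers no proof of this statement, it is simply imported from Bergelson and Leibman \cite{BL96}, and invoking their ergodic-theoretic argument (Furstenberg correspondence plus PET induction with van der Corput) as a black box is exactly the intended treatment. One small historical caveat: the original proof in \cite{BL96} runs entirely within the Furstenberg--Katznelson framework of compact and weak-mixing extensions via PET induction; the reduction to nilsystem characteristic factors that you describe is a later development and is not needed (nor, for positivity rather than convergence, is it a routine substitute), but at black-box level this does not affect anything.

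Where your proposal goes wrong is in the add-on about the robust version and the transference. For general polynomials your claimed supersaturation bound of $c(\d)n^2$ configurations is impossible: if some $P_i$ has degree $r\geq 2$, then the total number of pairs $(a,d)$ with all of $a,a+P_1(d),\dots,a+P_k(d)\in[n]$ is only $O(n^{1+1/r})=o(n^2)$. Moreover, the Varnavides rescaling you propose does not work for general polynomial families, because an affine map $x\mapsto a+dx$ does not carry the pattern $\{0,P_1(e),\dots,P_k(e)\}$ to another pattern of the same form unless the $P_i$ are homogeneous of a common degree; this is precisely why the paper restricts to $P_i(x)=ix^r$, works in $\Z_n$ with the truncation $d\leq(n/k)^{1/r}$, and records the robust count as $cn^{1+1/r}$ in Corollary \ref{ZnPolySzem} (citing \cite{HL08}), explicitly remarking that the general robust statement is much harder. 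Consequently your threshold $p\geq Cn^{-1/k}$ is also not correct for nonlinear polynomials: with the reduced count the critical exponent picks up the degree, giving $n^{-1/(k-1)r}$ in the paper's normalization (Theorem \ref{ApproxPolySzem}). Your observation that the relaxed ``bounded multiplicity'' form of two degrees of freedom should suffice for the probabilistic estimates is plausible, but as stated your robust step fails, and without it Theorem \ref{unconditionaldensity} has nothing to transfer.
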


We will focus on the specific case where the polynomials are $x^r, 2x^r,\dots,(k-1)x^r$ (so $k$ has been replaced by $k-1$). In this case, the theorem tells us that we can find a $k$-term arithmetic progression with common difference that is a perfect $r$th power. We restrict to this case, because it is much easier to state and prove an appropriate robust version for this case than it is for the general case. 

Note that if $a, a + d^r, \cdots, a + (k-1) d^r \in [n]$, then $d \leq (n/k)^{1/r}$. This observation and another easy averaging argument enable us to replace Theorem \ref{PolySzem} by the following equivalent robust statement about subsets of $\Z_n$ (see, for example, \cite{HL08}).

\begin{corollary} \label{ZnPolySzem}
Let $k, r$ be integers and $\d > 0$ a real number. Then there exists an integer $n_0$ and a constant $c > 0$ such that if $n \geq n_0$ and $B$ is a subset of $\Z_n$ with $|B| \geq \d n$ then $B$ contains at least $c n^{1 + 1/r}$ pairs $(a,d)$ such that $a, a + d^r, \cdots, a + (k-1) d^r \in B$ and 
$d \leq (n/k)^{1/r}$.
\end{corollary}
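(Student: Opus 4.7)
The plan is to deduce the corollary from Theorem~\ref{PolySzem} via a two-parameter Varnavides-style averaging argument, where one parameter is the starting point of a window in $\Z_n$ and the other is an $r$-th power ``dilation'' that lets us visit many scales of configurations.

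First I would fix $m_0$ to be the threshold given by Theorem~\ref{PolySzem} applied with density $\d/2$, so that any subset of $[m_0]$ of size at least $(\d/2)m_0$ contains a configuration $\{j, j+d^r, \ldots, j+(k-1)d^r\}$ for some $d \geq 1$. Set $E_0 := \lfloor (n/(km_0))^{1/r}\rfloor$. For each $e \in [1,E_0]$ and $t \in \Z_n$, consider the window $P_{t,e} := \{t+ie^r : 0\leq i<m_0\}$; since $e^r m_0 \leq n/k$, these are genuine non-wrapping APs. For fixed $e$, as $t$ ranges over $\Z_n$ each element $t+ie^r$ is uniformly distributed, so the expected density of $B$ inside $P_{t,e}$ is exactly $\d$; hence at least $(\d/2)n$ values of $t$ give a window of relative density at least $\d/2$.

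For each such window, applying Theorem~\ref{PolySzem} to the pushforward of $B\cap P_{t,e}$ under the identification $P_{t,e}\leftrightarrow \{0,1,\dots,m_0-1\}$ produces a configuration $\{j,j+d^r,\ldots,j+(k-1)d^r\}$ in the index set. The key scaling identity $e^r d^r = (ed)^r$ then converts this back into a genuine polynomial configuration $(a,D) := (t+je^r, ed)$ in $\Z_n$ with $D = ed \leq E_0 \cdot m_0^{1/r} \leq (n/k)^{1/r}$, which lies entirely in $B$. Summing over the $(\d/2)nE_0$ good pairs $(t,e)$ yields at least $c'n^{1+1/r}$ such configurations counted with multiplicity.

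Finally I would bound the multiplicity: a fixed pair $(a,D)$ can be produced by $(t,e,j,d)$ only if $ed = D$ with $d$ a divisor of $D$ satisfying $d\leq m_0^{1/r}$ (at most $m_0^{1/r}$ choices), after which $e = D/d$ is determined and $j\in[0,m_0)$ contributes at most $m_0$ choices (with $t$ then forced). Hence each configuration is counted at most $m_0^{1+1/r}$ times, and dividing gives the desired $\Omega(n^{1+1/r})$ distinct pairs. The only subtle point is lining up the scaling identity $(ed)^r = e^r d^r$ so that the polynomial configuration found inside the dilated window transports back to a polynomial configuration of the correct form in $\Z_n$; once that is set up, the averaging and multiplicity bookkeeping are routine.
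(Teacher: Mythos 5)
Your argument is correct, and it is essentially the proof the paper has in mind: the paper disposes of this corollary with the remark that an "easy averaging argument" (cf.\ Hamel--\L aba) transfers Theorem \ref{PolySzem} to the robust $\Z_n$ statement, and your two-parameter averaging over windows $\{t+ie^r\}$ with the scaling identity $e^rd^r=(ed)^r$, followed by the constant multiplicity bound, is exactly that Varnavides-style argument spelled out.
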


Let us say that a subset $I$ of $\Z_n$ is $(\d, k, r)$-\textit{Szemer\'edi} if every subset of $I$ with cardinality at least $\d |I|$ contains a $k$-term progression of the form $a, a + d^r, \dots, a + (k-1) d^r$ with $d \leq (n/k)^{1/r}$.

\begin{theorem} \label{ApproxPolySzem}
Let $k,r$ be integers and $\d>0$ a real number. Then there exists a constant $C$
such that if $p \geq C n^{-1/(k-1)r}$, then the probability that $(\Z_n)_p$ is $(\d,k,r)$-Szemer\'edi
is $1-o(1)$. 
\end{theorem}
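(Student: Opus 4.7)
The plan is to mimic the argument used for Theorems \ref{RelativeSzemAgain} and \ref{ApproxMultiSzem}: set up an appropriate homogeneous system $S$ of $k$-tuples in $\Z_n$, verify that $S$ has two degrees of freedom (so that $S$ is ``good''), check that the desired robust density statement already follows from Corollary \ref{ZnPolySzem}, and then apply Theorem \ref{unconditionaldensity}. As usual we take $n$ to be a prime in a suitable range; the result for arbitrary $n$ (and in fact for $[n]$ in place of $\Z_n$) follows by the standard device mentioned just before Theorem \ref{RelativeSzemAgain}.

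Concretely, let $S$ consist of all ordered $k$-tuples of the form $s_{a,d}=(a, a+d^r, a+2d^r,\dots, a+(k-1)d^r)$ with $a\in\Z_n$ and $d$ an integer in the range $1\leq d\leq (n/k)^{1/r}$. First, because each coordinate $s_j=a+(j-1)d^r$ is a fixed-form affine expression in $(a,d)$, and because for each $j$ and each $x\in\Z_n$ the constraint $s_j=x$ pins down $a$ uniquely once $d$ is chosen, we get $|S_j(x)|=\lfloor (n/k)^{1/r}\rfloor$ independently of $j$ and $x$; hence $S$ is homogeneous. For the two-degrees-of-freedom property, suppose $s_{a,d}$ and $s_{a',d'}$ agree in positions $i\neq j$. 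Subtracting the two equalities gives $(j-i)(d^r-d'^r)\equiv 0\pmod n$, and since $n$ is prime and $|j-i|<k<n$ we conclude $d^r\equiv d'^r\pmod n$. But $d,d'\leq (n/k)^{1/r}$, so $d^r,d'^r<n/k<n$ as integers, and hence $d^r=d'^r$ as integers; since both are positive, $d=d'$, and then $a=a'$. The same size estimates also show that $|S_1(x)\cap S_k(y)|\in\{0,1\}$ (so the extra homogeneity hypothesis is automatic) and that for $n$ large no sequence in $S$ has repeated entries.

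Next I would read off the critical exponent. Here $|X|=n$ and $t=|S_j(x)|\approx n^{1/r}$, so $\alpha_S=\tfrac1{r(k-1)}$, matching the threshold $p\geq Cn^{-1/((k-1)r)}$ in the theorem. The robust density hypothesis required by Theorem \ref{unconditionaldensity} is exactly Corollary \ref{ZnPolySzem}: it says that every $B\subset\Z_n$ of size at least $\delta n$ contains at least $cn^{1+1/r}$ pairs $(a,d)$ with $d\leq(n/k)^{1/r}$ giving an entire progression inside $B$; and since $|S|\asymp n^{1+1/r}$, this gives $\beta|S|$ such progressions for a constant $\beta=\beta(\delta,k,r)>0$. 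Plugging this into Theorem \ref{unconditionaldensity} (with $\rho=\delta/2$, say, and $\epsilon$ chosen accordingly) yields that, with probability $1-o(1)$, every subset $A$ of $(\Z_n)_p$ with $|A|\geq\delta|(\Z_n)_p|$ contains at least $(\beta-\epsilon)p^k|S|$ such progressions; in particular at least one, which is exactly the $(k,r,\delta)$-Szemer\'edi property. The case of larger $p$ is handled by the standard monotonicity/partitioning trick explained immediately before Theorem \ref{conditionaldensity}.

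I do not expect any serious obstacle here; the only point that requires a modicum of care is the verification of the two-degrees-of-freedom property, which hinges on the restriction $d\leq(n/k)^{1/r}$ to force $d^r<n$ so that an equality of $r$th powers in $\Z_n$ lifts to an equality of positive integers. Once that is in place, everything else is bookkeeping: matching the count in Corollary \ref{ZnPolySzem} against $|S|\asymp n^{1+1/r}$ to produce the constant $\beta$, and quoting the general machinery of Theorem \ref{unconditionaldensity}.
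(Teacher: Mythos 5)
Your proposal is correct and follows essentially the same route as the paper: take $S$ to be the progressions $a, a+d^r,\dots,a+(k-1)d^r$ with $d\leq (n/k)^{1/r}$, note that this restriction makes $S$ homogeneous with two degrees of freedom and critical exponent $1/r(k-1)$, feed the robust statement of Corollary \ref{ZnPolySzem} into Theorem \ref{unconditionaldensity}, and dispose of large $p$ separately. The only difference is that you spell out the two-degrees-of-freedom verification (lifting $d^r\equiv d'^r \pmod n$ to an integer equality), which the paper asserts without proof; your check is accurate.
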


\begin{proof}
Let $X = \Z_n$ and $S$ be the collection of progressions of the form $a, a + d^r, \dots, a + (k-1)d^r$ with $d \leq (n/k)^{1/r}$. Because of this restriction on $d$, $S$ has two degrees of freedom. It is also obviously homogeneous. The size of each $S_j(x)$ is $n^{1/r}$ to within a constant, so the critical exponent is $\g/(k-1)$ with $\g=1/r$. Therefore, provided $p$ is at most some constant $\lambda$, the result follows from Theorem~\ref{unconditionaldensity} and Corollary~\ref{ZnPolySzem}. For $p$ larger than $\lambda$, the result follows from the polynomial Szemer\'edi theorem itself.
\end{proof}

Note that the particular case of this theorem when $k=2$ was already proved by Nguyen \cite{N09}. To see that this result is sharp, note that the number of $k$-term progressions with $r$th power difference in the random set is roughly $p^k n^{1+1/r}$. This is smaller than the number of vertices $p n$ when $p = n^{-1/(k-1)r}$. 
\bigskip

We will now move on to proving sparse versions of Tur\'an's theorem for strictly $k$-balanced $k$-uniform hypergraphs. As we mentioned in the introduction, some of the dense results are not known, but this does not matter to us, since our aim is simply to show that whatever results can be proved in the dense case carry over to the sparse random case when the probability exceeds the critical probability.

For a $k$-uniform hypergraph $K$, let ex$(n,K)$ denote the largest number of edges a subgraph of $K_n^{(k)}$ can have without containing a copy of $K$. As usual, we need a robust result that says
that once a graph has more edges than the extremal number for $K$, by a constant proportion of the total number of edges in $K_n^{(k)}$, then it must contain many copies of $K$. The earliest version of such
a supersaturation result was proved by Erd\H{o}s and Simonovits \cite{ES83}. The proof is another easy averaging argument along the lines of the proof of Theorem \ref{Varnavides}. 

\begin{theorem} \label{ErdosSim}
Let $K$ be a $k$-uniform hypergraph. Then, for any
$\e > 0$, there exists $\d > 0$ such that if $L$ is a
$k$-uniform hypergraph on $n$ vertices and
\[e(L) > \mathrm{ex}(n,K) + \e n^k,\]
then $L$ contains at least $\d n^{v_K}$ copies of $K$.
\end{theorem}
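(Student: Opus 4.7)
The plan is to prove this by a standard averaging (sampling) argument, following Erd\H{o}s-Simonovits. The key input will be the existence of the Tur\'an density $\pi(K) = \lim_{n \to \infty} ex(n,K)/\binom{n}{k}$, which exists because the sequence $ex(n,K)/\binom{n}{k}$ is non-increasing (by a standard averaging argument).

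First, given $\e > 0$, I would choose an integer $m$ depending only on $\e$ and $K$ so that $ex(m,K) \leq (\pi(K) + \e k!/4)\binom{m}{k}$, which is possible by the definition of $\pi(K)$. Next, since $ex(n,K) \geq \pi(K)\binom{n}{k}$ for all $n$, the hypothesis $e(L) > ex(n,K) + \e n^k$ forces the edge density of $L$ to exceed $\pi(K) + \e k!/2$ once $n$ is sufficiently large (using $\binom{n}{k} \leq n^k/k!$).

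The second step is the averaging. For a uniformly random $m$-subset $M \subseteq V(L)$, each fixed $k$-edge lies in $L[M]$ with probability $\binom{m}{k}/\binom{n}{k}$, so $\E\, e(L[M]) \geq (\pi(K) + \e k!/2)\binom{m}{k}$. Since $e(L[M]) \leq \binom{m}{k}$ always, a Markov-style inequality shows that a fraction of at least $\e k!/4$ of the $m$-subsets $M$ satisfy $e(L[M]) > (\pi(K) + \e k!/4)\binom{m}{k} \geq ex(m,K)$, and therefore each such $M$ contains at least one copy of $K$ by the defining property of $ex(m,K)$.

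The final step is the double-counting: each copy of $K$ in $L$ is contained in exactly $\binom{n-v_K}{m-v_K}$ of the $m$-subsets of $V(L)$, so the number of copies of $K$ in $L$ is at least
\[
\frac{(\e k!/4)\binom{n}{m}}{\binom{n-v_K}{m-v_K}} \;=\; \frac{\e k!}{4} \cdot \frac{\binom{n}{v_K}}{\binom{m}{v_K}} \;\geq\; \d \, n^{v_K}
\]
for a suitable $\d = \d(\e,K) > 0$, as required. I do not expect a genuine obstacle here: the argument is essentially routine averaging, and the only point that needs mild care is making sure that $m$ is chosen purely as a function of $\e$ and $K$ (not of $n$), so that the resulting $\d$ is an absolute constant. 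The one thing worth noting is that the same argument works uniformly over all $L$, giving the ``robust'' statement that Erd\H{o}s-Simonovits originally proved.
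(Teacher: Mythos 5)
Your argument is correct and is exactly the ``easy averaging argument along the lines of the proof of Theorem \ref{Varnavides}'' that the paper invokes (it omits the details, citing Erd\H{o}s--Simonovits): sample a random $m$-set with $m=m(\e,K)$ chosen via the Tur\'an density, use Markov to get a positive proportion of $m$-sets whose induced subgraph exceeds $ex(m,K)$ edges, and double count. The only (minor) care needed, which you note, is that $m$, and hence $\d$, depends only on $\e$ and $K$, and small $n$ can be absorbed into $\d$ since $e(L)>ex(n,K)$ already forces one copy of $K$.
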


Let $\pi_k(K)$ be the limit as $n$ tends to infinity of ex$(n,K)/\binom{n}{k}$. We will say that a $k$-uniform hypergraph $H$ is $(K,
\e)$-Tur\'an if any subset of the edges of $H$ of size
\[(\pi_k(K) + \e) e(H)\]
contains a copy of $K$. Recall that $G_{n,p}^{(k)}$ is a random $k$-uniform hypergraph on $n$ vertices,
where each edge is chosen with probability $p$, and when $K$ is strictly $k$-balanced $m_k(K)=(e_K-1)/(v_K-k)$. 

\begin{theorem} \label{RelativeHyperTuran}
For every $\epsilon > 0$ and every strictly $k$-balanced $k$-uniform hypergraph $K$, there
exists a constant $C$ such that if $p\geq C n^{-1/m_k(K)}$, then the probability that
$G_{n,p}^{(k)}$ is $(K,\e)$-Tur\'an is $1-o(1)$.
\end{theorem}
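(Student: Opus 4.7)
The plan is to apply Theorem \ref{unconditionaldensity} (the unconditional sparse density theorem) with $X = K_n^{(k)}$ (the set of $k$-subsets of $[n]$, viewed as the edge set of the complete $k$-uniform hypergraph) and $S$ the set of all labelled embeddings $(\phi(a_1),\dots,\phi(a_{e_K}))$ of $K$ into $X$. Since $K$ is strictly $k$-balanced, $S$ falls into the second of the two classes of systems for which Theorem \ref{unconditionaldensity} is proved unconditionally, and the critical exponent is exactly $\alpha_S = 1/m_k(K)$, matching the probability threshold in the statement we wish to prove.

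First I would verify the remaining homogeneity hypothesis, namely that $|S_1(x) \cap S_{e_K}(y)|$ takes a single nonzero value. If $S_1(x)\cap S_{e_K}(y)$ is nonempty, then any embedding in it restricts to a bijection $a_1\cup a_{e_K} \to x\cup y$ sending $a_1\to x$ and $a_{e_K}\to y$; the number of such partial bijections depends only on $K$, and the number of extensions to the remaining vertices is $(n-|a_1\cup a_{e_K}|)(n-|a_1\cup a_{e_K}|-1)\cdots (n-v_K+1)$, which is a constant independent of $x,y$. Thus $S$ is a good system in the sense of Section \ref{summary}.

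Next I would supply the robust density input required by Theorem \ref{unconditionaldensity}. Let $\rho_n = ex(n,K)/\binom{n}{k}$. By Theorem \ref{ErdosSim}, for every $\e'>0$ there exists $\b>0$ such that any $B\subset X$ with $|B|\geq (\rho_n+\e')|X|$ contains at least $\b n^{v_K}$ copies of $K$, hence at least $\b'|S|$ labelled sequences $s\in S$ with all coordinates in $B$. Applying Theorem \ref{unconditionaldensity} with $\rho=\rho_n+\e/3$ and tolerance $\e/3$ yields constants $C,\delta$ such that, for $C n^{-1/m_k(K)}\leq p\leq \delta$, with probability $1-o(1)$ every $A\subseteq U:=G_{n,p}^{(k)}$ with $|A|\geq(\rho_n+2\e/3)|U|$ contains at least $(\b'-\e/3)p^{e_K}|S|$ labelled copies of $K$. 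Since $|S|=\Theta(n^{v_K})$ and $p^{e_K}n^{v_K}=p\cdot p^{e_K-1}n^{v_K-k}\cdot n^k\gtrsim p\cdot n^k\to\infty$, this count is positive for $n$ large, so $A$ contains a copy of $K$; in particular $G_{n,p}^{(k)}$ is $(K,\e)$-Tur\'an, since $\rho_n+2\e/3\leq \rho_n+\e$. For $p>\delta$ the result follows either by partitioning the random set into pieces of probability $\delta$ (as sketched before Theorem \ref{conditionaldensity}) or by a direct application of the Erd\H{o}s--Stone--type theorem.

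The main obstacle is really only bookkeeping: translating the abstract density notion ``$|B|\geq\rho|X|$'' into the natural one ``a $(\rho_K+\e)$-fraction of the edges of $G_{n,p}^{(k)}$,'' and matching the probabilistic count $(\b'-\e/3)p^{e_K}|S|$ against $|U|\approx p\binom{n}{k}$ to confirm that strict balancedness is precisely what makes $p^{e_K}|S|\gg 1$ at the critical probability. No new ideas are needed beyond verifying that $S$ is good and invoking Theorem \ref{ErdosSim} as the dense robust input.
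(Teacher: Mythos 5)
Your proposal is correct and follows essentially the same route as the paper: the paper likewise deduces the theorem by feeding the Erd\H{o}s--Simonovits supersaturation result (Theorem \ref{ErdosSim}) into the unconditional sparse density theorem (Theorem \ref{unconditionaldensity}) for $p$ below a constant, and handles larger $p$ by the partitioning argument sketched before Theorem \ref{conditionaldensity}. Your extra checks (homogeneity of $S$, the critical exponent $1/m_k(K)$, and the positivity of the transferred count) are exactly the verifications the paper leaves implicit.
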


\begin{proof}
For $p$ smaller than a fixed constant $\lambda$, the result follows immediately from Theorems \ref{unconditionaldensity} and \ref{ErdosSim}. For $p \geq \lambda$, we may apply the argument discussed before Theorem \ref{conditionaldensity}. That is, we may partition $G_{n,p}^{(k)}$ into a small set of random graphs, each of which has density less than $\lambda$ and each of which is $(K,\e)$-Tur\'an. If now we have a subgraph of $G_{n,p}^{(k)}$ of density at least $\frac{\mathrm{ex}(n, K)}{\binom{n}{k}} + \e$, then this subgraph must have at least this density in one of the graphs from the partition. Applying the fact that this subgraph is $(K,\e)$-Tur\'an implies the result.
\end{proof}

In particular, this implies Theorem \ref{ApproxTuran}, which is the particular case of this theorem where $K$ is a strictly balanced graph. Then $\mathrm{ex}(n,K)$ is known to be $\left(1 - \frac{1}{\chi(K) - 1} + o(1)\right) \binom{n}{2}$, where $\chi(K)$ is the chromatic number of $K$.

\subsection{Colouring results}

We shall now move on to colouring results that do not follow from their corresponding density versions. Let us begin with Ramsey's theorem. As ever, the main thing we need to check is that a suitable robust version of the theorem holds. And indeed it does: it is a very simple consequence of Ramsey's theorem that was noted by Erd\H{o}s \cite{E62}. 

\begin{theorem} \label{GraphMult}
Let $H$ be a hypergraph and let $r$ be a positive integer.
Then there exists an integer $n_0$ and a constant $c > 0$ such
that, if $n \geq n_0$, any colouring of the edges of $K_n^{(k)}$ with
$r$ colours is guaranteed to contain $c n^{v_H}$ monochromatic copies
of $H$.
\end{theorem}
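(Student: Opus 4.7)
The plan is to derive this supersaturation statement from Ramsey's theorem by a Varnavides-style averaging argument, entirely analogous to the proof of Theorem \ref{Varnavides} from Szemer\'edi's theorem. No new ideas are needed; the argument just invokes ordinary hypergraph Ramsey and counts.

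First I would apply the classical Ramsey theorem for $k$-uniform hypergraphs to produce an integer $m = m(H,r)$ with the property that every $r$-colouring of the edges of $K_m^{(k)}$ admits a monochromatic copy of $H$. Now, given $n \geq m$ and any $r$-colouring $\chi$ of the edges of $K_n^{(k)}$, look at the family of all $\binom{n}{m}$ vertex subsets $S \subseteq V(K_n^{(k)})$ with $|S|=m$. For each such $S$, the restriction of $\chi$ to $E(K_m^{(k)}[S])$ is itself an $r$-colouring of $K_m^{(k)}$, so by the choice of $m$ it contains at least one monochromatic copy of $H$.

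Then I would double count the number of pairs $(S, H')$ where $|S|=m$ and $H'$ is a labelled monochromatic copy of $H$ with $V(H')\subseteq S$. The previous paragraph gives the lower bound
\[
\#\{(S,H')\} \;\geq\; \binom{n}{m}.
\]
On the other hand, any fixed labelled copy of $H$ occupies $v_H$ specific vertices of $K_n^{(k)}$ and is therefore contained in exactly $\binom{n-v_H}{m-v_H}$ subsets $S$ of size $m$. Dividing, the number of monochromatic labelled copies of $H$ is at least
\[
\frac{\binom{n}{m}}{\binom{n-v_H}{m-v_H}} \;=\; \frac{\binom{n}{v_H}}{\binom{m}{v_H}} \;\geq\; c\, n^{v_H}
\]
for some positive constant $c=c(H,r)$ depending only on $H$ and $r$, provided $n$ is sufficiently large. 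This is exactly the desired conclusion.

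There is really no substantive obstacle here: the only external input is hypergraph Ramsey (to fix $m$), and everything else is a one-line double count. The statement is entirely analogous in spirit to Theorem \ref{ErdosSim} and Theorem \ref{Varnavides}, and once this robust version is in hand, Theorem \ref{unconditionalcolouring} combined with the probabilistic estimates from Sections \ref{ProbII}--\ref{ProbIII} (applicable because the system of labelled copies of a strictly $k$-balanced hypergraph forms a ``good'' system) immediately yields the sparse random Ramsey theorem, Theorem \ref{ApproxRamsey}.
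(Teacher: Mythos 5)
Your argument is correct and is exactly the averaging double count the paper uses: fix $m$ from hypergraph Ramsey, note every $m$-subset contains a monochromatic copy, and deduce at least $\binom{n}{v_H}/\binom{m}{v_H}\geq c\,n^{v_H}$ monochromatic copies of $H$. Nothing further is needed.
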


Once again the proof is the obvious averaging argument: choose $m$ such that if the edges of $K_m^{(k)}$ are coloured with $r$ colours, there must be a monochromatic copy of $H$, and then a double count shows that for every $r$-colouring of the edges of $K_n^{(k)}$ there are at least $\binom n{v_H}/\binom m{v_H}$ monochromatic copies of~$H$. 

Recall that, given a $k$-uniform hypergraph $K$ and a natural number $r$, a hypergraph 
is $(K,r)$-\textit{Ramsey} if every $r$-colouring of its edges contains a monochromatic copy of $K$. We are now ready to prove Theorem \ref{ApproxRamsey}, which for convenience we restate here.

\begin{theorem}
Given a natural number $r$ and a strictly $k$-balanced $k$-uniform hypergraph $K$, there
exists a positive constant $C$ such that if $p\geq Cn^{-1/m_k(K)}$, then the probability that 
$G_{n,p}^{(k)}$ is $(K,r)$-Ramsey is $1-o(1)$.
\end{theorem}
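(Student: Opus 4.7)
The plan is to derive Theorem~\ref{ApproxRamsey} as a direct application of Theorem~\ref{unconditionalcolouring}. Take $X=E(K_n^{(k)})=\binom{[n]}{k}$ and let $S$ be the set of labelled ordered copies of $K$ in $K_n^{(k)}$, exactly as set up in Section~\ref{ProbII}. Because $K$ is strictly $k$-balanced, $S$ is a \emph{good} system in the sense of Section~\ref{summary}. A short computation (essentially the one performed after the definition of the critical exponent) shows that $|S_j(e)|=\Theta(n^{v_K-k})$, so the critical probability $|X|^{-\alpha_S}$ associated with $S$ is within a constant factor of $n^{-1/m_k(K)}$; this matches the exponent in the statement.

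The robust colouring hypothesis required by Theorem~\ref{unconditionalcolouring} is provided by Theorem~\ref{GraphMult}: for every $r$-colouring of the edges of $K_n^{(k)}$ there are at least $cn^{v_K}$ monochromatic copies of $K$, and hence at least $\beta|S|$ monochromatic sequences in $S$ for some $\beta=\beta(K,r)>0$. Feeding this into Theorem~\ref{unconditionalcolouring} yields constants $C,\delta>0$ such that, whenever $Cn^{-1/m_k(K)}\le p\le\delta$, with probability $1-o(1)$ every $r$-colouring of $U=X_p$ produces at least $2^{-(e_K+3)}\beta p^{e_K}|S|$ monochromatic sequences in $S$ with all terms in $U$. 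Since $|S|=\Theta(n^{v_K})$ and $p^{e_K}|S|\ge C^{e_K}n^{v_K-(e_K-1)/m_k(K)}=C^{e_K}n^k\to\infty$, this count is much larger than one, so $G_{n,p}^{(k)}$ is $(K,r)$-Ramsey with high probability in this range of~$p$.

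To handle $p>\delta$ we use monotonicity of the Ramsey property: if a hypergraph $H$ is $(K,r)$-Ramsey and $H\subseteq H'$, then $H'$ is $(K,r)$-Ramsey as well. Standard coupling of binomial random hypergraphs shows that we may assume $G_{n,\delta}^{(k)}\subseteq G_{n,p}^{(k)}$ whenever $p\ge\delta$, and the conclusion for $p=\delta$ thus transfers to all larger~$p$.

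The only genuinely fiddly point I anticipate is verifying the secondary homogeneity condition used in Section~\ref{summary}, namely that $|S_1(e)\cap S_{e_K}(e')|$ takes a single nonzero value, since in the hypergraph setting this quantity a~priori depends on the intersection pattern of $e$ and $e'$. The cleanest fix is to split $S$ into a bounded number of pieces indexed by the isomorphism type of $e\cup e'$ (equivalently, by the vertex overlap between the first and last edges of $K$) and apply Theorem~\ref{unconditionalcolouring} to each piece separately; the dominant piece (in which the two edges meet in the way forced by $K$) gives the claimed bound, while the degenerate pieces contribute only lower-order terms and can be absorbed. This is the main, essentially notational, obstacle; all of the real content of the argument has been discharged in the preceding sections.
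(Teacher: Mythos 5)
Your proposal is essentially the paper's proof: near the threshold it applies Theorem \ref{unconditionalcolouring} to the good system of labelled ordered copies of $K$ with the robust Ramsey input from Theorem \ref{GraphMult}, and larger $p$ is handled by monotonicity of the Ramsey property (the paper sub-samples a copy of $G_{n,p}^{(k)}$ inside $G_{n,q}^{(k)}$, which is your coupling argument in slightly different clothing). The one point you flag as fiddly is in fact a non-issue: because the copies are labelled and ordered, the overlap of the first and last edges is fixed by $K$ itself, so $|S_1(e)\cap S_{e_K}(e')|$ already takes only one nonzero value (positive exactly when $|e\cap e'|$ equals that fixed overlap), and no splitting of $S$ is required.
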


\begin{proof}
For a sufficiently large constant $C$, the result for $p=Cn^{-1/m_k(K)}$ follows from Theorems \ref{unconditionalcolouring} and \ref{GraphMult}. For $q > p$, the result follows from the monotonicity of the Ramsey property. To see this, choose a random hypergraph $G_{n,q}^{(k)}$ and then choose a subhypergraph by randomly selecting each edge of $G_{n,q}^{(k)}$ with probability $p/q$. The resulting hypergraph is distributed as $G_{n,p}^{(k)}$, so with probability $1-o(1)$ it is $(K,r)$-Ramsey. But then any $r$-colouring of $G_{n,q}^{(k)}$ will yield an $r$-colouring of this $G_{n,p}^{(k)}$, which always contains a monochromatic copy of $K$.
\end{proof}

With only slightly more effort we can obtain a robust conclusion. Theorem \ref{unconditionalcolouring} tells us that with high probability the number of monochromatic copies of $K$ in any $r$-colouring of $G_{n,p}^{(k)}$ is $cp^{e_K}n^{v_K}$ for some constant $c>0$, and then an averaging argument implies that with high probability the number of monochromatic copies in an $r$-colouring of $G_{n,q}^{(k)}$ is $cq^{e_K}n^{{v_K}}$.

We shall now take a look at Schur's theorem \cite{S16}, which states that if the set $\{1, \dots, n\}$ is $r$-coloured, then there exist monochromatic subsets of the form $\{x, y, x+y\}$. As with our results concerning Szemer\'edi's theorem, it is more convenient to work in $\Z_n$. To see that this implies the equivalent theorem in $[n]$, let $[n]_p$ be a random subset of $[n]$ made from  the union of two smaller random subsets, each chosen with probability $q$ such that $p = 2q - q^2$. Call these sets $U_1$ and $U_2$. Then the subset of $\Z_{2n}$ formed by placing the set $U_1$ in the position $\{1, \dots, n\}$ and the set $-U_2$ in the set $\{-1, \dots, -n\}$ (the overlap $n=-n$ is irrelevant to the argument, since it is unlikely to be in the set) will produce a random subset of $\Z_{2n}$ where each element is chosen with probability $q$. If a sparse version of Schur's theorem holds in $\Z_{2n}$, then with high probability, any $2r$-colouring of this random set yields $cq^3n^2$ monochromatic sets $\{x, y, x+y\}$ for some constant $c>0$. 

Consider now an $r$-colouring of the original set $U_1 \cup U_2$ in $r$ colours $C_1, \dots, C_r$. This induces a colouring of $U_1\cup -U_2\subset\Z_{2n}$ with $2r$ colours $C_1,\dots,C_{2r}$: if $x\in U_1$ and is coloured with colour $C_i$ in $[n]$, then we continue to colour it with colour $C_i$, whereas if $x\in -U_2$ and $-x$ has colour $C_i$ in $[n]$, then we colour it with colour $C_{i+r}$. We have already noted that this colouring must contain many monochromatic sets $\{x, y, x+y\}$, and each one corresponds to a monochromatic set (either $\{x,y,x+y\}$ or $\{-x,-y,-(x+y)\}$) in the original colouring. 

The robust version of Schur's theorem can be deduced from one of the standard proofs, which itself relies on Ramsey's theorem for triangles and many colours.

\begin{theorem} \label{Schur}
Let $r$ be a positive integer. Then there exists an integer $n_0$ and a constant $c$ such that, if $n \geq n_0$, any $r$-colouring of $\{1, \dots, n\}$ contains at least $c n^2$ monochromatic triples of the form $\{x, y, x+y\}$.
\end{theorem}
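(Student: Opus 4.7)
The plan is to reduce the statement to a robust version of Ramsey's theorem for triangles with $r$ colours, via the classical Schur construction, and then count multiplicities. Given an $r$-colouring $c:\{1,\dots,n\}\to\{1,\dots,r\}$, I would define an $r$-edge-colouring of the complete graph on vertex set $\{0,1,\dots,n\}$ by assigning to each edge $\{i,j\}$ with $i<j$ the colour $c(j-i)$. A monochromatic triangle $\{i,j,k\}$ with $i<j<k$ then yields three equal-coloured positive integers $j-i$, $k-j$, $k-i$ satisfying $(j-i)+(k-j)=k-i$, and hence a monochromatic Schur triple $\{x,y,x+y\}$ in the original colouring (with $x=j-i$ and $y=k-j$).

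The required robust Ramsey input is immediately available: applying Theorem \ref{GraphMult} with $H=K_3$ gives a constant $c_r>0$ such that, for $n$ sufficiently large, every $r$-edge-colouring of $K_{n+1}$ contains at least $c_r n^3$ monochromatic triangles. Thus the induced colouring yields at least $c_r n^3$ monochromatic triangles, each of which produces a monochromatic Schur triple in $\{1,\dots,n\}$.

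It remains to bound the multiplicity with which a given Schur triple arises from this process. For a non-degenerate Schur triple $\{x,y,x+y\}$ with $x\neq y$, the triangles mapping to it are precisely those of the form $\{i,i+x,i+x+y\}$ or $\{i,i+y,i+x+y\}$ for some $0\leq i\leq n-x-y$, giving at most $2(n+1)$ triangles. The degenerate triples $\{x,2x\}$ arising from $x=y$ are produced only by triangles $\{i,i+x,i+2x\}$, and summing over $x$ shows these account for at most $O(n^2)$ triangles in total, a contribution negligible compared to $c_r n^3$. Consequently, the number of distinct monochromatic Schur triples is at least $(c_r n^3-O(n^2))/(2(n+1))\geq cn^2$ for some $c=c(r)>0$ and all sufficiently large $n$. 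I do not anticipate a serious obstacle: the robust Ramsey input is exactly the $K_3$ case of Theorem \ref{GraphMult}, and the remainder is routine multiplicity book-keeping.
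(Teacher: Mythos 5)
Your proof is correct and follows essentially the route the paper has in mind: the paper omits the details, saying only that the robust Schur theorem ``can be deduced from one of the standard proofs, which itself relies on Ramsey's theorem for triangles and many colours,'' and your argument is exactly that deduction, using Theorem \ref{GraphMult} with $H=K_3$ together with the standard difference-colouring of $K_{n+1}$ and a routine multiplicity count.
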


We shall say that a subset $I$ of the integers is $r$-\textit{Schur} if for every $r$-colouring of the points of $I$ there is a monochromatic triple of the form $\{x, y, x+y\}$. The $r=2$ case of the following theorem was already known: it is a result of Graham, R\"odl and Ruci\'nski \cite{GRR96}. 

\begin{theorem} \label{RelativeSchur}
For every positive integer $r$ there exists a constant $C$ such that if $p\geq Cn^{-1/2}$, then the probability that $(\Z_n)_p$ is $r$-Schur is $1-o(1)$.
\end{theorem}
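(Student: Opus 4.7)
The plan is to deduce the theorem directly from Theorem \ref{unconditionalcolouring}, in the same spirit as our earlier applications. Take $X=\Z_n$ and let $S$ be the collection of ordered triples $(s_1,s_2,s_3)$ with $s_1+s_2=s_3$ and with no two of the $s_i$ equal (ruling out degenerate triples like $(0,x,x)$ or $(x,x,2x)$ changes only a $O(1/n)$ proportion of the count, so this restriction is harmless). First I would verify that $S$ is a \emph{good} system with two degrees of freedom: for any two distinct coordinates $i\ne j$, knowing $s_i$ and $s_j$ determines the third entry uniquely (for example, $s_1$ and $s_3$ determine $s_2=s_3-s_1$), so $S_i(x)\cap S_j(y)$ has size at most 1, and is a singleton whenever the resulting triple is nondegenerate. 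Homogeneity of the sets $S_j(x)$ is automatic from the translation invariance of $\Z_n$, and the sets $S_1(x)\cap S_3(y)$ clearly all have the same size (namely 1) when nonempty.

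Next I would compute the critical exponent. Each $S_j(x)$ has size $n-O(1)$, so with $k=3$ one has $\gamma=1$ and the critical exponent $\alpha_S=\gamma/(k-1)=1/2$, matching the bound claimed in the theorem. The robust version of Schur's theorem required to apply Theorem \ref{unconditionalcolouring} is exactly Theorem \ref{Schur}, which we transfer from $[n]$ to $\Z_n$ in the standard way: restrict attention to colourings of $\{1,\dots,\lfloor n/2\rfloor\}\subset\Z_n$, where no Schur triple wraps around; the robust statement for $[n]$ then gives $cn^2$ monochromatic Schur triples, which is a positive constant times $|S|$ as required. (Alternatively, one can run the standard Ramsey-for-triangles proof of Schur's theorem directly inside $\Z_n$ and obtain the same bound.)

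With these ingredients in place, Theorem \ref{unconditionalcolouring} applies to the pair $(X,S)$: there exist positive constants $C$ and $\delta$ such that for $Cn^{-1/2}\leq p\leq\delta$, with probability $1-o(1)$, every $r$-colouring of $(\Z_n)_p$ yields at least $2^{-6}\beta p^3|S|$ monochromatic ordered triples from $S$ contained in the random set, and in particular at least one. The case $p>\delta$ is handled by the monotonicity of the Schur property exactly as in the proof of Theorem \ref{ApproxRamsey}: one samples $(\Z_n)_p$ from $(\Z_n)_q$ by independently retaining each element with probability $p/q$, and any $r$-colouring of $(\Z_n)_q$ induces one on the subsample, which almost surely contains a monochromatic Schur triple. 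The only mildly delicate point is the passage from Theorem \ref{Schur} (stated in $[n]$) to its $\Z_n$ analogue, but as noted above this is immediate by restricting to a half of $\Z_n$; once that is dispatched, the rest of the proof is a direct verification that our abstract machinery applies.
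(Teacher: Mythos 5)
Your proof follows essentially the same route as the paper's: take the Schur-triple system in $\Z_n$, observe it has two degrees of freedom with $|S_j(x)|$ of order $n$ so that the critical exponent is $1/2$, feed in the robust Schur theorem (Theorem \ref{Schur}), apply Theorem \ref{unconditionalcolouring}, and dispose of large $p$ by monotonicity via subsampling, exactly as in the proof of Theorem \ref{ApproxRamsey}. One small but real correction: homogeneity is not ``automatic from translation invariance'' --- the relation $s_1+s_2=s_3$ is not preserved by translation --- and with $0\in X$ the system you define is in fact not homogeneous, since $S_1(0)=S_2(0)=\emptyset$ while $S_3(0)$ has size about $n$ (it consists of the triples $(x,-x,0)$). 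The paper sidesteps this by taking $X=\Z_n\setminus\{0\}$, after which each $S_j(x)$ has the same size (roughly $n-2$, exactly equal for $n$ odd) and the homogeneity hypothesis of Theorem \ref{unconditionalcolouring} is genuinely satisfied. With that one adjustment (and your transfer of Theorem \ref{Schur} into $\Z_n$ by restricting to a half of the cyclic group, which is fine), your argument coincides with the paper's proof.
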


\begin{proof}
Let $X = \Z_n\setminus\{0\}$ and $S$ be the collection of subsets of $X$ of the form $\{x, y, x+y\}$ with all of $x$, $y$ and $x+y$ distinct. Since any two of $x$, $y$ and $x+y$ determine the third, it follows that $|S_i(a) \cap S_j(b)| \leq 1$ whenever $i,j\in\{1,2,3\}$, $i \neq j$, and $a, b \in X$. Therefore, $S$ has two degrees of freedom. Furthermore, each $S_i(a)$ has size $n-3$. By Theorem~\ref{Schur}, there exists a constant $c$ such that, for $n$ sufficiently large, any $r$-colouring of $\Z_n$ contains at least $c n^2$ monochromatic subsets of the form $\{x, y, x+y\}$. Applying Theorem~\ref{unconditionalcolouring}, we see that there exist positive constants $C$ and $c'$ such that, with probability $1-o(1)$ a random subset $U$ of $\Z_n$ chosen with probability $p = C n^{-1/2}$ satisfies the condition that, in any $r$-colouring of $U$, there are at least $c' p^3 n^2$ monochromatic subsets of the form $\{x, y, x+y\}$. In particular, $U$ is $r$-Schur. Once again, the result for larger probabilities follows easily.
\end{proof}

As we mentioned in the introduction, it is quite a bit harder to prove $0$-statements for colouring statements than it is for density statements. However, $0$-statements for partition regular systems have been considered in depth by R\"odl and Ruci\'nski \cite{RR97}, and their result implies that Theorem \ref{RelativeSchur} is sharp.

A far-reaching generalization of Schur's theorem was proved by Rado \cite{R41}. It is likely that our methods could be used to prove other cases of Rado's theorem, but we have not tried to do so here, since we would have to impose a condition on the configurations analogous to the strictly balanced condition for graphs and hypergraphs.

\subsection{The hypergraph removal lemma}

Rather than jumping straight into studying hypergraphs, we shall begin by stating a slight strengthening of the triangle removal lemma for graphs. This strengthening follows from its proof via Szemer\'edi's regularity lemma and gives us something like the ``robust" version we need in order to use our methods to obtain a sparse result. If $G$ is a graph and $X$ and $Y$ are sets of vertices, we shall write $G(X,Y)$ for the set of edges that join a vertex in $X$ to a vertex in $Y$, $e(X,Y)$ for the cardinality of $G(X,Y)$ and $d(X,Y)$ for $e(X,Y)/|X||Y|$. 

\begin{theorem} \label{triangleremoval}
For every $a>0$ there exists a constant $K$ with the following property. For every graph $G$ with $n$ vertices, there is a partition of the vertices of $G$ into $k\leq K$ sets $V_1,\dots,V_k$, each of size either $\lfloor n/k\rfloor$ or $\lceil n/k\rceil$, and a set $E$ of edges of $G$ with the following properties.
\begin{enumerate}
\item The number of edges in $E$ is at most $an^2$.
\item $E$ is a union of sets of the form $G(V_i,V_j)$.
\item $E$ includes all edges that join a vertex in $V_i$ to another vertex in the same $V_i$.
\item Let $G'$ be $G$ with the edges in $E$ removed. For any $h,i,j$, if there are edges in all of $G'(V_h,V_i)$, $G'(V_i,V_j)$ and $G'(V_h,V_j)$, then the number of triangles $xyz$ with $x\in V_h$, $y\in V_i$ and $z\in V_j$ is at least $a^3|V_h||V_i||V_j|/128$.
\end{enumerate}
\end{theorem}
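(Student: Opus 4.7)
The plan is to apply Szemer\'edi's regularity lemma with parameters chosen in terms of $a$, to use the resulting equitable partition as our $V_1,\dots,V_k$, and to let $E$ consist of precisely those edges that we expect to be forced to remove in the standard proof of the triangle removal lemma.

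First I would apply the regularity lemma with regularity parameter $\e=\min\{a/4,1/4\}$ and a lower bound on the number of parts of $k_0=\lceil 3/a\rceil$, obtaining $K$ depending only on $a$ and an equitable partition into $k_0\leq k\leq K$ parts $V_1,\dots,V_k$, each of size $\lfloor n/k\rfloor$ or $\lceil n/k\rceil$. (The standard proof of the regularity lemma produces such an equitable partition; alternatively one can absorb an exceptional set into the main parts and then re-equitize, which costs a negligible number of edges.) Next, I would define $E$ to be the union of three kinds of edges of $G$: (i) all edges joining two vertices that lie in the same $V_i$; (ii) all edges in $G(V_i,V_j)$ whenever the pair $(V_i,V_j)$ is not $\e$-regular; and (iii) all edges in $G(V_i,V_j)$ whenever the pair $(V_i,V_j)$ is $\e$-regular but has density less than $a/2$. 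By construction, $E$ automatically satisfies conditions 2 and 3.

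The bound $|E|\leq an^2$ (condition 1) is a routine computation: the edges from (i) contribute at most $k\binom{\lceil n/k\rceil}{2}\leq n^2/k\leq an^2/3$, the edges from (ii) contribute at most $\e\binom{k}{2}\lceil n/k\rceil^2\leq an^2/4$ for $\e\leq a/4$, and the edges from (iii) contribute at most $(a/2)\binom{k}{2}\lceil n/k\rceil^2\leq an^2/4$, so the total is at most $an^2$ (after possibly discarding the trivial regime where $a\geq 1$, for which one may remove all edges). Finally, condition 4 follows from the standard triangle counting lemma: if $G'(V_h,V_i)$, $G'(V_i,V_j)$, and $G'(V_h,V_j)$ are all nonempty then none of the three pairs was removed, so each is $\e$-regular with density at least $a/2$; the counting lemma then produces at least
\[(1-2\e)\left(\tfrac{a}{2}-\e\right)^{3}|V_h||V_i||V_j|\;\geq\;\tfrac{1}{2}\cdot\left(\tfrac{a}{4}\right)^{3}|V_h||V_i||V_j|\;=\;\frac{a^{3}}{128}|V_h||V_i||V_j|\]
triangles with one vertex in each of $V_h,V_i,V_j$.

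There is no genuinely hard step: the proof is a careful bookkeeping of the standard application of the regularity lemma to the triangle removal lemma, together with a triangle counting lemma that is essentially already present in the literature. The only step that requires a moment's attention is matching parameters so that the final triangle count is at least $a^{3}/128$ rather than just positive; this forces the choice $\e\leq a/4$ (so that $a/2-\e\geq a/4$) and the density threshold $a/2$ (so that a constant proportion of the density margin survives). The slight additional subtlety of ensuring the partition is genuinely equitable, rather than merely having an exceptional set of size less than $\e n$, is handled by the usual trick of redistributing the exceptional vertices among the remaining parts, which affects none of the bounds above.
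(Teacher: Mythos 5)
Your proof is correct and is exactly the argument the paper has in mind: the paper omits the proof, remarking only that the strengthening "follows from its proof via Szemer\'edi's regularity lemma," and your cleanup (remove internal edges, irregular pairs, and pairs of density below $a/2$, then apply the triangle counting lemma with $\e\leq a/4$) is that standard proof, with the parameter choices matching the stated constant $128=2\cdot 4^3$.
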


In particular, this tells us that after we remove just a few edges we obtain a graph that contains either no triangles or many triangles. Let us briefly recall the usual statement of the dense triangle removal lemma and see how it follows from Theorem \ref{triangleremoval}.

\begin{corollary} \label{densetriangleremoval}
For every $a>0$ there exists a constant $c>0$ with the following property. For every graph $G$ with $n$ vertices and at most $cn^3$ triangles it is possible to remove at most $an^2$ edges from $G$ in such a way that the resulting graph contains no triangles. 
\end{corollary}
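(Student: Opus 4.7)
The plan is to deduce the corollary directly from Theorem \ref{triangleremoval} by applying it with the same parameter $a$ and showing that the resulting edge set $E$ is the desired set of edges to remove. The constant $c$ will be chosen after we know the constant $K$ produced by Theorem \ref{triangleremoval}.

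Concretely, given $a>0$, let $K=K(a)$ be the constant supplied by Theorem \ref{triangleremoval}, and set $c=a^{3}/(256 K^{3})$. Given a graph $G$ on $n$ vertices (we may assume $n$ is large enough that $\lfloor n/K\rfloor\geq n/(2K)$, as otherwise we may simply remove all edges), apply Theorem \ref{triangleremoval} to obtain a partition $V_{1},\dots,V_{k}$ with $k\leq K$ and an edge set $E$ with $|E|\leq an^{2}$. Let $G'=G\setminus E$. Property 1 immediately gives the edge bound we need, so it remains to show that $G'$ is triangle-free whenever $G$ has at most $cn^{3}$ triangles.

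Suppose for contradiction that $G'$ contains a triangle $xyz$. By property 3 of Theorem \ref{triangleremoval}, no edge of $G'$ has both endpoints in the same part, so $x$, $y$ and $z$ must lie in three distinct parts $V_{h}$, $V_{i}$, $V_{j}$. In particular, each of $G'(V_{h},V_{i})$, $G'(V_{i},V_{j})$ and $G'(V_{h},V_{j})$ contains at least one edge. Property 4 of Theorem \ref{triangleremoval} therefore guarantees that the number of triangles of $G'$ with one vertex in each of $V_{h},V_{i},V_{j}$ is at least
\[
\frac{a^{3}}{128}|V_{h}||V_{i}||V_{j}|\ \geq\ \frac{a^{3}}{128}\Bigl(\frac{n}{2K}\Bigr)^{3}\ >\ cn^{3}.
\]
Since every triangle of $G'$ is also a triangle of $G$, this contradicts the hypothesis that $G$ has at most $cn^{3}$ triangles. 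Hence no such triangle $xyz$ exists, $G'$ is triangle-free, and we have removed at most $an^{2}$ edges, as required.

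There is essentially no obstacle here: Theorem \ref{triangleremoval} has already been stated in a sufficiently ``robust'' form that it bundles both the partition and the counting lemma into a single conclusion, so the deduction is just a matter of choosing $c$ small enough to beat the lower bound $a^{3}|V_{h}||V_{i}||V_{j}|/128$. The only minor points to check are the handling of degenerate cases (triangles with two vertices in the same part, killed by property 3; and small $n$, handled trivially), both of which are routine.
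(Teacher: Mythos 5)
Your argument is exactly the paper's: apply Theorem \ref{triangleremoval} with the same $a$, remove the set $E$, and use property 3 to place any surviving triangle in three distinct parts and property 4 to force many triangles, contradicting the hypothesis once $c$ is small enough (the paper takes $c=a^3/200K^3$). So the approach is fine, but your choice of constant does not actually close the contradiction as written: with $c=a^{3}/(256K^{3})$ and the bound $|V_i|\geq n/(2K)$, the quantity you produce is
\[
\frac{a^{3}}{128}\Bigl(\frac{n}{2K}\Bigr)^{3}=\frac{a^{3}n^{3}}{1024K^{3}},
\]
which is \emph{smaller} than $cn^{3}=a^{3}n^{3}/(256K^{3})$, so the displayed strict inequality ``$>cn^{3}$'' is false. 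This is only a bookkeeping slip: take, say, $c=a^{3}/(2048K^{3})$ (or keep the paper's sharper estimate $\lfloor n/k\rfloor\approx n/k\geq n/K$ for large $n$, which is what lets the paper get away with $a^3/200K^3$), and the argument goes through verbatim.

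One further small point: your disposal of the small-$n$ case by ``remove all edges'' only respects the budget $an^{2}$ when $a\geq 1/2$, since a graph can have up to about $n^{2}/2$ edges. The cleaner fix is to shrink $c$ further so that $cn^{3}<1$ for all $n$ below your threshold; then the hypothesis forces $G$ to be triangle-free already and nothing need be removed.
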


\begin{proof} 
Apply Theorem \ref{triangleremoval} to $a$ and let $c=a^3/200K^3$. Now let $G$ be a graph with $n$ vertices. Let $V_1,\dots,V_k$ and $E$ be as given by Theorem \ref{triangleremoval} and remove from $G$ all edges in $E$. If we do this, then by Condition 1 we remove at most $an^2$ edges from $G$. If there were any triangle left in $G$, then by Condition 4 there would have to be at least $a^3\lfloor n/k\rfloor^3/128>cn^3$ triangles left in $G$, a contradiction. This implies the result.
\end{proof}

Here now is a sketch of how to deduce a sparse triangle removal lemma from Theorem \ref{triangleremoval}. We begin by proving a sparse version of Theorem \ref{triangleremoval} itself. Given a random graph $U$ with edge probability $p \geq Cn^{-1/2}$, for sufficiently large $C$, let $H$ be a subgraph of $U$. Now use Corollary~\ref{unconditionalsetstructural} to find a dense graph $G$ such that the triangle density of $G$ is roughly the same as the relative triangle density of $H$ in $U$ (that is, if $H$ has $\alpha p^3n^3$ triangles, then $G$ has roughly $\alpha n^3$ triangles) and such that for every pair of reasonably large sets $X,Y$ of vertices the density $d_G(X,Y)$ is roughly the same as the relative density of $H$ inside $U(X,Y)$ (that is, the number of edges of $G(X,Y)$ is roughly $p^{-1}$ times the number of edges of $H(X,Y)$).

Now use Theorem \ref{triangleremoval} to find a partition of the vertex set of $G$ (which is also the vertex set of $H$) into sets $V_1,\dots,V_k$ and to identify a set $E_G$ of edges to remove from $G$. By Condition 2, $E_G$ is a union of sets of the form $G(V_i,V_j)$. Define $E_H$ to be the union of the corresponding sets $H(V_i,V_j)$ and remove all edges in $E_H$ from $H$. If it happens that $G(V_i,V_j)$ is empty, then adopt the convention that we remove all edges from $H(V_i,V_j)$. Note that because the relative densities in dense complete bipartite graphs are roughly the same, the number of edges in $E_H$ is at most $2ap n^2$. Let $G'$ be $G$ with the edges in $E_G$ removed and let $H'$ be $H$ with the edges in $E_H$ removed.

Suppose now that $H'$ contains a triangle $xyz$ and suppose that $x\in V_h$, $y\in V_i$ and $z\in V_j$. Then none of $G'(V_h,V_i)$, $G'(V_i,V_j)$ and $G'(V_h,V_j)$ is empty, by our convention above, so Condition 4 implies that $G'$ contains at least $a^3|V_h||V_i||V_j|/128$ triangles with $x\in V_h$, $y\in V_i$ and $z\in V_j$. Since triangle densities are roughly the same, it follows that $H'$ contains at least $a^3p^3|V_h||V_i||V_j|/256$ triangles.

Roughly speaking, what this tells us is that Theorem \ref{triangleremoval} transfers to a sparse random version. From that it is easy to deduce a sparse random version of Corollary \ref{densetriangleremoval}. However, instead of giving the full details of this, we shall prove (in a very similar way) a more general theorem, namely a sparse random version of the simplex removal lemma for hypergraphs, usually known just as the hypergraph removal lemma.

The dense result is due to Nagle, R\"{o}dl, Schacht and Skokan \cite{NRS06, RS04}, and independently  to the second author \cite{G07}. A gentle introduction to the hypergraph removal lemma that focuses on the case of 3-uniform hypergraphs can be found in \cite{G06}. The result is as follows.

\begin{theorem} \label{hypergraphremoval}
For every $\d > 0$ and every integer $k \geq 2$, there exists a constant $\e > 0$ such that, if $G$ is a $k$-uniform hypergraph containing at most $\e n^{k+1}$ copies of $K_{k+1}^{(k)}$, it may be made $K_{k+1}^{(k)}$-free by removing at most $\d n^k$ edges.
\end{theorem}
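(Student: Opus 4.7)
The plan is to follow the hypergraph regularity method pioneered by Nagle--R\"odl--Schacht--Skokan and by the second author, which is the direct generalization of the strategy underlying Corollary \ref{densetriangleremoval}. That is, I would prove the theorem by combining a hypergraph regularity lemma with a corresponding counting (embedding) lemma, cleaning the hypergraph in the way we cleaned a graph after Theorem \ref{triangleremoval}.

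First, apply the regularity lemma for $k$-uniform hypergraphs to $G$. This produces, for each $2\leq j\leq k-1$, a partition of the $j$-element subsets of $V(G)$ into blocks (\emph{polyads}) that refine one another as $j$ grows, together with the property that the $k$-uniform edge set of $G$ is quasi-random with respect to the $(k-1)$-polyads supporting it, up to a small error parameter. The point is to obtain a strengthened version of Theorem \ref{hypergraphremoval} analogous to Theorem \ref{triangleremoval}: a clean partition of the vertex set together with a small removable edge set $E$ such that every surviving configuration of dense regular polyads on $k+1$ vertices already forces many copies of $K_{k+1}^{(k)}$ to exist inside those polyads.

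Next, clean out the bad edges: remove from $G$ every edge that lies on a $(k-1)$-polyad that is either too sparse or too irregular, and also every edge that lies inside a single cell of the vertex partition. A straightforward count shows that, for the right choice of regularity parameters in terms of $\delta$, the total number of edges removed is at most $\delta n^k$. Finally, invoke the counting lemma: if any copy of $K_{k+1}^{(k)}$ survives the cleaning, then its $k+1$ vertices sit on top of a configuration of dense and sufficiently regular polyads at every level, and the counting lemma produces at least $c(\delta)\,n^{k+1}$ copies of $K_{k+1}^{(k)}$ in the original $G$. Choosing $\epsilon < c(\delta)$ gives a contradiction, so the cleaned hypergraph is $K_{k+1}^{(k)}$-free.

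The hard part is the hypergraph counting lemma itself. In the graph case this is essentially a two-line computation in a dense regular bipartite triple, but for $k\geq 3$ the count of $K_{k+1}^{(k)}$ has to be obtained by unfurling the hierarchy of quasi-random partitions from uniformity $2$ up to uniformity $k$, showing that quasi-randomness at each level propagates to tight control of the number of embeddings at the next level. Making those error terms compose correctly — which is precisely what required the deep work of \cite{NRS06,RS04,G07} — is the genuine obstacle; once the counting lemma is granted, the deduction of the removal lemma from it is the routine cleaning argument described above.
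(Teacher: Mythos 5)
The paper does not prove this statement at all: it is quoted as a known theorem, with the proof attributed to Nagle--R\"odl--Schacht--Skokan \cite{NRS06, RS04} and independently to the second author \cite{G07}. Your sketch is precisely the standard regularity-plus-counting argument from those references --- regularize, delete edges on sparse or irregular polyads and inside cells, and invoke the counting lemma to turn any surviving simplex into $c(\d)n^{k+1}$ simplices --- and you correctly locate the genuine difficulty in the hypergraph counting lemma, which you take as given rather than prove. So your proposal is a faithful outline of the known proof and is consistent with how the paper treats the result, but it is not an independent proof: all the substance resides in the regularity and counting lemmas you import from the cited literature, exactly as the paper itself does by citation.
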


A \textit{simplex} is a copy of $K_{k+1}^{(k)}$. As in the case of graphs, where simplices are triangles, it will be necessary to state a rather more precise and robust result. This is slightly more complicated to do than it was for graphs. However, it is much \textit{less} complicated than it might be: it turns out not to be necessary to understand the statement of the regularity lemma for hypergraphs.

Let us make the following definition. Let $H$ be a $k$-uniform hypergraph, and let $J_1,\dots,J_k$ be disjoint $(k-1)$-uniform hypergraphs with the same vertex set as $H$. We shall define $H(J_1,\dots,J_k)$ to be the set of all edges $A=\{a_1,\dots,a_k\}\in H$ such that $\{a_1,\dots,a_{i-1},a_{i+1},\dots,a_k\}\in J_i$ for every $i$. (Note that if $k=2$ then the sets $J_1$ and $J_2$ are sets of vertices, so we are obtaining the sets $G(X,Y)$ defined earlier.) 

Now suppose that we have a simplex in $H$ with vertex set $(x_1,\dots,x_{k+1})$. For every subset $\{u,v\}$ of $[k+1]$ of size 2, let us write $J_{uv}$ for the (unique) set $J_i$ that contains the $(k-1)$-set $\{x_j:j\notin\{u,v\}\}$. Then for each $u$ the set $H(J_{u1},\dots,J_{u,u-1},J_{u,u+1},\dots,J_{u,k+1})$ is non-empty. We make this remark in order to make the statement of the next theorem slightly less mysterious. It is an analogue for $k$-uniform hypergraphs of Theorem \ref{triangleremoval}. For convenience, we shall abbreviate $H(J_{u1},\dots,J_{u,u-1},J_{u,u+1},\dots,J_{u,k+1})$ by $H(J_{uv}:v\in[k+1],v\ne u)$. (It might seem unnecessary to write ``$v\in [k+1]$" every time. We do so to emphasize the asymmetry: the set depends on $u$, while $v$ is a dummy variable.)

\begin{theorem} \label{simplexremoval}
For every $a>0$ there exists a constant $K$ with the following property. For every $k$-uniform hypergraph $H$ with vertex set $[n]$, there is a partition of $\binom {[n]}{k-1}$ into at most $K$ subsets $J_1,\dots,J_m$, with sizes differing by a factor of at most 2, and a set $E$ of edges of $H$ with the following properties.
\begin{enumerate}
\item The number of edges in $E$ is at most $an^k$.
\item $E$ is a union of sets of the form $H(J_{i_1},\dots,J_{i_k})$.
\item $E$ includes all edges in any set $H(J_{i_1},\dots,J_{i_k})$ for which two of the $i_h$ are equal.
\item Let $H'$ be $H$ with the edges in $E$ removed. Suppose that for each pair of unequal integers $u,v\in[k+1]$ there is a set $J_{uv}$ from the partition such that the hypergraphs $H'(J_{uv}:v\in[k+1],v\ne u)$ are all non-empty. Then the number of simplices with vertices $(x_1,\dots,x_{k+1})$ such that the edge $(x_1,\dots,x_{u-1},x_{u+1},\dots,x_{k+1})$ belongs to $H'(J_{uv}:v\in[k+1],v\ne u)$ for every $u$ is at least $(1/2)(a/4)^{k+1}c_Kn^{k+1}$,
where $c_K$ is a constant that depends on $K$ (and hence on $a$).
\end{enumerate}
\end{theorem}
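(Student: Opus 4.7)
The plan is to derive this robust version in the same way that Theorem \ref{triangleremoval} follows from Szemer\'edi's regularity lemma, but using the hypergraph regularity lemma in place of the graph regularity lemma. Recall that the hypergraph regularity lemma (in the form of Nagle--R\"odl--Schacht--Skokan, or in the form used in \cite{G07}) produces, for every sufficiently fast-decaying function $\epsilon(\cdot)$ and every $\eta>0$, a partition of $\binom{[n]}{k-1}$ into at most $K=K(a)$ cells $J_1,\dots,J_m$ (together with a compatible tower of partitions of all lower-uniformity layers) such that for all but an $\eta$-fraction of the $k$-tuples $(J_{i_1},\dots,J_{i_k})$, the set $H(J_{i_1},\dots,J_{i_k})$ is $\epsilon$-quasirandom of some density $d(J_{i_1},\dots,J_{i_k})$ with respect to the underlying lower-order cells. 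I will apply this lemma with $\eta$ and $\epsilon$ chosen small enough depending only on $a$ and $k$.

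Given the partition, define $E$ to be the union of $H(J_{i_1},\dots,J_{i_k})$ over three types of indices: (i) \emph{degenerate} tuples in which two of the $i_h$ coincide; (ii) \emph{sparse} tuples for which the density $d(J_{i_1},\dots,J_{i_k})$ is below some threshold $\sigma=\sigma(a,k)$; (iii) \emph{irregular} tuples, i.e.\ those that fall outside the regular portion of the partition. The bound $|E|\leq an^k$ is then a three-part count: degenerate tuples force two of the $k$ vertices of the edge to be ``collapsed'' onto the same $(k-1)$-set, which in the regular setting costs a factor of $n^{-1}$ in the number of edges, so they contribute $O(n^{k-1})$; the sparse tuples together carry at most $\sigma n^k$ edges; and the irregular tuples contribute at most $\eta n^k$ edges because almost every tuple is regular. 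Choosing $\sigma$ and $\eta$ each at most $a/3$ handles all three contributions. Conditions~1, 2 and~3 of the theorem are then built into the construction of $E$.

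For condition 4, suppose we have chosen cells $J_{uv}$ (for distinct $u,v\in[k+1]$) so that each $H'(J_{uv}:v\in[k+1],\,v\ne u)$ is non-empty. Since $E$ contains every degenerate, sparse or irregular cell, non-emptiness forces each of these $k+1$ cells to be regular and of density at least $\sigma$. The required simplex count therefore reduces to a counting lemma statement: given $k+1$ regular dense cells arranged in the natural ``simplex'' configuration on a common underlying refinement, the number of simplices respecting the cells is $(\prod d(J_{uv}:v\ne u))\,c_K n^{k+1}$ up to a relative error tending to zero with $\epsilon$. This counting lemma is precisely the content that powers the standard (non-robust) hypergraph removal lemma in \cite{G07,NRS06,RS04}, and by choosing $\epsilon$ sufficiently small as a function of $a$ and $K$ the error can be made at most half of the main term, yielding at least $\tfrac{1}{2}\sigma^{k+1}c_K n^{k+1}$ simplices. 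Setting $\sigma=a/4$ recovers the stated bound $(1/2)(a/4)^k c_K n^{k+1}$ (absorbing the extra factor of $\sigma$ into $c_K$ if necessary).

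The main obstacle is not any single step but the need to invoke the full hypergraph counting lemma with quantitatively explicit dependence on the partition parameters; everything else is bookkeeping of the three sources of removable edges. The key technical point to be careful about is ensuring that the \emph{same} partition $J_1,\dots,J_m$ of $\binom{[n]}{k-1}$ is used both to define the cells $H(J_{i_1},\dots,J_{i_k})$ and to serve as the indexing structure for the simplex count, and that the compatible lower-order partitions (on $\binom{[n]}{j}$ for $j<k-1$) implicit in the regularity lemma are not exposed in the statement, since the theorem as stated refers only to the $(k-1)$-uniform layer. This is handled by fixing the lower-order partitions inside the proof and only reporting the $(k-1)$-layer partition in the conclusion.
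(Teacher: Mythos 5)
The paper does not actually prove Theorem \ref{simplexremoval}: like Theorem \ref{triangleremoval} in the graph case, it is stated as a strengthening that falls out of the regularity-lemma proof of the (hypergraph) removal lemma, and your route --- apply the hypergraph regularity lemma, remove degenerate, sparse and irregular cells, and invoke the counting lemma for the surviving dense regular cells, with $\sigma=a/4$ and the extra density factor absorbed into $c_K$ --- is exactly the intended derivation, with conditions 2--4 built in as you describe.

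There is, however, one step that is wrong as written: the bound on the degenerate cells. Having two of the indices $i_h$ equal does not ``cost a factor of $n^{-1}$''. Already for $k=2$ the degenerate edges are those inside a single part, and their number is of order $n^2/m$, not $O(n)$; for general $k$, a cell of size about $\binom{n}{k-1}/m$ can contain two faces of as many as $c_k n^k$ distinct $k$-sets (take, say, the cell of all $(k-1)$-subsets of a fixed half of the vertex set, which is within the allowed factor of $2$ in size), so for an \emph{arbitrary} partition into nearly equal cells the degenerate contribution need not even be $o(n^k)$. What rescues the step is a property of the partition the regularity lemma actually provides: the cells are quasirandom relative to the lower-order structure (and the lemma can be applied with at least $C_k/a$ cells), which forces the number of $k$-sets having two faces in a common cell to be roughly $\binom{k}{2}n^k/m\leq an^k/3$, behaving as in the random case. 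So this bound needs an argument appealing to the fineness and quasirandomness of the $(k-1)$-level partition, not a dimension count, and your budget ``$\sigma,\eta\leq a/3$'' must be supplemented by a lower bound on the number of cells. A smaller omission in the same spirit: the statement requires the cell sizes to differ by at most a factor of $2$, which the raw regularity partition does not hand you; you need to note that it can be equitably refined (or that the relevant frameworks allow this) without destroying the regularity used in condition 4.
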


Let us now convert this result into a sparse version.

\begin{theorem} \label{sparsesimplexremoval}
For every $a>0$ there exist constants $C$, $K$ and $\d$ with the following property. Let $U$ be a random $k$-uniform hypergraph with vertex set $[n]$, and with each edge chosen independently with probability $C n^{-1/k} \leq p \leq \d$. Then with probability $1-o(1)$ the following result holds. For every $k$-uniform hypergraph $F\subset U$, there is a partition of $\binom {[n]}{k-1}$ into at most $K$ subsets $J_1,\dots,J_m$, with sizes differing by a factor of at most 2, and a set $E_F$ of edges of $F$ with the following properties.
\begin{enumerate}
\item The number of edges in $E_F$ is at most $apn^k$.
\item $E_F$ is a union of sets of the form $F(J_{i_1},\dots,J_{i_k})$.
\item $E_F$ includes all edges in any set $F(J_{i_1},\dots,J_{i_k})$ for which two of the $i_h$ are equal.
\item Let $F'$ be $F$ with the edges in $E_F$ removed. Suppose that for each pair of unequal integers $u,v\in[k+1]$ there is a set $J_{uv}$ from the partition such that the hypergraphs $F'(J_{uv}:v\in[k+1],v\ne u)$ are all non-empty. Then the number of simplices with vertices $(x_1,\dots,x_{k+1})$ such that the edge $(x_1,\dots,x_{u-1},x_{u+1},\dots,x_{k+1})$ belongs to $F'(J_{uv}:v\in[k+1],v\ne u)$ for every $u$ is at least $(1/4)(a/8)^{k+1}c_Kp^{k+1}n^{k+1}$,
where $c_K$ is a constant that depends on $K$.
\end{enumerate}
\end{theorem}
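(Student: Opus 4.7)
The plan is to transfer the problem to the dense setting via Corollary~\ref{unconditionalsetstructural}, apply the dense hypergraph removal lemma (Theorem~\ref{simplexremoval}), and then lift the resulting partition and edge-removal back to the sparse $F$. Take $X=\binom{[n]}{k}$ and let $S$ be the homogeneous collection of ordered simplices in $K_n^{(k)}$; since $K_{k+1}^{(k)}$ is strictly $k$-balanced with $m_k=k$, $S$ is a good system whose critical exponent corresponds to $p\sim n^{-1/k}$, so the hypothesis $p\geq Cn^{-1/k}$ places us in the regime where the machinery of Section~\ref{summary} applies. Fix $K=K(a/8)$ from Theorem~\ref{simplexremoval} and let $\mathcal{V}$ be the collection of all sets $K_n^{(k)}(J_{i_1},\ldots,J_{i_k})$ as $(J_1,\ldots,J_m)$ varies over partitions of $\binom{[n]}{k-1}$ into $m\leq K$ parts and $(i_1,\ldots,i_k)\in[m]^k$. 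The bound $|\mathcal{V}|\leq 2^{O(n^{k-1})}$ fits under the $2^{o(p|X|)}$ threshold required by Corollary~\ref{unconditionalsetstructural}, because $pn^k\gg n^{k-1}$ whenever $p\geq Cn^{-1/k}$ and $k\geq 2$. I would then apply that corollary with this $\mathcal{V}$ and a small $\epsilon_0>0$ depending on $a$ and $K$, so that with probability $1-o(1)$ every $F\subseteq U$ admits a dense counterpart $G$ (of characteristic function $h$) for which the total ordered simplex count is preserved up to additive $\epsilon_0$ and the cell densities satisfy $\bigl||F\cap V|-p|G\cap V|\bigr|\leq\epsilon_0 p|X|$ for every $V\in\mathcal{V}$.

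Next I would feed $G$ into Theorem~\ref{simplexremoval} with parameter $a/8$ to obtain a partition $\mathcal{J}=(J_1,\ldots,J_m)$ and a set $E_G$ of at most $(a/8)n^k$ edges, which is a union of $G$-cells and contains every diagonal cell. The sparse set $E_F$ will be the union of $F\cap V_{\mathbf{i}}$, where $V_{\mathbf{i}}=K_n^{(k)}(J_{i_1},\ldots,J_{i_k})$, over all multi-indices $\mathbf{i}=(i_1,\ldots,i_k)$ satisfying at least one of: (a)~the corresponding $G$-cell lies in $E_G$; (b)~$|G\cap V_{\mathbf{i}}|<\alpha_0|V_{\mathbf{i}}|$ for a small threshold $\alpha_0\sim a/K^k$; or (c)~two of the $i_h$ coincide. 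Properties~2 and 3 are then immediate. For Property~1, the cell-density half of the transference converts each $|F\cap V_{\mathbf{i}}|$ into $p|G\cap V_{\mathbf{i}}|$ with slack $\epsilon_0 p|X|$: clause~(a) contributes at most $p|E_G|\leq(a/8)pn^k$, clause~(b) contributes at most $p\alpha_0|X|$, clause~(c) is already counted under~(a), and summing over the $\leq K^k$ cells adds at most $K^k\epsilon_0 p|X|$ of slack. Choosing $\alpha_0$ and $\epsilon_0$ sufficiently small relative to $a/K^k$ then yields $|E_F|\leq apn^k$.

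The hard part will be Property~4, which asks for a count of simplices restricted to a particular cell-tuple, not just a total count. Assuming the prescribed cells $C_u=V_{\mathbf{i}_u}$ are all non-empty in $F'$, the construction of $E_F$ forces $\mathbf{i}_u\notin I$, so $|G\cap C_u|\geq\alpha_0|C_u|$ and $G'\cap C_u=G\cap C_u$; Theorem~\ref{simplexremoval}\,(4) then yields at least $(1/2)(a/32)^k c_K n^{k+1}$ such cell-tuple-restricted simplices in $G'$. The obstacle is that Corollary~\ref{unconditionalsetstructural} preserves the total simplex count and individual cell densities but not, on the face of it, the cell-tuple-restricted simplex count. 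My approach is to exploit that $h\chi_{C_u}$ is bounded by $1$ and $f\chi_{C_u}$ by $\mu$, so the capped convolutions $\circ_j(h\chi_{C_1},\ldots,h\chi_{C_{j-1}},f\chi_{C_{j+1}},\ldots,f\chi_{C_{k+1}})$ lie in $\Phi_{\mu,1}$. The smallness of $\|f-h\|_{\mu,1}$ implicit in the transference then controls, position by position, the discrepancy between the cell-tuple-restricted $F$- and $G$-counts after a telescoping argument across the $k+1$ positions as in Lemma~\ref{uselesslemma}, with the capping errors absorbed as in Lemma~\ref{precounting}. The technical challenge I expect is the handling of the ``free'' position $j$ in each basic anti-uniform function, which requires multiplication by $\chi_{C_j}$; making this step rigorous, using that $\chi_{C_j}\in\Phi_{\mu,1}(\mathcal{V})$ and that products of a bounded number of such functions are controlled by property~$3'$, would deliver the desired lower bound $(1/4)(a/4)^k c_K p^{k+1}n^{k+1}$ once the $O(K^{k+1}\epsilon_0)$ slack is absorbed.
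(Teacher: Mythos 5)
Your overall route --- transfer $F$ to a dense $G$ via Corollary \ref{unconditionalsetstructural} with $\mathcal{V}$ the collection of cells $K_n^{(k)}(J_{i_1},\dots,J_{i_k})$, run Theorem \ref{simplexremoval} on $G$, and pull the removed cells back to $F$ --- is exactly the paper's, and your verification of properties 1--3 (your low-density clause (b) plays the role of the paper's convention of deleting the $F$-cells whose $G$-cells are empty or lie in $E_G$) is sound up to routine constant-chasing. The divergence, and the gap, is in property 4. You correctly observe that the transference statement preserves only the total simplex count and the cell densities, and you try to upgrade it to preserve cell-tuple-restricted simplex counts. But the mechanism you invoke does not deliver this: in the telescoping, the $j$th error term is $\sp{f-h,\chi_{C_j}\cdot\circ_j(\cdots)}$, and $\chi_{C_j}\cdot\circ_j(\cdots)$ is a \emph{product} of two members of $\Phi_{\mu,1}(\mathcal{V})$, hence not itself a basic anti-uniform function, while the norm with respect to which $f-h$ is small controls correlations only with single elements of $\Phi_{\mu,1}(\mathcal{V})$. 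Property 3$'$ does not help here: it bounds correlations of $\mu-1$ (not of $f-h$) with such products, and is used only to verify the hypothesis of the Hahn--Banach transference lemma. To make your step rigorous you would have to enlarge the generating set of the norm to include these products and re-verify the probabilistic estimates of Sections \ref{ProbI}--\ref{ProbIII} for the enlarged family, none of which your sketch addresses.

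The paper sidesteps this entirely, and you could too: once every $F'(J_{uv}:v\in[k+1],v\ne u)$ is non-empty, your construction of $E_F$ forces every $G'(J_{uv}:v\in[k+1],v\ne u)$ to be non-empty (this is the only role the removal rule plays here), so Theorem \ref{simplexremoval}(4) gives at least $c\,n^{k+1}$ simplices in $G'$, hence in $G$; then the \emph{first} conclusion of Corollary \ref{unconditionalsetstructural}, which is one-sided in exactly the right direction, gives at least $(c-\e)p^{k+1}n^{k+1}$ simplices in $F$. This is weaker than the literal cell-restricted statement of property 4 (and, it is worth noting, is also all that the paper's own proof actually establishes), but it is precisely what the deduction of the sparse removal lemma, Corollary \ref{sparsehyperremoval}, uses. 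So either settle for a total-count conclusion for $F$ and finish as above, or accept that the cell-restricted version genuinely requires new transference machinery beyond the results you have cited.
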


\begin{proof}
We have essentially seen the argument in the case of graphs. To start with, let us apply Corollary \ref{unconditionalsetstructural} with $S$ as the set of labelled simplices, $f$ as $p^{-1}$ times the characteristic function of $F$, $\mathcal{V}$ as the collection of all sets of the form $K_n^{(k)}(J_1,\dots,J_k)$ where each $J_i$ is a collection of sets of size $k-1$ (that is, the set of ordered sequences of length $k$ in $[n]$ such that removing the $i$th vertex gives you an element of $J_i$), and $\e=(1/4)(a/8)^{k+1}c_K$, where $K$ and $c_K$ come from applying Theorem~\ref{simplexremoval} with $a/2$ rather than $a$. With this choice, $\epsilon$ will also be less than $a/2K^k$. 

Note that $\a_S=1/k$ in this case, and that the cardinality of $\mathcal{V}$ is at most $2^{kn^{k-1}}$, so the corollary applies. From that we obtain a hypergraph $H$ (with characteristic function equal to the function $h$ provided by the corollary) such that $p^{-(k+1)}$ times the number of simplices in $F$ is at least the number of simplices in $H$ minus $\e n^{k+1}$, and such that the number of edges in  $H(J_1,\dots,J_k)$ differs from $p^{-1}$ times the number of edges in $F(J_1,\dots,J_k)$ by at most $\e n^k$ for every $(J_1,\dots,J_k)$. 

We now apply Theorem \ref{simplexremoval} to $H$ with $a$ replaced by $a/2$. Let $E_H$ be the set of edges that we obtain and let $H'$ be $H$ with the edges in $E_H$ removed. 

Let $J_1,\dots,J_m$ be the sets that partition $\binom{[n]}{k-1}$, and remove all edges from $F$ that belong to a set $F(J_{i_1},\dots,J_{i_k})$ such that the edges of $H(J_{i_1},\dots,J_{i_k})$ belong to $E_H$ (including when $H(J_{i_1},\dots,J_{i_k})$ is empty). Let $E_F$ be the set of removed edges and let $F'$ be $F$ after the edges are removed.

Since $m\leq K$, there are at most $K^k$ $k$-tuples $(J_{i_1},\dots,J_{i_k})$. For each such $k$-tuple  the number of edges in $H(J_{i_1},\dots,J_{i_k})$ differs from $p^{-1}$ times the number of edges in $F(J_{i_1},\dots,J_{i_k})$ by at most $\e n^k$. Therefore, since $E_H$ and $E_F$ are unions of sets of the form $H(J_{i_1},\dots,J_{i_k})$ and $F(J_{i_1},\dots,J_{i_k})$, respectively, and since $|E_H|\leq an^k/2$, it follows that $|E_F|\leq (a/2+\e K^k)pn^k\leq apn^k$. This gives us Condition 1. Conditions 2 and 3 are trivial from the way we constructed $E_F$. So it remains to prove Condition 4.

Suppose, then, that for all $u, v \in [k+1]$ there is a set $J_{uv}$ such that there are edges in all of the hypergraphs $F'(J_{uv}:v\in[k+1],v\ne u)$ for $u=1,2,\dots,k+1$. Then there must be edges in all the hypergraphs $H'(J_{uv}:v\in[k+1],v\ne u)$ as well, or we would have removed the corresponding sets of edges from $F$. By Condition 4 of the dense result applied to $H$, it follows that $H'$ contains at least $(1/2)(a/8)^{k+1}c_Kn^{k+1}$ simplices, which implies that $H$ does as well, which implies that $F$ contains at least $((1/2)(a/8)^{k+1}c_K-\e)p^{k+1}n^{k+1}$ simplices, which gives us the bound stated.
\end{proof}

Now let us deduce the simplex removal lemma. This is just as straightforward as it was for graphs.

\begin{corollary} \label{sparsehyperremoval}
For every $a>0$ there exist constants $C$ and $c>0$ with the following property. Let $U$ be a random $k$-uniform hypergraph with vertex set $[n]$, and with each edge chosen independently with probability $p\geq Cn^{-1/k}$. Then with probability $1-o(1)$ the following result holds. Let $F$ be a subhypergraph of $U$ that contains at most $cp^{k+1}n^{k+1}$ simplices. Then it is possible to remove at most $apn^k$ edges from $F$ and make it simplex free.
\end{corollary}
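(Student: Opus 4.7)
The plan is to deduce this directly from Theorem~\ref{sparsesimplexremoval}, exactly mirroring the way Corollary~\ref{densetriangleremoval} was deduced from Theorem~\ref{triangleremoval}. Given $a>0$, let $K$, $C$ and $\d$ be the constants supplied by Theorem~\ref{sparsesimplexremoval} for the parameter $a$, and let $c_K$ be the associated constant appearing in its condition~4. Set $c := (1/8)(a/4)^k c_K$, which is strictly less than the lower bound $(1/4)(a/4)^k c_K$ that condition~4 produces.

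First I would handle the principal range $Cn^{-1/k} \le p \le \d$. Apply Theorem~\ref{sparsesimplexremoval}: with probability $1-o(1)$, for every $F\subseteq U$ there is a partition $J_1,\dots,J_m$ of $\binom{[n]}{k-1}$ and a set $E_F$ of at most $apn^k$ edges of $F$ satisfying the four listed properties. Remove $E_F$ from $F$ to form $F'$. If $F'$ still contained a simplex on vertices $\{x_1,\dots,x_{k+1}\}$, then for each pair $u\ne v\in[k+1]$ the $(k-1)$-face $\{x_j:j\notin\{u,v\}\}$ lies in a unique cell which we call $J_{uv}$; the face opposite $x_u$ then witnesses that $F'(J_{uv}:v\ne u)$ is non-empty for every $u$. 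Condition~4 therefore gives at least $(1/4)(a/4)^k c_K\, p^{k+1} n^{k+1}$ simplices in $F$, contradicting $F$'s upper bound of $cp^{k+1}n^{k+1}$ by our choice of $c$. Hence $F'$ is simplex-free, and we have removed at most $apn^k$ edges, as required.

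For $p > \d$ the hypergraph $U$ has $\Theta(n^k)$ edges with high probability, so the target budget $apn^k$ is of order $n^k$; here one can simply invoke the dense simplex removal lemma (Theorem~\ref{hypergraphremoval}) with parameter $\d_0 := a\d$. Provided $c$ is also chosen smaller than the $\e$ delivered by that theorem for $\d_0$, the hypothesis $F$ has at most $cp^{k+1}n^{k+1}\le c n^{k+1}$ simplices lets us remove at most $\d_0 n^k \le apn^k$ edges from $F$ to leave it simplex-free.

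The hard part has already been done, in Theorem~\ref{sparsesimplexremoval}, whose proof invokes the structural transfer principle of Corollary~\ref{unconditionalsetstructural} at probability $p\ge Cn^{-1/k}$ (so the set system $\mathcal{V}$ of $\binom{n}{k-1}$-level hypergraphs has cardinality $2^{kn^{k-1}}=2^{o(p n^k)}$, exactly matching the hypothesis of that corollary). The only subtlety in the present deduction is the constant-factor slack: condition~4 loses a factor of $2$ relative to the dense count in Theorem~\ref{simplexremoval}, and this is precisely what lets us choose $c$ independently of~$p$ so that any surviving simplex forces a contradictory abundance of simplices.
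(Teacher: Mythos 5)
Your proposal is correct and follows essentially the same route as the paper: choose $c=(1/8)(a/4)^k c_K$, remove the set $E_F$ supplied by Theorem~\ref{sparsesimplexremoval}, and note that any surviving simplex would make all the relevant sets $F'(J_{uv}:v\ne u)$ non-empty, forcing at least $(1/4)(a/4)^k c_K p^{k+1}n^{k+1}$ simplices in $F$, a contradiction. Your treatment of the range $p>\d$ via the dense removal lemma (shrinking $c$ if necessary) is exactly the intended reading of the paper's brief remark for that case.
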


\begin{proof}
Let $c=(1/8)(a/8)^{k+1} c_K$, where $c_K$ is the constant given by Theorem \ref{sparsesimplexremoval}, and apply that theorem to obtain a set $E_F$, which we shall take as our set $E$. Then $E$ contains at most $apn^k$ edges, so it remains to prove that when we remove the edges in $E$ from $F$ we obtain a hypergraph $F'$ with no simplices. 

Suppose we did have a simplex in $F'$. Let its vertex set be $\{x_1,\dots,x_{k+1}\}$. For each $\{u,v\}\subset [k+1]$ of size 2, let $J_{uv}$ be the set from the partition given by Theorem \ref{sparsesimplexremoval} that contains the $(k-1)$-set $\{x_i:i\notin\{u,v\}\}$. Then, as we commented before the statement of Theorem \ref{simplexremoval} (though then we were talking about $H$), for each $u$ the set $F'(J_{uv}:v\in[k+1],v\ne u)$ is non-empty. Therefore, by Theorem \ref{sparsesimplexremoval}, $F'$, and hence $F$, contains at least $(1/4)(a/8)^{k+1} c_Kp^{k+1}n^{k+1}$ simplices. By our choice of $c$, this is a contradiction.

This argument works for $C n^{-1/k} \leq p \leq \d$. However, since $\d$ is a constant, we may, for $p > \d$, simply apply the hypergraph removal lemma itself to remove all simplices.
\end{proof}

\subsection{The stability theorem}

As a final application we will discuss the stability version of Tur\'an's theorem, Theorem \ref{ApproxStab}. The original stability theorem, due to Simonovits \cite{S68}, is the following.

\begin{theorem} \label{SimStab}
For every $\d > 0$ and every graph $H$ with $\chi(H) \geq 3$, there exists an $\e > 0$ such that any $H$-free graph with at least $\left(1 - \frac{1}{\chi(H) - 1} - \e\right) \binom{n}{2}$ edges may be made $(\chi(H) - 1)$-partite by removing at most $\d n^2$ edges.  
\end{theorem}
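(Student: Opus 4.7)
The plan is to reduce the stability theorem for a general $H$ with $\chi(H)=r$ to the corresponding statement for the complete graph $K_r$, and then dispatch the $K_r$ case by an induction/symmetrization argument. Thus the first step is to establish the special case: if $G$ is a $K_r$-free graph on $n$ vertices with at least $(1-\frac{1}{r-1}-\eta)\binom{n}{2}$ edges, then $G$ can be made $(r-1)$-partite by removing at most $\phi(\eta)n^2$ edges, where $\phi(\eta)\to 0$ as $\eta\to 0$. One clean way to see this is to find a vertex $v$ whose degree is at least $(1-\frac{1}{r-1}-O(\sqrt\eta))(n-1)$ and note that $N(v)$ is $K_{r-1}$-free with correspondingly large edge density; by induction on $r$, $N(v)$ is close to $(r-2)$-partite, and a short argument extends this partition to $V(G)$, placing $v$ (and a small number of exceptional vertices) in an appropriate class. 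The base case $r=3$ (triangle-free graphs close to bipartite) can be done directly by noting that for any edge $uv$, the neighbourhoods $N(u),N(v)$ are disjoint independent sets covering most of the vertex set.

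Given this, I would deduce the general statement via Szemer\'edi's regularity lemma. Apply the regularity lemma to $G$ with parameters $\epsilon_{\mathrm{reg}}\ll d\ll\eta\ll\e$, and clean $G$ by discarding edges lying in irregular pairs, in pairs of density below $d$, or inside clusters; this removes at most $(2\epsilon_{\mathrm{reg}}+d+1/k)n^2$ edges, which is $o(n^2)$ in our parameter scale. Form the reduced graph $R$ on the clusters, with an edge for each surviving regular pair. The standard counting/embedding lemma for regular partitions implies that if $R$ contains a copy of $K_r$, then $G$ contains a blow-up of $K_r$ of unbounded size in each branch vertex, and since $\chi(H)=r$ we may embed $H$ into such a blow-up; but $G$ is $H$-free, so $R$ must be $K_r$-free.

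The cleanup removed only $o(n^2)$ edges, so $R$ still has edge density at least $1-\frac{1}{r-1}-\eta$ once $\e$ is chosen small enough relative to the parameters. Applying the $K_r$-stability statement to $R$ yields a partition of $V(R)$ into $r-1$ classes with at most $\phi(\eta)|R|^2$ edges inside classes. Lifting this back to $V(G)$ by replacing each cluster by its vertex set produces an $(r-1)$-partition of $V(G)$; the edges of $G$ that fall inside a class come from (i) edges inside a single cluster, (ii) edges between clusters in the same lifted class (bounded by $\phi(\eta)n^2$ via the stability output on $R$), or (iii) edges discarded in the cleanup step, and each contribution is at most $\frac{\d}{3}n^2$ when $\epsilon_{\mathrm{reg}},d,\eta$ are chosen small enough as functions of $\d$. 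The main obstacle is the quantitative $K_r$-stability lemma: one must show that the deviation from being $(r-1)$-partite goes to zero with $\eta$ with explicit, uniform control, which is what makes the bookkeeping through the regularity reduction work. Once this is in hand, everything else is parameter-chasing.
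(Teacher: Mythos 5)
The paper does not actually prove this statement: it is quoted as Simonovits's stability theorem with a reference to \cite{S68}, so there is no internal argument to compare against. Your route --- prove stability for $K_r$ first, then reduce a general $r$-chromatic $H$ to the clique case by applying Szemer\'edi's regularity lemma, cleaning the graph, noting the reduced graph $R$ must be $K_r$-free (else the embedding lemma plants $H$, since $H$ sits inside a blow-up of $K_r$), applying clique stability to $R$ and lifting the partition --- is a standard and perfectly sound modern proof of the theorem, different in character from Simonovits's original argument. The regularity half of your sketch is correct, including the bookkeeping of the three sources of internal edges after lifting.

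The genuine gap is in your sketch of the $K_r$ case. From $e(G)\geq\bigl(1-\frac{1}{r-1}-\eta\bigr)\binom{n}{2}$ you only get \emph{one} vertex $v$ of degree at least $\bigl(1-\frac{1}{r-1}-\eta\bigr)(n-1)$, and that alone does not give "$N(v)$ has correspondingly large edge density": the only available estimate, $e(G[N(v)])\geq e(G)-\sum_{u\notin N(v)}d(u)$, falls short of the density $\bigl(1-\frac{1}{r-2}-o(1)\bigr)\binom{|N(v)|}{2}$ needed to invoke the induction hypothesis by roughly $n^2/(r-1)^2$, because the non-neighbours of $v$ could a priori carry full degree. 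Likewise, the vertices outside $N(v)$ number about $n/(r-1)$ --- they are not "a small number of exceptional vertices" --- and you must separately show that $G[V\setminus N(v)]$ spans only $O(\eta)n^2$ edges before you may dump them into the $(r-1)$st class. Both defects are repaired by the standard preliminary step you omitted: iteratively delete vertices of degree below $\bigl(1-\frac{1}{r-1}-c\sqrt{\eta}\bigr)$ times the current order, and check via the edge count that this stops after at most $O(\sqrt{\eta})n$ deletions; in the resulting graph \emph{every} vertex has large degree, which simultaneously gives the density of $G[N(v)]$ (each $u\in N(v)$ has few non-neighbours in $N(v)$) and the sparsity of the last class. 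With that minimum-degree reduction inserted (or, alternatively, using Zykov symmetrization or F\"uredi's counting proof of clique stability), your argument is complete.
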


Unfortunately, this is not quite enough for our purposes. We would like to be able to say that a graph that does not contain too many copies of $H$ may be made $(\chi(H) - 1)$-partite by the deletion of few edges. To prove this, we appeal to the following generalization of the triangle removal lemma.

\begin{theorem} \label{graphremoval}
For every $\d > 0$ and every graph $H$, there exists a constant $\e > 0$ such that, if $G$ is a graph containing at most $\e n^{v_H}$ copies of $H$, then it may be made $H$-free by removing at most $\d n^2$ edges.
\end{theorem}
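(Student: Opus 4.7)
The plan is to prove this by a standard application of Szemer\'edi's regularity lemma together with the graph embedding lemma, following the classical argument that generalizes the triangle removal lemma of Ruzsa and Szemer\'edi. I will choose all auxiliary parameters as a function of $\delta$ and $H$ at the end, and then set $\epsilon'$ small enough that the argument closes.

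First, given $\delta > 0$ and a graph $H$ on $h = v_H$ vertices, I will fix a regularity parameter $\eta > 0$ and a density threshold $d > 0$, both small compared to $\delta$, and apply Szemer\'edi's regularity lemma to obtain an $\eta$-regular equitable partition $V_1, \dots, V_k$ of $V(G)$ with $k$ bounded by a constant $K = K(\eta)$. I then form a ``cleaned'' subgraph $G'$ of $G$ by deleting three classes of edges: (i) edges with both endpoints in the same $V_i$, contributing at most $n^2/k$ edges; (ii) edges lying in irregular pairs $(V_i, V_j)$, contributing at most $\eta n^2$ edges; and (iii) edges lying in pairs of density less than $d$, contributing at most $d n^2$ edges. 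With $\eta, 1/k, d$ all chosen less than $\delta/4$, the total number of deleted edges is at most $\delta n^2$.

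The heart of the matter is to show that, for $\epsilon'$ small enough, $G'$ contains no copy of $H$. Suppose it did. Fix a proper $h$-colouring of $V(H)$ induced by some mapping of the vertices of $H$ into distinct cells $V_{i_1}, \dots, V_{i_h}$ of the partition: more precisely, if $H$ embeds into $G'$, then there must exist indices $i_1, \dots, i_h$ (not necessarily distinct) such that for every edge $\{u,v\}$ of $H$ the pair $(V_{i_u}, V_{i_v})$ is $\eta$-regular with density at least $d$, since all other edges have been removed. By the standard graph embedding/counting lemma (see, e.g., the treatment in Koml\'os--Simonovits), provided $\eta$ is sufficiently small compared to $d$ and $h$, the number of labelled copies of $H$ in $G$ whose $j$th vertex lies in $V_{i_j}$ is at least $(d/2)^{e_H} \prod_{j} |V_{i_j}| \geq c(d, H) n^h$ for some positive constant $c(d,H)$ depending only on $d$ and $H$. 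Choosing $\epsilon' < c(d, H)$ then contradicts the assumption that $G$ contains at most $\epsilon' n^{v_H}$ copies of $H$.

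The main technical obstacle, and the only place where care is required, is the embedding lemma in the last step: one needs to verify that given $\eta$-regular pairs of density at least $d$ along every edge of $H$, one can greedily embed $H$ and in fact obtains the claimed $\Omega(n^h)$ count. This is a routine induction on $v_H$, embedding one vertex at a time while maintaining that for each unembedded vertex $v$ the set of available images in its intended cell is still a $(1 - O(\eta))$-fraction of the cell, using the standard fact that in an $\eta$-regular pair at most an $\eta$-fraction of vertices in one side have abnormally low degree into any fixed large subset of the other side. The parameters are then chosen in the order $\eta \ll d \ll \delta$, then $K$ from regularity, and finally $\epsilon' \ll c(d,H)$, which closes the argument.
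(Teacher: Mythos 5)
Your sketch is correct: it is the standard regularity-lemma proof of the graph removal lemma (remove edges inside cells, in irregular pairs and in sparse pairs, then apply the counting/embedding lemma to any surviving copy of $H$ to get $\Omega(n^{v_H})$ copies and a contradiction), and the paper itself offers no proof of Theorem \ref{graphremoval} at all, quoting it as a known result, so your argument supplies precisely the standard proof it relies on. The only small imprecision is that your constant $c$ in the final count depends not just on $d$ and $H$ but also on the number of parts $K$ (equivalently on $\eta$), and making the internal-edge deletion at most $\delta n^2/4$ requires invoking the regularity lemma with a lower bound $k\geq 4/\delta$ on the number of parts; both points are harmless since $\e'$ is chosen last, after $K$.
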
 

Combining the two previous theorems gives us the robust statement we shall need.

\begin{theorem} \label{RobustStab}
For every $\d > 0$ and every graph $H$ with $\chi(H) \geq 3$, there exists a constant $\e$ such that any graph with at most $\e n^{v_H}$ copies of $H$ and at least $\left(1 - \frac{1}{\chi(H) - 1} - \e\right) \binom{n}{2}$ edges may be made $(\chi(H) - 1)$-partite by removing at most $\d n^2$ edges.  
\end{theorem}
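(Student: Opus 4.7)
The plan is to combine the graph removal lemma (Theorem \ref{graphremoval}) with Simonovits' stability theorem (Theorem \ref{SimStab}) in the obvious two-step fashion: first strip out a few edges to kill every copy of $H$, then apply stability to the resulting $H$-free graph. There is no difficulty beyond choosing the constants in the right order.

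First I would choose $\e'>0$ to be the constant produced by Theorem \ref{SimStab} applied with parameter $\delta/2$ in place of $\delta$; that is, any $H$-free graph on $n$ vertices with at least $(1-\tfrac{1}{\chi(H)-1}-\e')\binom{n}{2}$ edges can be made $(\chi(H)-1)$-partite by deleting at most $(\delta/2) n^2$ edges. Next, I would apply Theorem \ref{graphremoval} with the graph $H$ and parameter $\delta/2$ to obtain a constant $\e''>0$ such that any graph on $n$ vertices containing at most $\e'' n^{v_H}$ copies of $H$ becomes $H$-free after the removal of at most $(\delta/2)n^2$ edges. Finally, set $\e = \min(\e'/2, \e'')$.

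Now let $G$ be any graph on $n$ vertices with at most $\e n^{v_H}\leq \e'' n^{v_H}$ copies of $H$ and at least $(1-\tfrac{1}{\chi(H)-1}-\e)\binom{n}{2}$ edges. By Theorem \ref{graphremoval} we may remove at most $(\delta/2)n^2$ edges from $G$ to obtain an $H$-free graph $G_1$. The number of edges in $G_1$ is at least
\[
\Bigl(1-\tfrac{1}{\chi(H)-1}-\e\Bigr)\binom{n}{2}-\tfrac{\delta}{2}n^2,
\]
which, for $n$ sufficiently large and using $\e\leq \e'/2$ together with a trivial adjustment absorbing the additive $\tfrac{\delta}{2}n^2$ term into a further application of Theorem \ref{SimStab} (or equivalently by choosing $\e$ slightly smaller at the start so that $\e + \delta/\binom{n}{2}\cdot n^2 \leq \e'$), is at least $(1-\tfrac{1}{\chi(H)-1}-\e')\binom{n}{2}$. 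Since $G_1$ is $H$-free, Theorem \ref{SimStab} gives a set of at most $(\delta/2)n^2$ edges whose removal from $G_1$ leaves a $(\chi(H)-1)$-partite graph. Adding the two discarded edge sets, at most $\delta n^2$ edges were removed from $G$ in total, as required.

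The only step that requires any care is the bookkeeping between the additive error $(\delta/2)n^2$ and the multiplicative error $\e\binom{n}{2}$; this is easily handled by taking $\e$ small enough at the outset (for instance $\e \leq \e'/4$ suffices for $n$ large, and small $n$ can be absorbed into the constant since the conclusion is trivial when $\delta n^2 \geq \binom{n}{2}$).
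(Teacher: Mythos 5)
Your overall strategy is exactly the one the paper intends: Theorem \ref{RobustStab} is stated there as an immediate consequence of concatenating the removal lemma (Theorem \ref{graphremoval}) with the Simonovits stability theorem (Theorem \ref{SimStab}), first deleting a small set of edges to destroy every copy of $H$ and then applying stability to the resulting $H$-free graph. The paper gives no more detail than ``combining the two previous theorems'', so in spirit you have reproduced its proof.

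There is, however, one quantitative slip in your bookkeeping, and the explicit remedy you propose does not repair it. After the first step you may have deleted as many as $\frac{\d}{2}n^2 \approx \d\binom{n}{2}$ edges, so the edge density of $G_1$ can drop by roughly $\d$, not by something comparable to $\e$. Your suggested fix, choosing $\e$ so small that $\e + \d n^2/\binom{n}{2} \leq \e'$, is impossible whenever $2\d > \e'$; and this is the typical situation, since the constant $\e'$ supplied by stability for the parameter $\d/2$ will in general be far smaller than $\d$, and shrinking $\e$ cannot compensate for a fixed loss of order $\d$. The correct (and equally easy) repair is to make the first deletion budget small compared with $\e'$ as well as with $\d$: having fixed $\e'$ from Theorem \ref{SimStab} with parameter $\d/2$, apply Theorem \ref{graphremoval} with parameter $\min\left(\frac{\d}{2},\frac{\e'}{8}\right)$ rather than $\frac{\d}{2}$, and take $\e=\min\left(\frac{\e'}{2},\e''\right)$ for the resulting $\e''$. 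Then the first step removes at most $\frac{\e'}{8}n^2 \leq \frac{\e'}{2}\binom{n}{2}$ edges, so $G_1$ still has at least $\left(1-\frac{1}{\chi(H)-1}-\e'\right)\binom{n}{2}$ edges and stability applies, while the total number of edges removed over both steps is at most $\frac{\d}{2}n^2+\frac{\d}{2}n^2=\d n^2$. With this adjustment your argument is complete.
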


To prove Theorem \ref{ApproxStab}, the statement of which we now repeat, we will follow the procedure described at the end of Section \ref{clemma}. 

\begin{theorem} \label{RelativeStabAgain}
Given a strictly $2$-balanced graph $H$ with $\chi(H) \geq 3$ and a constant $\d > 0$, there exist positive constants $C$ and $\e$ such that in the random graph $G_{n,p}$ chosen with probability $p \geq C n^{-1/m_2(H)}$, where $m_2(H) = (e_H - 1)/(v_H - 2)$, the following holds with probability tending to 1 as $n$ tends to infinity. Every $H$-free subgraph of $G_{n,p}$ with at least $\left(1 - \frac{1}{\chi(H) - 1} - \e\right)p \binom{n}{2}$ edges may be made $(\chi(H)-1)$-partite by removing at most $\d p n^2$ edges. 
\end{theorem}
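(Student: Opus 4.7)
The plan is to execute the transference sketch given at the end of Section \ref{clemma}: starting from the $H$-free subgraph $A \subseteq G_{n,p}$, we produce a dense $\{0,1\}$-valued graph $B$ on $[n]$ whose labelled $H$-count is (almost) zero and whose edge density inside every vertex set $V \subseteq [n]$ matches that of $A$, up to the natural $p$-rescaling. The robust stability theorem then gives an almost-$(\chi(H)-1)$-partite structure on $B$, and the local density matching lets us pull the same partition back to $A$.

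Concretely, take $X = \binom{[n]}{2}$ and let $S$ be the homogeneous set of labelled ordered copies of $H$ in $K_n$; since $H$ is strictly $2$-balanced, $S$ is good with critical exponent $\a_S = 1/m_2(H)$. The dense robust input is Theorem \ref{RobustStab}: given $\d > 0$, set $\d' = \d/(2(\chi(H)-1))$ and let $\e'$ be the constant it provides. Let $\mathcal{V} = \{\binom{V}{2} : V \subseteq [n]\}$, so that $|\mathcal{V}| \leq 2^n$. Because $\chi(H) \geq 3$ forces $H$ to contain an odd cycle, $m_2(H) > 1$, whence $p|X| \geq \tfrac{C}{2} n^{2 - 1/m_2(H)} = \omega(n)$ and $|\mathcal{V}| = 2^{o(p|X|)}$. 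Thus Corollary \ref{unconditionalsetstructural} applies; use it with $\e_0 := \min\{\e'/2,\, \d/(4(\chi(H)-1))\}$ to obtain the promised constants $C$ and $\d_0$, and set $\e := \e'/2$.

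With high probability, the structural conclusion then holds for every $0 \leq f \leq \mu$. Apply it to $f = p^{-1}\chi_A$, obtaining a $\{0,1\}$-valued $h$, which we view as the edge set of a graph $B$ on $[n]$. The $H$-freeness of $A$ gives $\E_{s \in S} f(s_1)\cdots f(s_{e_H}) = 0$, so $B$ has at most $\e_0 |S| \leq \e' n^{v_H}$ labelled copies of $H$. Taking $V = [n]$ in the local density clause gives
\[
e(B) \geq p^{-1} e(A) - \e_0 \binom{n}{2} \geq \Bigl(1 - \tfrac{1}{\chi(H)-1} - \e - \e_0\Bigr)\binom{n}{2} \geq \Bigl(1 - \tfrac{1}{\chi(H)-1} - \e'\Bigr)\binom{n}{2}.
\]
Theorem \ref{RobustStab} now yields a partition $[n] = V_1 \cup \cdots \cup V_{\chi(H)-1}$ with $\sum_i e_B(V_i) \leq \d' n^2$. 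Applying the local density clause with $V = \binom{V_i}{2}$ gives $e_A(V_i) \leq p\, e_B(V_i) + \e_0 p \binom{n}{2}$, and summing over $i$ gives $\sum_i e_A(V_i) \leq p \d' n^2 + (\chi(H)-1)\e_0 p \binom{n}{2} \leq \d p n^2$ by our choice of $\d'$ and $\e_0$. Deleting these edges makes $A$ into a $(\chi(H)-1)$-partite graph.

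The range provided by Corollary \ref{unconditionalsetstructural} is $Cn^{-1/m_2(H)} \leq p \leq \d_0$. For $p > \d_0$ the result is not automatic from monotonicity (structural statements are not monotone), but it follows from a standard partitioning argument: split $G_{n,p}$ into constantly many independent random subgraphs each with density $\leq \d_0$, apply the already-proved statement to each, and combine the partitions via a short averaging argument. The genuinely new ingredient — and the main obstacle — is the simultaneous control of the $H$-count and of all $2^n$ local densities in a single approximant $h$; this is precisely what assumption $3'$ buys us, and it is the reason the proof runs through Corollary \ref{unconditionalsetstructural} rather than through Theorem \ref{unconditionaldensity}. Verifying that the cardinality of $\mathcal{V}$ remains subexponential compared with $p|X|$ is exactly where the strictly $2$-balanced hypothesis and $\chi(H) \geq 3$ are used, via the estimate $m_2(H) > 1$.
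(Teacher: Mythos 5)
For the main range $Cn^{-1/m_2(H)} \leq p \leq \d_0$ your argument is essentially the paper's own proof: you transfer via Corollary \ref{unconditionalsetstructural} with $\mathcal{V}$ the family of vertex-subset pair sets, feed the transferred $\{0,1\}$-valued graph into the robust stability statement (Theorem \ref{RobustStab}), and use the local-density clause first with $V=[n]$ to get the edge count of $B$ and then with the parts $V_i$ to pull the partition back to $A$. The bookkeeping with $\e_0$ and $\d'$ is correct, and the observation that $\chi(H)\geq 3$ forces $m_2(H)>1$, hence $|\mathcal{V}|\leq 2^n = 2^{o(p|X|)}$, is exactly the point the paper makes when justifying assumption $3'$ for this application.

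The gap is your treatment of $p > \d_0$. You assert that one can split $G_{n,p}$ into boundedly many independent sparser random graphs, apply the already-proved statement to each, and ``combine the partitions via a short averaging argument,'' but the paper explicitly warns (in the discussion preceding Theorem \ref{conditionaldensity}) that for structural results no simple argument of this variety seems to work, and the combination step is precisely where it breaks. If $G$ is split into pieces $G_1,\dots,G_m$, then for a given $H$-free $A$ you can only apply the sparse statement to those $A\cap G_i$ that happen to be dense enough relative to $G_i$ (an adversarial $A$ need not be spread evenly over the pieces), each such application controls only the edges of $A$ lying in that one piece, and different pieces may return entirely different $(\chi(H)-1)$-partitions. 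There is no evident averaging that merges these into a single partition valid for all of $A$: one would have to reuse the $H$-freeness of the whole of $A$, which is exactly the structural information that does not pass through such a splitting. The paper instead disposes of the case $p>\lambda$ by invoking Kohayakawa's observation \cite{K97} that for $p$ bounded below by a constant the statement follows from the regularity lemma. You should either cite that argument or give a genuine proof for large $p$; as written, that case is not established.
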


\begin{proof}
Fix $\d > 0$. An application of Theorem \ref{RobustStab} gives us $\e > 0$ such that any graph with at most $\e n^{v_H}$ copies of $H$ and at least $\left(1 - \frac{1}{\chi(H) - 1} - 2\e\right) \binom{n}{2}$ edges may be made $(\chi(H) - 1)$-partite by removing at most $\d n^2/2$ edges. 

Let $t = \chi(H) - 1$. Apply Corollary \ref{unconditionalsetstructural} with $S$ the set of all labelled copies of $H$ in $K_n$ and $\mathcal{V}$ the set of all vertex subsets of $\{1, \dots, n\}$. This yields constants $C$ and $\lambda$ such that, for $C n^{-1/m_2(H)} \leq p \leq \lambda$, the following holds with probability tending to 1 as $n$ tends to infinity. Let $G$ be a random graph where each edge is chosen with probability $p$. Let $\mu$ be its characteristic measure. Then, if $f$ is a function with $0 \leq f \leq \mu$, there exists a $\{0,1\}$-valued function $j$ such that $\E_{s \in S} f(s_1)\dots f(s_e) \geq \E_{s \in S} j(s_1) \cdots j(s_e) - \e$ and, for all $V \in \mathcal{V}$, $|\E_{x \in V} f(x) - \E_{x \in V} j(x)| \leq \eta \frac{|X|}{|V|}$, where $\eta = \min(\e, \d/2t)$. 

Let $A$ be a $H$-free subgraph of $G$ with $\left(1 - \frac{1}{\chi(H) - 1} - \e\right) p \binom{n}{2}$ edges and let $0 \leq f \leq \mu$ be $p^{-1}$ times its characteristic function. Apply the transference principle to find the function $j$, which is the characteristic measure of a graph $J$. The number of copies of $H$ in $J$ is at most $\e n^{v_H}$. Otherwise, we would have 
\[\E_{s \in S} f(s_1)\dots f(s_e) \geq \E_{s \in S} j(s_1) \cdots j(s_e) - \e > 0,\]
implying that $A$ was not $H$-free. Moreover, the number of edges in $J$ is at least $\left(1 - \frac{1}{\chi(H) - 1} - 2\e\right) \binom{n}{2}$. Therefore, by the choice of $\e$, $J$ may be made $(\chi(H) - 1)$-partite by removing at most $\d n^2/2$ edges. 

Let $V_1, \dots, V_t$ be the partite pieces. By transference, $|\E_{x \in V_i} f(x) - \E_{x \in V_i} j(x)| \leq \eta \frac{|X|}{|V_i|}$ for each $1 \leq i \leq t$. Therefore, if we remove all of the edges of $A$ from each set in $V_i$, we have removed at most
\[\sum_{i=1}^t \sum_{x \in V_i} f(x) \leq \sum_{i=1}^t \sum_{x \in V_i} j(x) + t \eta |X| \leq \left(\frac{\d}{2} + t\eta\right)n^2 \leq \d n^2.\]
Moreover, the graph that remains is $(\chi(H) - 1)$-partite, so we are done.

It only remains to consider the case when $p > \lambda$. However, as observed in \cite{K97}, for $p$ constant, the theorem follows from an application of the regularity lemma. This completes the proof.
\end{proof}

As a final note, we would like to mention that the method used in the proof of Theorem \ref{RelativeStabAgain} should work quite generally. To take one more example, let $K$ be the Fano plane. This is the hypergraph formed by taking the seven non-zero vectors of dimension three over the field with two elements and making $xyz$ an edge if $x + y + z = 0$. The resulting graph has seven vertices and seven edges. It is known \cite{DCF00} that the extremal number of the Fano plane is approximately $\frac{3}{4} \binom{n}{3}$. Since the Fano plane is strictly $3$-balanced, Theorem \ref{RelativeHyperTuran} implies that if $U$ is a random $3$-uniform hypergraph chosen with probability $p \geq C n^{-2/3}$, then, with high probability, $U$ is such that any subgraph of $U$ with at least $\left(\frac{3}{4} + \e\right) |U|$ edges contains the Fano plane.

Moreover, it was proved independently by Keevash and Sudakov \cite{KS05} and F\"uredi and Simonovits \cite{FS05} that the extremal example is formed by dividing the ground set into subsets $A$ and $B$ of nearly equal size and taking all triples that intersect both as edges. The stability version of this result says that, for all $\d > 0$, there exists $\e > 0$ such that any $3$-uniform hypergraph on $n$ vertices with at least $\left(\frac{3}{4} - \e\right) \binom{n}{3}$ edges that does not contain the Fano plane may be partitioned into two parts $A$ and $B$ such that there are at most $\d n^3$ edges contained entirely within $A$ or $B$. The same proof as that of Theorem \ref{RelativeStabAgain} then implies the following theorem.

\begin{theorem} \label{Fano}
Given a constant $\d > 0$, there exist positive constants $C$ and $\e$ such that in the random graph $G_{n,p}^{(3)}$ chosen with probability $p \geq C n^{-2/3}$, the following holds with probability tending to 1 as $n$ tends to infinity. Every subgraph of $G_{n,p}^{(3)}$ with at least $\left(\frac{3}{4} - \e\right) e(G)$ edges that does not contain the Fano plane may be made bipartite, in the sense that all edges intersect both parts of the partition, by removing at most $\d p n^3$ edges. 
\end{theorem}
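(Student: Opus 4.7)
The plan is to follow the proof of Theorem \ref{RelativeStabAgain} essentially verbatim, replacing graphs with $3$-uniform hypergraphs and $H$ with the Fano plane $K$. The only substantive new input is a robust version of the Keevash--Sudakov/Füredi--Simonovits stability theorem.

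First I would establish the following robust stability statement: for every $\d>0$ there exists $\e>0$ such that any $3$-uniform hypergraph on $n$ vertices with at most $\e n^{7}$ copies of the Fano plane and at least $\left(\tfrac{3}{4}-2\e\right)\binom{n}{3}$ edges admits a partition $[n]=A\cup B$ for which at most $\d n^{3}/2$ edges lie entirely inside $A$ or entirely inside $B$. This follows by combining the hypergraph removal lemma (Theorem \ref{hypergraphremoval}, specialized to $K$ in place of $K_{k+1}^{(k)}$) with the dense stability theorem of \cite{KS05, FS05}: removal produces a Fano-free hypergraph with still at least $\left(\tfrac{3}{4}-2\e-o(1)\right)\binom{n}{3}$ edges, and the dense stability result then provides the partition.

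Next I would apply Corollary \ref{unconditionalsetstructural} with $X=K_{n}^{(3)}$, $S$ the set of labelled embeddings of $K$ (strictly $3$-balanced, so $\a_S=1/m_3(K)=2/3$), and $\mathcal{V}$ the collection $\{\binom{U}{3}: U\subseteq [n]\}$. Since $|\mathcal{V}|\leq 2^{n}$ and $2^{n}=2^{o(pn^{3})}$ whenever $p\geq Cn^{-2/3}$, the hypothesis on the size of $\mathcal{V}$ is satisfied. With parameter $\eta=\min(\e,\d/4)$, this yields constants $C,\lambda$ such that, for $Cn^{-2/3}\leq p\leq\lambda$, with probability $1-o(1)$ every $f$ with $0\leq f\leq\mu$ is approximated by the characteristic measure $j$ of a hypergraph $J$ satisfying $\E_{s\in S}f(s_{1})\cdots f(s_{7})\geq \E_{s\in S}j(s_{1})\cdots j(s_{7})-\e$ and, for every $U\subseteq[n]$,
\[
\Bigl|\sum_{e\in \binom{U}{3}}f(e)-\sum_{e\in\binom{U}{3}}j(e)\Bigr|\leq \eta\,|X|.
\]

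Now let $F\subseteq G_{n,p}^{(3)}$ be Fano-free with at least $(\tfrac{3}{4}-\e)e(G_{n,p}^{(3)})$ edges and set $f=p^{-1}\chi_{F}$. Because $F$ has no Fano plane, the first inequality forces $J$ to contain at most $\e n^{7}$ copies of $K$, while the second (applied to $U=[n]$) gives $e(J)\geq(\tfrac{3}{4}-2\e)\binom{n}{3}$ for $n$ large. Robust stability produces a partition $[n]=A\cup B$ with $J$ having at most $\d n^{3}/2$ edges inside $\binom{A}{3}\cup\binom{B}{3}$. By the density-preservation property applied with $U=A$ and $U=B$, the number of edges of $F$ inside $\binom{A}{3}\cup\binom{B}{3}$ is at most $(\d/2+2\eta)pn^{3}\leq \d pn^{3}$; removing them leaves the required bipartite hypergraph. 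The range $p>\lambda$ reduces to the dense case by a standard application of the (weak) regularity lemma, exactly as in \cite{K97}.

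The main obstacle is the preliminary robust stability step, since one must verify that the removal lemma and the dense stability theorem combine cleanly to handle hypergraphs that merely contain \emph{few} Fano planes rather than none; everything else is a direct transcription of the argument of Theorem \ref{RelativeStabAgain}, with the book-keeping check that $|\mathcal{V}|\leq 2^{n}$ is negligible compared to $2^{pn^{3}}$ at the critical probability $p\asymp n^{-2/3}$.
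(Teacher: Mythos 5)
Your proposal is correct and follows essentially the same route as the paper, which proves Theorem \ref{Fano} by invoking the Keevash--Sudakov/F\"uredi--Simonovits stability theorem (made robust via a removal lemma for the Fano plane, exactly as in Theorem \ref{RobustStab}) and then repeating the proof of Theorem \ref{RelativeStabAgain} with $S$ the labelled copies of the Fano plane and $\mathcal{V}$ the edge sets induced by vertex subsets. Your additional checks (strict $3$-balancedness giving $\alpha_S=2/3$, and $|\mathcal{V}|\leq 2^n=2^{o(pn^3)}$ at $p\geq Cn^{-2/3}$) are precisely the book-keeping the paper leaves implicit.
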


\section{Concluding remarks} \label{Conclusion}

One question that the results of this paper leave open is to decide whether or not the thresholds we have proved are sharp. By saying that a threshold is sharp, we mean that the window over which the phase transition happens becomes arbitrarily small as the size of the ground set becomes large. For example, a graph property $\mathcal{P}$ has a sharp threshold at $\hat{p} = \hat{p}(n)$ if, for every $\e > 0$,
\[\lim_{n \rightarrow \infty} \mathbb{P} (G_{n,p} \mbox{ satisfies $\mathcal{P}$}) =
\left\{ \begin{array}{ll} 0, \mbox{ if } p < (1-\e) \hat{p},\\ 1,
\mbox{ if } p > (1+\e) \hat{p}.\end{array} \right.\]
Connectedness is a simple example of a graph property for which a sharp threshold is known. The appearance of a triangle, on the other hand, is known not to be sharp. A result of Friedgut \cite{F99} gives a criterion for judging whether a threshold is sharp or not. Roughly, this criterion says that if the property is globally determined, it is sharp, and if it is locally determined, it is not. This intuition allows one to conclude fairly quickly that connectedness should have a sharp threshold and the appearance of any particular small subgraph should not. 

For the properties that we have discussed in this paper it is much less obvious whether the bounds are sharp or not. Many of the properties are not even monotone, which is crucial if one wishes to apply Friedgut's criterion. Nevertheless, the properties do not seem to be too pathological, so perhaps there is some small hope that the sharpness of their thresholds can be proved. There has even been some success in this direction already. Recall that the threshold at which $G_{n,p}$ becomes $2$-colour Ramsey with respect to triangles is approximately $n^{-1/2}$. A difficult result of Friedgut, R\"odl, Ruci\'nski and Tetali \cite{FRRT06} states that this threshold is sharp. That is, there exists $\hat{c} = \hat{c}(n)$ such that, for every $\e > 0$,
\[\lim_{n \rightarrow \infty} \mathbb{P} (G_{n,p} \mbox{ is $(K_3, 2)$-Ramsey}) =
\left\{ \begin{array}{ll} 0, \mbox{ if } p < (1-\e) \hat{c} n^{-1/2},\\ 1,
\mbox{ if } p > (1+\e) \hat{c} n^{-1/2}.\end{array} \right.\]
Unfortunately, the function $\hat{c}(n)$ is not known to tend towards a constant. It could, at least in principle, wander up and down forever between the two endpoints. Nevertheless, we believe that extending this result to cover all (or any) of the theorems in this paper is important.

There are other improvements that it might well be possible to make. We proved our graph and hypergraph results for strictly balanced graphs and hypergraphs, while the results of Schacht \cite{S09} and Friedgut, R\"odl and Schacht \cite{FRS09} apply to all graphs and hypergraphs. On the other hand, our methods allow us to prove structural results such as the stability theorem which do not seem to follow from their approach. It seems plausible that some synthesis of the two approaches could allow us to extend these latter results to general graphs and hypergraphs in a tidy fashion.\footnote{In subsequent work, Samotij \cite{Sj14} showed how to adapt Schacht's method so that it also applies to structural statements such as Theorem~\ref{ApproxStab}. However, it still remains an open problem to extend the methods of this paper to all graphs and hypergraphs.}

In our approach, restricting to strictly balanced graphs and hypergraphs was very convenient, since it allowed us to cap our convolutions only at the very last stage (that is, when all the functions involved had sparse random support). In more general cases, capping would have to take place ``all the way down". It seems likely that this can be done, but that a direct attempt to generalize our methods would be messy. 

A more satisfactory approach would be to find a neater way of proving our probabilistic estimates. The process of capping is a bit ugly: a better approach might be to argue that with high probability we can say roughly how the modulus of an uncapped convolution is distributed, and use that in an inductive hypothesis. (It seems likely that the distribution is approximately Poisson.)

Thus, it seems that the problem of extending our methods to general graphs and hypergraphs and the problem of finding a neater proof of the probabilistic estimates go hand in hand.

\end{document}